\def\nudge{.5}
\tikzset{axis/.style={ultra thick, Black!75!black, -latex, shorten <=-\nudge cm, shorten >=-2*\nudge cm}}
\tikzset{line/.style={thick,Red}}
\def\rank{\mathrm{rank}}
\def\tv{\widetilde{\ve}}
\newtheorem{thm}{Theorem}[section]
\newtheorem{theorem}[thm]{Theorem}
\newtheorem{lemma}[thm]{Lemma}
\newtheorem{corollary}[thm]{Corollary}
\newtheorem{conjecture}[thm]{Conjecture}
\newtheorem{claim}{Claim}
\newtheorem{proposition}[thm]{Proposition}
\theoremstyle{definition}
\newtheorem{definition}[thm]{Definition}
\newtheorem{remark}[thm]{Remark}
\newtheorem{convention}[thm]{Convention}
\newtheorem{notation}[thm]{Notation}
\newtheorem{notation-definition}[thm]{Notation-Definition}
\numberwithin{equation}{section}
\def\OD{\overline \Delta}
\def\calH{\mathcal{H}}
\def\calK{\mathcal{K}}
\def\calO{\mathcal{O}}
\def\CC{\mathbb{C}}
\def\FF{\mathbb{F}}
\def\HH{\mathbb{H}}
\def\LL{\mathbb{L}}
\def\QQ{\mathbb{Q}}
\def\ZZ{\mathbb{Z}}
\def\bfm{\mathbf{m}}
\def\rmK{\mathrm{K}}
\def\jing{\overunderset{u}{i=1}{\mathlarger{\#}}}
\def\Or{\overline r}
\def\Orp{\overline r_p}
\def\adic{\textrm{-adic}}
\def\Bo{B^{\mathrm{op}}}
\def\GQp{\Gal_{\QQ_p}}
\def\rI{\Or|_{I_{\QQ_p}}}
\def\rIp{\Orp|_{I_{\QQ_p}}}
\def\ve{\varepsilon}
\def\us{\underline{s}}
\def\nS{nS}
\def\vl{v_p(\ell)}
\def\v{v_p}
\def\nS{\mathrm{nS}}
\def\NP{\mathrm{NP}}
\def\tH{\widetilde{\mathrm{H}}}
\DeclareMathOperator{\GL}{GL}
\DeclareMathOperator{\Hom}{Hom}
\DeclareMathOperator{\Gal}{Gal}
\newcommand{\ur}{\mathrm{ur}}
\newcommand{\Iw}{\mathrm{Iw}}
\newcommand{\new}{\mathrm{new}}
\newcommand{\Dig}{\mathrm{Dig}}
\begin{document}
\title{The slope-invariant of local ghost series under direct sum}

\author{Rufei Ren}
\address{Department of Mathematics 
	Fudan University}
\email{rufeir@fudan.edu.cn}
\date{\today}
	\begin{abstract}
The	ghost conjecture is first provided by Bergdall and Pollack in \cite{BP1,BP2} to study the $U_p$-slopes of spaces of modular forms, which, so far, has already brought plenty of important results.  
		The local version of this conjecture under genericity condition has been solved by Liu-Truong-Xiao-Zhao in \cite{xiao, xiao1}. 
		
		In the current  paper, we prove a necessary and sufficient condition for 
		a sequence of local ghost series to satisfy that their product has the same Newton polygon to the ghost series build from the direct sum of their associated modules (see Theorem~\ref{keythm}). That answers a common question asked in both \cite{BP2,xiao}.  
	
\end{abstract}

\subjclass[2010]{}
\keywords{Modular forms, Ghost conjecture}
\maketitle
\setcounter{tocdepth}{1}
\tableofcontents

\section{Introduction}

%
%We point out that one may decompose $S_{k}\left(\Gamma_{0}(N p)\right)$ into the direct sum $\bigoplus\limits_{i} S_{k}\left(\Gamma_{0}(N p)\right)_{\overline{r}_{i}}$ over all (semisimple) residual Galois representations $\overline{r}_{i}: \operatorname{Gal}(\overline{\mathbb{Q}} / \mathbb{Q}) \rightarrow \operatorname{GL}_{2}\left(\overline{\mathbb{F}}_{p}\right)$, where $S_{k}\left(\Gamma_{0}(N p)\right)_{\overline{r}_{i}}$ denotes the subspace spanned by the eigenforms whose associated semisimple residual Galois representation is isomorphic to $\overline{r}_{i}$. So it is natural to ask Conjectures 1.1 $1.3$ for each $\overline{r}_{i}$. In terms of Langlands program, these conjectures are closely related to asking certain fine structure of Kisin semistable deformation spaces of $\left.\overline{r}_{i}\right|_{\mathrm{Gal}\left(\overline{\mathbb{Q}}_{p} / \mathbb{Q}_{p}\right)} .$ For example, Conjecture $1.2$ inspired the following folklore conjecture, which was originated in emails among Breuil, Buzzard, and Emerton in $2005\left(\left[\operatorname{BeGe} 15^{+}\right]\right)$.

We fix a prime $p\geq 7$ and an integer $N$ that is coprime to $p$. Let $\rmK_p:=\GL_2(\ZZ_p)$.  We normalize the $p$-adic valuation of elements in $\overline\QQ_p$ so that $v_{p}(p)=1$.
For any even integer $k\geq 2$, we recall that  \begin{multline*}
	S_k(\Gamma_{0}(Np)):=
	\Big\{	f:\calH\to \CC \textrm{~is holomophic and vanishes at all cusps}\;\Big|\;
	\\
	f\left(\tfrac{\alpha z+\beta}{\gamma z+\delta}\right)	=(\gamma z+\delta)^k f(z) \textrm{~for~} \begin{bsmallmatrix}\alpha & \beta \\ \gamma &\delta\end{bsmallmatrix} \in \Gamma_{0}(Np)\Big\}
\end{multline*}
is the space of cusp forms of weight $k$ and level $\Gamma_0(Np)$, where $\calH$ is the upper half plane of $\CC$. There is a natural $U_p$-operator on $S_k(\Gamma_{0}(Np))$, whose slopes, i.e., the $p$-adic valuation of the eigenvalues of this $U_p$-operator acting on 
an eigenform basis of $S_k(\Gamma_{0}(Np))$, relates to many famous conjectures. For example, as mentioned in \cite{xiao}, 
\begin{enumerate}
	\item a folklore conjecture of Breuil--Buzzard--Emerton on the slopes of Kisin's crystabelian deformation spaces, 
	\item Gouvêa's $\left\lfloor\frac{k-1}{p+1}\right\rfloor$ conjecture,
	\item the finiteness of irreducible components of the eigencurve,
	\item Gouvêa--Mazur conjecture, 
	\item Gouvêa's conjecture on slope distributions, and
	\item  a refined version of Coleman's spectral halo conjecture.
\end{enumerate} 
We refer \cite{Bu,BC,Cl,Gou,GM92,Kis,Lo07,R}, etc., for more details, and \cite{xiao} for a summary. 
In recent breakthrough work of Bergdall and Pollack \cite{BP1,BP2}, they unified all
historically important conjectures regarding slopes into one conjecture: the ghost conjecture, which has classical and local two versions.

The latter one is stated in \cite[Conjecture~7.1]{BP2} and proved by Liu, Truong, Xiao and Zhao in \cite{xiao, xiao1} under some genericity hypothesis (see Theorem~\ref{local ghost}). Right after this conjecture in \cite{BP2}, Bergdall and Pollack asked a natural question, which they believe to be true. Here, we rephrase this question as the following conjecture.
\begin{conjecture}[\cite{BP2}]\label{conj:1}
	Given two $\calO\llbracket \rmK_p\rrbracket$-projective augmented modules $\tH_1$ and $\tH_2$ of related type (see Definition~\ref{re::2}), let $G_1(w,-)$, $G_2(w,-)$ and $G_3(w,-)$ denote, respectively, the corresponding ghost series of $\tH_1$,  $\tH_2$ and $\tH_1\oplus\tH_2$.  Then the Newton polygons of $(G_1\cdot G_2)(w_\star,-)$ and  $G_3(w_\star,-)$ are the same for any $w_\star\in \bfm_{\CC_p}$, where $\bfm_{\CC_p}$ is the maximal ideal of  $\calO_{\CC_p}$.
\end{conjecture}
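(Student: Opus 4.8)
The plan is to reduce the statement to a combinatorial identity of Newton polygons and then to analyze the ``tent functions'' that record the ghost multiplicities, and the genericity results play essentially no role until the very end.

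\textbf{Reduction.} I would first invoke the standard fact that for any two nonzero power series $f,g$ over a complete non-archimedean field the Newton polygon of $fg$ is obtained by merging the multisets of slopes of $\NP(f)$ and $\NP(g)$; equivalently, viewing a Newton polygon as a convex piecewise-linear function, $\NP(fg)(m)=\min_{a+b=m}\bigl(\NP(f)(a)+\NP(g)(b)\bigr)$ (infimal convolution). Applied to $f=G_1(w_\star,-)$, $g=G_2(w_\star,-)$, this turns Conjecture~\ref{conj:1} into the assertion
\[
\NP\bigl(G_3(w_\star,-)\bigr)(m)=\min_{a+b=m}\Bigl(\NP\bigl(G_1(w_\star,-)\bigr)(a)+\NP\bigl(G_2(w_\star,-)\bigr)(b)\Bigr)\qquad\text{for all }m,\ w_\star\in\bfm_{\CC_p}.
\]
The related-type hypothesis (Definition~\ref{re::2}) is used here to check that $\tH_1\oplus\tH_2$ is again an admissible projective augmented module, that $G_1,G_2,G_3$ are assembled from the \emph{same} ghost zeros $w_k$ (indexed by a common set of weights), and that the dimension data is additive, $d_k^{(3)}=d_k^{(1)}+d_k^{(2)}$ and likewise for the new-at-$p$ dimensions $d_k^{\mathrm{new}}$.

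\textbf{The combinatorial core.} Writing $g_n^{(j)}(w)=\prod_k (w-w_k)^{m_n^{(j)}(k)}$, one has $v_p\bigl(g_n^{(j)}(w_\star)\bigr)=\sum_k m_n^{(j)}(k)\,e_k$ with $e_k:=v_p(w_\star-w_k)\ge 0$, so both sides of the displayed identity are functions of the finitely many parameters $e_k$ and of the tent functions $n\mapsto m_n^{(j)}(k)$ — each supported on an interval whose length is the new dimension $d_k^{\mathrm{new},(j)}$, rising and falling by steps of $1$ about a central apex. The heart of the argument is to compare, for each fixed $k$, the single tent $m_\bullet^{(3)}(k)$ of width $d_k^{\mathrm{new},(1)}+d_k^{\mathrm{new},(2)}$ with the max-plus convolution $n\mapsto\max_{a+b=n}\bigl(m_a^{(1)}(k)+m_b^{(2)}(k)\bigr)$ of the two smaller tents: a short check shows these two functions agree everywhere off the apex and can differ there only by a parity defect bounded by $1$. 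The plan is then to show that the equality of Newton polygons holds \emph{precisely} when these apex defects, weighted by the $e_k$ and aggregated over $k$, never rise to the lower convex hull — this is exactly the mechanism by which a necessary-and-sufficient condition (Theorem~\ref{keythm}) appears — and that for modules of related type the condition is automatic. Concretely I would prove that, under the related-type hypothesis, the vertices of $\NP\bigl(G_3(w_\star,-)\bigr)$ sit at the abscissas $d_k^{(3)}=d_k^{(1)}+d_k^{(2)}$ (plus the additional ``near-central'' vertices forced by the relation $U_p^2=p^{k-2}$ on newforms when $w_\star$ is $p$-adically close to a single $w_k$), and that along each slope these are precisely the Minkowski sums of the corresponding vertices of $\NP(G_1(w_\star,-))$ and $\NP(G_2(w_\star,-))$; the two convex polygons then share all vertices.

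\textbf{Regimes and the main obstacle.} As in \cite{xiao,xiao1} the analysis should split into an \emph{interior} regime (the $e_k$ small and spread out, where the optimal splitting $m=a+b$ in the infimal convolution is dictated by the coarse shape of $\NP(G_1)$ and $\NP(G_2)$) and a \emph{near-central} regime (one $e_k$ dominating, where the local polygon refines according to the $\pm$-eigenspaces at weight $k$). I expect the near-central regime to be the main obstacle: one must verify both that a ``shaved apex'' can never surface as a genuine vertex of $\NP\bigl(G_3(w_\star,-)\bigr)$ and that the refined near-central shape of $G_3$ is exactly the concatenation of those of $G_1$ and $G_2$, which forces one to use the recursive formulas for $d_k^{(j)}$ and $d_k^{\mathrm{new},(j)}$ and their exact behaviour under addition. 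This same analysis yields the counterexample showing necessity of the condition in Theorem~\ref{keythm} for general (not-necessarily-related-type) sequences. Finally, the combinatorial identity, once proved, holds for every $w_\star\in\bfm_{\CC_p}$ with no genericity assumption, and Theorem~\ref{local ghost} is invoked only at the end to translate the statement into one about genuine $U_p$-slopes.
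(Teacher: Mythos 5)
The statement you set out to prove is false, and the paper's main content is precisely a disproof of it: Theorem~\ref{keythm} establishes a necessary and sufficient condition on the types $(a_i,b_i)$ for the Newton-polygon identity to hold for all $w_\star$, and the remark immediately following it records that Conjecture~\ref{conj:1} fails in general. The fatal step in your proposal is the sentence asserting that ``for modules of related type the condition is automatic.'' Being related to a fixed character $\ve$ only pins down $a+2b$ modulo $p-1$; there are roughly $p-1$ distinct $\ve$-related types, parameterized by $s\in\{0,\dots,p-2\}$ via the bijection $\iota$ of Definition~\ref{re::1}(1), and the identity $\NP(G_3(w_\star,-))=\NP(G_1(w_\star,-))\,\#\,\NP(G_2(w_\star,-))$ holds for \emph{every} $w_\star$ only when the two types coincide or are split partners $(a,b)$ and $(p-3-a,\{a+b+1\})$ in the sense of Definition~\ref{re::2}(3). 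For any other pair the paper constructs in \S\ref{section4.2} (Proposition~\ref{main thm}) an explicit weight $k_{w_0}$ and a point $w_\star$ with $v_p(w_\star-w_{k_{w_0}})=P_{k,\ell}$ at which the two polygons genuinely differ.

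Your combinatorial setup is otherwise close in spirit to the paper's: the reduction to merging slopes, the additivity $d_k^{(3)}=d_k^{(1)}+d_k^{(2)}$ from Notation-Definition~\ref{re:7}(5), and the comparison of the tent $m_\bullet^{(3)}(k)$ with the sup-convolution of $m_\bullet^{(1)}(k)$ and $m_\bullet^{(2)}(k)$ all have counterparts (Proposition~\ref{mlemma:Ghost}). But the defect you dismiss as a ``parity defect bounded by $1$'' is exactly where the argument breaks. For $v_p(g_n^{(3)}(w_\star))$ to be computed by a single splitting $n=a+b$, the same $(a,b)$ must be optimal for every $k$ simultaneously; this is what forces the ``odd dominant'' structure of Lemma~\ref{lem:3} and leads to the zigzag criterion of Proposition~\ref{critirien}: the merge equals $\NP(G_3(w_\star,-))$ if and only if the increments $h_{n+1}^{(s_i),\dagger}(w_\star)-h_n^{(s_i),\dagger}(w_\star)$ of the individual polygons are correctly ordered at every odd and every even $n$. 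For two genuinely different $\ve$-related types this ordering is violated at a carefully engineered near-Steinberg range: by Proposition~\ref{prop:1} the central segments of the local convex hulls $\OD^{(s)}_{k_{w_0}}$ have lengths $2(r_0-1)$ and $2(r_0+1)$ for the two types, so the two Newton polygons acquire vertices at different abscissas and the ``shaved apex'' does surface as a discrepancy. No refinement of your near-central analysis can rescue the conjecture as stated; the correct statement is the equivalence in Theorem~\ref{keythm}.
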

The same question is also mentioned in \cite[Remark~2.32]{xiao}. In the current paper, we answer this question by proving a necessary and sufficient condition for finitely many $\calO\llbracket \rmK_p\rrbracket$-projective augmented modules of $\ve$-related types to satisfy the property described in Conjecture~\ref{conj:1}   (see Theorem~\ref{keythm}). 
%The purpose of this paper is to investigate, under the assumption of the local ghost conjecture for primitive weak modules, when a direct sum of 
%such modules satisfies this conjecture? We in fact give a 
%
To present our result better, we first revisit the local ghost conjecture.
 
Let $E$ be a finite field extension of $\QQ_p$ that contains $\sqrt p$, and denote by  $\calO$ and $\FF$ its
ring of integers and residue field, respectively. Let $\Delta \cong(\mathbb{Z} / p \mathbb{Z})^{\times}$ be the torsion subgroup of $\mathbb{Z}_{p}^{\times}$, and $\omega: \Delta \rightarrow \mathbb{Z}_{p}^{\times}$ be the Teichmüller lift. For an element $\alpha \in \mathbb{Z}_{p}^{\times}$, we use $\overline{\alpha} \in \Delta$ to denote its reduction modulo $p$.

%
%For a character $\ve: \Delta^{2} \rightarrow \calO^{\times}$, let $\mathcal{W}^{(\ve)}$ denote the rigid analytic open unit disk associated to $\mathcal{O} \llbracket w \rrbracket^{(\ve)}:=$ $\mathcal{O} \llbracket \Delta \times \mathbb{Z}_{p}^{\times} \rrbracket \otimes_{\mathcal{O}\left[\Delta^{2}\right], \ve} \mathcal{O}$, that is called \emph{the piece of weight disk parameterized by $\ve$}. 
%The set of points $\kappa \in \mathcal{W}^{(\ve)}\left(\mathbb{C}_{p}\right)$ is the same as the set of continuous characters $\kappa: \Delta \times \mathbb{Z}_{p}^{\times} \rightarrow \mathcal{O}_{\mathbb{C}_{p}}^{\times}$ such that $\left.\kappa\right|_{\Delta^{2}}=\ve$.
%
%In terms of Langlands program,  conjectures are closely related to asking certain fine structure of Kisin semistable deformation spaces of $\left.\overline{r}_{i}\right|_{\mathrm{Gal}\left(\overline{\mathbb{Q}}_{p} / \mathbb{Q}_{p}\right)}.$ 
%We write
%\begin{multline*}
%	S_k(\Gamma_{0}(Np)):=
%	\Big\{	f:\calH\to \CC \textrm{~holomophic and vanishes at all cusp points}\;\Big|\;
%	\\
%	f\left(\tfrac{\alpha z+\beta}{\gamma z+\delta}\right)	=(\gamma z+\delta)^k f(z) \textrm{~for~} \begin{bmatrix}\alpha & \beta \\ \gamma &\delta\end{bmatrix} \in \Gamma_{0}(Np)\Big\},
%\end{multline*}
%where $\calH$ is the upper half place of $\CC$; and $\overline\delta$ is the reduction of $\delta$ modulo $p$.
We call a pair of integers $(a,b)$ with $0\leq a\leq p-2$, $0\leq b\leq p-2$ is \emph{related} to a character $\ve: \Delta^{2} \rightarrow \calO^{\times}$ if $\ve(\overline{\alpha}, \overline{\alpha})=\omega(\overline{\alpha})^{a+2b}$ for all $\overline{\alpha} \in \Delta$. 
We call a representation  
$\overline{r}: \Gal_{\QQ}:=\Gal\left(\overline{\QQ} / \QQ\right) \rightarrow \GL_{2}(\FF)$ \emph{hybrid} if $\Or$ is absolutely irreducible, and 
$\Orp:=\Or|_{\Gal_{\QQ_{p}}}$ is reducible. In this case, 
 there is a unique pair of integers $(a,b)$ such that 
\begin{equation*}
	\overline{r}|_{\mathrm{I}_{\QQ_p}} \simeq\left(\begin{array}{cc}
		\omega_{1}^{a+b+1} & *\\
		0 & \omega_{1}^{b}
	\end{array}\right) \quad \text { for some~} 0 \leq a \leq p-2 \text { and } 0 \leq b \leq p-2,	
\end{equation*}
where  $I_{\QQ_p}$ and 	$\omega_{1}$ are the inertial group and the first fundamental character of $\Gal_{\QQ_p}$, respectively. 
We call
$\Or$ \begin{itemize}
	\item \emph{of type $(a,b)$}; 
	\item \emph{generic}, if $1\leq a\leq p-4$; and
	\item \emph{related to $\ve$}, if  
	$(a,b)$ and $\ve$ are related.
\end{itemize} 
Let $\Or$ be an $\ve$-related hybrid  representation of $\Gal_\QQ$. For any $\mathrm{K}_{p}$-projective arithmetic module $\tH$ of type $\overline{r}$ (see Definition~\ref{re::1}), we define later in Notation-Definition~\ref{re:7} the abstract $p$-adic space $S_{\widetilde{H}}^{(\ve)}$, and denote by $C_{\widetilde{\mathrm{H}}}^{(\varepsilon)}(w, t) \in \mathcal{O} \llbracket w, t \rrbracket$ the characteristic power series of the $U_p$-operator on  $S_{\tH}^{(\ve)}$. 
%Two hybrid representations are called associated if their types $(a,b)$ and $(a',b')$ satisfy $a+2b\equiv a'+2b'\pmod {p-1}$. 
 For a power series $f(t)=\sum\limits_{n=0}^\infty a_nt^n\in \CC_p[\![t]\!]$, the lower convex hull of points $\{\left(n, v_{p}\left(a_n\right)\right)\}_{n\geq 0}$ is called the \emph{Newton polygon of $f$}, and denoted by $\mathrm{NP}(f)$.
% For a character $\ve:=\ve_1\times \ve_2: \Delta^{2} \rightarrow \calO^{\times}$, we call $(a,b)$ $\ve$-related if 
% $\ve(\overline{\alpha}, \overline{\alpha})=\omega(\overline{\alpha})^{a+2b}$ for all $\overline{\alpha} \in \Delta$. Definition~\ref{re:3} for the one of  $G^{(\ve)}_{(a,b)}$. 
For two Newton polygons $A$ and $B,$ we denote by $A \# B$ the Newton polygon whose slopes (with multiplicity) is the disjoint union of those of $A$ and $B$.

\begin{theorem}[Local ghost conjecture, {\cite[Theorem~8.7]{xiao1}}]\label{local ghost}
Let $\ve: \Delta^{2} \rightarrow \calO^{\times}$ be a character; $\Or$ be an $\ve$-related hybrid  representation of type $(a, b)$ with $a\in\{2, \ldots, p-5\}$, i.e., $(a,b)$ is very generic; and
		 $\widetilde{\mathrm{H}}$ be a $\mathrm{K}_{p}$-projective arithmetic module of type $\overline{r}$ with multiplicity $m(\widetilde{\mathrm{H}})$.
 Then for every $w_{\star} \in \mathfrak{m}_{\mathbb{C}_{p}}$,
	\begin{enumerate}
		\item 
		if $\rIp$ is nonsplit, then the Newton polygon $\mathrm{NP}\left(C_{\widetilde{\mathrm{H}}}^{(\varepsilon)}\left(w_{\star},-\right)\right)$ is the same as the Newton polygon $\mathrm{NP}\left(G_{(a,b)}^{(\varepsilon)}\left(w_{\star},-\right)\right)$, stretched in both $x$-and $y$-directions by $m(\widetilde{\mathrm{H}})$, where $G_{(a,b)}^{(\varepsilon)}\left(w_{\star},-\right)$ is defined in Definition~\ref{re:3};
		\item if $\rIp$ is split, the Newton polygon $\mathrm{NP}\left(C_{\widetilde{\mathrm{H}}}^{(\varepsilon)}\left(w_{\star},-\right)\right)$ is the same as the Newton polygon $\operatorname{NP}\left(G_{(a,b)}^{(\varepsilon)}\left(w_{\star},-\right)\right) \# \mathrm{NP}\left(G_{(p-3-a, a+b+1)}^{(\varepsilon)}\left(w_{\star},-\right)\right)$, stretched in both $x$- and $y$-directions by $m(\widetilde{\mathrm{H}})$.
	\end{enumerate}
\end{theorem}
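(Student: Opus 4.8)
Since the statement is the local ghost conjecture of Bergdall--Pollack, the plan is to recall the structure of the proof given by Liu--Truong--Xiao--Zhao. Two reductions come first. The multiplicity $m(\tH)$ enters only trivially: passing from $\tH$ to a primitive module of multiplicity $1$ replaces $C_{\tH}^{(\varepsilon)}(w,t)$ by its $m(\tH)$-th root, and since raising a power series to the $m$-th power stretches its Newton polygon by $m$ in both directions, one may assume $m(\tH)=1$. The dichotomy in the conclusion records the number of triangulations of $\Orp$: when $\rIp$ is nonsplit there is a single distinguished triangulation, while when $\Orp$ is split the two orderings of its diagonal characters give two, and the substitution $(a,b)\mapsto(p-3-a,a+b+1)$ is exactly this swap (it interchanges $\omega_1^{a+b+1}$ with $\omega_1^{b}$); the space of $p$-adic forms sees both triangulations, which is the source of the "$\#$". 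So in either case it suffices to compare $\NP\big(C_{\tH}^{(\varepsilon)}(w_\star,-)\big)$ with the ghost polygon attached to one triangulation.

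For the comparison I would use that $\tH$ is $\mathrm{K}_p$-projective arithmetic to build a power basis of $S_{\tH}^{(\varepsilon)}$ over $\calO\llbracket w\rrbracket$ --- a Katz-expansion / overconvergent-projection basis adapted to $\varepsilon$ --- in which the entries of the matrix of $U_p$ can be $p$-adically estimated as functions of $w$. On the other side, $G_{(a,b)}^{(\varepsilon)}(w,t)=\sum_n g_n(w)t^n$ has $g_n(w)$ an explicit product of factors $w-w_k$ over the classical weights $k$ in the correct residue class, the multiplicities being a ballot-type combinatorial rule in $n$ and in the dimensions of the classical old/new subspaces; so $\NP\big(G_{(a,b)}^{(\varepsilon)}(w_\star,-)\big)$ is governed by $\min_k v_p(w_\star-w_k)$, and near a ghost zero $w_k$ the ghost factors as a ``classical block'' times a ``halo tail.'' I would then match the Newton polygons in three steps: (i) a \emph{lower bound} $\NP\big(C_{\tH}^{(\varepsilon)}(w_\star,-)\big)\ge\NP\big(G_{(a,b)}^{(\varepsilon)}(w_\star,-)\big)$ for all $w_\star$, using that the characteristic power series factors at each $w_k$ through the classical Hecke module, so its coefficients vanish at $w_k$ to at least the predicted orders --- these orders being exactly the crystalline slope estimates associated to a hybrid $\Orp$ of type $(a,b)$ (Fontaine--Laffaille theory and its Breuil--M\'ezard-type refinements) that the ghost packages; (ii) \emph{equality on the halo}: for $v_p(w_\star)$ small the slopes of $U_p$ form explicit arithmetic progressions (Liu--Wan--Xiao), which a direct computation matches with the ghost polygon; and (iii) \emph{propagation}: both $w_\star\mapsto\NP\big(C_{\tH}^{(\varepsilon)}(w_\star,-)\big)$ and its ghost counterpart are piecewise affine with breaks only along the locally finite set $\{w_k\}$, so equality on the halo, the classical-weight factorizations, and the Atkin--Lehner ``ghost duality'' $G_{(a,b)}^{(\varepsilon)}\leftrightarrow G_{(p-3-a,a+b+1)}^{(\varepsilon)}$ --- which matches the old/new decomposition and the reflection of $U_p$-slopes, and in the split case reassembles the ``$\#$'' --- propagate equality to every $w_\star$.

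The hard part is the upper bound in the interior of weight space: one must rule out extra $U_p$-slopes appearing as $w_\star$ moves from the halo toward a ghost zero, equivalently show that the valuations of the minors of the $U_p$-matrix in the power basis do not exceed the ghost prediction. This is a delicate wall-crossing analysis of how the Newton polygon of a $p$-adic family of compact operators behaves as $w_\star$ approaches $w_k$, and it is here that the very-generic hypothesis $a\in\{2,\ldots,p-5\}$ is essential: it makes the relevant mod-$p$ $\mathrm{K}_p$-representations uniform, pins down the multiplicities of the ghost zeros, and prevents unexpected collisions between the classical blocks at nearby weights. The other ingredients --- the reductions, the power basis, the halo computation and the duality bookkeeping --- are technical but essentially forced once this framework is in place.
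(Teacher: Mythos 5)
This theorem is not proved in the paper at all: it is quoted verbatim from Liu--Truong--Xiao--Zhao ({\cite[Theorem~8.7]{xiao1}}) and used as a black box, so there is no internal proof to compare your proposal against. Judged on its own terms, your sketch correctly reproduces the architecture of the LTXZ argument --- reduction to $m(\tH)=1$ (the characteristic power series of a direct sum of $m$ copies is the $m$-th power, whence the stretching), the split/nonsplit dichotomy as the two orderings of the diagonal characters of $\Orp$ with $(a,b)\mapsto(p-3-a,a+b+1)$ the swap, a power basis of $S_{\tH}^{(\ve)}$, the halo/boundary computation, and ghost duality. But it is an outline of a several-hundred-page proof, not a proof.

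The genuine gap is the one you yourself flag and then do not fill: the entire content of the theorem is the two-sided estimate on the minors of the $U_p$-matrix in the power basis, and your step (iii) ``propagation'' would not close it as stated. Piecewise affineness of $w_\star\mapsto\NP\big(C_{\tH}^{(\ve)}(w_\star,-)\big)$ in $v_p(w_\star)$ plus equality on the halo does not propagate equality into the interior of weight space: the set of ghost zeros $\{w_k\}$ accumulates, and nothing in your argument rules out the Newton polygon of $C_{\tH}^{(\ve)}$ acquiring extra vertices strictly above (or, worse, below) the ghost polygon away from the boundary. The actual proof establishes the lower bound globally via vanishing-order (corank) estimates of the minors at every classical $w_k$, and then forces equality by exhibiting, at each vertex of the ghost Newton polygon, an explicit minor whose valuation attains the bound --- an induction organized around the nested near-Steinberg ranges and the local ghost duality $\Delta_{k,\ell}=\Delta_{k,-\ell}$, which is where the hypothesis $a\in\{2,\dots,p-5\}$ is actually consumed. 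Without that construction the proposal is a plan, not a proof; since the present paper only cites the result, the honest course here is to do the same.
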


%This conjecture is proved by Liu, Truong, Xiao and Zhao in \cite{xiao} and \cite{xiao1} for the case that $1\leq a\leq p-4$, i.e. the generic case. 
% An interesting question is asked in both \cite{BP1} and \cite{xiao}: 
% Is the Newton polygon of augmented modules invariant under direct sum operation? More precisely, for the set of $\ve$-related arithmetic modules $\{\tH_i\}_{i=1}^u$, they asked if for every $w_\star\in \bfm_{\CC_p}$, there is $$\NP\left(G^{(\ve)}_{\bigoplus\limits_{i=1}^u \tH_i}(w_\star,-)\right)= \NP\left(\prod_{i=1}^{u}G^{(\ve)}_{\tH_i}(w_\star,-)\right).$$
%In this paper, we answer this question by giving a necessary and sufficient condition for 
%these modules to satisfy this direct sum property, and discover that for most cases, the Newton polygon is not invariant under taking direct sum. 
We state our main result as follows, whose proof is given at the end of \S\ref{section4.2}.
\begin{theorem}\label{keythm}
	For a character $\ve: \Delta^2 \to  \calO^\times$, and a sequence of generic $\ve$-related hybrid  representations $\Or_1,\dots, \Or_u$ of types $(a_i, b_i)$ for $i=1,\dots,u$, respectively. 
	Then for any $\calO\llbracket \rmK_p\rrbracket$-projective augmented modules $\tH_i$ of types $\Or_i$  (for $1\leq i\leq u$). The following statements are equivalent.
	\begin{enumerate}
		\item For every $w_\star\in \bfm_{\CC_p}$, we have $$\NP\left(G^{(\ve)}_{\bigoplus\limits_{i=1}^u \tH_i}(w_\star,-)\right)= \overunderset{u}{i=1}{\mathlarger{\#}}\NP\left(G^{(\ve)}_{\tH_i}(w_\star,-)\right),$$
		where $G^{(\ve)}_{\bigoplus\limits_{i=1}^u \tH_i}(w_\star,-)$ and $G^{(\ve)}_{\tH_i}(w_\star,-)$ (for $i=1,\dots, u$) are defined in  Definition~\ref{re:3} which are the ghost series associated to $\bigoplus\limits_{i=1}^u \tH_i$ and $\tH_i$, respectively.
		
		\item For every distinct $i, j\in \{0,1,\dots, p-2\}$, we have $$(a_j,b_j)=(a_i,b_i)\quad\textrm{or}\quad (a_j,b_j)=(p-3-a_i, \{a_i+b_i-1\}),$$
		where $\{a_i+b_i-1\}\in \{0, \ldots, p-2\}$ is the residue of $a_i+b_i-1$ modulo $p-1$.
	\end{enumerate}	
\end{theorem}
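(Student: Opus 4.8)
The plan is to argue entirely on the combinatorial side, working directly with the ghost series $G_{\tH}^{(\varepsilon)}$ of Definition~\ref{re:3} rather than with the characteristic power series $C_{\tH}^{(\varepsilon)}$; this is essentially forced, since the local ghost conjecture (Theorem~\ref{local ghost}) is only available in the very-generic range $a\in\{2,\dots,p-5\}$, whereas Theorem~\ref{keythm} must cover all generic types. The starting point is that $G_{\tH}^{(\varepsilon)}$ is, by construction, manufactured from the sequence of dimensions $d_k(\tH)$ of the weight-$k$ classical pieces of $S_{\tH}^{(\varepsilon)}$; that this sequence is additive, $d_k\big(\bigoplus_i\tH_i\big)=\sum_i d_k(\tH_i)$; and that $d_k(\tH_i)=m(\tH_i)\cdot d_k^{(a_i,b_i)}$ for a universal function of the type (if $\overline r_i|_{I_{\QQ_p}}$ is split one first replaces $\tH_i$ by a direct sum of a nonsplit module of type $(a_i,b_i)$ and one of the companion type, which does not change $d_k$, hence not $G_{\tH_i}^{(\varepsilon)}$). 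Thus $G_{\bigoplus_i\tH_i}^{(\varepsilon)}$ is the ghost series attached to $\sum_i m(\tH_i)\,d_k^{(a_i,b_i)}$, and the whole problem becomes: for which multisets of generic types does the ghost construction commute, at the level of Newton polygons at every $w_\star$, with addition of dimension data?

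For the implication (2) $\Rightarrow$ (1): condition (2) (applied with $i=1$) forces every type to be $X:=(a_1,b_1)$ or its companion $Y:=\mathrm{comp}(X)$, so the summed data is $\alpha\,d_k^{X}+\beta\,d_k^{Y}$ with $\alpha,\beta\ge 0$, and on the right side $\#_i\mathrm{NP}(G_{\tH_i}^{(\varepsilon)}(w_\star,-))$ equals the $\alpha$-fold stretch of $\mathrm{NP}(G_{X}^{(\varepsilon)}(w_\star,-))$ hashed with the $\beta$-fold stretch of $\mathrm{NP}(G_{Y}^{(\varepsilon)}(w_\star,-))$ — using that the Newton polygon of a ghost series is additive under multiplicity, which I would prove as a combinatorial lemma about the tent-shaped ghost multiplicities, first on the generic locus $0<v_p(w_\star)<1$ (where the Newton polygon degenerates to $v_p(w_\star)$ times the Newton polygon of the sequence of $w$-degrees of the coefficients) and then near each classical point $w_{k_0}$. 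It then remains to show that the ghost series of $\alpha\,d_k^{X}+\beta\,d_k^{Y}$ has exactly that Newton polygon at every $w_\star$; the degenerate cases $\alpha=0$ or $\beta=0$ are the multiplicity lemma again, and for the cross term the key observation is that $d_k^{X}+d_k^{Y}$ is precisely the dimension data of an Iwahori-level / split-type-$X$ family, so that what is needed is the combinatorial heart of Theorem~\ref{local ghost}(2): the ghost series attached to $d_k^{X}+d_k^{Y}$ has Newton polygon $\mathrm{NP}(G_X^{(\varepsilon)}(w_\star,-))\#\mathrm{NP}(G_Y^{(\varepsilon)}(w_\star,-))$. I would extract this from the explicit dimension formulas: the tent functions of $X$ and of $Y$ interleave, so the ghost multiplicity of $d_k^{X}+d_k^{Y}$ at each weight equals the infimal convolution of the two tents, whence the $w$-degree sequences, and then (by the generic-locus linearisation plus a near-classical-point check) the Newton polygons, add up under $\#$; general $\alpha,\beta$ follow by combining.

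For the implication (1) $\Rightarrow$ (2) I would argue by contraposition: suppose there are distinct $i_0,j_0$ with $(a_{j_0},b_{j_0})$ equal to neither $(a_{i_0},b_{i_0})$ nor $\mathrm{comp}(a_{i_0},b_{i_0})$, and produce a single $w_\star$ — I expect the generic locus $0<v_p(w_\star)<1$ to suffice, otherwise one passes to $w_\star$ close to a suitable classical weight — at which the two Newton polygons differ. There both sides equal $v_p(w_\star)$ times an explicit integral Newton polygon built from ghost multiplicities: the left one from the multiplicities $M_n(k)$ of the summed dimension data, the right one from the infimal convolution of the individual multiplicities. One compares weight by weight and index by index: $M_n(k)$ is bounded below by the corresponding infimal convolution, with equality for all $n,k$ precisely when every two of the tent functions being added are equal or companion, while failure of (2) forces a weight $k$ and an index $n$ at which $M_n(k)$ strictly exceeds it — this is where the arithmetic of the positions of the ramps of $d_k^{(a,b)}$ (controlled by $a$ and by $k\bmod(p-1)$) enters. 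One then verifies that this strictness survives convexification and summation over $k$, producing an honest vertex of the left Newton polygon that lies strictly above the right one.

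The step I expect to be the main obstacle is the necessity direction: one must show that a \emph{pairwise} incompatibility of two of the types forces a \emph{global} failure of the Newton-polygon identity — that the defect created by $\{i_0,j_0\}$ cannot be masked by the remaining $\tH_i$ — and pin down concretely both the weight $k$ and the index $n$ at which the ghost multiplicity of the summed data genuinely exceeds the infimal convolution. A secondary difficulty is the bookkeeping in the companion subcase of sufficiency, where the interleaving of the two companion tent functions has to be matched against the precise, somewhat case-laden, dimension formulas for \emph{generic} (rather than very generic) types, including the boundary values $a\in\{1,p-4\}$.
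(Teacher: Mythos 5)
Your overall strategy --- work purely combinatorially with the ghost series, reduce to primitive nonsplit dimension data via additivity and the split-to-sum-of-companions replacement, and treat the companion pairing as the source of sufficiency --- matches the paper's opening moves (Proposition~\ref{re:4}, Lemma~\ref{re:14}). For sufficiency the paper's actual mechanism is cleaner than your ``infimal convolution'' picture: after a normalization by the horizontal shift $\delta_s$ (the dagger series), companion types give \emph{literally identical} power series (Lemma~\ref{lem:2}), so condition (2) reduces to ``all parameters equal''; the coefficient of the direct-sum series then factors exactly as a product of individual coefficients evaluated at a parity-balanced partition $n=\sum n_{u,i}$ (Proposition~\ref{mlemma:Ghost}), which rests on an ``odd dominant'' parity property of the dimension maps (Lemma~\ref{lem:3}); and a zigzag criterion (Proposition~\ref{critirien}) converts the factorization into the Newton-polygon identity. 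Your sketch gestures at the interleaving of the two companion tents but does not isolate this parity structure, which is what actually makes the sufficiency argument close.

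The genuine gap is in the necessity direction, and it is twofold. First, your expectation that a $w_\star$ in the generic locus $0<v_p(w_\star)<1$ could witness the failure is wrong: there the Newton polygon is governed by the $w$-degrees $\sum_k m_n(k)$, and the parity inequalities of Lemma~\ref{lemma:parity} (summed over $k$) show that the zigzag criterion is \emph{always} satisfied on that locus, for arbitrary tuples of generic types; no counterexample lives there. Second, the discrepancy is not located where you place it. By Proposition~\ref{mlemma:Ghost} the summed ghost multiplicity never ``strictly exceeds'' the relevant combination of individual multiplicities at a single $(n,k)$ --- the coefficients factor exactly at the balanced partition. The failure occurs only after pairing with the valuations $v_p(w_\star-w_k)$ and convexifying: the balanced partition realizing the coefficient of $G_{\oplus}$ at $x=N$ differs from the slope-minimizing partition that governs $\#_i\NP(G_i)$ at $x=N$. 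Detecting this forces $w_\star$ to be taken extremely $p$-adically close to a single, delicately tuned classical weight: the paper fixes $\ell=p^{p^{p+2}}$, $r_0=p^{2p}$, locates a weight $k_{w_0}$ by an intermediate-value argument on the quantity $F^{(s_0)}_{k_w}(r_0-1)$ (Lemma~\ref{mlem:1}), shows that the convex hulls $\OD^{(s)}_{k_{w_0}}$ have central segments of lengths $2(r_0-1)$ versus $2(r_0+1)$ according as $s\le s_1$ or $s>s_1$ (Proposition~\ref{prop:1}), takes $v_p(w_\star-w_{k_{w_0}})$ equal to the common slope $P_{k,\ell}$, and then must still rule out interference from every other weight $k'$ (the maximality of the near-Steinberg range, Claims~\ref{claim} and \ref{claim:2}). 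This construction is the mathematical core of the theorem, and your proposal leaves all of it unsupplied while predicting the wrong locus for the counterexample.
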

\begin{remark}
	This theorem shows that Conjecture~\ref{conj:1} is false in general.
\end{remark}

%\begin{theorem}\label{keythm}
%	For a character $\ve: \Delta^2 \to \overline \QQ^\times$, and a sequence of pairs of integers $(a_j, b_j)$. The following statements are equivalent.
%	\begin{enumerate}
%		\item For every $w_\star\in \bfm_{\CC_p}$, we have $$\NP\left(G^{(\ve)}_{\bigoplus\limits_{i=1}^u (a_i,b_i)}(w_\star,-)\right)= \NP\left(\prod_{i=1}^uG^{(\ve)}_{(a_i,b_i)}(w_\star,-)\right).$$
%		\item For every distinct $i, j\in \{0,1,\dots, p-2\}$, we have $$(a_j,b_j)=(a_i,b_i)\quad\textrm{or}\quad (a_j,b_j)=(p-3-a_i, \{a_i+b_i-1\}),$$
%		where $\{a_i+b_i-1\}\in \{0, \ldots, p-2\}$ is the residue of $a_i+b_i-1$ modulo $p-1$.
%	\end{enumerate}	
%\end{theorem}
%This theorem shows that the conjecture
%We also gives a result on incompatibility between the local and classical ghost conjectures below.
%\begin{theorem}[The ghost conjecture]
%Assume that Conjecture~\ref{re:9} is true. 
%Then for each piece of weight space $\calW^{(\ve)}$, there exists $\kappa \in \mathcal{W^{(\ve)}}$ such that  $$\NP\left(C_{\kappa}(t)\right)\neq \NP\left(G_\kappa(t)\right).$$
%\end{theorem}

	\subsection*{Acknowledgment}
The author would like to express his deepest appreciation to Liang Xiao for his massive help. This paper would not be done without his inspiration. We also thank Bin Zhao for related
interesting discussions.

\section{Local ghost conjecture}

 Let $E$ be a finite extension of $\QQ_p$ that contains $\sqrt{p}$, whose  
ring of integers and residue field are denoted by $\calO$ and $\FF$, respectively. We normalize the $p$-adic valuation of elements in $\overline\QQ_p$ so that $v_{p}(p)=1$. 
Let  $\omega:\FF_p^\times\to \calO^\times$ be the Teichmüller lift, and $\rmK_p:=\GL_2(\ZZ_p)$. 
Let $\Delta:=\ZZ/(p-1)\ZZ$, and $\bfm_{\CC_p}$ be the maximal ideal of $\calO_{\CC_p}$.
For any integer $n$, we denote by $\{n\}$ the residue of $n$ in $\{0,\dots.p-2\}$ modulo $p-1$.

\subsection{The $\calO\llbracket \rmK_p\rrbracket$-projective augmented module $\tH$ of type $\Or$}
Now we introduce some notations and definitions from \cite{xiao1}. 
\begin{notation-definition}\label{re::2}\noindent
	\begin{enumerate}
		\item 
		We call a representation  
		$\overline{r}: \Gal_{\QQ}:=\Gal\left(\overline{\QQ} / \QQ\right) \rightarrow \GL_{2}(\FF)$ \emph{hybrid} if $\Or$ is absolutely irreducible, and 
		$\Orp:=\Or|_{\Gal_{\QQ_p}}$ is reducible, i.e.,
		$$
		\overline{r}_{p}=\begin{bmatrix}
			\operatorname{ur}\left(\overline{\alpha}_{1}\right) \omega_{1}^{a+b+1} & * \\
			0 & \operatorname{ur}\left(\overline{\alpha}_{2}\right) \omega_{1}^{b}
		\end{bmatrix}: \mathrm{Gal}_{\mathbb{Q}_{p}} \rightarrow \mathrm{GL}_{2}(\mathbb{F})
		$$
		with $a \in\{0, \ldots, p-2\}, b \in\{0, \ldots, p-2\}$, and $\overline{\alpha}_{1}, \overline{\alpha}_{2} \in \mathbb{F}^{\times}$, where 
		\begin{itemize}
			\item	
			$\operatorname{ur}(\alpha)$ is the one-dimensional unramified representation of $\GQp$ sending the geometric Frobenius element to $\alpha\in \FF$; and
			\item  $\omega_{1}: \GQp\rightarrow \operatorname{Gal}\left(\mathbb{Q}_{p}\left(\mu_{p}\right) / \mathbb{Q}_{p}\right) \cong \mathbb{F}_{p}^{\times}$ denote the first fundamental character of $\mathrm{Gal}_{\mathbb{Q}_{p}}$. Here, $\mu_p$ is any primitive $p$-th root of unity. 
		\end{itemize}
		\item We call $\Or$ generic if $1\leq a\leq p-4$.
		\item
		We say $\Orp$ is split if $*=0$ and nonsplit if $* \neq 0$. The condition on $a$ ensures that there is a unique such nontrivial extension when $\overline{r}_{p}$ is nonsplit, because $\mathrm{H}^{1}\left(\mathrm{Gal}_{\mathbb{Q}_{p}}, \operatorname{ur}\left(\overline{\alpha}_{2}^{-1} \overline{\alpha}_{1}\right) \omega^{a+1}\right)$ is one-dimensional.
		We write $\rIp: \mathrm{I}_{\mathbb{Q}_{p}} \rightarrow \mathrm{GL}_{2}(\mathbb{F})$ for the corresponding residual inertia representation:
		\begin{itemize}
			\item (nonsplit case) $\rI=\begin{bmatrix}\omega_{1}^{a+b+1} & * \neq 0 \\ 0 & \omega_{1}^{b}\end{bmatrix}$, where $*$ is the unique nontrivial extension (up to isomorphism) in the class $\mathrm{H}^{1}\left(\mathrm{I}_{\mathbb{Q}_{p}}, \omega_{1}^{a+1}\right)^{\mathrm{Gal}_{\mathbb{F}_p}}=\mathrm{H}^{1}\left(\mathrm{Gal}_{\mathbb{Q}_{p}}, \omega_{1}^{a+1}\right)$; and
			\item (split case) $\rI=\omega_{1}^{a+b+1} \oplus \omega_{1}^{b}$.
		\end{itemize}
		\item	
		Let $R_{\overline{r}_{p}}^{\square}$ denote the framed deformation ring of $\overline{r}_{p}$ parametrizing deformations of $\overline{r}_{p}$ into matrix representations of $\mathrm{Gal}_{\mathbb{Q}_{p}}$ with coefficients in noetherian complete local $\mathcal{O}$-algebras. 
	\end{enumerate}
	
	For a character $\eta:\Delta\to \calO^\times$ and an integer $k\geq 2$, we define the following.
	\begin{enumerate}		
		\item[(5)] 	An \emph{$\calO\llbracket \rmK_p\rrbracket$-projective augmented module} $\widetilde{\mathrm{H}}$ is a finite projective right $\mathcal{O} \llbracket\rmK_p \rrbracket$-module  together with an action of $\GL_{2}\left(\mathbb{Q}_{p}\right)$ extended from the $\rmK_p$-action.
		For such a module $\tH$, we put $$\mathrm{S}_{\tH,k}^{\mathrm{ur}}(\eta):=\Hom_{\calO[K_p]}\left(\widetilde{\mathrm{H}}, \mathcal{O}[z]^{\leq k-2} \otimes \eta \circ\det\right).$$
		Note that every $\begin{bmatrix}\alpha& \beta \\ \gamma& \delta\end{bmatrix}\in \mathrm{M}_{2}\left(\mathbb{Z}_{p}\right)$ 
		with $\alpha \delta-\beta \gamma=p^{r} d\neq 0$ acts on $h\in \mathcal{O}[z]^{\leq k-2} \otimes \eta \circ\det$ by
		$$
		\left.h\right|_{\begin{bsmallmatrix}\alpha& \beta \\ \gamma& \delta\end{bsmallmatrix}}(z)=\eta(\overline{d}) \cdot(\gamma z+\delta)^{k-2} h\left(\frac{\alpha z+\beta}{\gamma z+\delta}\right) .
		$$
		This defines the operators $T_{p}$ and $S_p$ on $\phi\in\mathrm{S}_{\tH,k}^{\mathrm{ur}}(\eta)$ as follows:
		\begin{itemize}
			\item ($T_p$-operator)
			Taking a coset decomposition $\mathrm{K}_{p}\begin{bmatrix}p^{-1} & 0 \\ 0&1\end{bmatrix}\mathrm{K}_{p}=$ $\coprod_{j=0}^{p} u_{j} \mathrm{~K}_{p}$ (e.g. $u_{j}=\begin{bmatrix}
				p^{-1} & 0 \\ j & 1\end{bmatrix}$ 
			and $u_{j}^{-1}=\begin{bmatrix}p & 0 \\ j p & 1\end{bmatrix}$ for $j=0, \ldots, p-1$, and $u_{p}\begin{bmatrix}
				p^{-1} & 0 \\ 0 & 1\end{bmatrix}$ and $\left.u_{p}^{-1}=
			\begin{bmatrix}
				p & 0 \\ 0 & 1\end{bmatrix}\right)$, for any $x \in \widetilde{\mathrm{H}}$, we have
			$$
			T_{p}(\varphi)(x):=\sum\limits_{j=0}^{p} \varphi\left(x u_{j}\right)\big|_{u_{j}^{-1}}.
			$$
			\item ($S_p$-operator)
			For any $x \in \widetilde{\mathrm{H}}$, we have
			$$
			S_{p}(\varphi)(x):=\varphi\left(x\begin{bmatrix}
				p^{-1} & 0 \\
				0 & p^{-1}
			\end{bmatrix}\right).
			$$
		\end{itemize} 
		Note that the operator $S_{p}$ is invertible and commutes with $T_{p}$. Hence, $S_{\tH, k}^{\mathrm{ur}}\left(\eta\right)$ admits an
		$\mathcal{O}\left[T_{p}, S_{p}^{\pm 1}\right]$-module structure.
		\item[(6)] Let $R_{\overline{r}_{p}}^{\square, 1-k, \eta}$ be the quotient of $R_{\overline{r}_{p}}^{\square}$ parametrizing crystabelian representations with Hodge-Tate weight $(1-k, 0)$ such that $\GQp$ acts on $\mathbb{D}_{\text {pcrys }}(-)$ by $\eta$ (here, we remind the reader that $\mathbb{D}_{\text {pcrys }}(-)$ is the union of $\mathbb{D}_{\text {crys }}(-)$ for $\mathbb{Q}_{p}\left(\zeta_{p^{N}}\right)$ with $N$ sufficiently large.)
		Let $\mathcal{V}_{1-k}$ denote the universal representation on $\mathcal{X}_{\overline{r}_{p}}^{\square, 1-k, \eta}:=\left(\operatorname{Spf} R_{\overline{r}_{p}}^{\square, 1-k, \eta}\right)^{\text {rig }}$, then $\mathbb{D}_{\text {pcrys }}\left(\mathcal{V}_{1-k}\right)$ is locally free of rank two over $\mathcal{X}_{\overline{r}_{p}}^{\square, 1-k, \eta}$, equipped with a linear action of crystalline Frobenius $\phi$.
		Define elements $s_{p} \in \mathcal{O}\left(\mathcal{X}_{\overline{r}_{p}}^{\square, 1-k, \eta}\right)^{\times}$and $t_{p} \in \mathcal{O}\left(\mathcal{X}_{\overline{r}_{p}}^{\square, 1-k, \eta}\right)$ such that
		$$
		\operatorname{det}(\phi)=p^{k-1} s_{p}^{-1} \quad \text { and } \quad \operatorname{tr}(\phi)=s_{p}^{-1} t_{p}.
		$$
		As both $s_{p}$ and $t_{p}$ take bounded values, we have $s_{p} \in R_{\overline{r}_{p}}^{\square, 1-k, \eta}\left[\frac{1}{p}\right]^{\times}$and $t_{p} \in R_{\overline{r}_{p}}^{\square, 1-k, \eta}\left[\frac{1}{p}\right]$. Following \cite[\S4]{ceggps}, we define a natural homomorphism
		$$
		\eta_{k}: \mathcal{O}\left[T_{p}, S_{p}^{\pm 1}\right] \rightarrow R_{\overline{r}_{p}}^{\square, 1-k, \eta}\left[\frac{1}{p}\right] \quad \text { given by } \quad \eta_{k}\left(T_{p}\right)=t_{p}, \text { and } \eta_{k}\left(S_{p}\right)=s_{p}.
		$$
	\end{enumerate}
\end{notation-definition}

\begin{definition}\label{re::1}\hspace{2em}
	\begin{enumerate}
\item		For a character $\ve: \Delta^{2} \rightarrow \mathcal{O}^{\times}$, we define 
		$\ve_1:\Delta\rightarrow \mathcal{O}^{\times}$ and $k^{(\ve)}\in \{2,\dots,p\}$ by 	\begin{equation}\label{eq9}
			\ve=\ve_{1} \times \ve_{1} \omega^{k^{(\ve)}-2}.
		\end{equation}
A pair of  integers $(a,b)$ with $0\leq a\leq p-2$, $0\leq b\leq p-2$ is called \emph{related to $\ve$} if $\ve(\overline{\alpha}, \overline{\alpha})=\omega(\overline{\alpha})^{a+2b}$ for all $\overline{\alpha} \in \Delta$. For every $\ve$-related $(a,b)$, there is a unique 
 $s\in \{0,\dots,p-2\}$ such that $\ve=\omega^{-s+b} \times \omega^{a+s+b}.$ We write this  one-to-one correspondence by $\iota$ which maps  $s\in \{0,\dots,p-2\}$  to an $\ve$-related $(a,b)$.
\end{enumerate}
Let $\Or$ be any hybrid representation  of type $(a,b)$.
\begin{enumerate}
	\item[(2)] If $(a,b)$ is related to $\ve$, then we call $\Or$ \emph{$\ve$-related}. 
		\item[(3)] 
		An $\calO\llbracket \rmK_p\rrbracket$-projective augmented module $\tH$ is called \emph{of  type $\overline{r}$} if it is equipped with a continuous left action of $R_{\overline{r}_{p}}^{\square}$ such that
		\begin{enumerate}
			\item the left $R_{\overline{r}_{p}}^{\square}$-action on $\widetilde{\mathrm{H}}$ commutes with the right $\mathrm{GL}_{2}\left(\mathbb{Q}_{p}\right)$-action
			\item The induced $\rmK_p$-action makes $\widetilde{\mathrm{H}}$ a right $\mathcal{O} \llbracket \rmK_p \rrbracket$-module isomorphic to the direct sum of some number $m(\widetilde{\mathrm{H}}) \in \mathbb{N}$ copies of
			\begin{itemize}
				\item  a projective envelope of $\sigma_{a, b}$ as an $\mathcal{O} \llbracket \mathrm{K}_{p} \rrbracket$-module, if $\Or|_{I_{\QQ_p}}$ is nonsplit, or
				\item a projective envelope of $\sigma_{a, b} \oplus \sigma_{p-3-a, a+b+1}$ as an $\mathcal{O} \llbracket K_{p} \rrbracket$-module, if $\Or|_{I_{\QQ_p}}$ is split.
			\end{itemize}
			\item For every $\Or$-relevant character $\varepsilon$ and integer $k\geq 2$ such taht $k\equiv k^{(\ve)}\pmod {p-1}$, the induced $R_{\overline{r}_{p}}^{\square}$-action on $\mathrm{S}_{\widetilde{\mathrm{H}}, k}^{\mathrm{ur}}\left(\varepsilon_{1}\right)$ factors through the quotient $R_{\overline{r}_{p}}^{\square, 1-k, \varepsilon_{1}}$. Moreover, the Hecke action of $\mathcal{O}\left[T_{p}, S_{p}^{\pm 1}\right]$ on $\mathrm{S}_{\widetilde{\mathrm{H}}, k}^{\mathrm{ur}}\left(\varepsilon_{1}\right)$ agrees with the composition
			$$
			\mathcal{O}\left[T_{p}, S_{p}^{\pm 1}\right] \stackrel{\eta_{k}}{\longrightarrow} R_{\overline{r}_{p}}^{\square, 1-k, \varepsilon_{1}}\left[\frac{1}{p}\right] \rightarrow \operatorname{End}_{E}\left(\mathrm{~S}_{\widetilde{\mathrm{H}}, k}^{\mathrm{ur}}\left(\varepsilon_{1}\right) \otimes_{\mathcal{O}} E\right).
			$$
		\end{enumerate}
		The number $m(\widetilde{\mathrm{H}})$ is called the multiplicity of $\widetilde{\mathrm{H}}$.
	We call
		$\widetilde{\mathrm{H}}$ \emph{primitive} if $m(\widetilde{\mathrm{H}})=1$.
	\end{enumerate}
\end{definition}
\begin{remark}
	Here, for simplicity of notation, instead of using ``type $\Or_p$" as in \cite{xiao1}, we use ``type $\Or$''. It is worthy mentioning that since two representations of types $ \Or_1$ and $\Or_2$ such that $\Or_1|_{\Gal_{\QQ_{p}}}=\Or_2|_{\Gal_{\QQ_{p}}}$ define the same ghost series, our result is compatible to that in   \cite{xiao1}.
\end{remark}

\subsection{Characteristic power series and ghost series}\label{S.22}
From now on, we fix  a character $\ve:=\ve_1\times \ve_2: \Delta^{2} \rightarrow \calO^{\times}$. Write 
$\tv:=\ve_1\times\ve_1$ and 
$ 
\mathcal{O} \llbracket w \rrbracket^{(\ve)}:=\mathcal{O} \llbracket \Delta \times \mathbb{Z}_{p}^{\times} \rrbracket \otimes_{\mathcal{O}\left[\Delta^{2}\right], \ve} \mathcal{O},$
which is isomorphic to the ring of formal power series $\mathcal{O} \llbracket w \rrbracket$ with $w$ corresponding to $[(1, \exp (p))]-1$; and 
$$
\Bo\left(\mathbb{Z}_{p}\right):=\begin{bmatrix}
	\mathbb{Z}_{p}^{\times} & 0 \\
	p\ZZ_p & \mathbb{Z}_{p}^{\times}
\end{bmatrix} \subset	\Iw_{p}:=
\begin{bmatrix}\mathbb{Z}_{p}^{\times} & \mathbb{Z}_{p} \\ p \mathbb{Z}_{p} & \mathbb{Z}_{p}^{\times}\end{bmatrix}\subset
\mathrm{K}_{p}=\mathrm{GL}_{2}\left(\mathbb{Z}_{p}\right).$$
The universal character is defined by
\begin{align*}
	\chi_{\text {univ}}^{(\ve)}:& \;\Bo\left(\mathbb{Z}_{p}\right)\longrightarrow\left(\mathcal{O} \llbracket w \rrbracket^{(\ve)}\right)^{\times} \\
	&	\begin{bmatrix}
		\alpha &0 \\
		\gamma &	\delta
	\end{bmatrix} \longmapsto[(\overline{\alpha}, \delta)] \otimes 1=\ve(\overline{\alpha}, \overline{\delta}) \cdot(1+w)^{\delta / \omega(\overline{\delta})},
\end{align*}
where $\overline{\alpha}$ and $\overline{\delta}$ are the reductions of $\alpha$ and $\delta$ modulo $p$, respectively.

%We will mostly work with the following subgroups of $\GL_{2}\left(\mathbb{Q}_{p}\right)$ :
%$$
%\GL_2(\ZZ_p):=\GL_{2}\left(\mathbb{Z}_{p}\right) \supset \Iw_{p}:=\left(\begin{array}{cc}
%	\mathbb{Z}_{p}^{\times} & \mathbb{Z}_{p} \\
%	p \mathbb{Z}_{p} & \mathbb{Z}_{p}^{\times}
%\end{array}\right) \supset \Iw_{p, 1}:=\left(\begin{array}{cc}
%	1+p \mathbb{Z}_{p} & \mathbb{Z}_{p} \\
%	p \mathbb{Z}_{p} & 1+p \mathbb{Z}_{p}
%\end{array}\right) .
%$$

%\begin{definition}\label{re:8}
%We  call 
%	a $\calO\llbracket \rmK_p\rrbracket$-projective augmented module  $\tH$ of type $(a,b)$ \emph{relevant to $\ve$} if $\omega^{a+2b}(\overline\alpha)=\ve(\overline\alpha, \overline\alpha)$ for every $\overline\alpha\in \Delta$.
%	We parameterize the types of $\ve$-relevant $\calO\llbracket \rmK_p\rrbracket$-projective augmented module $\tH$ by the  the unique integer $s^{(\ve)}_{\tH}$  in $\{0,\dots,p-2\}$ such that 
%	\begin{equation}
%		\ve=\ve_{1} \times \ve_{2}=\omega^{-s^{(\ve)}_{\tH}+b} \times \omega^{a+s^{(\ve)}_{\tH}+b}.
%	\end{equation}
%Note that \begin{equation}\label{eq5}
%	a+2s^{(\ve)}_{\tH}\equiv k^{(\ve)}-2 \pmod {p-1}
%\end{equation} for all $\ve$-relevant $\tH$.
%%	(2) For  respectively, we call them \emph{associated}, 
%%	if $a+2b=a'+2b'$. 
%%	Clearly, the associated relation is equivalent.
%\end{definition}

\begin{notation-definition}\label{re:7}\hspace{2em}
	\begin{enumerate}
			\item	For every $\ve$-related pair of integers $(a,b)$, we fix 
		\begin{enumerate}
			\item a hybrid representation $\Or_{(a,b)}$ of type $(a,b)$ such that $\Or_{(a,b)}|_{I_{\QQ_p}}$ is nonsplit; and
			\item a primitive  $\tH_{(a,b)}$ of type $\Or_{(a,b)}$.
		\end{enumerate}
		We put $\HH^{(\ve)}:=\{\tH_{(a,b)}\;|\;(a,b) \textrm{~is an~}  \ve\textrm{-related pair of integers}\}.$
	\end{enumerate}
Let $\Or$ be any $\ve$-related hybrid representation,  and $\tH$ be any 
$\calO\llbracket \rmK_p\rrbracket$-projective augmented module  of type. 
\begin{enumerate}
		\item[(2)]   The \emph{space of abstract $p$-adic forms (w.r.t. $\tH$)} is defined by
	$$
	S_{\widetilde{H}}^{(\ve)}=S_{\widetilde{H},p\adic}^{(\ve)}:=\Hom_{\mathcal{O}\left[\mathrm{Iw}_{p}\right]}\left( \tH, \operatorname{Ind}_{B\left(\mathbb{Z}_{p}\right)}^{\mathrm{Iw}_{p}}\left(\chi_{\text {univ}}^{(\ve)}\right)\right)
	\cong \Hom_{\mathcal{O}\left[\mathrm{Iw}_{p}\right]}\left(	\tH, \mathcal{C}^{0}\left(\mathbb{Z}_{p} ; \mathcal{O} \llbracket w \rrbracket^{(\ve)}\right)\right),
	$$
	which is a module over $\mathcal{O} \llbracket w \rrbracket$, carrying an $\mathcal{O} \llbracket w \rrbracket$-linear $U_{p}$-action as follows: \\
	Fix a decomposition of the double coset $$\operatorname{Iw}_{p}
	\begin{bmatrix}p &0 \\ 0&1\end{bmatrix} \mathrm{Iw}_{p}=\coprod\limits_{j=0}^{p-1} \mathrm{Iw}_{p} u_{j} \quad(\textrm{e.g.~} u_{j}=\begin{bmatrix}
		p & 0 \\ j p & 1\end{bmatrix}).$$ The $U_p$-operator acts on $\varphi \in S_{\widetilde{H}}^{(\ve)}$ by
	$$
	U_{p}(\varphi)(x):=\sum\limits_{j=0}^{p-1} \varphi\left(xu_{j}^{-1}\right)\Big|_{u_{j}} \quad \text { for all } x \in\tH,
	$$
	where for every $h\in  \operatorname{Ind}_{B\left(\mathbb{Z}_{p}\right)}^{\mathrm{Iw}_{p}}\left(\chi_{\text {univ}}^{(\ve)}\right)$ and  every
	$\begin{bmatrix}\alpha &\beta \\ \gamma & \delta\end{bmatrix}\in M_2(\ZZ_p)$ such that $p\,|\, \gamma$, $p \nmid \delta$ and $\alpha \delta-\beta \gamma=p^rd$ with $d \in \mathbb{Z}_{p}^{\times}$,
	$$h\Big|_{\begin{bsmallmatrix}\alpha &\beta \\ \gamma & \delta\end{bsmallmatrix}}(z):=\ve(\overline{d}/\overline\delta, \overline{\delta}) \cdot(1+w)^{\log ((\gamma z+\delta) / \omega(\overline{\delta})) / p} \cdot h\left(\frac{\alpha z+\beta}{\gamma z+\delta}\right).$$

	\item[(3)] We denote by $C_{\tH}^{(\ve)}(w,-)$ the characteristic power series of $U_p$ acting on 		$S_{\tH}^{(\ve)}.$

%	\noindent(3)  For any character $\eta$ of $\Delta$ and integer $k\geq 2$, we put $$S_{\tH,k}^{\mathrm{ur}}(\eta):=\Hom_{\calO[K_p]}\left(\widetilde{\mathrm{H}}, \mathcal{O}[z]^{\leq k-2} \otimes \eta \circ \mathrm{det}\right) \textrm{~and~} d_{\tH,k}^{\mathrm{ur}}(\eta):=\rank_{\mathcal{O}} S_{\tH,k}^{\mathrm{ur}}(\eta).$$
%	Similarly, for any character $\ve$ of $\Delta^{2}$, we put $$S_{\tH,k}^{\mathrm{Iw}}(\ve):=\Hom_{\mathcal{O}\left[\mathrm{Iw}_{p}\right]}\left(\widetilde{\mathrm{H}}, \mathcal{O}[z]^{\leq k-2} \otimes \ve\right) \textrm{~and~} 
%	d_{\tH,k}^{\mathrm{Iw}}(\ve):=\rank_{\mathcal{O}} S_{\tH,k}^{\mathrm{Iw}}(\ve).$$
%	

\item[(4)] Similar to the definition of $\mathrm{S}_{\tH,k}^{\mathrm{ur}}(\eta)$ in Notation-Definition~\ref{re::2}(5),
for every $k\geq 2$, we define  $$\mathrm{S}_{\tH,k}^{\mathrm{Iw}}(\tv):=\Hom_{\mathcal{O}\left[\mathrm{Iw}_{p}\right]}\left(\widetilde{\mathrm{H}}, \mathcal{O}[z]^{\leq k-2} \otimes \tv\right),$$
and write  $$d_{\tH,k}^{\mathrm{ur}}(\eta):=\rank_{\mathcal{O}} \left(\mathrm{S}_{\tH,k}^{\mathrm{ur}}(\eta)\right) \quad\textrm{and}\quad 
d_{\tH,k}^{\mathrm{Iw}}(\tv):=\rank_{\mathcal{O}}\left( \mathrm{S}_{\tH,k}^{\mathrm{Iw}}(\tv)\right).$$
Note that $\mathrm{S}_{\tH,k}^{\mathrm{ur}}(\eta)$ and $\mathrm{S}_{\tH,k}^{\mathrm{Iw}}(\tv)$
depend only on $(a,b)$, $m(\tH)$, $k$ and $\rI$ being split or nonsplit. In particular, for any $\ve$-related $(a,b)$, we put 
$$d_{(a,b), k}^{\mathrm{ur}}(\eta):=d_{\tH_{(a,b)},k}^{\mathrm{ur}}(\eta)\quad \textrm{and}\quad d_{(a,b), k}^{\mathrm{Iw}}(\tv):=d_{\tH_{(a,b)},k}^{\mathrm{Iw}}(\tv).$$

	\item[(5)]	Let $\tH_1,\dots, \tH_u$  be 
	 a sequence of 
	$\calO\llbracket \rmK_p\rrbracket$-projective augmented modules
of $\ve$-related type $\Or_i$ for $i=1,\dots,u$. For $\tH:=\bigoplus\limits_{i=1}^u \tH_i$ and any integer $k\geq 2$,  we define
	$$S_{\tH,k}^{\ur}(\ve_1):=\Hom_{\calO[K_p]}\left(\tH, \mathcal{O}[z]^{\leq k-2} \otimes \ve_1 \circ \mathrm{det}\right)\textrm{~and~}
	d_{\tH,k}^{\mathrm{ur}}(\ve_1):=\rank_{\mathcal{O}} \left(S_{\tH,k}^{\mathrm{ur}}(\ve_1)\right).$$
	 Note that from the isomorphism 
$	S_{\tH,k}^{\ur}(\ve_1)
	\cong
	\bigoplus\limits_{i=1}^uS_{\tH_i,k}^{\ur}(\ve_1),$
	we have \begin{equation}\label{re::6}
			d_{\tH,k}^{\ur}(\ve_1)=\sum\limits_{i=1}^u	d_{\tH_i,k}^{\ur}(\ve_1).
	\end{equation}
	
	Similarly, for our fixed character $\ve$, let
	$$S_{\tH,k}^{\mathrm{Iw}}(\tv):=\Hom_{\mathcal{O}\left[\mathrm{Iw}_{p}\right]}\left(\tH, \mathcal{O}[z]^{\leq k-2} \otimes \tv\right) \textrm{~and~} 
	d_{\tH,k}^{\mathrm{Iw}}(\tv):=\rank_{\mathcal{O}} \left(S_{\tH,k}^{\mathrm{Iw}}(\tv)\right),$$
	and obtain
 \begin{equation}\label{re::7}	d_{\tH,k}^{\Iw}(\tv)=\sum\limits_{i=1}^u	d_{\tH_i,k}^{\Iw}(\tv).	
\end{equation}

	\item[(6)] For a power series $f(t)=\sum\limits_{n=0}^\infty a_nt^n\in \CC_p[\![t]\!]$, the lower convex hull of points $\{\left(n, v_{p}\left(a_n\right)\right)\}_{n\geq 0}$ is called the \emph{Newton polygon of $f$}, and denoted by $\mathrm{NP}(f)$.

	\end{enumerate}
\end{notation-definition}

\begin{definition}\label{re:3}
	Let $\Or_1,\dots, \Or_u$ be a sequence of $\ve$-related hybrid representations of $\Gal_\QQ$, and  $\tH_i$ (for $1\leq i\leq u$) be 
	$\calO\llbracket \rmK_p\rrbracket$-projective augmented modules of  type $\Or_i$.
	For  
	$\tH:=\bigoplus\limits_{i=1}^u \tH_i$, we  call  $G^{(\ve)}_{\tH}(w, t):=\sum\limits_{n \geq 0} g^{(\ve)}_{\tH,n}(w) t^n \in \mathbb{Z}_{p}[w] \llbracket t \rrbracket$ \emph{the ghost series corresponding to $(\tH, \ve)$}, where 
	\begin{equation*}
		g^{(\ve)}_{\tH,n}(w):=\prod_{k \equiv k^{(\ve)} \bmod (p-1)}\left(w-w_{k}\right)^{m_{\tH,n}^{(\ve)}(k)} \in \mathbb{Z}_{p}[w],
	\end{equation*}
$w_{k}:=\exp ((k-2) p)-1$, and the exponents $m_{\tH,n}^{(\ve)}(k)$ are given by the following recipe:
	\begin{equation*}
		m_{\tH,n}^{(\ve)}(k):=\begin{cases}
			\min \left\{n-d_{\tH,k}^{\ur}(\ve_1), d_{\tH,k}^{\Iw}(\tv)-d_{\tH, k}^{\ur}(\ve_1)-n\right\}& \textrm{for~} d_{\tH,k}^{\ur}(\ve_1)<n<d_{\tH,k}^{\Iw}(\tv)-d_{\tH,k}^{\ur}(\ve_1),\\
			0&\textrm{else}.
		\end{cases}
	\end{equation*}
	In particular, for every $\ve$-related $(a,b)$, we write $G^{(\ve)}_{(a,b)}(w,t):=G^{(\ve)}_{\tH_{(a,b)}}(w,t)$, which, by Notation-Definition~\ref{re:7}(4), is independent of the choices of $\Or_{(a,b)}$ and $\tH_{(a,b)}$.

\end{definition}

\section{The zigzag criterion}
We write $k_0:=k^{(\ve)}$ and 
$\calK:=\{k\geq 2\;|\;k\equiv k_0\pmod {p-1}\}$.
%For every $0\leq s\leq p-2$ we put $d_{k}^{\Iw}\left(s\right):=d_{k,\tH}^{\Iw}(\ve)$, where 
%$\tH$ is the unique $\calO\llbracket \rmK_p\rrbracket$-projective augmented module in $S$ of type $(*, \{s-c\})$.

\begin{lemma}\label{re:1}
	For any $\ve$-related hybrid  representation $\Or$ of type $(a,b)$ and an  $\calO\llbracket \rmK_p\rrbracket$-projective augmented module $\tH$ of type $\Or$ with multiplicity $m$, we have
	$$d_{\tH,k}^{\ur}(\ve_1)=\begin{cases}
		md_{(a,b),k}^{\ur}(\ve_1)& \textrm{if}\  \Or|_{I_{\QQ_p}}\ \textrm{is nonsplit},\\
			m\left(d_{(a,b),k}^{\ur}(\ve_1)+d_{(p-3-a,a+b+1),k}^{\ur}(\ve_1)\right)& \textrm{if}\  \Or|_{I_{\QQ_p}} \ \textrm{is split},
	\end{cases}
	$$	
	and 
	$$d_{\tH,k}^{\Iw}(\tv)=\begin{cases}
	md_{(a,b),k}^{\ur}(\tv)& \textrm{if}\  \Or|_{I_{\QQ_p}}\ \textrm{is nonsplit},\\
	m\left(d_{(a,b),k}^{\Iw}(\tv)+d_{(p-3-a,a+b+1),k}^{\Iw}(\tv)\right)& \textrm{if}\  \Or|_{I_{\QQ_p}} \ \textrm{is split}.
\end{cases}
$$	
\end{lemma}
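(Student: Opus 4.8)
The plan is to reduce everything to the defining property of an $\calO\llbracket \rmK_p\rrbracket$-projective augmented module of type $\Or$ listed in Definition~\ref{re::1}(3)(b), namely that the underlying right $\calO\llbracket \rmK_p\rrbracket$-module is a direct sum of $m=m(\tH)$ copies of a projective envelope of $\sigma_{a,b}$ (nonsplit case) or of $\sigma_{a,b}\oplus\sigma_{p-3-a,a+b+1}$ (split case). Since both $d_{\tH,k}^{\ur}(\ve_1)=\rank_\calO\Hom_{\calO[\rmK_p]}\bigl(\tH,\calO[z]^{\leq k-2}\otimes\ve_1\circ\det\bigr)$ and $d_{\tH,k}^{\Iw}(\tv)=\rank_\calO\Hom_{\calO[\Iw_p]}\bigl(\tH,\calO[z]^{\leq k-2}\otimes\tv\bigr)$ are defined by $\Hom$ out of $\tH$, and $\Hom$ turns finite direct sums in the first variable into finite direct sums, both quantities are additive in $\tH$; in particular each is exactly $m$ times the corresponding quantity for a primitive module of the same type. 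This already handles the overall factor $m$, so it suffices to treat the primitive case $m=1$.

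Next I would separate the two cases according to whether $\Or|_{I_{\QQ_p}}$ is split. In the nonsplit case, $\tH$ as an $\calO\llbracket \rmK_p\rrbracket$-module is a projective envelope of $\sigma_{a,b}$, which is by definition (up to isomorphism) the same $\calO\llbracket \rmK_p\rrbracket$-module underlying the fixed primitive $\tH_{(a,b)}$ of Notation-Definition~\ref{re:7}(1); since $d_{\tH,k}^{\ur}(\ve_1)$ and $d_{\tH,k}^{\Iw}(\tv)$ depend only on the underlying $\calO\llbracket \rmK_p\rrbracket$-module (the $\rmK_p$- and $\Iw_p$-actions on $\calO[z]^{\leq k-2}$ with the relevant twists being intrinsic), we get $d_{\tH,k}^{\ur}(\ve_1)=d_{(a,b),k}^{\ur}(\ve_1)$ and $d_{\tH,k}^{\Iw}(\tv)=d_{(a,b),k}^{\Iw}(\tv)$, as claimed. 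This uses the observation already recorded in Notation-Definition~\ref{re:7}(4) that $\mathrm{S}_{\tH,k}^{\ur}(\eta)$ and $\mathrm{S}_{\tH,k}^{\Iw}(\tv)$ depend only on $(a,b)$, $m(\tH)$, $k$ and whether $\rI$ is split.

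For the split case, the underlying $\calO\llbracket \rmK_p\rrbracket$-module of $\tH$ is a projective envelope of $\sigma_{a,b}\oplus\sigma_{p-3-a,a+b+1}$. The projective envelope of a direct sum of two (distinct, or not) modules is the direct sum of their projective envelopes, so as $\calO\llbracket \rmK_p\rrbracket$-modules $\tH\cong P_{(a,b)}\oplus P_{(p-3-a,a+b+1)}$, where $P_{(a',b')}$ is a projective envelope of $\sigma_{a',b'}$, i.e. the module underlying $\tH_{(a',b')}$. Applying additivity of $\Hom$ again gives $d_{\tH,k}^{\ur}(\ve_1)=d_{(a,b),k}^{\ur}(\ve_1)+d_{(p-3-a,a+b+1),k}^{\ur}(\ve_1)$ and the analogous identity for $d^{\Iw}$. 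Combining with the factor-$m$ reduction finishes the proof. The only genuinely delicate point is making sure that the $\Hom$-spaces are computed with respect to the \emph{right} module structure and that a projective envelope is well-defined up to (non-canonical) isomorphism so that ranks are unambiguous; this is standard for $\calO\llbracket \rmK_p\rrbracket$ a pseudocompact ring and $\sigma_{a,b}$ of finite length, and is exactly how $d_{(a,b),k}^{\ur}$ and $d_{(a,b),k}^{\Iw}$ were defined in Notation-Definition~\ref{re:7}(4), so no new input is required. I expect the write-up to be short, with the main (mild) obstacle being to cite the correct structural facts about projective envelopes over $\calO\llbracket \rmK_p\rrbracket$ from \cite{xiao1} rather than any real computation.
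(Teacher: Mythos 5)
Your proposal is correct and follows essentially the same route as the paper, whose entire proof is the single sentence that the lemma ``follows from the definition of the multiplicity $m$ of $\tH$'' --- i.e.\ exactly the reduction you carry out via Definition~\ref{re::1}(3)(b), additivity of $\Hom$ under finite direct sums, and the decomposition of the projective envelope of $\sigma_{a,b}\oplus\sigma_{p-3-a,a+b+1}$. Your write-up simply makes explicit the standard facts the paper leaves implicit.
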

\begin{proof}
	This follows from the definition of the multiplicity $ m$ of $\tH$.
\end{proof}

\begin{proposition}\label{re:4}
If Theorem~\ref{keythm} holds for modules in $\HH^{(\ve)}$, so does for the general ones. 
\end{proposition}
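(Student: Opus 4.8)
The plan is to reduce statement (1) of Theorem~\ref{keythm} for an arbitrary sequence $\tH_1,\dots,\tH_u$ (with $\tH_i$ of type $\Or_i$, multiplicity $m_i$, and $\Or_i|_{I_{\QQ_p}}$ possibly split or nonsplit) to the same statement for the ``standard'' primitive nonsplit modules $\tH_{(a,b)}\in\HH^{(\ve)}$, and to check that the combinatorial condition (2) is unaffected by this reduction. The key observation is that, by Lemma~\ref{re:1} together with the recipe defining $m_{\tH,n}^{(\ve)}$ in Definition~\ref{re:3}, both $d_{\tH,k}^{\ur}(\ve_1)$ and $d_{\tH,k}^{\Iw}(\tv)$ depend on $\tH$ only through $(a,b)$, $m(\tH)$, and whether $\rI$ is split; and these ranks are exactly additive over direct sums by \eqref{re::6} and \eqref{re::7}. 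So, ghost-series-wise, $\tH$ behaves identically to a direct sum of copies of standard modules.

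Concretely, I would proceed as follows. First, for a single $\calO\llbracket\rmK_p\rrbracket$-projective augmented module $\tH$ of type $\Or$ with multiplicity $m$: if $\rI$ is nonsplit, then by Lemma~\ref{re:1} the functions $d^{\ur}$ and $d^{\Iw}$ attached to $\tH$ equal $m$ times those attached to $\tH_{(a,b)}$, hence $G^{(\ve)}_{\tH}(w,-)$ has the same Newton polygon as $\mathlarger{\#}_{j=1}^{m}\NP(G^{(\ve)}_{(a,b)}(w,-))$ — indeed, stretching the defining data by $m$ stretches $\NP$ by $m$ in both directions, which is the Newton polygon of the $m$-fold $\#$-power; if $\rI$ is split, Lemma~\ref{re:1} shows the same data are $m$ times the sum of those for $(a,b)$ and $(p-3-a,a+b+1)$, so by the additivity of the $d$'s over direct sums and the fact (implicit in Definition~\ref{re:3}) that the ghost series of a direct sum is governed by the summed $d$'s, $G^{(\ve)}_\tH(w,-)$ has Newton polygon $\mathlarger{\#}_{j=1}^m\big(\NP(G^{(\ve)}_{(a,b)}(w,-))\#\NP(G^{(\ve)}_{(p-3-a,a+b+1)}(w,-))\big)$. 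Thus each $\tH_i$ in the given sequence may be replaced, without changing any Newton polygon in sight, by a finite multiset of standard modules $\tH_{(a',b')}\in\HH^{(\ve)}$: namely $m_i$ copies of $\tH_{(a_i,b_i)}$ in the nonsplit case, or $m_i$ copies of each of $\tH_{(a_i,b_i)}$ and $\tH_{(p-3-a_i,a_i+b_i+1)}$ in the split case.

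Next I would record that this replacement also leaves invariant the equivalence with condition (2): the new list of types is $\{(a_i,b_i)\}$ (counted with the multiplicities just described) possibly enlarged by the ``mirror'' pairs $(p-3-a_i,\{a_i+b_i-1\})$, and condition (2) — that any two types on the list are either equal or mirror to each other — is stable under adding mirror pairs and under repetition, and is unchanged in content because $(a_i,b_i)$ and $(p-3-a_i,\{a_i+b_i-1\})$ are mirror to each other. Hence Theorem~\ref{keythm}(1) $\Leftrightarrow$ (2) for the original sequence follows from the same equivalence for the associated multiset of standard modules in $\HH^{(\ve)}$, which is the case we are assuming. The only real point needing care — and the step I expect to be the main obstacle — is the bookkeeping that the ghost series of a direct sum is literally determined by the summed quantities $d_{\tH,k}^{\ur}(\ve_1)$ and $d_{\tH,k}^{\Iw}(\tv)$ in the way claimed, and that passing from ``stretching the data by $m$'' to ``taking an $m$-fold $\#$-power of the Newton polygon'' is correct for \emph{all} $w_\star\in\bfm_{\CC_p}$ simultaneously (not merely generically); this is a direct check against the piecewise-linear recipe for $m^{(\ve)}_{\tH,n}$, using that $d^{\ur}$ and $d^{\Iw}$ scale linearly in $m$ so the breakpoints and the function $n\mapsto m^{(\ve)}_{\tH,n}(k)$ scale accordingly.
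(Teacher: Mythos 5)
Your reduction is the same as the paper's: use Lemma~\ref{re:1} together with \eqref{re::6} and \eqref{re::7} to replace each $\tH_i$ by $\tH_{(a_i,b_i)}^{\oplus m_i}$ (nonsplit case) or by $\tH_{(a_i,b_i)}^{\oplus m_i}\oplus\tH_{(a_i',b_i')}^{\oplus m_i}$ (split case) without changing any ghost series, and observe that condition (2) is insensitive to repetition and to adjoining mirror pairs. The gap is in how you justify that $\NP\big(G^{(\ve)}_{\tH_{(a,b)}^{\oplus m}}(w_\star,-)\big)$ is the $m$-fold $\#$-power of $\NP\big(G^{(\ve)}_{(a,b)}(w_\star,-)\big)$ (and similarly for the split replacement). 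You call this a ``direct check'' from the fact that $d^{\ur}$ and $d^{\Iw}$ scale by $m$, but that scaling only controls the coefficients at indices divisible by $m$, where indeed $g_{\tH,mn}(w_\star)=g_{(a,b),n}(w_\star)^m$. For $n$ not divisible by $m$ the exponent $m^{(\ve)}_{\tH,n}(k)=\max\{0,\min(n-md^{\ur},\,md^{\Iw}-md^{\ur}-n)\}$ is a truncated tent function of $n$ whose value at an intermediate $n$ can lie strictly below the chord joining its values at $mn_0$ and $m(n_0+1)$ (this happens whenever the breakpoint $m\,d^{\ur}_{(a,b),k}$ falls strictly between them), so it is not automatic that the intermediate points $(n,v_p(g_{\tH,n}(w_\star)))$ stay on or above the stretched polygon; controlling them is precisely what the partition and parity machinery of \S3 (Proposition~\ref{mlemma:Ghost} and the zigzag criterion) is built for, and it is not a one-line verification.

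The fix is cheap and is what the paper does: you never need to verify the stretch directly, because it is an instance of the hypothesis you are allowed to assume. The $m_i$ identical copies of $\tH_{(a_i,b_i)}$ form a family of modules in $\HH^{(\ve)}$ trivially satisfying condition (2), and $\tH_{(a_j,b_j)}^{\oplus m_j}\oplus\tH_{(a_j',b_j')}^{\oplus m_j}$ is a family of $2m_j$ modules in $\HH^{(\ve)}$ whose types are pairwise equal or mirror to one another, hence also satisfies (2); the assumed implication $(2)\Rightarrow(1)$ for modules in $\HH^{(\ve)}$ then yields the desired $\#$-decompositions for every $w_\star\in\bfm_{\CC_p}$ at once. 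With that substitution in place of your ``direct check,'' your argument closes and coincides with the paper's.
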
	
\begin{proof}
	Without loss of generality, we may assume that $\ve$ is fixed in \S\ref{S.22}. Let $\Or_i$ and $\tH_i$ for $i=1,\dots,u$ be  stated in Theorem~\ref{keythm}.
	Let $T:=\{i\in\{1,\dots,u\}\;|\;\Or_i|_{I_{\QQ_p}}\ \textrm{is nonsplit}\},$ $T':=\{1,\dots, u\}\backslash T,$
	and $a_i':=p-3-a_i$, $b_i':=a_i+b_i+1$ for every $i\in T'$.
	By \eqref{re::6}, \eqref{re::7} and Lemma~\ref{re:1}, for any $1\leq i\leq u$, we have 
		\begin{equation*}
		\NP\left(G^{(\ve)}_{\tH_i}(w_\star,-)\right)=
		\begin{cases}
				\NP\left(G^{(\ve)}_{  \tH_{(a_i,b_i)}^{\oplus m_i}}(w_\star,-)\right)&
				\textrm{if}\  i\in T,\\
					\NP\left(G^{(\ve)}_{  \tH_{(a_i,b_i)}^{\oplus m_i}\oplus \tH_{(a_i',b_i')}^{\oplus m_i}}(w_\star,-)\right)&
						\textrm{if}\  i\in T',
		\end{cases}
	\end{equation*}
and
	\begin{equation}\label{neweq1}
		\NP\left(G^{(\ve)}_{\tH}(w_\star,-)\right)=
		\NP\left(G^{(\ve)}_{\tH'}(w_\star,-)\right),
	\end{equation}
where $\tH':=\bigoplus\limits_{i\in T} 
\left( \tH_{(a_i,b_i)}^{\oplus m_i}\right)\oplus \bigoplus\limits_{j\in T'} 
\left(\tH_{(a_j,b_j)}^{\oplus m_j}\oplus \tH_{(a_j',b_j')}^{\oplus m_j}\right)$.

Combining
\begin{itemize}
	\item the first equality, 
	\item the hypothesis of this proposition that the condition (2) implies (1) for modules in $\HH^{(\ve)}$, 
	\item the fact that $(a_j,b_j)$ and $(a_j', b_j')$
	satisfy the second relation in (2) of Theorem~\ref{keythm}, 
\end{itemize} we have
		\begin{gather*}
		\NP\left(G^{(\ve)}_{  \tH_{(a_i,b_i)}^{\oplus m_i}}(w_\star,-)\right)
		=
	\NP\left(G^{(\ve)}_{ \tH_{(a_i,b_i)}}(w_\star,-)\right)^{\#m_i},\\
		\NP\left(G^{(\ve)}_{  \tH_{(a_j,b_j)}^{\oplus m_j}\oplus \tH_{(a_j', b_j')}^{\oplus m_j}}(w_\star,-)\right)=
		\NP\left(G^{(\ve)}_{ \tH_{(a_j,b_j)}}(w_\star,-)\right)^{\#m_j}\#
	\NP\left(G^{(\ve)}_{ \tH_{(a_j', b_j')}}(w_\star,-)\right)^{\#m_j},
\end{gather*} 
and hence 
\begin{align*}
&	\underset{i\in T}{\mathlarger{\#}}
	\NP\left(G^{(\ve)}_{  \tH_{(a_i,b_i)}^{\oplus m_i}}(w_\star,-)\right){ \#}
\underset{j\in T'}{\mathlarger{\#}}
	\NP\left(G^{(\ve)}_{  \tH_{(a_j,b_j)}^{\oplus m_j}\oplus \tH_{(a_j', b_j')}^{\oplus m_j}}(w_\star,-)\right)
\\
\notag=&\underset{i\in T}{\mathlarger{\#}}	\NP\left(G^{(\ve)}_{ \tH_{(a_i,b_i)}}(w_\star,-)\right)^{\#m_i}
\#
\underset{j\in T'}{\mathlarger{\#}}
\left(	\NP\left(G^{(\ve)}_{ \tH_{(a_j,b_j)}}(w_\star,-)\right)^{\#m_j}\#
\NP\left(G^{(\ve)}_{ \tH_{(a_j', b_j')}}(w_\star,-)\right)^{\#m_j}\right).
\end{align*}
 Combined with \eqref{neweq1}, this equality implies that the condition (1) of Theorem~\ref{keythm}
  holds for $\tH_1,\dots, \tH_u$ if and only if 
  the right hand side of the above equality is equal to 
  $	\NP\left(G^{(\ve)}_{\tH'}(w_\star,-)\right).$
  By the hypothesis of this proposition that the two conditions of Theorem~\ref{keythm} 
  are equivalent for modules in $\HH^{(\ve)}$, this equality is exactly equivalent to the condition (2) of  Theorem~\ref{keythm}.
This completes the proof.
% From our assumption of this theorem on primitive modules, we have 
% $$	\NP\left(G^{(\ve)}_{\bigoplus\limits_{i=1}^u  \tH_{i,0}}(w_\star,-)\right)=
% \prod\NP\left(\prod_{i=1}^u \left(G^{(\ve)}_{ \tH_{i,0}}(w_\star,-)\right)^{m_i}\right).
% $$
% 
% 
%  $\NP\left(G^{(\ve)}_{\tH}(w_\star,-)\right)=	\NP\left(\prod_{i=1}^uG^{(\ve)}_{\tH_i}(w_\star,-)\right)$, is equivalent to 
%$$	\NP\left(G^{(\ve)}_{\bigoplus\limits_{i=1}^u  \tH_{i,0}}(w_\star,-)\right)=\NP\left(\prod_{i=1}^u \left(G^{(\ve)}_{ \tH_{i,0}}(w_\star,-)\right)^{m_i}\right).$$
%
%
%
%From our assumption in this lemma on primitive $\tH_{i,0}$, for every $1\leq i\leq u$ we have
%	$$\NP\left(G^{(\ve)}_{\tH_i}(w_\star,-)\right)=\NP\left( \left(G^{(\ve)}_{ \tH_{i,0}}(w_\star,-)\right)^{m_i}\right).$$
%Combined with \eqref{neweq1}, the statement (1) of 	Theorem~\ref{keythm}, i.e. $\NP\left(G^{(\ve)}_{\tH}(w_\star,-)\right)=	\NP\left(\prod_{i=1}^uG^{(\ve)}_{\tH_i}(w_\star,-)\right)$, is equivalent to 
%	$$	\NP\left(G^{(\ve)}_{\bigoplus\limits_{i=1}^u  \tH_{i,0}}(w_\star,-)\right)=\NP\left(\prod_{i=1}^u \left(G^{(\ve)}_{ \tH_{i,0}}(w_\star,-)\right)^{m_i}\right).$$
%By this assumption again, this equality is further equivalent to the statement (2) of 	Theorem~\ref{keythm}, and this completes the proof.
\end{proof}
	
	Hence, without of loss generality, from now on we assume that  $\tH_i$'s belong to the set $\HH^{(\ve)}$.

\begin{notation}\label{re:2}
(1) By Definition~\ref{re::1}(1), we parameterize the modules in $\HH^{(\ve)}$ by $\{0,\dots, p-2\}$ 
with
$\tH(s):=\tH_{\iota(s)}$.
If we write $\iota(s)=(a,b)$, then  \begin{equation}\label{eq5}
	a+2s\equiv k_0-2 \pmod {p-1}.
\end{equation}

(2) For any $\us:=(s_1,\dots,s_u)\in \{0,\dots,p-2\}^u$ and any $k\in \calK$, we put 
		\begin{gather*}
		d_{k}^{\ur}\left(\us\right):=d_{\tH,k}^{\ur}\left(\ve_1\right),\quad
		d_{k}^{\Iw}\left(\us\right):=d_{\tH,k}^{\Iw}\left(\tv\right), \quad m_n^{(\us)}(k):=	m_{\tH,n}^{(\ve)}(k), \\
		g^{(\us)}_{n}(w):=g^{(\ve)}_{\tH,n}(w)
		 \quad\textrm{and}\quad 
		 G^{(\us)}(w, t):= G_{\tH}^{(\ve)}(w, t),
		\end{gather*}
		where $\tH:=\bigoplus\limits_{i=1}^u\tH(s_i)$.
		Clearly, for every $n\geq 0$, we have	
		\begin{itemize}
			\item 	
			$	m_n^{(\us)}(k)=\begin{cases}
					\min \left\{n-d_{k}^{\ur}\left(\us\right), 	d_{k}^{\Iw}\left(\us\right)-d_{k}^{\ur}\left(\us\right)-n\right\}& \textrm{for~} d_{k}^{\ur}\left(\us\right)<n<	d_{k}^{\Iw}\left(\us\right)-d_{k}^{\ur}\left(\us\right),\\
					0&\textrm{else},
				\end{cases}$
			
			\item $
				g^{(\us)}_{n}(w)=\prod\limits_{k \equiv k_0 \bmod (p-1)}\left(w-w_{k}\right)^{m_{n}^{(\us)}(k)} \in \mathbb{Z}_{p}[w],
				$
		\item
			$G^{(\us)}(w, t)=\sum\limits_{n \geq 0} g^{(\us)}_{n}(w) t^n \in \mathbb{Z}_{p}[w] \llbracket t \rrbracket$.
		\end{itemize}
		
\end{notation}
Now we interpolate the relation (2) of Theorem~\ref{keythm} to a statement with respect to $\tH(s)$'s. 

\begin{lemma}\label{re:14}
	For $s, s'\in \{0,\dots,p-2\}$, if we write $\iota(s)=(a,b)$ and $\iota(s')=(a',b')$, then 
	$$s+s'\equiv k_0-1\pmod {p-1}\quad \textrm{is equivalent to}\quad a'=\{p-3-a\}\textrm{~and~} b'=\{a+b+1\}.$$
\end{lemma}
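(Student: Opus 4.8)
\textbf{Proof plan for Lemma~\ref{re:14}.}
The plan is to unwind the bijection $\iota$ from Definition~\ref{re::1}(1) on both sides and reduce everything to a pair of congruences modulo $p-1$. Recall that $\iota$ sends $s\in\{0,\dots,p-2\}$ to the unique $\ve$-related pair $(a,b)$ with $\ve=\omega^{-s+b}\times\omega^{a+s+b}$; equivalently, writing $\ve=\ve_1\times\ve_1\omega^{k_0-2}$ as in \eqref{eq9}, the pair $(a,b)$ attached to $s$ is characterized by the two congruences
\begin{equation*}
	-s+b\equiv \lambda\pmod{p-1}\qquad\text{and}\qquad a+s+b\equiv \lambda+k_0-2\pmod{p-1},
\end{equation*}
where $\omega^\lambda=\ve_1$. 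Subtracting the first from the second gives precisely $a+2s\equiv k_0-2\pmod{p-1}$, which is \eqref{eq5}; so $(a,b)=\iota(s)$ is pinned down by knowing $a\pmod{p-1}$ together with $b\equiv s+\lambda\pmod{p-1}$, with $a,b\in\{0,\dots,p-2\}$. The same description applies to $(a',b')=\iota(s')$ with $s$ replaced by $s'$.

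First I would prove the forward direction. Assume $s+s'\equiv k_0-1\pmod{p-1}$. From $a+2s\equiv k_0-2$ and $a'+2s'\equiv k_0-2$ we get $a+a'\equiv 2(k_0-2)-2(s+s')\equiv 2(k_0-2)-2(k_0-1)=-2\pmod{p-1}$, hence $a'\equiv -a-2\equiv p-3-a\pmod{p-1}$; since $0\le a'\le p-2$ this forces $a'=\{p-3-a\}$. For the $b$-coordinate, use $b\equiv s+\lambda$ and $b'\equiv s'+\lambda$, so $b+b'\equiv s+s'+2\lambda\equiv k_0-1+2\lambda\pmod{p-1}$; on the other hand $a+b\equiv (k_0-2-2s)+(s+\lambda)= k_0-2-s+\lambda\pmod{p-1}$, so $a+b+1\equiv k_0-1-s+\lambda\equiv s'+\lambda\equiv b'\pmod{p-1}$, and again the range constraint $0\le b'\le p-2$ upgrades this to $b'=\{a+b+1\}$. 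Conversely, assuming $a'=\{p-3-a\}$ and $b'=\{a+b+1\}$, I read the congruences backwards: $b'\equiv a+b+1$ together with $b\equiv s+\lambda$, $b'\equiv s'+\lambda$ and $a\equiv k_0-2-2s$ yields $s'+\lambda\equiv (k_0-2-2s)+(s+\lambda)+1=k_0-1-s+\lambda$, i.e. $s+s'\equiv k_0-1\pmod{p-1}$, as desired. (One checks the relation $a'\equiv p-3-a$ is then automatically consistent, so no contradiction arises.)

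I do not expect a genuine obstacle here — the whole statement is bookkeeping with the two congruences defining $\iota$ and the normalization $\ve_1=\omega^\lambda$. The one point that needs a little care is the passage from a congruence modulo $p-1$ to an honest equality: this is legitimate only because $a',b'$ are constrained to lie in $\{0,\dots,p-2\}$ and $\{\cdot\}$ denotes exactly the representative in that range, so each congruence has a unique solution there. I would state that reduction once and apply it to both coordinates. If desired, one can also present the argument slightly more symmetrically by noting that $\iota$ intertwines $s\mapsto k_0-1-s$ on the source with $(a,b)\mapsto(\{p-3-a\},\{a+b+1\})$ on the target, which is exactly the content of the lemma, but the direct congruence computation above is the cleanest route.
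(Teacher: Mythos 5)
Your proposal is correct and follows essentially the same route as the paper: both unwind $\iota$ into the two congruences $b\equiv s+\lambda$ and $a+2s\equiv k_0-2\pmod{p-1}$ (the paper writes $\ve=\omega^c\times\omega^d$ with $c=\lambda$) and then carry out the same modular bookkeeping, using the range constraint on $a',b'$ to upgrade congruences to equalities. No gap.
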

\begin{proof}
	Assume that $s+s'\equiv k_0-1\pmod {p-1}$. 
By \eqref{eq5}, we have
	$a+a'\equiv p-3\pmod {p-1}$. 
%	Note that we assume $s\neq s'$ and that both $a$, $a'$ have the range $\{0,\dots,p-2\}$. 
%	Hence, we have $a+a'=p-3$.
	We write $\ve=\omega^{c}\times \omega^d$, which is equal to $\omega^{-s+b}\times \omega^{s+a+b}=\omega^{-s+b}\times \omega^{-s+b+k_0-2}$, 
	 and hence obtain  that $b-s\equiv b'-s'\equiv c\pmod {p-1}$. Combined with 
	$2c=c+d+(c-d)\equiv a+2b+(2-k_0)\pmod{p-1}$, this chain of congruence relations implies 
	$$b+b'\equiv 2c+(s+s')\equiv 2c+(k_0-1)\equiv a+2b+1\pmod {p-1}.$$
	This completes the proof.
	
	Now we prove the other direction. Assume $a'=\{p-3-a\}$ and $b'=\{a+b+1\}$. As in previous discussion, we have $$s+s'\equiv (b+b')-2c\equiv (a+2b+1)-2c\equiv (c+d+1)-2c=d-c+1\equiv k_0-2+1=k_0-1\pmod{p-1}.$$
	This completes the proof. 
\end{proof}

%The reason that we modified our notations here is to corresponding those in \cite{xiao} better, whose results are introduced below.
%

\begin{notation}\label{notation:2}\hspace{2em}
	\begin{enumerate}
		\item 	For any $k\in \calK$, we write 
		$k_\bullet:=\frac{k-k_0}{p-1}$, i.e., 
		 $k:=(p-1)k_\bullet+k_0$.
		\item 
		For any parameter $s\in \{0,\dots, p-2\}$, we put $a_{s}:=\{k_0-2-2s\}$
and
		$$
		\delta_{s}:=\left\lfloor\frac{s+\left\{a_{s}+s\right\}}{p-1}\right\rfloor= \begin{cases}0 & \text { if } s+\left\{a_{s}+s\right\}<p-1,\\ 
			1 & \text { if } s+\left\{a_{s}+s\right\} \geq p-1.
		\end{cases}
		$$	
	It is easy to verify that $\iota(s)=(a_s,b)$ for some $b\in \{0,\dots,p-2\}$.
		\item When $a_{s}+s<p-1,$ we put $t_{1}^{(s)}:=s+\delta_{s}$ and $t_{2}^{(s)}:=a_{s}+s+\delta_{s}+2$.
		\item When $a_{s}+s \geq p-1, $ we put $t_{1}^{(s)}:=\left\{a_{s}+s\right\}+\delta_{s}+1$ and $t_{2}^{(s)}:=s+\delta_{s}+1$.	
		\item For any $n\in \ZZ$,  we write  $$\beta_{n}^{(s)}:=\left\{\begin{array}{ll}
			t_{1}^{(s)} & \text { if } n \text { is even,} \\
			t_{2}^{(s)}-\frac{p+1}{2} & \text { if } n \text { is odd.}
		\end{array}\right. $$
	\end{enumerate}
\end{notation}

\begin{lemma}\label{re:10}
	For every $k\in \calK$ and  every $s\in  \{0,\dots, p-2\}$, we have
	\begin{equation}
		d_{k}^{\Iw}\left(s\right)=2 k_\bullet+2-2 \delta_{s},
	\end{equation}
	and 
	\begin{equation*}
		d_{k}^{\ur}\left(s\right) =\left\lfloor\frac{k_\bullet-t_{1}^{(s)}}{p+1}\right\rfloor+\left\lfloor\frac{k_\bullet-t_{2}^{(s)}}{p+1}\right\rfloor+2.
	\end{equation*}
\end{lemma}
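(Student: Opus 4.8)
The plan is to compute both ranks directly, reducing each to the Jordan--H\"older multiplicity of a single Serre weight in a twist of $\Sym^{k-2}\overline{\FF}_p^2$. Write $\iota(s)=(a_s,b)$, so that $a_s=\{k_0-2-2s\}$ by \eqref{eq5}. Since $\tH(s)=\tH_{\iota(s)}\in\HH^{(\ve)}$ is chosen with $\Or_{\iota(s)}|_{I_{\QQ_p}}$ nonsplit, Definition~\ref{re::1}(3)(b) tells us that as a right $\calO\llbracket\rmK_p\rrbracket$-module $\tH(s)$ is a single copy of a projective envelope $P_{a_s,b}$ of $\sigma_{a_s,b}$; its reduction mod $p$ is the projective cover of the simple $\FF\llbracket\rmK_p\rrbracket$-module $\sigma_{a_s,b}$, which factors through $\GL_2(\FF_p)$. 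Hence for any $\calO\llbracket\rmK_p\rrbracket$-module $M$ that is finite free over $\calO$,
\[
\rank_{\calO}\Hom_{\calO\llbracket\rmK_p\rrbracket}\bigl(P_{a_s,b},M\bigr)=\bigl[\,M\otimes_{\calO}\FF:\sigma_{a_s,b}\,\bigr].
\]
Taking $M=\calO[z]^{\leq k-2}\otimes\ve_1\circ\det$ computes $d_k^{\ur}(s)$. Since $[\rmK_p:\Iw_p]=p+1$ is prime to $p$, the restriction $P_{a_s,b}|_{\Iw_p}$ is $\calO\llbracket\Iw_p\rrbracket$-projective, and Frobenius reciprocity together with the projection formula identify $d_k^{\Iw}(s)$ with the multiplicity of $\sigma_{a_s,b}$ in $\bigl(\Sym^{k-2}\overline{\FF}_p^2\bigr)\otimes\overline{\Ind_{\Iw_p}^{\rmK_p}\tv}$, where $\Ind_{\Iw_p}^{\rmK_p}\tv$ is the fixed $(p+1)$-dimensional principal series of $\GL_2(\FF_p)$ attached to $\ve$ (equivalently, to $s$).

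Both counts are therefore governed by the classical description of the Jordan--H\"older constituents of $\Sym^{n}\overline{\FF}_p^2$ as a $\GL_2(\FF_p)$-representation --- the same input used in \cite{xiao1}. Writing $n=k-2=(p-1)k_\bullet+(k_0-2)$ and tracking the constituent whose first coordinate is $a_s$, one finds its multiplicity is a non-decreasing step function of $k_\bullet$ that increases by one along two arithmetic progressions of common difference $p+1$, one for each of the two positions at which a constituent of that highest weight can occur in the $p$-adic pattern of $\Sym^{k-2}$ --- the parity dichotomy that also underlies $\beta_n^{(s)}$ in Notation~\ref{notation:2}(5) --- the onsets of the two progressions being exactly $t_1^{(s)}$ and $t_2^{(s)}$. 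Summing gives $d_k^{\ur}(s)=\bigl\lfloor(k_\bullet-t_1^{(s)})/(p+1)\bigr\rfloor+\bigl\lfloor(k_\bullet-t_2^{(s)})/(p+1)\bigr\rfloor+2$. For the Iwahori count, tensoring with $\overline{\Ind_{\Iw_p}^{\rmK_p}\tv}$ makes each unit increment of $k_\bullet$ contribute exactly two new copies of $\sigma_{a_s,b}$, producing the leading term $2k_\bullet$; the additive correction $2-2\delta_s$ records whether that constituent straddles a carry in the base-$p$ expansion, which is precisely the content of $\delta_s=\bigl\lfloor(s+\{a_s+s\})/(p-1)\bigr\rfloor$.

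What remains is bookkeeping: recall the $\Sym^n$ decomposition precisely, then run the two ranges $a_s+s<p-1$ and $a_s+s\geq p-1$ of Notation~\ref{notation:2}(3)--(4) separately, in each case matching the number of relevant constituents with index $\leq k_\bullet$ to the stated floor expressions. The leading behaviour ($\sim k_\bullet/(p+1)$ for $d_k^{\ur}$ and $\sim 2k_\bullet$ for $d_k^{\Iw}$) is immediate; \textbf{the real obstacle is pinning down the additive constants and the $\delta_s$-correction}, which forces one to chase the first few base-$p$ digits of $k-2$ against the definitions of $t_1^{(s)}$, $t_2^{(s)}$, $\delta_s$ --- a routine but delicate verification of the boundary terms. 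Alternatively, once the identifications $\tH(s)=\tH_{\iota(s)}$ and $a_s=\{k_0-2-2s\}$ are matched to the conventions of \cite{xiao1}, both formulas can simply be quoted from there.
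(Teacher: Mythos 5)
Your fallback option is exactly what the paper does: Lemma~\ref{re:10} is proved by quoting the dimension formulas from LTXZ (specifically \cite[Corollary~4.4]{xiao} for $d_k^{\Iw}$ and \cite[Proposition~4.7]{xiao} for $d_k^{\ur}$, i.e.\ the first LTXZ paper, not \cite{xiao1}), after one normalization step that you only gesture at: the paper twists the coefficient module by $\omega^{-b}\times\omega^{-b}$, replacing $\tH(s)=\tH_{(a_s,b)}$ by a primitive module $\tH'$ of type $(a_s,0)$ and the characters $\tv$, $\ve_1$ by $\omega^{-s}\times\omega^{-s}$, $\omega^{-s}$, which produces isomorphisms $S^{\Iw}_{\tH(s),k}(\tv)\cong S^{\Iw}_{\tH',k}(\omega^{-s}\times\omega^{-s})$ and $S^{\ur}_{\tH(s),k}(\ve_1)\cong S^{\ur}_{\tH',k}(\omega^{-s})$ in exactly the normal form to which the cited results apply. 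Spelling out that twist is the only real content of the paper's proof, so if you take the citation route you should make it explicit rather than saying the identifications "can be matched."

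Your primary route --- computing both ranks as Jordan--H\"older multiplicities of $\sigma_{a_s,b}$ in $\Sym^{k-2}$ and its Iwahori induction --- is the right underlying mechanism (it is how the cited results of \cite{xiao} are themselves proved), but as written it has a genuine gap: the entire content of the lemma is the identification of the additive constants, i.e.\ that the two arithmetic progressions of onsets are exactly $t_1^{(s)}$ and $t_2^{(s)}$ as defined in Notation~\ref{notation:2}(3)--(4), and that the Iwahori correction is $-2\delta_s$. You explicitly defer this as "routine but delicate bookkeeping" without carrying it out, and the asymptotic behaviour you do verify ($\sim k_\bullet/(p+1)$ and $\sim 2k_\bullet$) does not distinguish the claimed formulas from nearby wrong ones. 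So the direct computation is a plausible plan, not a proof; only the citation route, completed with the explicit twist, closes the argument.
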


\begin{proof}
	By Notation~\ref{notation:2}, we have  $\tH(s)=\tH_{(a_s, b)}$ for some $b\in \{0,\dots,p-2\}$. Let $\ve'=\ve\cdot(\omega^{-b}\times \omega^{-b})=\omega^{-s} \times \omega^{a_s+s}$. Twisting $\ve $ by $\omega^{-b}\times \omega^{-b}$, 
	we have
\begin{align}\label{neweq2}
		S_{\tH(s),k}^{\Iw}(\tv)=&\Hom_{\calO\left[\Iw_{p}\right]}\left(\tH(s), \calO[z]^{\leq k-2} \otimes \tv\right) \\\notag
		\cong &\Hom_{\calO\left[\Iw_{p}\right]}\left(\tH', \calO[z]^{\leq k-2} \otimes (\omega^{-s} \times \omega^{-s})\right)\\=&S_{\tH',k}^{\Iw}(\omega^{-s} \times \omega^{-s}),\notag
\end{align}
and 
 \begin{align}\label{neweq3}	S_{\tH(s),k}^{\ur}(\ve_1)=&\Hom_{\calO\left[K_p\right]}\left(\tH(s), \calO[z]^{\leq k-2} \otimes \ve_1\circ\det\right) \\
\notag	\cong &\Hom_{\calO\left[K_p\right]}\left(\tH', \calO[z]^{\leq k-2} \otimes \omega^{-s}\circ\det\right)\\=&S_{\tH',k}^{\Iw}(\omega^{-s}).\notag
\end{align}
	Here $\widetilde{\ve'}=\omega^{-s}\times\omega^{-s}$, and $\tH'$ is a primitive $\calO\llbracket \rmK_p\rrbracket$-projective augmented module of type $\Or'$, where $\Or'$ is an $\ve'$-related hybrid representation of type $(a_s, 0)$.
	By \cite[Corollary~4.4]{xiao}, we have
	$$\dim_{\calO} \left(S_{\tH',k}^{\Iw}(\omega^{-s}\times\omega^{-s})\right)=2 k_\bullet+2-2 \delta_{s}.$$
	Combined with \eqref{neweq2}, this implies that
		$$d_{k}^{\Iw}\left(s\right)=\dim_{\calO} \left(S_{\tH(s),k}^{\Iw}(\tv)\right)=2 k_\bullet+2-2 \delta_{s}.$$
		By \cite[Proposition~4.7]{xiao}, we have
			$$\dim_{\calO} \left(S_{\tH',k}^{\ur}(\omega^{-s})\right)=\left\lfloor\frac{k_\bullet-t_{1}^{(s)}}{p+1}\right\rfloor+\left\lfloor\frac{k_\bullet-t_{2}^{(s)}}{p+1}\right\rfloor+2.$$
			Combined with \eqref{neweq3}, this implies that
		\[d_{k}^{\ur}\left(s\right)=\dim_{\calO} \left(S_{\tH(s),k}^{\ur}(\ve_1)\right)=\left\lfloor\frac{k_\bullet-t_{1}^{(s)}}{p+1}\right\rfloor+\left\lfloor\frac{k_\bullet-t_{2}^{(s)}}{p+1}\right\rfloor+2.\qedhere\]
\end{proof}

\begin{lemma}\label{lem:1}
We have the following table:
	\begin{center}
		\begin{tabular}{|c|c|c|c|c|c|}
			\hline
			$s $& $\delta_{s}$ & $t_{1}^{(s)}$ & $t_{2}^{(s)}$  \\
			\hline
			$	\{0, \dots, \lfloor \frac{k_0-2}{2}\rfloor  \}$ & $0 $  &   $s$  & $k_0-s$     \\
			\hline
			$	\{\lfloor \frac{k_0-2}{2}\rfloor+1, \dots, k_0-2\}$     &  $0 $  &  $k_0-s-1$ & $s+1$    \\
			\hline
			$	\{k_0-1, \dots, \lfloor \frac{k_0-2+p-1}{2}\rfloor \}$    & $1$   & $s+1$  & $p+k_0-s$             \\
			\hline
			$	\{\lfloor \frac{k_0-2+p-1}{2}\rfloor+1, \dots, p-2\}$ & $1$&  $k_0-s+p-1$    &$s+2$ \\
			\hline
			
		\end{tabular}.
	\end{center}

\end{lemma}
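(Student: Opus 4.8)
The plan is to partition $\{0,\dots,p-2\}$ according to which multiple of $p-1$ must be added to $k_0-2-2s$ to land it in $\{0,\dots,p-2\}$, and then, block by block, substitute into the definitions of $\delta_s$, $t_1^{(s)}$, $t_2^{(s)}$. Concretely, since $a_s=\{k_0-2-2s\}\equiv k_0-2-2s\pmod{p-1}$ with $a_s\in\{0,\dots,p-2\}$, there is a unique $j=j(s)\in\{0,1,2\}$ with $a_s+2s=(k_0-2)+j(p-1)$; uniqueness is immediate, and $0\le j\le 2$ follows from $0\le a_s+2s\le(p-2)+2(p-2)<3(p-1)$ together with $0\le k_0-2\le p-2$. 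Because $a_s\mapsto a_{s+1}$ decreases by $2$ (with a wrap-around adding $p-3$ when $a_s\in\{0,1\}$), the quantity $a_s+2s$ is non-decreasing in $s$, hence $j(s)$ is non-decreasing and runs through all three values over three consecutive blocks of $s$.

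First I would pin down these blocks by solving the linear equation for $a_s$ in each case and imposing $a_s\in\{0,\dots,p-2\}$: $j=0$ gives $a_s=(k_0-2)-2s$ on $s\le\lfloor\frac{k_0-2}{2}\rfloor$; $j=1$ gives $a_s=(k_0+p-3)-2s$ on $\lfloor\frac{k_0-2}{2}\rfloor+1\le s\le\lfloor\frac{k_0+p-3}{2}\rfloor$; and $j=2$ gives $a_s=(k_0+2p-4)-2s$ on $\lfloor\frac{k_0+p-3}{2}\rfloor+1\le s\le p-2$, where the endpoints are reconciled via the elementary identity $\lceil\frac{n}{2}\rceil=\lfloor\frac{n-1}{2}\rfloor+1$. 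Consequently $a_s+s$ equals $k_0-2-s$, $k_0+p-3-s$, and $k_0+2p-4-s$ in the three blocks, respectively.

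Next, in each block I compare $a_s+s$ with $p-1$ to decide which clause of the definition of $(t_1^{(s)},t_2^{(s)})$ in Notation~\ref{notation:2}(3)--(4) applies, and read off $\{a_s+s\}$. For $j=0$ one always has $a_s+s=k_0-2-s<p-1$, the first clause. For $j=1$ the expression $a_s+s-(p-1)=k_0-2-s$ changes sign at $s=k_0-2$: for $\lfloor\frac{k_0-2}{2}\rfloor+1\le s\le k_0-2$ we get $a_s+s\ge p-1$ (second clause, row~2), while for $k_0-1\le s\le\lfloor\frac{k_0+p-3}{2}\rfloor$ we get $a_s+s<p-1$ (first clause, row~3). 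For $j=2$ one has $p-1\le a_s+s=k_0+2p-4-s<2(p-1)$, the second clause (row~4). In every case the point of the computation is that $s+\{a_s+s\}$ collapses either to $k_0-2$ (rows~1--2) or to $k_0+p-3$ (rows~3--4); since $2\le k_0\le p$, the former lies in $[0,p-1)$ and the latter in $[p-1,2(p-1))$, so $\delta_s=0$ in rows~1--2 and $\delta_s=1$ in rows~3--4. Substituting these values of $\delta_s$ and $\{a_s+s\}$ — and, for the first-clause blocks, the value of $a_s$ — into the formulas of Notation~\ref{notation:2}(3)--(4) reproduces all four rows.

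The genuinely delicate point, and the main obstacle, is the floor/parity bookkeeping at the block boundaries: one must check that the ranges of $s$ dictated by the value of $j$ and by the sign of $k_0-2-s$ coincide exactly with the four ranges listed in the table, and must handle the extreme case $k_0=p$ (in which $k_0-1>p-2$, so rows~3 and~4 are empty) so that no value of $s$ in $\{0,\dots,p-2\}$ is missed or double-counted. Everything used here is contained in Notation~\ref{notation:2}; in particular the dimension formulas of Lemma~\ref{re:10} are not needed.
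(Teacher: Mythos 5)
Your proposal is correct and follows essentially the same route as the paper: the paper's proof is exactly the four-block case analysis, writing $a_s$ as $k_0-2-2s$ plus the appropriate multiple of $p-1$ in each range, then reading off $a_s+s$, $\delta_s$, $t_1^{(s)}$, $t_2^{(s)}$ from Notation~\ref{notation:2}. Your extra bookkeeping (the shift count $j(s)$, its monotonicity, and the empty-row check at $k_0=p$) only makes explicit what the paper leaves implicit; as a side remark, your computation confirms $t_2^{(s)}=s+1$ in the second row, agreeing with the table and correcting a typo ($t_2^{(s)}=s$) in the paper's own proof of that case.
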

\begin{proof}
	(1)	For $s\in 	\{0, 1, \dots, \lfloor \frac{k_0-2}{2}\rfloor \}$, we have $a_{s}=k_0-2-2s$, and hence 
	$$a_{s}+s=k_0-2-s<p-1, \ \delta_{s}=\left\lfloor\frac{k_0-2}{p-1}\right\rfloor =0,\  t_{1}^{(s)} = s,\  t_{2}^{(s)}=k_0-s. $$
	
	(2)	For $s\in 	\{\lfloor \frac{k_0-2}{2}\rfloor+1, \dots, k_0-2\}$, we have $a_{s}=k_0+p-1-2-2s$, and hence 
	$$a_{s}+s=k_0-2-s+p-1\geq  p-1, \ \delta_{s}=\left\lfloor\frac{k_0-2}{p-1}\right\rfloor =0,\  t_{1}^{(s)} = k_0-s-1,\  t_{2}^{(s)}=s. $$
	
	(3)	For $s\in 	\{k_0-1, \dots, \lfloor \frac{k_0-2+p-1}{2}\rfloor \}$, we have $a_{s}=k_0+p-1-2-2s$, and hence 
	$$a_{s}+s=k_0-2-s+p-1< p-1, \ \delta_{s}=\left\lfloor\frac{k_0-3+p}{p-1}\right\rfloor =1,\  t_{1}^{(s)} = s+1,\  t_{2}^{(s)}=k_0-s+p. $$
	
	(4)	For $s\in 	\{ \lfloor \frac{k_0-2+p-1}{2}\rfloor +1, \dots, p-2\}$, we have $a_{s}=k_0+2(p-1)-2-2s$, and hence 
	\[a_{s}+s=k_0-2-s+2(p-1)\geq  p-1, \ \delta_{s}=\left\lfloor\frac{k_0-3+p}{p-1}\right\rfloor =1,\  t_{1}^{(s)} = k_0-s+p-1,\  t_{2}^{(s)}=s+2. \qedhere\]
\end{proof}

\begin{notation}\label{eq:1}
	For any $s\in \{1,\dots, p-2\}$,  we normalize  	$$d_{k}^{\ur,\dagger}(s):=	d_{k}^{\ur}\left(s\right) +\delta_{s},$$
	and write
		$$d_{k}^{\Iw,\dagger}:=2k_\bullet+2.$$
		Note that for every $s\in \{0,\dots, p-2\}$, we have
	\begin{equation}
		d_{k}^{\Iw,\dagger}=	d_{k}^{\Iw}\left(s\right) +2\delta_{s}.
	\end{equation}

\end{notation}

\begin{notation}
	For a sequence $\underline s=(s_1,s_2,\dots,s_u)\in \{0,1,\dots,p-2\}^u$, we put $$d_k^{\ur,\dagger}(\us):=\sum\limits_{i=1}^u d_k^{\ur,\dagger}(s_i)\quad \textrm{and}\quad d_k^{\Iw,\dagger}(\us):=u\cdot  d_k^{\Iw,\dagger}.$$
	
%	For any $2\leq k_0\leq p$ we put $G^{(\us)}(w, t):=\sum\limits_{n \geq 0} g_{n}^{(\us)}(w) t^n \in \mathbb{Z}_{p}[w] \llbracket t \rrbracket$ be the ghost series defined below:
%	\begin{equation}\label{neq::22}
%			g_{n}^{(\us)}(w)=\prod_{k \equiv k_0 \bmod (p-1)}\left(w-w_{k}\right)^{m_n^{(\us)}(k)} \in \mathbb{Z}_{p}[w],
%	\end{equation}
%	where $w_{k}:=\exp ((k-2) p)-1$, and the exponents $m_{i}^{(\us)}(k)$ are given by the following recipe:
%	\begin{equation}\label{eq:2}
%		m_n^{(\us)}(k):=\begin{cases}
%			\min \left\{n-d_k^{\ur}(\us), d_k^{\Iw}(\us)-d_k^{\ur}(\us)-n\right\}& \textrm{for~} d_k^{\ur}(\us)<n<d_k^{\Iw}(\us)-d_k^{\ur}(\us),\\
%			0&\textrm{else}.
%		\end{cases}
%	\end{equation}

\end{notation}	
	
	Consider Notation~\ref{re:2}. 
	By replacing  $d_k^{\ur}(\us)$ and $d_k^{\Iw}(\us)$
	with $d_k^{\ur,\dagger}(\us)$ and $d_k^{\ur,\dagger}(\us)$ in Definition~\ref{re:3},	we define  $m_n^{(\us),\dagger}(k), g_{n}^{(\us),\dagger}(w), G^{(\us),\dagger}(w,-)$. 
	We simply replace $\us$ by $s$ if $\us$ has length one.

	\begin{lemma}\label{re:5}
For any $w_\star\in \bfm_{\CC_p}$,	the Newton polygon $\NP\left(G^{(\us),\dagger}(w_\star,-)\right)$ is equal to the polygon that patches a horizontal segment of length $\sum\limits_{i=1}^u \delta_{s_i}$ with  $\NP\left(G^{(\us)}(w_\star,-)\right)$.
	\end{lemma}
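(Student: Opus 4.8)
The plan is to compare the two ghost series $G^{(\us)}(w,t)$ and $G^{(\us),\dagger}(w,t)$ coefficient-by-coefficient and then read off the relation between their Newton polygons. The key observation is that the $\dagger$-version is obtained from the original by the substitutions $d_k^{\ur}(\us) \rightsquigarrow d_k^{\ur,\dagger}(\us) = d_k^{\ur}(\us) + \sum_i \delta_{s_i}$ and $d_k^{\Iw}(\us) \rightsquigarrow d_k^{\Iw,\dagger}(\us) = d_k^{\Iw}(\us) + 2\sum_i \delta_{s_i}$, which follow from Notation~\ref{eq:1} and Lemma~\ref{re:10} together with the additivity relations \eqref{re::6}, \eqref{re::7}. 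Writing $\Delta_\delta := \sum_{i=1}^u \delta_{s_i}$, the interval $d_k^{\ur,\dagger}(\us) < n < d_k^{\Iw,\dagger}(\us) - d_k^{\ur,\dagger}(\us)$ governing $m_n^{(\us),\dagger}(k)$ is the interval governing $m_n^{(\us)}(k)$ shifted to the right by $\Delta_\delta$, and both endpoints move by the same amount because $d_k^{\Iw,\dagger}(\us) - 2 d_k^{\ur,\dagger}(\us) = d_k^{\Iw}(\us) - 2 d_k^{\ur}(\us)$. Hence I expect the clean identity $m_{n+\Delta_\delta}^{(\us),\dagger}(k) = m_n^{(\us)}(k)$ for all $k \in \calK$ and all $n$, and consequently $g_{n+\Delta_\delta}^{(\us),\dagger}(w) = g_n^{(\us)}(w)$ for every $n \geq 0$, while $g_n^{(\us),\dagger}(w) = 0$ (i.e. the coefficient is the constant $1$ up to the empty-product convention — I will be careful here) for $n < \Delta_\delta$.

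Granting this coefficient identity, the first step of the write-up is to verify it: fix $k$, note that the map $n \mapsto n - \Delta_\delta$ carries the defining interval for the $\dagger$-series bijectively onto the defining interval for the original series, and that under this map the two quantities $n - d_k^{\ur,\dagger}(\us)$ and $d_k^{\Iw,\dagger}(\us) - d_k^{\ur,\dagger}(\us) - n$ become exactly $(n-\Delta_\delta) - d_k^{\ur}(\us)$ and $d_k^{\Iw}(\us) - d_k^{\ur}(\us) - (n-\Delta_\delta)$; so the minima agree. Taking the product over $k \equiv k_0 \pmod{p-1}$ gives $g_{n}^{(\us),\dagger}(w) = g_{n-\Delta_\delta}^{(\us)}(w)$ for $n \geq \Delta_\delta$, and for $n < \Delta_\delta$ every exponent $m_n^{(\us),\dagger}(k)$ vanishes so $g_n^{(\us),\dagger}(w) = 1$.

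The second step is to translate this into Newton polygons. We have $G^{(\us),\dagger}(w_\star, t) = \sum_{n\geq 0} g_n^{(\us),\dagger}(w_\star) t^n = \sum_{m \geq 0} g_m^{(\us)}(w_\star) t^{m + \Delta_\delta} = t^{\Delta_\delta} \cdot G^{(\us)}(w_\star, t) + \big(1 + g_1^{(\us),\dagger}(w_\star)t + \dots + g_{\Delta_\delta - 1}^{(\us),\dagger}(w_\star) t^{\Delta_\delta-1}\big)$; but since the last bracketed terms all have coefficient $1$ with $v_p(1) = 0$, the points $(n, v_p(g_n^{(\us),\dagger}(w_\star)))$ for $0 \le n < \Delta_\delta$ all lie on the $x$-axis, and for $n \geq \Delta_\delta$ they are the points of $\NP(G^{(\us)}(w_\star,-))$ translated right by $\Delta_\delta$. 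Since the Newton polygon is the lower convex hull of these points and the constant coefficient of $G^{(\us)}$ is also $g_0^{(\us)}(w_\star) = 1$ (so $\NP(G^{(\us)}(w_\star,-))$ starts at height $0$), the lower convex hull of the combined point set is precisely the horizontal segment of length $\Delta_\delta$ from $(0,0)$ to $(\Delta_\delta, 0)$ followed by $\NP(G^{(\us)}(w_\star,-))$ shifted to start at $(\Delta_\delta, 0)$. This is exactly the claimed "patching of a horizontal segment of length $\sum_i \delta_{s_i}$ with $\NP(G^{(\us)}(w_\star,-))$".

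I do not anticipate a serious obstacle; the only delicate points are bookkeeping ones — confirming via Lemma~\ref{re:10} and \eqref{re::7} that the Iwahori-dimension shift is exactly $2\Delta_\delta$ (so that the right endpoint of the relevant interval moves by the same $\Delta_\delta$ as the left endpoint) and handling the empty-product / constant-coefficient conventions so that the horizontal initial segment is genuinely flat rather than accidentally contributing a negative slope. Once those are pinned down, the Newton-polygon statement is immediate from the definition as a lower convex hull.
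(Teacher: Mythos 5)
Your proposal is correct and follows essentially the same route as the paper: establish $d_k^{\ur,\dagger}(\us)=d_k^{\ur}(\us)+\sum_i\delta_{s_i}$ and $d_k^{\Iw,\dagger}(\us)=d_k^{\Iw}(\us)+2\sum_i\delta_{s_i}$, deduce the coefficient shift $g_{n}^{(\us),\dagger}=g_{n-\sum_i\delta_{s_i}}^{(\us)}$ (with $g_n^{(\us),\dagger}=1$ for small $n$), and read off the Newton polygon. The paper's proof is exactly this, stated more tersely; your extra bookkeeping on the convex hull is fine.
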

\begin{proof}
		By \eqref{re::6}  and  \eqref{re::7}, we have $$d_k^{\ur,\dagger}(\us)=d_k^{\ur}(\us)+\sum\limits_{i=1}^u \delta_{s_i}\quad\textrm{and}\quad d_k^{\Iw,\dagger}(\us)=d_k^{\Iw}(\us)+2\sum\limits_{i=1}^u \delta_{s_i},$$ and hence
	$$g_{n}^{(\us),\dagger}(w)=\begin{cases}
		g_{n-\sum\limits_{i=1}^u (\delta_{s_i})}^{(\us)}(w) &\textrm{for~} n\geq \sum\limits_{i=1}^u \delta_{s_i},
		\\
		1 &\textrm{for~} 0\leq n< \sum\limits_{i=1}^u \delta_{s_i}.
	\end{cases}$$
This completes the proof.
\end{proof}

\begin{remark}
		By Proposition~\ref{re:4},  Notation~\ref{re:2} and Lemma~\ref{re:5}, to prove Theorem~\ref{keythm}, it is enough to study the Newton polygon of the dagger ghost series parameterized by $\us$, i.e., $\NP\left(G^{(\us),\dagger}(w,-)\right)$. 
\end{remark}
	
%
%\begin{condition}[Generic condition]\label{condition:1}
%	We require $1\leq a_{s}=\{k_0-2-2s\}\leq p-3$, and hence $s\neq \lfloor \frac{k_0-2}{2}\rfloor+1$ or $\lfloor \frac{k_0-2+p-1}{2}\rfloor+1$ and 
%	.
%\end{condition}

\begin{lemma}\label{lem:2}\hspace{2em}
	\begin{enumerate}
		\item 	For any $s\in \{0,1,2,\dots,\lfloor \frac{k_0-2}{2}\rfloor\}$, if we put $s':=k_0-1-s$, then 
		$$ G^{(s),\dagger}(w,t)=G^{(s'),\dagger}(w,t)$$
		as power series in $\ZZ_p[w][\![t]\!]$.
		\item For any $s\in \{k_0,\dots,\lfloor \frac{k_0-2+p-1}{2}\rfloor\}$,  if we put $s'':=k_0-1-s+p-1$, then
		$$ G^{(s),\dagger}(w,t)=G^{(s''),\dagger}(w,t)$$
			as power series in $\ZZ_p[w][\![t]\!]$.
	\end{enumerate}	
\end{lemma}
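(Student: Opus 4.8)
The two statements are symmetric in nature, so I will explain the argument for (1); part (2) is then verbatim with the shifted index $s'' = k_0-1-s+p-1$ in place of $s' = k_0-1-s$. The strategy is to show that the \emph{entire data packet} defining the dagger ghost series — namely the sequence of pairs $\bigl(d_k^{\ur,\dagger}(s),\, d_k^{\Iw,\dagger}\bigr)$ indexed by $k \in \calK$ — is unchanged when $s$ is replaced by $s'$. Since $G^{(s),\dagger}(w,t)$ is built mechanically from these two quantities (through $m_n^{(s),\dagger}(k)$, then $g_n^{(s),\dagger}(w)$, then the generating series), equality of the data packets forces equality of the power series. Note $d_k^{\Iw,\dagger} = 2k_\bullet + 2$ does not depend on $s$ at all, so the whole content is the claim
$$d_k^{\ur,\dagger}(s) = d_k^{\ur,\dagger}(s') \quad \text{for all } k \in \calK.$$

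\emph{First step: reduce to the closed formulas.} By Notation~\ref{eq:1}, $d_k^{\ur,\dagger}(s) = d_k^{\ur}(s) + \delta_s$, and by Lemma~\ref{re:10},
$$d_k^{\ur,\dagger}(s) = \Bigl\lfloor\tfrac{k_\bullet - t_1^{(s)}}{p+1}\Bigr\rfloor + \Bigl\lfloor\tfrac{k_\bullet - t_2^{(s)}}{p+1}\Bigr\rfloor + 2 + \delta_s.$$
So it suffices to check that the multiset $\{t_1^{(s)}, t_2^{(s)}\}$ together with $\delta_s$ is the same for $s$ and $s'$ — indeed the floor-sum is symmetric in $t_1, t_2$, so I only need the unordered pair to match and $\delta_s = \delta_{s'}$.

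\emph{Second step: read off the table of Lemma~\ref{lem:1}.} For $s \in \{0,\dots,\lfloor\frac{k_0-2}{2}\rfloor\}$ we are in row~1: $\delta_s = 0$, $(t_1^{(s)}, t_2^{(s)}) = (s,\, k_0-s)$. Now $s' = k_0-1-s$; since $s \le \lfloor\frac{k_0-2}{2}\rfloor$ one checks $s' = k_0-1-s \ge k_0-1-\lfloor\frac{k_0-2}{2}\rfloor = \lceil\frac{k_0}{2}\rceil = \lfloor\frac{k_0-2}{2}\rfloor + 1$ and $s' \le k_0-1-0 = k_0-1$; but in fact $s \ge 0$ gives $s' \le k_0-1$, and we need $s' \le k_0-2$, which holds precisely when $s \ge 1$ — the boundary case $s=0$, $s'=k_0-1$ must be handled by comparing row~1 at $s=0$ with row~3 at $s=k_0-1$ (there $\delta_{s'}=1$, $(t_1,t_2)=(k_0,\,p+1)$ versus $\delta_s = 0$, $(t_1,t_2)=(0,k_0)$, and one verifies the floor-sum-plus-$\delta$ still agrees because $\lfloor\frac{k_\bullet}{p+1}\rfloor + \lfloor\frac{k_\bullet-k_0}{p+1}\rfloor$ matches $\lfloor\frac{k_\bullet-k_0}{p+1}\rfloor + \lfloor\frac{k_\bullet-p-1}{p+1}\rfloor + 1$, using $\lfloor\frac{x-p-1}{p+1}\rfloor = \lfloor\frac{x}{p+1}\rfloor - 1$). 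For $1 \le s \le \lfloor\frac{k_0-2}{2}\rfloor$, $s'$ lands in row~2: $\delta_{s'} = 0$ and $(t_1^{(s')}, t_2^{(s')}) = (k_0 - s' - 1,\, s' + 1) = (s,\, k_0 - s)$, which is exactly $(t_1^{(s)}, t_2^{(s)})$. Hence $d_k^{\ur,\dagger}(s) = d_k^{\ur,\dagger}(s')$ in all cases, and therefore $m_n^{(s),\dagger}(k) = m_n^{(s'),\dagger}(k)$ for every $n$, giving $g_n^{(s),\dagger} = g_n^{(s'),\dagger}$ and thus $G^{(s),\dagger} = G^{(s'),\dagger}$.

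\emph{Main obstacle.} The only delicate point is the boundary index: when $s = 0$ (resp.\ the analogous endpoint in part (2)), $s'$ jumps to a different row of the table with a different value of $\delta$, so the pair $\{t_1, t_2\}$ genuinely differs and one must instead verify the identity of floor-sums directly via the relation $\lfloor\frac{x - (p+1)}{p+1}\rfloor = \lfloor\frac{x}{p+1}\rfloor - 1$ to see that the $+\delta$ compensates. Away from that endpoint the matching of $(t_1, t_2)$ and of $\delta$ is immediate from Lemma~\ref{lem:1}, and everything else is the formal observation that the ghost series is a deterministic function of $\bigl(d_k^{\ur,\dagger}, d_k^{\Iw,\dagger}\bigr)_{k \in \calK}$. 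For part (2) one runs the same comparison between rows~3 and~4 of the table (both with $\delta = 1$), with $s'' = k_0 - 1 - s + (p-1)$; the interior case gives $(t_1^{(s'')}, t_2^{(s'')}) = (s+1,\, k_0 - s + p)= (t_1^{(s)}, t_2^{(s)})$ directly, and the endpoint is handled as above.
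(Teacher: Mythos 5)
Your proposal is correct and follows essentially the same route as the paper: both arguments read off $(\delta_s, t_1^{(s)}, t_2^{(s)})$ from the table of Lemma~\ref{lem:1}, match them between $s$ and $s'$ (resp.\ $s''$) in the interior cases, and treat $s=0$ separately by checking that the shift $\delta_{s'}=1$ exactly compensates the drop in the floor-sum coming from $t_2^{(s')}=p+1$. Your boundary computation is in fact written more carefully than the paper's (which states the relation between $d_k^{\ur}(0)$ and $d_k^{\ur}(s')$ with the sign reversed, though the conclusion $d_k^{\ur,\dagger}(0)=d_k^{\ur,\dagger}(s')$ is the same).
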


\begin{proof}
	(1) For $s\in \{1,2,\dots,\lfloor \frac{k_0-2}{2}\rfloor\}$
	 from the first two rows in the table of Lemma~\ref{lem:1}, we have $t_{1}^{(s)}=t_{1}^{(s')}$ and $t_{2}^{(s)}=t_{2}^{(s')}$. Combined with 	$\delta_{s}=\delta_{s'}=0$ in this case, these two equalities imply
	 $m_n^{(s),\dagger}(k)=m_n^{(s'),\dagger}(k)$, and further $G^{(s),\dagger}(w,t)=G^{(s'),\dagger}(w,t)$
	 by definition.

	For $s=0$,  from the first and third rows in the table of Lemma~\ref{lem:1}, we have $t_{1}^{(s)}=0$,  $t_{2}^{(s)}=t_{1}^{(s')}=k_0$ and $t_{2}^{(s')}=p+1$, which together imply that for every $k\in \calK$, we have 
	$d_k^{\ur}(s)=d_k^{\ur}(s')-1$. Combined with 	$\delta_{s}=0$ and $\delta_{s'}=1$ in this case, we have
	$m_n^{(s),\dagger}(k)=m_n^{(s'),\dagger}(k)$. This clearly implies $G^{(s),\dagger}(w,t)=G^{(s'),\dagger}(w,t)$
	by definition.

	(2) 
	Similar to (1), by the last two rows in the table of Lemma~\ref{lem:1}, we have $t_{1}^{(s)}=t_{1}^{(s'')}$ and $t_{2}^{(s)}=t_{2}^{(s'')}$. Combined with 	$\delta_{s}=\delta_{s''}=1$ in this case, these two equalities imply
	$m_n^{(s),\dagger}(k)=m_n^{(s''),\dagger}(k)$, and further $G^{(s),\dagger}(w,t)=G^{(s''),\dagger}(w,t)$
	by definition.
\end{proof}

%
%
%
%This lemma says that in order to study the direct sum of $H^{(s)}$, it is enough to just study `half' of it. This leads to the  following notation. 

\begin{notation}\label{notation:3}
 We put $$S:=\left\{\left\lceil \frac{k_0+1}{2}\right\rceil,\left\lceil \frac{k_0+1}{2}\right\rceil+1,\dots,\left\lfloor\frac{k_0-4+p}{2}\right\rfloor\right\}.$$
\end{notation}

\begin{lemma}\label{re:15}
For $s\in \{\left\lceil\frac{k_0-1}{2}\right\rceil,\dots, \left\lfloor\frac{p-1+k_0-1}{2}\right\rfloor\}$, 
$\iota(s)$ is generic if and only if $s\in S$.
\end{lemma}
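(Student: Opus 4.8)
The plan is to reduce the claim to an explicit inequality computation using the fourth (last) row and part of the third row of the table in Lemma~\ref{lem:1}, together with the genericity condition $1\le a_s\le p-4$. First I would unwind the definitions: for $s$ in the stated range $\{\lceil\frac{k_0-1}{2}\rceil,\dots,\lfloor\frac{p-1+k_0-1}{2}\rfloor\}$, by Notation~\ref{notation:2} we have $\iota(s)=(a_s,b)$ with $a_s=\{k_0-2-2s\}$, so genericity of $\iota(s)$ is by definition the condition $1\le a_s\le p-4$. The range of $s$ under consideration straddles exactly the boundary between the second/third rows and the third/fourth rows of the table (indeed $\lceil\frac{k_0-1}{2}\rceil$ is essentially $\lfloor\frac{k_0-2}{2}\rfloor+1$ and $\lfloor\frac{p-1+k_0-1}{2}\rfloor$ is essentially $\lfloor\frac{k_0-2+p-1}{2}\rfloor$), so I would split into the sub-range where $s\le k_0-2$ (here $a_s=k_0+p-1-2-2s$ by row 2, but wait—in row 2 we are in the $a_s+s\ge p-1$ regime; I should recompute carefully) and the sub-range $k_0-1\le s\le \lfloor\frac{k_0-2+p-1}{2}\rfloor$ (row 3, $a_s=k_0+p-3-2s$).

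Concretely, in the row-3 regime we have $a_s=k_0+p-3-2s$, and the condition $1\le a_s\le p-4$ becomes $1\le k_0+p-3-2s\le p-4$, i.e. $\frac{k_0+1}{2}\le s\le \frac{k_0+p-4}{2}$; taking integer parts (and using that $s$ is an integer) gives exactly $\lceil\frac{k_0+1}{2}\rceil \le s\le \lfloor\frac{k_0+p-4}{2}\rfloor$, which is the set $S$ of Notation~\ref{notation:3}. So on the row-3 part the equivalence is immediate. The work is then to check that on the boundary cases — $s$ near $k_0-2$ on one side and $s$ near $\lfloor\frac{k_0-2+p-1}{2}\rfloor$ on the other, and the single row-2 index $s=\lfloor\frac{k_0-2}{2}\rfloor+1$ or $s=k_0-2$ if those fall in the range — the values $a_s$ fall outside $[1,p-4]$ exactly when $s\notin S$. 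I would simply tabulate $a_s$ at these $O(1)$ boundary indices and verify the (in)equalities directly.

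The main obstacle, such as it is, is bookkeeping the floor/ceiling arithmetic and the parity of $k_0$ carefully: the definition of $S$ uses $\lceil\frac{k_0+1}{2}\rceil$ and $\lfloor\frac{k_0-4+p}{2}\rfloor$ while the range of $s$ in the statement uses $\lceil\frac{k_0-1}{2}\rceil$ and $\lfloor\frac{p-1+k_0-1}{2}\rfloor$, and one must confirm that $\lceil\frac{k_0-1}{2}\rceil=\lfloor\frac{k_0-2}{2}\rfloor+1$ and that the top endpoints agree, splitting into $k_0$ even versus odd. Since $a_s$ is a strictly decreasing (by steps of $2$) affine function of $s$ on each relevant sub-range and the genericity band $[1,p-4]$ is an interval, the set of good $s$ is automatically an interval, so it suffices to pin down its two endpoints; I expect no subtlety beyond this case analysis. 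Finally I would remark that the excluded indices (where $a_s\in\{0,p-3,p-2\}$, etc.) correspond to the non-generic hybrid types, consistent with Definition~\ref{re::1}(2).
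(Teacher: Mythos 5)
Your proposal is correct and follows essentially the same route as the paper: the paper's proof simply writes $a_s=p-1+k_0-2-2s$ on this range and unwinds $1\le a_s\le p-4$ (via the inequality \eqref{neq::23}) to get exactly $s\in S$, which is precisely your row‑3 computation (and the row‑2 formula coincides with it, so no separate case is needed). Your extra caution about the top boundary index landing in row 4 when $k_0$ is odd is a fair point the paper glosses over, but it does not change the conclusion.
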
	
\begin{proof}
	 For $s\in \{\left\lceil\frac{k_0-1}{2}\right\rceil,\dots, \left\lfloor\frac{p-1+k_0-1}{2}\right\rfloor\}$, we have $a_{s}=p-1+k_0-2-2s$. By \eqref{neq::23},
	we have $1\leq a_s\leq p-4$ is equivalent to $1\leq p-1+k_0-2-2s\leq p-4$, and further to 
	$s\in S$. 	
\end{proof}

%
%
%\begin{notation}
%	We denote $s_i$ the $i$-st element in the sequence $\{\lfloor \frac{k_0-2}{2}\rfloor,\dots, 0, k_0-1,\dots, \lfloor \frac{k_0-2+p-1}{2}\rfloor\}$ for $i\in S:=\{1,2,\dots,(p+1)/2\}$.
%	Note that this sequence is decreasing first and then becomes increasing.
%\end{notation}

%
%
%For any given $(s_1,\dots,s_u)\in S^u$ and $n\geq 0$, 
%we now determine $\{n_{u,i}\}\in \ZZ^u$ such that 
%$$g_{n}^{(\us)}(w)=\prod_{i=1}^{u}g_{n_{u,i}}^{(s_i)}(w).$$

%\begin{notation}\label{notation:order}
%	We first order elements in $S$ by
%	$$	\lfloor \frac{k_0-2}{2}\rfloor<\lfloor \frac{k_0-2}{2}\rfloor-1<\cdots< 0< k_0< \cdots< \lfloor \frac{k_0-2+p-1}{2}\rfloor. $$
%\end{notation}

\begin{definition}\label{def:1}
Given a map $\phi: S\to \ZZ$, we call a pair of indices $s< s'$ in $S$ \emph{$\phi$-regular} if it satisfies that
	\begin{enumerate}
		\item[(a)] $|\phi(s)-\phi(s')|\leq 1$;
		\item[(b)] if $\phi(s)\neq \phi(s')$, then $\phi(s')$ is odd.
	\end{enumerate}

We call $\phi$ \emph{odd dominant (w.r.t. $\ve$)} if every pair of indices $s< s'$ in  $S$ is $\phi$-regular.
\end{definition}
Intuitively speaking, we use ``odd dominant'' to emphasize the property of $\phi$ that $\phi(s)$ is odd for larger $s$ if the smaller one is.

\begin{lemma}\label{lem:5}
	A map $\phi: S\to \ZZ$ is odd dominant if and only if $s_1<s_2$ is $\phi$-regular for all consecutive $s_1$ and $s_2$ in $S.$
\end{lemma}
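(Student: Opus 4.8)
The plan is to prove the nontrivial direction: assuming every consecutive pair in $S$ is $\phi$-regular, show that every pair $s<s'$ in $S$ is $\phi$-regular. Write $S=\{\sigma_0<\sigma_1<\dots<\sigma_m\}$ for the elements in increasing order. The two conditions (a) and (b) in Definition~\ref{def:1} together say precisely that, for a $\phi$-regular pair $s<s'$, one of three mutually exclusive things happens: either $\phi(s')=\phi(s)$, or $\phi(s')=\phi(s)+1$ with $\phi(s')$ odd, or $\phi(s')=\phi(s)-1$ with $\phi(s')$ odd. I would first record this trichotomy as the working reformulation.

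First I would prove the key monotonicity-type claim by induction along $S$: if all consecutive pairs are $\phi$-regular, then for any $\sigma_i<\sigma_j$ in $S$ we have $\phi(\sigma_j)\in\{\phi(\sigma_i)-1,\phi(\sigma_i),\phi(\sigma_i)+1\}$, and moreover if $\phi(\sigma_j)\neq\phi(\sigma_i)$ then $\phi(\sigma_j)$ is odd. The base case $j=i+1$ is the hypothesis. For the inductive step, suppose the statement holds for the pair $(\sigma_i,\sigma_{j})$ and consider $(\sigma_i,\sigma_{j+1})$, using also $\phi$-regularity of the consecutive pair $(\sigma_j,\sigma_{j+1})$. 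The main point is a short case analysis on the value of $\phi(\sigma_j)$ relative to $\phi(\sigma_i)$ combined with the value of $\phi(\sigma_{j+1})$ relative to $\phi(\sigma_j)$:

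\begin{itemize}
\item If $\phi(\sigma_j)=\phi(\sigma_i)$, then $\phi$-regularity of $(\sigma_j,\sigma_{j+1})$ immediately gives the desired conclusion for $(\sigma_i,\sigma_{j+1})$.
\item If $\phi(\sigma_j)=\phi(\sigma_i)+1$ (so $\phi(\sigma_j)$ is odd): the three options for $\phi(\sigma_{j+1})$ are $\phi(\sigma_j)$, $\phi(\sigma_j)+1$ with $\phi(\sigma_j)+1$ odd, or $\phi(\sigma_j)-1$ with $\phi(\sigma_j)-1$ odd. Since $\phi(\sigma_j)$ is odd, the options $\phi(\sigma_j)\pm1$ would force an odd number adjacent to an odd number, which is impossible; hence $\phi(\sigma_{j+1})=\phi(\sigma_j)=\phi(\sigma_i)+1$, which is odd, and we are done.
\item If $\phi(\sigma_j)=\phi(\sigma_i)-1$ (so $\phi(\sigma_j)$ is odd): symmetrically, $\phi(\sigma_{j+1})=\phi(\sigma_j)=\phi(\sigma_i)-1$, which is odd.
\end{itemize}

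This establishes the claim. Notice that the parity obstruction — an odd integer cannot differ by $\pm1$ from another odd integer — is exactly what rigidifies the situation and forces the value of $\phi$ to stabilize once it has moved off $\phi(\sigma_i)$; this is the heart of the argument and the step I expect to need the most care in bookkeeping, though it is elementary.

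Finally, given the claim, let $s<s'$ be arbitrary in $S$, say $s=\sigma_i$ and $s'=\sigma_j$ with $i<j$. The claim gives $|\phi(s)-\phi(s')|\leq 1$, which is condition (a), and it gives that $\phi(s')$ is odd whenever $\phi(s')\neq\phi(s)$, which is condition (b). Hence the pair $s<s'$ is $\phi$-regular, so $\phi$ is odd dominant. The converse direction is trivial, since consecutive pairs are in particular pairs $s<s'$ in $S$. This completes the proof.
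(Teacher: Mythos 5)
Your proof is correct and rests on exactly the same mechanism as the paper's: the parity obstruction that an odd value of $\phi$ cannot move by $\pm 1$ to another odd value, which forces $\phi$ to stabilize once it changes. The paper packages this as a transitivity statement for three elements $s_1<s_2<s_3$ (casing on the parity of $\phi(s_2)$) while you run an explicit induction along the chain casing on $\phi(\sigma_j)-\phi(\sigma_i)$, but these are the same argument in different clothing.
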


\begin{proof}
	It is trivial that the first statement implies the second. Now we prove the opposite. Clearly, it is enough to show that for any $s_1< s_2< s_3$, if $s_1< s_2$ and $s_2< s_3$ are both $\phi$-regular, so is $s_1< s_3$. 
	Its proof falls into two cases. (1) If $\phi(s_2)$ is odd, then since $s_2<s_3$ is $\phi$-regular,  $\phi(s_3)$ must also be odd, and $\phi(s_3)=\phi(s_2)$. Combined with that $s_1< s_2$ is $\phi$-regular, this implies that $s_1< s_3$ is $\phi$-regular.
	(2) If $\phi(s_2)$ is even, then since $s_1<s_2$ is $\phi$-regular, $\phi(s_1)$ must also be even, and $\phi(s_1)=\phi(s_2)$. Combined with that $s_2< s_3$ is $\phi$-regular, this implies that $s_1< s_3$ is $\phi$-regular.
\end{proof}

\begin{lemma}\label{lem:3}
	For any $k\in \calK$, the dimension map $d_k^{\ur,\dagger}: S\to \ZZ$ is odd dominant.
\end{lemma}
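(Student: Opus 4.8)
The plan is to reduce everything to the explicit formula for $d_k^{\ur}(s)$ given in Lemma~\ref{re:10} together with the case-by-case description of $\delta_s, t_1^{(s)}, t_2^{(s)}$ in Lemma~\ref{lem:1}, and then, by Lemma~\ref{lem:5}, to check only consecutive pairs $s_1 < s_2$ in $S$. Recall $d_k^{\ur,\dagger}(s) = d_k^{\ur}(s) + \delta_s = \lfloor\frac{k_\bullet - t_1^{(s)}}{p+1}\rfloor + \lfloor\frac{k_\bullet - t_2^{(s)}}{p+1}\rfloor + 2 + \delta_s$. So first I would note that on the range of $S$ (which, by Notation~\ref{notation:3}, sits inside $\{\lceil\frac{k_0-1}{2}\rceil, \dots, \lfloor\frac{p-1+k_0-1}{2}\rfloor\}$ and splits across the second and third rows of the table in Lemma~\ref{lem:1}), the pair $(t_1^{(s)}, t_2^{(s)})$ has a uniform linear behavior: as $s$ increases by $1$, one of $t_1, t_2$ increases by $1$ and the other decreases by $1$, while $\delta_s$ jumps from $0$ to $1$ exactly once (at $s = k_0-1$). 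The key arithmetic input is that $\lfloor\frac{x}{p+1}\rfloor$ changes by at most $1$ when $x$ changes by $\pm 1$, so $d_k^{\ur,\dagger}(s)$ changes by at most $1$ between consecutive $s$; that already gives condition (a) of Definition~\ref{def:1}.

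For condition (b) — that whenever $d_k^{\ur,\dagger}(s_1) \neq d_k^{\ur,\dagger}(s_2)$ for consecutive $s_1 < s_2$, the value $d_k^{\ur,\dagger}(s_2)$ must be odd — I would track the parity. When moving from $s$ to $s+1$ inside a single row of the table (so $\delta$ is constant), $t_1 \mapsto t_1 \pm 1$ and $t_2 \mapsto t_2 \mp 1$, and the total $d_k^{\ur,\dagger}$ changes exactly when exactly one of the two floor functions $\lfloor\frac{k_\bullet - t_i}{p+1}\rfloor$ jumps. The point is to show that such a jump, when it occurs, always lands the new value at an odd integer. This should follow from a parity relation between $d_k^{\ur,\dagger}(s)$ and the auxiliary data: I expect that $d_k^{\ur,\dagger}(s)$ is always even except precisely in the "transition" configurations, and that the transitions are arranged so that the larger index carries the odd value. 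Concretely I would compute $d_k^{\ur,\dagger}(s) \bmod 2$ in terms of $\lfloor\frac{k_\bullet - t_1^{(s)}}{p+1}\rfloor + \lfloor\frac{k_\bullet - t_2^{(s)}}{p+1}\rfloor + \delta_s \bmod 2$, and use that $t_1^{(s)} + t_2^{(s)}$ has a fixed value (independent of $s$) within each row — e.g. $t_1 + t_2 = k_0$ in the second row, $t_1 + t_2 = p + k_0 + 1$ in the third — so that the two floors are "complementary" and their sum's parity is governed by whether $k_\bullet - t_1$ and $k_\bullet - t_2$ lie in the same residue block mod $p+1$. Since $t_2 - t_1$ is monotone in $s$, crossing a block boundary happens in a controlled way, and the parity bookkeeping closes.

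I would handle separately the one place where $s_1$ and $s_2$ straddle the boundary between two rows of Lemma~\ref{lem:1} — that is, the consecutive pair $(s_1, s_2) = (\lfloor\frac{k_0-2}{2}\rfloor, \lfloor\frac{k_0-2}{2}\rfloor+1)$ if it lies in $S$ (here $\delta$ stays $0$ but the formulas for $t_1, t_2$ switch), and the pair at $s = k_0-1$ if relevant (where $\delta$ jumps $0 \to 1$). In these boundary cases one computes $d_k^{\ur,\dagger}$ directly on both sides from the table and checks (a) and (b) by hand; the $\delta$-jump contributes an extra $+1$ which must be absorbed into the parity count, and this is exactly why the dagger-normalization (adding $\delta_s$) was introduced — it smooths out the discontinuity. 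The main obstacle I anticipate is precisely this parity bookkeeping across block boundaries of the floor function combined with the row-boundary of the table: getting the two sources of "$\pm 1$" to interact correctly requires careful case analysis on the position of $k_\bullet \bmod (p+1)$ relative to $t_1^{(s)}$ and $t_2^{(s)}$. Everything else is a routine, if tedious, verification from the explicit formulas.
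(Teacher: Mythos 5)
Your proposal follows essentially the same route as the paper's proof: reduce to consecutive pairs via Lemma~\ref{lem:5}, combine the formula of Lemma~\ref{re:10} with the table of Lemma~\ref{lem:1}, and case-split on the position of $k_\bullet \bmod (p+1)$ relative to the thresholds $t_1^{(s)}, t_2^{(s)}, t_1^{(s+1)}, t_2^{(s+1)}$ (the paper's three cases correspond exactly to your ``within row~2'', ``row boundary at $s=k_0-1$'', and ``within row~3''), with the parity of $d_k^{\ur,\dagger}$ read off from whether that residue lies between $t_1^{(s)}$ and $t_2^{(s)}$ and with $\delta_s$ absorbing the jump at the row boundary. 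The only slip is the aside that $d_k^{\ur,\dagger}$ is ``always even except in transition configurations'' (it is odd on a whole interval of residues, not just at crossings), but this does not affect the argument you outline, which is exactly the verification the paper carries out.
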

\begin{proof} 
	Let $ m\in \{0,\dots, p\}$ be the representative element of the congruence class of $k_\bullet$ modulo $p+1$. 
	It is enough to check the conditions (a) and (b) in Definition~\ref{def:1} for $d_k^{\ur,\dagger}$.
	By Lemma~\ref{lem:5}, we split our proof into three cases. We note first that (a) is trivial in all three cases after listing the four sub-cases for each. Now we focus on proving (b).
	
	\textbf{Case 1.} When $\left\lceil \frac{k_0+1}{2}\right\rceil\leq s<k_0-2$.
	By Lemma~\ref{lem:1}, we have $\delta_{s}=\delta_{s+1}=0$ and  $$t_{1}^{(s+1)}< t_{1}^{(s)}< t_{2}^{(s)}<t_{2}^{(s+1)}.$$
	This chain of inequalities implies that
	\begin{enumerate}
		\item if $ m<t_{1}^{(s+1)}$ or $\geq t_{2}^{(s+1)}$, then $d_k^{\ur,\dagger}(s+1)= d_k^{\ur,\dagger}(s)$;
		\item  if $t_{1}^{(s+1)}\leq  m< t_{1}^{(s)}$, then $d_k^{\ur,\dagger}(s+1)= d_k^{\ur,\dagger}(s)+1$;
		\item  if $t_{1}^{(s)}\leq m< t_{2}^{(s)}$, then $d_k^{\ur,\dagger}(s+1)= d_k^{\ur,\dagger}(s)$;
		\item   if $t_{2}^{(s)}\leq m< t_{2}^{(s+1)}$, then $d_k^{\ur,\dagger}(s+1)= d_k^{\ur,\dagger}(s)-1$.
	\end{enumerate}
	Note that in the subcases (2) and (4), when  $d_k^{\ur,\dagger}(s)\neq d_k^{\ur,\dagger}(s+1)$, the dimension $d_k^{\ur,\dagger}(s+1)$ is odd. This proves (b)  for this case.
	
	\textbf{Case 2.}
	By Lemma~\ref{lem:1}, we have $\delta_{k_0-2}=0$,  $\delta_{k_0-1}=1$ and $$t_{1}^{(k_0-2)}< t_{2}^{(k_0-2)}< t_{1}^{(k_0-1)}< t_{2}^{(k_0-1)}.$$
	This chain of inequalities implies that
	\begin{enumerate}
		\item if $ m<t_{1}^{(k_0-2)}$ or $\geq t_{2}^{(k_0-1)}$, then $d_k^{\ur,\dagger}(k_0-1)= d_k^{\ur,\dagger}(k_0-2)+1$;
		\item  if $t_{1}^{(k_0-2)}\leq m< t_{2}^{(k_0-2)}$, then $d_k^{\ur,\dagger}(k_0-1)= d_k^{\ur,\dagger}(k_0-2)$;
		\item  if $t_{2}^{(k_0-2)}\leq m< t_{1}^{(k_0-1)}$, then $d_k^{\ur,\dagger}(k_0-1)= d_k^{\ur,\dagger}(k_0-2)-1$;
		\item   if $t_{1}^{(k_0-1)}\leq m< t_{2}^{(k_0-1)}$, then $d_k^{\ur,\dagger}(k_0-1)= d_k^{\ur,\dagger}(k_0-2)$.
	\end{enumerate}
	Note that in the subcases (1) and (3), when  $d_k^{\ur,\dagger}(k_0-2)\neq d_k^{\ur,\dagger}(k_0-1)$, the dimension $d_k^{\ur,\dagger}(k_0-1)$ is odd. This proves (b) for this case.

	\textbf{Case 3.} When $k_0-1\leq s\leq \left\lfloor\frac{k_0-4+p}{2}\right\rfloor$.
	By Lemma~\ref{lem:1}, we have $\delta_{s}=\delta_{s+1}=1$ and $$t_{1}^{(s)}< t_{1}^{(s+1)}< t_{2}^{(s+1)}<t_{2}^{(s)}.$$
	This chain of inequalities implies that
	\begin{enumerate}
		\item if $ m<t_{1}^{(s)}$ or $\geq t_{2}^{(s)}$, then $d_k^{\ur,\dagger}(s+1)= d_k^{\ur,\dagger}(s)$;
		\item  if $t_{1}^{(s)}\leq  m< t_{1}^{(s+1)}$, then $d_k^{\ur,\dagger}(s+1)= d_k^{\ur,\dagger}(s)-1$;
		\item  if $t_{1}^{(s+1)}\leq m< t_{2}^{(s+1)}$, then $d_k^{\ur,\dagger}(s+1)= d_k^{\ur,\dagger}(s)$;
		\item   if $t_{2}^{(s+1)}\leq m< t_{2}^{(s)}$, then $d_k^{\ur,\dagger}(s+1)= d_k^{\ur,\dagger}(s)+1$.
	\end{enumerate}
Note that in the subcases (2) and (4), when  $d_k^{\ur,\dagger}(s)\neq d_k^{\ur,\dagger}(s+1)$, the dimension $d_k^{\ur,\dagger}(s+1)$ is odd. This proves (b) for this case.
\end{proof}

%Case 4. Assume that $k_0$ is odd. Let $s:=\frac{ k_0-3+p-1}{2}$. 
%Then by Lemma~\ref{lem:1}, we have $\delta_{s}=\delta_{s+1,k}=1$ and  $$t_{1}^{(s)}< t_{1}^{(s+1)}< t_{2}^{(s+1)}<t_{2}^{(s)}.$$
%
%	This chain of inequalities implies that
%\begin{enumerate}
%	\item if $ m<t_{1}^{(s)}$ or $\geq t_{2}^{(s)}$, then $d_k^{\ur,\dagger}(s+1)= d_k^{\ur,\dagger}(s)$;
%	\item  if $t_{1}^{(s)}\leq  m< t_{1}^{(s+1)}$, then $d_k^{\ur,\dagger}(s+1)= d_k^{\ur,\dagger}(s)-1$;
%	\item  if $t_{1}^{(s+1)}\leq m< t_{2}^{(s+1)}$, then $d_k^{\ur,\dagger}(s+1)= d_k^{\ur,\dagger}(s)$;
%	\item   if $t_{2}^{(s+1)}\leq m< t_{2}^{(s)}$, then $d_k^{\ur,\dagger}(s+1)= d_k^{\ur,\dagger}(s)+1$.
%\end{enumerate}
%
%Clearly, this case satisfies the condition~(1) in Definition~\ref{def:1}.
%
%
%Note that in the subcases (2) and (4) when  $d_k^{\ur,\dagger}(s)\neq d_k^{\ur,\dagger}(s+1)$, the dimension $d_k^{\ur,\dagger}(s+1)$ is odd. This proves the condition~(2) in Definition~\ref{def:1} for this case.

The following partition of an integer has the similar property as in Proposition~\ref{mlemma:Ghost}.
\begin{notation}\label{re:11}
Given any $n\geq 0$ and $u\in \ZZ_{>0}$, we decompose $n$ into $u$ parts $\{n_{u,i}\}_{i=1}^u$ such that $n_{u,i}$ is odd for larger $i$'s and even for smaller $i$'s. More precisely, 
	\begin{itemize}
		\item 	if $\lfloor\frac{n}{u}\rfloor$ is even, we write \[n_{u,i}:=\begin{cases}
			\lfloor\frac{n}{u}\rfloor& \textrm{for~} 1\leq i\leq u(\lfloor\frac{n}{u}\rfloor+1)-n,\\
			\lfloor\frac{n}{u}\rfloor+1&\textrm{for~} u(\lfloor\frac{n}{u}\rfloor+1)-n+1\leq i\leq u;
		\end{cases}\]
		\item 	if $\lfloor\frac{n}{u}\rfloor$ is odd, we write 
		\[n_{u,i}:=\begin{cases}
			\lfloor\frac{n}{u}\rfloor+1& \textrm{for~} 1\leq i\leq n-u\lfloor\frac{n}{u}\rfloor,\\
			\lfloor\frac{n}{u}\rfloor&\textrm{for~} n-u\lfloor\frac{n}{u}\rfloor+1\leq i\leq u.
		\end{cases}\]
	\end{itemize}	 
\end{notation}

\begin{lemma}\label{lem:4}
	Let $\phi$ be any odd dominant map, and $\us=(s_1,\dots, s_u)\in S^u$ be any non-decreasing sequence. 
Then for an integer $n\geq 0$, if  $  n_{u,i_0}<\phi(s_{i_0})$ for some $1\leq i_0\leq u$, then $  n_{u,i}\leq \phi(s_{i})$ for every $1\leq i\leq u$.
\end{lemma}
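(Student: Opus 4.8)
The plan is to unwind the combinatorics of the partition $\{n_{u,i}\}_{i=1}^u$ from Notation~\ref{re:11} and the odd-dominance of $\phi$ (Definition~\ref{def:1}). First I would record the two structural facts about the partition: the parts are weakly increasing (indeed $n_{u,i}\in\{\lfloor n/u\rfloor,\lfloor n/u\rfloor+1\}$ and $n_{u,i}\le n_{u,i+1}$), and the parts that equal the larger value $\lfloor n/u\rfloor+1$ are precisely a terminal block $\{i_1,\dots,u\}$ of indices; moreover, by construction, among two consecutive possible values exactly one parity occurs for the smaller value and the other for the larger, and the partition is arranged so that the \emph{odd} value sits in the terminal block (this is the "$n_{u,i}$ odd for larger $i$'s" clause). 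Symmetrically, since $\us$ is non-decreasing and $\phi$ is odd dominant, Lemma~\ref{lem:5} together with Definition~\ref{def:1} forces $\phi(s_i)\in\{c,c+1\}$ for some integer $c$ once any two consecutive values differ, and the larger value $c+1$, if it occurs, is odd and occupies a terminal block of indices $\{j_1,\dots,u\}$.

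The key step is then to compare the "terminal odd block" of the partition with the "terminal larger block" of $\phi\circ\us$. Suppose $n_{u,i_0}<\phi(s_{i_0})$ for some $i_0$. I would split into cases according to whether $n_{u,i_0}$ is even or odd. If $n_{u,i_0}$ is the smaller value of the partition (so all earlier parts equal it and are $\le n_{u,i_0}$), then for $i<i_0$ we immediately get $n_{u,i}=n_{u,i_0}<\phi(s_{i_0})\le \phi(s_i)+1$, and one pushes this to $n_{u,i}\le\phi(s_i)$ using the parity constraint: if $\phi(s_i)=\phi(s_{i_0})-1$ then $\phi(s_{i_0})$ is odd (odd dominance applied to the pair $s_i<s_{i_0}$), forcing $n_{u,i_0}$ even hence $n_{u,i_0}\le\phi(s_{i_0})-1=\phi(s_i)$, a contradiction-free conclusion; if $\phi(s_i)=\phi(s_{i_0})$ the inequality is clear. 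For $i>i_0$ one uses that $\phi(s_i)\ge\phi(s_{i_0})$ (non-decreasing through the larger-block structure, or directly: if $\phi(s_i)=\phi(s_{i_0})-1$ then odd dominance on $s_{i_0}<s_i$ forces $\phi(s_i)$ odd and equal to $\phi(s_{i_0})$, contradiction) while $n_{u,i}\le n_{u,i_0}+1$, and again the parity clause (the partition puts the odd value on top, $\phi$ puts the odd value on top) rules out $n_{u,i}=\phi(s_i)+1$.

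The main obstacle I anticipate is the bookkeeping when $i_0$ sits at the boundary between the two blocks of the partition and between the two values of $\phi$ — i.e. making precise that "odd on top" for $n_{u,\cdot}$ and "odd on top" for $\phi(s_\cdot)$ are aligned rather than offset by one. Concretely, the delicate point is showing that $n_{u,i}-\phi(s_i)$ cannot jump from $\le 0$ at $i_0$ to $=1$ at some later $i$: this requires simultaneously that the partition increments (even$\to$odd, in the odd-$\lfloor n/u\rfloor$ case even$\to$odd is replaced by odd$\to$even, so one must also handle the case $\lfloor n/u\rfloor$ odd where the roles of the blocks in Notation~\ref{re:11} are swapped) and that $\phi$ increments happen "in the same place." I would handle this by a short monotonicity argument: define $\psi(i):=\phi(s_i)-n_{u,i}$ and show $\psi$ is weakly increasing on the range where $n_{u,i_0}<\phi(s_{i_0})$ fails to propagate, using that each of $\phi(s_i)$ and $n_{u,i}$ changes by at most $1$ and that a decrease of $\psi$ by $1$ at step $i\to i+1$ would require $\phi(s_{i+1})=\phi(s_i)$ with $n_{u,i+1}=n_{u,i}+1$ of the "wrong" parity, contradicting that the partition's top block and $\phi$'s top block carry the same parity. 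Once $\psi(i_0)\ge 1$ and $\psi$ weakly increasing from the left is established, together with the symmetric statement on the right, we get $n_{u,i}\le\phi(s_i)$ for all $i$.
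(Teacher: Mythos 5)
Your overall strategy is the paper's: dispose of the case $\phi(s_{i_0})-n_{u,i_0}\geq 2$ immediately from $|\phi(s)-\phi(s')|\leq 1$ and $|n_{u,i}-n_{u,j}|\leq 1$, and then handle the edge case $\phi(s_{i_0})=n_{u,i_0}+1$ by exploiting that in both sequences the \emph{odd} value occupies a terminal block of indices. That parity alignment is indeed the decisive point and you do state it. However, several of the supporting claims you lean on are false as written, and the mechanism you propose for the delicate boundary case does not work.

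First, the parts $n_{u,i}$ are \emph{not} weakly increasing: when $\lfloor n/u\rfloor$ is odd, Notation~\ref{re:11} places the larger value $\lfloor n/u\rfloor+1$ on the \emph{initial} block, so the sequence is non-increasing; you acknowledge the swap but only run the case analysis ("all earlier parts equal it") for the even-floor configuration. Second, and more seriously, for an odd dominant $\phi$ the larger of its two values need \emph{not} sit on the terminal block: Definition~\ref{def:1}(b) only forces the \emph{later} value to be odd when two values differ, and $\phi(s)=4$, $\phi(s')=3$ for $s<s'$ is perfectly regular (such decreases genuinely occur for $d_k^{\ur,\dagger}$, e.g.\ subcase (4) of Case 1 in the proof of Lemma~\ref{lem:3}). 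Hence your step ``$\phi(s_i)\geq\phi(s_{i_0})$ for $i>i_0$'' is wrong, and its justification (``forces $\phi(s_i)$ odd and equal to $\phi(s_{i_0})$'') is a non sequitur. Third, $\psi(i)=\phi(s_i)-n_{u,i}$ is \emph{not} weakly increasing (take $\phi$ decreasing with $n_{u,\cdot}$ constant), and a unit decrease of $\psi$ does not require $n_{u,i+1}=n_{u,i}+1$; so the monotonicity argument you invoke to close the boundary case fails. The correct repair, which is exactly the paper's proof, is to split on the parity of $n_{u,i_0}$ itself: if $n_{u,i_0}$ is odd then $\phi(s_{i_0})$ is even, so condition (b) forces $\phi(s_i)=\phi(s_{i_0})$ for all $i\leq i_0$ (two values differing by exactly one cannot both be odd), while $i_0$ lies in the terminal odd block of the partition so $n_{u,i}=n_{u,i_0}$ for all $i\geq i_0$; the even case is symmetric. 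This handles both floor-parities at once and requires no monotonicity of either sequence.
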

\begin{proof}
	If  $\phi(s_{i_0})-n_{u,i_0}\geq 2$, by Definition~\ref{def:1}(a) and
	\begin{equation}\label{eq:3}
		|n_{u,i}-n_{u, j}|\leq 1 \textrm{~for every~} i,j\in\{1,\dots,u\},
	\end{equation} we immediately prove this lemma. 
	Now we assume that $\phi(s_{i_0})-n_{u,i_0}= 1$, and discuss it in two cases.
	
	\textbf{Case 1.} If $n_{u,i_0}$ is odd, then $\phi(s_{i_0})=n_{u,i_0}+1$ is even. By Definition~\ref{def:1}(b), it follows that $\phi(s_{i})=\phi(s_{i_0})$ for all $i\leq i_0$. 
	Combined with \eqref{eq:3}, this implies that for every $i\leq i_0$,
	we have $$n_{u,i}\leq n_{u,i_0}+1=\phi(s_{i_0})=\phi(s_{i});$$
	and for any $i\geq i_0$, we have 
	$$\phi(s_{i})\geq \phi(s_{i_0})-1=n_{u,i_0}=n_{u,i}.$$
	
	\textbf{Case 2.} If $n_{u,i_0}$ is even, then $\phi(s_{i_0})=n_{u,i_0}+1$ is odd. By Definition~\ref{def:1}(b),  it follows that  $\phi(s_{i})=\phi(s_{i_0})$ for all $i\geq i_0$. 
	Combined with \eqref{eq:3}, this implies that for every $i\geq i_0$
	we have $$n_{u,i}\leq n_{u,i_0}+1=\phi(s_{i_0})=\phi(s_{i});$$
	and for any $i\leq  i_0$, we have 
	\[\phi(s_{i})\geq \phi(s_{i_0})-1=n_{u,i_0}=n_{u,i}.\qedhere\]
\end{proof}

%For any given $(s_1,\dots,s_u)\in S^u$ and $n\geq 0$, 
%we now determine $\{n_{u,i}\}\in \ZZ^u$ such that 
%$$g_{n}^{(\us)}(w)=\prod_{i=1}^{u}g_{n_{u,i}}^{(s_i)}(w).$$

\begin{proposition}\label{mlemma:Ghost}
For any integer $n\geq 0$ and any non-decreasing sequence $\us=(s_1,\dots, s_u)\in S^u$, we have $$g_{n}^{(\us),\dagger}(w)=\prod_{i=1}^{u}g_{n_{u,i}}^{(s_i), \dagger}(w).$$
\end{proposition}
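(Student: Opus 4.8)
The plan is to reduce the multiplicativity of the dagger ghost coefficients $g_n^{(\us),\dagger}$ to a combinatorial identity on the exponents $m_n^{(\us),\dagger}(k)$, one prime $k\in\calK$ at a time. Since
$$g_n^{(\us),\dagger}(w)=\prod_{k\equiv k_0\bmod (p-1)}(w-w_k)^{m_n^{(\us),\dagger}(k)},$$
it suffices to show that for every fixed $k\in\calK$ one has
$$m_n^{(\us),\dagger}(k)=\sum_{i=1}^u m_{n_{u,i}}^{(s_i),\dagger}(k).$$
So fix $k$. Set $d^{\ur,\dagger}_i:=d_k^{\ur,\dagger}(s_i)$ and recall $d_k^{\ur,\dagger}(\us)=\sum_i d^{\ur,\dagger}_i$, $d_k^{\Iw,\dagger}(\us)=u\cdot d_k^{\Iw,\dagger}=u(2k_\bullet+2)$, while the single-variable Iwahori dagger dimension is the constant $d_k^{\Iw,\dagger}=2k_\bullet+2$. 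Thus, writing $M:=d_k^{\Iw,\dagger}=2k_\bullet+2$, the exponent $m_n^{(\us),\dagger}(k)$ equals $\min\{\,n-\sum_i d^{\ur,\dagger}_i,\ \sum_i(M-2d^{\ur,\dagger}_i)-n\,\}$ truncated at $0$, and $m_{n_{u,i}}^{(s_i),\dagger}(k)=\min\{\,n_{u,i}-d^{\ur,\dagger}_i,\ M-2d^{\ur,\dagger}_i-n_{u,i}\,\}$ truncated at $0$. Since $\sum_i n_{u,i}=n$, the linear (untruncated) versions already add up: $\sum_i(n_{u,i}-d^{\ur,\dagger}_i)=n-\sum_i d^{\ur,\dagger}_i$ and $\sum_i(M-2d^{\ur,\dagger}_i-n_{u,i})=\sum_i(M-2d^{\ur,\dagger}_i)-n$. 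The whole content is therefore to control the truncation at $0$, i.e. to show that the positive parts are compatible with the sum.

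\textbf{Key step: synchronizing the truncations.} The crucial point is that, by Lemma~\ref{lem:3}, the map $\phi:=d_k^{\ur,\dagger}\colon S\to\ZZ$ is odd dominant, so Lemma~\ref{lem:4} applies to the non-decreasing sequence $\us\in S^u$. I would argue as follows. If every summand $m_{n_{u,i}}^{(s_i),\dagger}(k)$ is zero, then for each $i$ either $n_{u,i}\le d_i^{\ur,\dagger}$ or $n_{u,i}\ge M-d_i^{\ur,\dagger}$; using the contrapositive of Lemma~\ref{lem:4} (with $\phi=d_k^{\ur,\dagger}$ and also with $\phi'(s):=M-d_k^{\ur,\dagger}(s)$, which one checks is odd dominant precisely because $M$ is even and $d_k^{\ur,\dagger}$ is odd dominant—so $\phi'(s)\equiv \phi(s)+$even, and an odd value of $\phi'$ corresponds to an odd value of $\phi$... here I need to double-check parity, but the structural point is that the ``small'' and ``large'' regimes cannot mix) one shows that either $n_{u,i}\le d_i^{\ur,\dagger}$ for all $i$ or $n_{u,i}\ge M-d_i^{\ur,\dagger}$ for all $i$; in either regime summing gives $n\le\sum_i d_i^{\ur,\dagger}$ or $n\ge u M-\sum_i d_i^{\ur,\dagger}$, hence $m_n^{(\us),\dagger}(k)=0$ as well. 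Conversely, if some summand is positive, Lemma~\ref{lem:4} forces every $n_{u,i}$ to lie in the ``active'' window $d_i^{\ur,\dagger}<n_{u,i}<M-d_i^{\ur,\dagger}$ for all $i$ (this is exactly the statement that $n_{u,i_0}>d_{i_0}^{\ur,\dagger}$ for one index propagates to all, and similarly on the other side), so each summand equals its linear expression; summing the linear expressions gives the two linear quantities above, and a final check (again using Lemma~\ref{lem:4} to see that neither linear quantity can be negative once all windows are active) shows their minimum is exactly $m_n^{(\us),\dagger}(k)$.

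\textbf{Main obstacle.} The delicate part is the bookkeeping of the two one-sided conditions simultaneously: Lemma~\ref{lem:4} is stated for a single odd dominant $\phi$ and the condition $n_{u,i_0}<\phi(s_{i_0})$, so to handle the upper bound $n_{u,i}>M-d_i^{\ur,\dagger}$ I must either apply the lemma to a suitably reflected map (replacing $n_{u,i}$ by $M-n_{u,i}$ and $\phi$ by $M-\phi$, keeping track of how the even/odd splitting of the $n_{u,i}$ in Notation~\ref{re:11} transforms, which is why the decomposition puts the odd parts for larger $i$) or prove a mirror version of Lemma~\ref{lem:4} directly. I expect verifying that $M-d_k^{\ur,\dagger}$ is again odd dominant, and that the parity pattern of $\{n_{u,i}\}$ and $\{M-n_{u,i}\}$ matches what Lemma~\ref{lem:4} needs, to be the one genuinely fiddly computation; everything else is linear algebra on the nose. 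Once the ``all windows active or all windows inactive'' dichotomy is established, the proof of the displayed identity, and hence of the proposition, is immediate by taking the product over all $k\in\calK$.
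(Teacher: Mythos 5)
Your overall strategy is the same as the paper's: reduce the identity on $g_n^{(\us),\dagger}$ to the per-weight identity $m_n^{(\us),\dagger}(k)=\sum_{i=1}^u m_{n_{u,i}}^{(s_i),\dagger}(k)$ for each $k\in\calK$, and use the odd dominance of $d_k^{\ur,\dagger}$ (Lemma~\ref{lem:3}) together with Lemma~\ref{lem:4} to rule out mixed regimes among the indices $i$. Your worry about the reflected map resolves as you suspect: since $d_k^{\Iw,\dagger}=2k_\bullet+2$ is even, $2k_\bullet+2-d_k^{\ur,\dagger}$ is odd dominant whenever $d_k^{\ur,\dagger}$ is (an odd value of the reflected map corresponds to an odd value of the original), which is exactly the observation the paper makes.

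There is, however, a genuine gap in your treatment of the ``active window''. You write that once $d_i^{\ur,\dagger}<n_{u,i}<M-d_i^{\ur,\dagger}$ for all $i$, ``each summand equals its linear expression'' and that summing gives $\min\{\sum_i A_i,\sum_i B_i\}$. But inside the active window each exponent is a minimum of \emph{two} linear expressions with a kink at the midpoint $n_{u,i}=k_\bullet+1=\tfrac12 d_k^{\Iw,\dagger}$, and in general $\sum_i\min\{A_i,B_i\}<\min\bigl\{\sum_i A_i,\sum_i B_i\bigr\}$ unless every individual minimum is attained by the same branch (e.g.\ $\min\{1,0\}+\min\{0,1\}=0<1$). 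So your claim that ``the whole content is therefore to control the truncation at $0$'' is not correct: you must additionally show that either $n_{u,i}\le k_\bullet+1$ for all $i$ or $n_{u,i}\ge k_\bullet+1$ for all $i$. The paper does this by a four-case analysis, applying Lemma~\ref{lem:4} a third time to the constant map $s\mapsto k_\bullet+1$ (which is trivially odd dominant); alternatively the needed dichotomy follows at once from $|n_{u,i}-n_{u,j}|\le 1$. The repair is short, but as written your argument does not establish the identity for those $k$ at which $n$ sits near $u(k_\bullet+1)$, i.e.\ exactly where the two branches of the minimum exchange roles.
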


\begin{proof}
	It is enough to prove that for every $k\in \calK$, we have $$m_n^{(\us),\dagger}(k)=\sum\limits_{i=1}^u m_{n_{u,i}}^{(s_i),\dagger}(k).$$
	By Notation~\ref{eq:1}, this lemma can be further reduced to proving that every $k\in \calK$ at least fits in one of the following cases:
	\begin{enumerate}
		\item$n_{u,i}\leq d_k^{\ur,\dagger}(s_{i})$ for every $1\leq i\leq u$, in which case, $$m_n^{(\us),\dagger}(k)=\sum\limits_{i=1}^u m_{n_{u,i}}^{(s_i),\dagger}(k)=0.$$
		\item  $d_k^{\ur,\dagger}(s_{i})\leq n_{u,i}\leq \frac{1}{2}d_k^{\Iw,\dagger}=k_\bullet+1$ for every $1\leq i\leq u$, in which case, 
		$$m_n^{(\us),\dagger}(k)=n-d_k^{\ur,\dagger}(\us)=\sum\limits_{i=1}^u (n_{u,i}-d_k^{\ur,\dagger}(s_i)) =\sum\limits_{i=1}^u m_{n_{u,i}}^{(s_i),\dagger}(k).$$
		\item  $k_\bullet+1\leq n_{u,i}\leq 2(k_\bullet+1)-d_k^{\ur,\dagger}(s_{i})$ for every $1\leq i\leq u$, in which case, 
		$$m_n^{(\us),\dagger}(k)=2u(k_\bullet+1)-d_k^{\ur,\dagger}(\us)-n=\sum\limits_{i=1}^u (2(k_\bullet+1)-d_k^{\ur,\dagger}(s_{i})-n_{u,i}) =\sum\limits_{i=1}^u m_{n_{u,i}}^{(s_i),\dagger}(k).$$
		\item $ n_{u,i}\geq 2(k_\bullet+1)-d_k^{\ur,\dagger}(s_{i})$ for every $1\leq i\leq u$, in which case,
		$$m_n^{(\us),\dagger}(k)=\sum\limits_{i=1}^u m_{n_{u,i}}^{(s_i),\dagger}(k)=0.$$
	\end{enumerate}

	We first prove that $d_k^{\ur,\dagger}(s_{i})$, $2k_\bullet+2-d_k^{\ur,\dagger}(s_{i})$ and the constant function $k_\bullet+1$ are all odd dominant. It is easy to see that if the first map is odd dominant, so is the second one, and that every constant map from $S$ to $\ZZ$ is odd dominant. Hence, we are left to prove that $d_k^{\ur,\dagger}(s_{i})$ is odd dominant, which follows from Lemma~\ref{lem:3} directly. Therefore, if there is $1\leq i_1\leq u$ such that $n_{u,i_1}<d_k^{\ur,\dagger}(s_{i_1})$, then by Lemma~\ref{lem:4}, we have $n_{u,i}\leq d_k^{\ur,\dagger}(s_{i})$
	for every $1\leq i\leq u$. This implies that $k$ satisfies the first case. Now we assume that $n_{u,i}\geq d_k^{\ur,\dagger}(s_{i})$
	for every $1\leq i\leq u$. Similarly, by Lemma~\ref{lem:4}, if there is $1\leq i_2\leq u$ such that $d_k^{\ur,\dagger}(s_{i_2})\leq n_{u,i_2}< k_\bullet+1$, then $d_k^{\ur,\dagger}(s_{i})\leq n_{u,i}\leq k_\bullet+1$
	for every $1\leq i\leq u$. This implies that $k$ satisfies the second case. With a similar argument for the last two cases, we complete the proof of this lemma.
	% Assume $  n_{[i_1]}<d_k^{\ur,\dagger}(s_{i_1})$ for some $1\leq i_1\leq u$. If  $n_{[i_0]}-d_k^{\ur,\dagger}(s_{i_0})\geq 2$, by Lemma~\ref{lem:3}(1), $k$ satisfies (1).
	%If $n_{[i_0]}-d_k^{\ur,\dagger}(s_{i_0})= 1$ and $n_{[i_0]}$ is even, then $d_k^{\ur,\dagger}(s_{i_0})$ is odd, and hence $d_k^{\ur,\dagger}(s_{i})=d_k^{\ur,\dagger}(s_{i_0})$ for all $i\leq i_0$. Note that for $i\leq i_0$
	%we have $$n_{[i]}\geq n_{[i_0]}-1=d_k^{\ur,\dagger}(s_{i_0})=d_k^{\ur,\dagger}(s_{i}).$$
	%
	% For any $i\geq i_0$, we have 
	%$n_{[i]}= n_{[i_0]}$ and by Lemma~\ref{lem:3} we have
	%$d_k^{\ur,\dagger}(s_{i})\leq d_k^{\ur,\dagger}(s_{i_0})+1=n_{[i_0]}$.
	%
	%
	%(2) Assume $n_{[i_0]}\leq d_k^{\ur,\dagger}(s_{i_0})<$ for some $i_0$. If  $n_{[i_0]}-d_k^{\ur,\dagger}(s_{i_0})\geq 2$, by Lemma~\ref{lem:3}(1), $k$ satisfies (1).
	%If $n_{[i_0]}-d_k^{\ur,\dagger}(s_{i_0})= 1$ and $n_{[i_0]}$ is even, then $d_k^{\ur,\dagger}(s_{i_0})$ is odd, and hence $d_k^{\ur,\dagger}(s_{i})=d_k^{\ur,\dagger}(s_{i_0})$ for all $i\leq i_0$. Note that for $i\leq i_0$
	%we have $$n_{[i]}\geq n_{[i_0]}-1=d_k^{\ur,\dagger}(s_{i_0})=d_k^{\ur,\dagger}(s_{i}).$$
	%
	%For any $i\geq i_0$, we have 
	%$n_{[i]}= n_{[i_0]}$ and by Lemma~\ref{lem:3} we have
	%$d_k^{\ur,\dagger}(s_{i})\leq d_k^{\ur,\dagger}(s_{i_0})+1=n_{[i_0]}$.
	%
	%
	%
	%	Note that $d_k^{\Iw,\dagger}(s_{i})=k_\bullet$ for every $1\leq i\leq u$
\end{proof}

\begin{lemma}\label{lemma:parity}
	For any integer $n\geq 0$, any $k\in \calK$ and any pair of indices $s<s'$ in $S$, 
	\begin{enumerate}
		\item if $n$ is odd, then $m_{n+1}^{(s'),\dagger}(k)-m_{n}^{(s'),\dagger}(k)\geq m_{n+1}^{(s),\dagger}(k)-m_{n}^{(s),\dagger}(k);$
		\item if $n$ is even, then $m_{n+1}^{(s'),\dagger}(k)-m_{n}^{(s'),\dagger}(k)\leq  m_{n+1}^{(s),\dagger}(k)-m_{n}^{(s),\dagger}(k)$.
	\end{enumerate}
	
\end{lemma}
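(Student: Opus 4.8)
The plan is to compute the finite difference $m_{n+1}^{(s),\dagger}(k)-m_{n}^{(s),\dagger}(k)$ explicitly as a function of the position of $n$ relative to the three transition points $d_k^{\ur,\dagger}(s)$, $k_\bullet+1$, and $2(k_\bullet+1)-d_k^{\ur,\dagger}(s) = d_k^{\Iw,\dagger}-d_k^{\ur,\dagger}(s)$. Recall from Notation~\ref{eq:1} that the dagger multiplicity is, up to shifting by $\delta_s$, the original ghost multiplicity, so it is the piecewise-linear "tent" function: it is $0$ for $n\leq d_k^{\ur,\dagger}(s)$, increases by $1$ at each step while $d_k^{\ur,\dagger}(s)\leq n\leq k_\bullet+1$, decreases by $1$ at each step while $k_\bullet+1\leq n\leq 2(k_\bullet+1)-d_k^{\ur,\dagger}(s)$, and is $0$ thereafter. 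Hence the forward difference $\Delta_n^{(s)}(k):=m_{n+1}^{(s),\dagger}(k)-m_{n}^{(s),\dagger}(k)$ takes only the values $+1$, $0$, $-1$, and the set of $n$ on which it equals $+1$ is (essentially) the interval $[d_k^{\ur,\dagger}(s), k_\bullet]$, while the set on which it equals $-1$ is the interval $[k_\bullet+1, 2k_\bullet+1-d_k^{\ur,\dagger}(s)]$; one must be slightly careful at the apex depending on the parity of $d_k^{\Iw,\dagger}=2k_\bullet+2$ (which is even, so the tent peaks cleanly at $n=k_\bullet+1$).

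The second step is to observe that as $s$ ranges over $S$, both the left transition point $d_k^{\ur,\dagger}(s)$ and the right transition point $2(k_\bullet+1)-d_k^{\ur,\dagger}(s)$ move, but the apex $k_\bullet+1$ does not. Since $d_k^{\ur,\dagger}$ is odd dominant by Lemma~\ref{lem:3}, for $s<s'$ in $S$ we have $|d_k^{\ur,\dagger}(s)-d_k^{\ur,\dagger}(s')|\leq 1$, and when they differ, $d_k^{\ur,\dagger}(s')$ is odd. So there are only two situations: either $d_k^{\ur,\dagger}(s)=d_k^{\ur,\dagger}(s')$, in which case $\Delta_n^{(s)}(k)=\Delta_n^{(s')}(k)$ for all $n$ and both inequalities in the lemma are equalities; or $d_k^{\ur,\dagger}(s')=d_k^{\ur,\dagger}(s)\pm 1$ with $d_k^{\ur,\dagger}(s')$ odd. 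Then the $+1$-intervals for $s$ and $s'$ differ by one unit at the left endpoint, and symmetrically the $-1$-intervals differ by one unit at the right endpoint. This means $\Delta_n^{(s)}(k)$ and $\Delta_n^{(s')}(k)$ agree except at exactly two values of $n$: one near $d_k^{\ur,\dagger}(s)$ (the left side), where the difference $\Delta_n^{(s')}(k)-\Delta_n^{(s)}(k)$ is $\pm 1$, and one near $2(k_\bullet+1)-d_k^{\ur,\dagger}(s)$ (the right side), where it is the opposite sign $\mp 1$.

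The final step is the parity bookkeeping: I would check that the single $n$ at which the left-side discrepancy occurs has a parity opposite to that of the single $n$ at which the right-side discrepancy occurs — this follows because the left endpoint is $d_k^{\ur,\dagger}(s)$ (or $d_k^{\ur,\dagger}(s')$) while the right endpoint is $2(k_\bullet+1)-d_k^{\ur,\dagger}(s)$ (or the $s'$ analogue), and $2(k_\bullet+1)$ is even so the two endpoints have opposite parity precisely when $d_k^{\ur,\dagger}(s)\neq d_k^{\ur,\dagger}(s')$, which is the only case we need. Combining: when $d_k^{\ur,\dagger}(s')$ is odd, the discrepancy on the left falls at an even $n$ and on the right at an odd $n$ (or vice versa) in just the way that makes $\Delta_n^{(s')}(k)-\Delta_n^{(s)}(k)\geq 0$ for odd $n$ and $\leq 0$ for even $n$. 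I expect the main obstacle to be the careful case analysis of exactly which $n$ the discrepancies land on as a function of whether $d_k^{\ur,\dagger}(s')=d_k^{\ur,\dagger}(s)+1$ or $d_k^{\ur,\dagger}(s')=d_k^{\ur,\dagger}(s)-1$, and of whether $d_k^{\ur,\dagger}(s)$ itself is even or odd; the cleanest way to organize this is probably to write $m_n^{(s),\dagger}(k)=\max\{0,\min\{n-d_k^{\ur,\dagger}(s),\,d_k^{\Iw,\dagger}-d_k^{\ur,\dagger}(s)-n,\,k_\bullet+1-d_k^{\ur,\dagger}(s)\}\}$ as a single closed-form expression and differentiate the $\min/\max$ directly, reducing everything to the elementary inequality that shifting a symmetric tent function by $\pm 1$ in its support shifts its forward-difference support accordingly.
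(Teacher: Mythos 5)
Your proposal is correct and follows essentially the same route as the paper: both arguments rest on the explicit piecewise ("tent") formula for the forward difference $m_{n+1}^{(s),\dagger}(k)-m_{n}^{(s),\dagger}(k)$ together with the odd dominance of $d_k^{\ur,\dagger}$ from Lemma~\ref{lem:3}, reducing everything to checking the parity of the boundary indices $n=d_k^{\ur,\dagger}(\cdot)-1$ and $n=2(k_\bullet+1)-d_k^{\ur,\dagger}(\cdot)-1$ where the two difference functions can disagree. The paper phrases this as a proof by contradiction (a failure forces $n$ to be such a boundary point with the wrong parity), whereas you enumerate the discrepancy points directly, but the content is identical.
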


\begin{proof}
	By Notations~\ref{re:2}(2) and \ref{eq:1}, we have
\begin{equation}\label{eq::8}
		m_{n+1}^{(s),\dagger}(k)-m_{n}^{(s),\dagger}(k)=
	\begin{cases}
		0& \textrm{if~} n\leq d_k^{\ur,\dagger}(s)-1,\\
		1  & \textrm{if~} d_k^{\ur,\dagger}(s)\leq n\leq k_\bullet,\\
		-1& \textrm{if~}k_\bullet+1\leq n\leq  2(k_\bullet+1)- d_k^{\ur,\dagger}(s)-1,\\
		0& \textrm{if~} n\geq 2 (k_\bullet+1)- d_k^{\ur,\dagger}(s).
	\end{cases}
\end{equation}

	(1) Note that by Lemma~\ref{lem:3}, $d_k^{\ur,\dagger}$ is odd dominant, and hence 
	$|d_k^{\ur,\dagger}(s)-d_k^{\ur,\dagger}(s')|\leq 1$. Combined with 
	 \eqref{eq::8}, this implies that if $$m_{n+1}^{(s'),\dagger}(k)-m_{n}^{(s'),\dagger}(k)<m_{n+1}^{(s),\dagger}(k)-m_{n}^{(s),\dagger}(k), $$
	then $n$ must be equal to the boundary point of the first and second cases or the one of the third and fourth cases, i.e., 
	$n=d_k^{\ur,\dagger}(s')-1$ or $2 k_\bullet-d_k^{\ur,\dagger}(s')-1$.
	Note that since $n$ is assumed to be odd, $d_k^{\ur,\dagger}(s')$ is even. Therefore, by the condition~(b) in Definition~\ref{def:1} for $d_k^{\ur,\dagger}$, in both cases, we have $d_k^{\ur,\dagger}(s)=d_k^{\ur,\dagger}(s')$, and hence 
	$m_{n+1}^{(s'),\dagger}(k)-m_{n}^{(s'),\dagger}(k)=m_{n+1}^{(s),\dagger}(k)-m_{n}^{(s),\dagger}(k)$, a contradiction.
	
	(2) Similar to (1), by \eqref{eq::8} and Lemma~\ref{lem:3}, if $$m_{n+1}^{(s'),\dagger}(k)-m_{n}^{(s'),\dagger}(k)>m_{n+1}^{(s),\dagger}(k)-m_{n}^{(s),\dagger}(k),$$
	then 
	$n=d_k^{\ur,\dagger}(s)-1$ or $2 k_\bullet-d_k^{\ur,\dagger}(s)-1$.
	Note that since $n$ is assumed to be even, $d_k^{\ur,\dagger}(s)$ is odd. Therefore,  by the condition~(b) in Definition~\ref{def:1} for $d_k^{\ur,\dagger}$, in both cases, we have $d_k^{\ur,\dagger}(s)=d_k^{\ur,\dagger}(s')$, and hence 
	$m_{n+1}^{(s'),\dagger}(k)-m_{n}^{(s'),\dagger}(k)=m_{n+1}^{(s),\dagger}(k)-m_{n}^{(s),\dagger}(k)$, a contradiction.
\end{proof}

\begin{notation}
	Given any $n\geq 0$ and $w_\star\in \bfm_{\CC_p}$, we denote by $\left(n,h_{n}^{(\us), \dagger}(w_\star)\right)$ the point in $\NP(G^{(\us),\dagger}(w_\star,-))$ over $x=n$.
\end{notation}
\begin{proposition}[The zigzag criterion]\label{critirien}
	Given any $w_\star\in\bfm_{\CC_p}$ and any non-deceasing sequence  $\us=(s_1,\dots,s_u)\in S^u$,  we have $$	\NP(G^{(\us),\dagger}(w_\star,-))=\overunderset{u}{i=1}{\mathlarger{\#}}\NP\left(G^{(s_i),\dagger}(w_\star,-)\right)$$
if and only if for any 
		$1\leq i<j\leq u$, 
		\begin{itemize}
			\item $ h_{n+1}^{(s_j), \dagger}(w_\star)-h_{n}^{(s_j), \dagger}(w_\star)\geq h_{n+1}^{(s_i), \dagger}(w_\star)-h_{n}^{(s_i), \dagger}(w_\star)$ for every odd number $n$; 
			\item  $ h_{n+1}^{(s_j), \dagger}(w_\star)-h_{n}^{(s_j), \dagger}(w_\star)\leq h_{n+1}^{(s_i), \dagger}(w_\star)-h_{n}^{(s_i), \dagger}(w_\star)$ for every even number $n$.
		\end{itemize}
	
\end{proposition}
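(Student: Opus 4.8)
The plan is to trade the equality of Newton polygons for a purely combinatorial statement about the balanced partition $\{n_{u,i}\}$ of Notation~\ref{re:11}, with Proposition~\ref{mlemma:Ghost} as the only non-formal ingredient. Write $P_i:=\NP\big(G^{(s_i),\dagger}(w_\star,-)\big)$, let $h_i(n):=h_n^{(s_i),\dagger}(w_\star)$ be the function it defines, put $\psi_i(n):=v_p\big(g_n^{(s_i),\dagger}(w_\star)\big)$ (so $\psi_i\ge h_i$, with equality at the vertices of $P_i$), and abbreviate $\Phi:=\overunderset{u}{i=1}{\mathlarger{\#}}P_i$ and $Q:=\NP\big(G^{(\us),\dagger}(w_\star,-)\big)$. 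By Proposition~\ref{mlemma:Ghost}, $v_p\big(g_n^{(\us),\dagger}(w_\star)\big)=\sum_{i=1}^u\psi_i(n_{u,i})$; since $\Phi(n)=\min_{\ell_1+\cdots+\ell_u=n}\sum_i h_i(\ell_i)$ and each $h_i$ is convex, this yields $\Phi(n)\le\sum_i h_i(n_{u,i})\le\sum_i\psi_i(n_{u,i})=v_p\big(g_n^{(\us),\dagger}(w_\star)\big)$ for every $n$. Hence $\Phi$ is a convex minorant of the coefficient valuations of $G^{(\us),\dagger}(w_\star,-)$, so $Q\ge\Phi$; and by the elementary fact that a Newton polygon equals such a minorant precisely when the two agree at the vertices of the minorant, $Q=\Phi$ if and only if $v_p\big(g_m^{(\us),\dagger}(w_\star)\big)=\Phi(m)$ at every vertex $m$ of $\Phi$.

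Next I would examine a vertex $m$ of $\Phi$. Writing $\nabla_\ell^{(s)}:=h_\ell^{(s),\dagger}(w_\star)$-differences and letting $\lambda_-<\lambda_+$ be the $m$-th and $(m+1)$-th smallest slopes among all the $\nabla_\ell^{(s_i)}$, the number $v_i$ of slopes of $P_i$ that are $\le\lambda_-$ makes $v_i$ a vertex of $P_i$, and $(v_1,\dots,v_u)$ is the \emph{unique} minimizer of $\sum_i h_i(\ell_i)$ over $\sum_i\ell_i=m$ (using any other slot than the first $v_i$ of each $P_i$ forces in a slope $\ge\lambda_+>\lambda_-$). Since $v_i$ is a vertex, $\psi_i(v_i)=h_i(v_i)$; unwinding the chain of inequalities above then shows $v_p\big(g_m^{(\us),\dagger}(w_\star)\big)=\Phi(m)$ if and only if $\{m_{u,i}\}=(v_1,\dots,v_u)$, i.e.\ iff the balanced partition $\{m_{u,i}\}$ minimizes $\sum_i h_i(\ell_i)$ at level $m$. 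So the target is reduced to: $\{m_{u,i}\}$ minimizes at every vertex of $\Phi$.

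The heart of the argument is the equivalence of this with the zigzag condition, which I would obtain by proving the stronger claim that $\{n_{u,i}\}$ minimizes $\sum_i h_i(\ell_i)$ at \emph{every} level $n\ge0$ iff zigzag holds. Optimality of a feasible point of a separable convex minimization is equivalent to no single unit transfer being profitable, i.e.\ $\nabla_{n_{u,a}-1}^{(s_a)}\le\nabla_{n_{u,b}}^{(s_b)}$ for all $a\ne b$. Now $n_{u,i}\in\{q,q+1\}$ with $q=\lfloor n/u\rfloor$, the value $q$ being attached to the small indices when $q$ is even and to the large indices when $q$ is odd, and $\us$ is non-decreasing; running through the sub-cases (two indices with equal part-value versus straddling, in both orders of $a,b$) one finds that monotonicity of each $\nabla^{(s_i)}$ settles everything except, for $i<j$, the inequalities $\nabla_q^{(s_j)}\le\nabla_q^{(s_i)}$ when $q$ is even and $\nabla_q^{(s_i)}\le\nabla_q^{(s_j)}$ when $q$ is odd — exactly the two clauses of zigzag at $n=q$ — and conversely these are forced, since the index threshold between the two part-values can be placed between any prescribed $i<j$ by choosing $n$ suitably in the block $\{uq,\dots,uq+u-1\}$. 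Finally, to pass between ``all $n$'' and ``all vertices of $\Phi$'', I would use that $n\mapsto n_{u,i}$ is non-decreasing for each fixed $i$ (it increases by $1$ at a single index per step): on an edge $[m,m']$ of $\Phi$ the merged slopes are constant, the unique minimizers at $m$ and $m'$ are $(v_i)$ and $(v_i')$ with $v_i\le v_i'$, and every partition with $v_i\le\ell_i\le v_i'$ is optimal there — so optimality of $\{n_{u,i}\}$ at the two endpoints propagates across the whole edge. Assembling the three steps gives $Q=\Phi\iff\text{zigzag}$.

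I expect the main obstacle to be the bookkeeping in the third paragraph: correctly aligning the even/odd dichotomy of the zigzag condition with the parity of $\lfloor n/u\rfloor$ and with the index-ordering convention of Notation~\ref{re:11}, and checking in every sub-case that convexity of the $P_i$ genuinely absorbs all the non-zigzag transfer inequalities, while isolating precisely the two that do not. The small monotonicity statement for $n\mapsto n_{u,i}$, needed to interpolate optimality across an edge of $\Phi$, must also be verified against the parity-reshuffling built into the balanced partition, but that is routine.
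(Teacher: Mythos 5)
Your proposal is correct and rests on the same two pillars as the paper's own proof: Proposition~\ref{mlemma:Ghost} (so that the coefficient valuations of $G^{(\us),\dagger}(w_\star,-)$ are computed along the balanced partition of Notation~\ref{re:11}) and a comparison of that balanced partition with the slope-minimizing partition of $n$. The packaging differs: the paper proves ``$\Longleftarrow$'' by exhibiting, slope by slope, matching segments of the two polygons, and ``$\Longrightarrow$'' by constructing an explicit index $N=\sum_i n_i'$ at which the polygons must disagree, whereas you reduce everything to ``the balanced partition is optimal at every level,'' characterize optimality by local unit exchanges, force the zigzag inequalities from the cross-block exchanges, and propagate optimality along edges of $\overunderset{u}{i=1}{\mathlarger{\#}}\NP(G^{(s_i),\dagger}(w_\star,-))$ via monotonicity of $n\mapsto n_{u,i}$; these are faithful substitutes for the paper's two constructions, and your reduction to vertex-agreement of the convex minorant is sound. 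One intermediate claim is imprecise: convexity of the $h_i$ alone does not settle the within-block and ``easy'' cross-block transfers. For instance, for $a<b$ both carrying the even part value $q$, the exchange condition is $\nabla_{q-1}^{(s_a)}\le\nabla_{q}^{(s_b)}$, and the only working route is the odd zigzag clause at level $q-1$ (giving $\nabla_{q-1}^{(s_a)}\le\nabla_{q-1}^{(s_b)}$) followed by convexity of $h_{s_b}$ --- the level-$q$ clause points the wrong way here. So the exchange conditions at a given level consume zigzag clauses at $q-1$, $q$ and $q+1$, not only at $q$. This does not damage the equivalence (in the direction where those clauses are used they are all part of the hypothesis, and for the converse only the level-$q$ clauses coming from the cross-block transfers need to be extracted), but the sub-case table you defer should be written out with this in mind.
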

\begin{proof}
	``$\Longleftarrow$'' 
	We first note that for every $n\geq 0$, by Proposition~\ref{mlemma:Ghost}, we have
	\begin{equation}\label{re:12}
		\v\left(g_{n}^{(\us), \dagger}(w_\star)\right)=\sum\limits_{i=1}^{u}\v\left(g_{n_{u,i}}^{(s_i), \dagger}(w_\star)\right).
	\end{equation}
	Applying the two hypotheses in the second statement on the partitions of $n$ and $n+1$ from Notation~\ref{re:11}, we obtained that 	$$h(n):=\sum\limits_{i=1}^{u}h_{(n+1)_{u,i}}^{(s_i), \dagger}(w_\star)-\sum\limits_{i=1}^{u}h_{n_{u,i}}^{(s_i), \dagger}(w_\star)$$
	is an increasing function of $n\geq 0$; and 
		the set of slopes of $\jing\NP( G^{(s_i),\dagger}(w_\star,-))$ is equal to the multi-set $\{h(n)\;|\;n\geq 0\}.$ Let $\alpha$ be an arbitrary element in $\{h(n)\;|\;n\geq 0\}$. We write $n'$ and $n''$ for the smallest and largest number such that 
		$h(n')=h(n''-1)=\alpha$. Clearly, $n'$ and $n''$ are two vetices of $\NP(G^{(\us),\dagger}(w_\star,-))$, and hence 
	\begin{equation}\label{re::3}	h_{n'}^{(\us),\dagger}(w_\star)=\v\left(g_{n'}^{(\us),\dagger}(w_\star)\right)\quad\textrm{and}\quad h_{n''}^{(\us),\dagger}(w_\star)=\v\left(g_{n''}^{(\us),\dagger}(w_\star)\right).
	\end{equation}
	Moreover, for each $1\leq i\leq u$, we have $$h_{(n')_{u,i}}^{(s_i), \dagger}(w_\star)-h_{(n')_{u,i}-1}^{(s_i), \dagger}(w_\star)<\alpha\leq h_{(n')_{u,i}+1}^{(s_i), \dagger}(w_\star)-h_{(n')_{u,i}}^{(s_i), \dagger}(w_\star),$$
	and hence 
	for each $i\in \{1,\dots,u\}$, the Newton polygon	$\NP\left(G^{(s_i)}(w_\star,-)\right)$ has a vertex over $x=(n')_{u,i}$. This implies that
		\begin{equation}\label{re::4}
				h_{(n')_{u,i}}^{(s_i), \dagger}(w_\star)=\v\left(g_{(n')_{u,i}}^{(s_i), \dagger}(w_\star)\right).
		\end{equation}
		
	Similarly, 
	for each $i\in \{1,\dots,u\}$, the Newton polygon	$\NP\left(G^{(s_i)}(w_\star,-)\right)$ has a vertex over $x=(n'')_{u,i}$, 
	and hence 
		\begin{equation}\label{re::5}
		h_{(n'')_{u,i}}^{(s_i), \dagger}(w_\star)=\v\left(g_{(n'')_{u,i}}^{(s_i), \dagger}(w_\star)\right).
	\end{equation}
	
	Combining \eqref{re::4} with our choices of $n'$ and $n''$, for every $n'+1\leq n\leq n''-1$, we have
	$$\sum\limits_{i=1}^{u}\v\left(g_{n_{u,i}}^{(s_i), \dagger}(w_\star)\right)\geq \sum\limits_{i=1}^{u}h_{n_{u,i}}^{(s_i), \dagger}(w_\star)= \sum\limits_{i=1}^{u}h_{(n')_{u,i}}^{(s_i), \dagger}(w_\star)+\alpha(n-n')=\sum\limits_{i=1}^{u}\v\left(g_{(n')_{u,i}}^{(s_i), \dagger}(w_\star)\right)+\alpha(n-n').$$
	
	On the other hand, combining \eqref{re::4} and \eqref{re::5} with our choices of $n'$ and $n''$, we obtain that 
	for every $n\leq n'-1$,
\begin{equation*}
	\sum\limits_{i=1}^{u}\v\left(g_{(n')_{u,i}}^{(s_i), \dagger}(w_\star)\right)-\sum\limits_{i=1}^{u}\v\left(g_{n_{u,i}}^{(s_i), \dagger}(w_\star)\right)
	\leq \sum\limits_{i=1}^{u}h_{(n')_{u,i}}^{(s_i), \dagger}(w_\star)-\sum\limits_{i=1}^{u}h_{n_{u,i}}^{(s_i), \dagger}(w_\star)< \alpha(n'-n),
\end{equation*}
and 	for every $n\geq n''+1$,
\begin{equation*}
	\sum\limits_{i=1}^{u}\v\left(g_{n_{u,i}}^{(s_i), \dagger}(w_\star)\right)-\sum\limits_{i=1}^{u}\v\left(g_{(n'')_{u,i}}^{(s_i), \dagger}(w_\star)\right)\geq 
	\sum\limits_{i=1}^{u}h_{n_{u,i}}^{(s_i), \dagger}(w_\star)-\sum\limits_{i=1}^{u}h_{(n'')_{u,i}}^{(s_i), \dagger}(w_\star)> \alpha(n-n'').
\end{equation*} 
	The  three chains of inequalities above together imply that 
	the segment connecting 
	$$\left(n',\sum\limits_{i=1}^{u}\v\left(g_{(n')_{u,i}}^{(s_i), \dagger}(w_\star)\right)\right) \textrm{~and~} 	\left(n'',\sum\limits_{i=1}^{u}\v\left(g_{(n'')_{u,i}}^{(s_i), \dagger}(w_\star)\right)\right)$$
		is the segment of slope $\alpha$ in  $\jing\NP\left(G^{(s_i),\dagger}(w_\star,-)\right)$.
	
	On the other hand, by  \eqref{re:12} and \eqref{re::3}, this segment is also the one of slope $\alpha$ in 
	 $\NP(G^{(\us),\dagger}(w_\star,-))$. Since we take $\alpha$ arbitrarily, we obtain
$$	\NP(G^{(\us),\dagger}(w_\star,-))=\jing\NP\left(G^{(s_i),\dagger}(w_\star,-)\right),$$
that completes the proof.

``$\Longrightarrow$'' 
%	Combining the two items above with 
%	Lemma~\ref{mlemma:Ghost},  
%	 $\NP(G^{(\us),\dagger}(w_\star,-))$ is equal to the graph
%	  generated by connecting points $\left\{\left(n,\sum\limits_{i=1}^{u}h_{n_{[i]}}^{(s_i), \dagger}(w_\star) \right)\right\}$.
%	h_{\lfloor \frac{n-1}{u}\rfloor+1}^{(s_{n-u\lfloor \frac{n-1}{u}\rfloor}), \dagger}(w_\star)
	Suppose that the second statement is false. 
	Let $n$ be the minimal number that fails one of the two items in this statement. By Lemma~\ref{lemma:parity}, we have $n\geq 1$. 
	Without loss of generality, we assume that $n$ is odd, and that there is $1\leq i_0<i_1\leq u$ such that 
	\begin{equation*}
		h_{n+1}^{(s_{i_0}), \dagger}(w_\star)-h_{n}^{(s_{i_0}), \dagger}(w_\star)> h_{n+1}^{(s_{i_1}), \dagger}(w_\star)-h_{n}^{(s_{i_1}), \dagger}(w_\star).
	\end{equation*} 
	Let $\alpha:=\min\limits_{ i_1\leq i\leq u}\{h_{n+1}^{(s_i), \dagger}(w_\star)-h_{n}^{(s_i), \dagger}(w_\star)\}$ and $$i_{\max}:=\max\{i\in \{i_1,\dots,u\}\;|\;h_{n+1}^{(s_i), \dagger}(w_\star)-h_{n}^{(s_i), \dagger}(w_\star)=\alpha\}.$$
%	Take $\us:=\{s,s'\}$.
	By Lemma~\ref{lemma:parity}, the point $(n, \v(g_{n}^{(s_{i_{\max}}), \dagger}(w_\star))$ must lie above a segment of length $\geq2$ in  $\NP(G^{(s_{i_{\max}}),\dagger}(w_\star,-))$.  We denote by	$n_{i_{\max}}'$ the right endpoint of this segment. Clearly, we have
$
	h_{n}^{(s_{i_{\max}}), \dagger}(w_\star)-h_{n-1}^{(s_{i_{\max}}), \dagger}(w_\star)=h_{n+1}^{(s_{i_{\max}}), \dagger}(w_\star)-h_{n}^{(s_{i_{\max}}), \dagger}(w_\star)= \alpha,
$ and
\begin{equation}\label{eq2}
n_{i_{\max}}'\geq n+1.
\end{equation}

	We generalize the definition of $n_{i}'$ to each $1\leq i\leq u$ by putting
	  $n_{i}'$ be the integer such that $$h_{n_i'}^{(s_i), \dagger}(w_\star)-h_{n_i'-1}^{(s_i), \dagger}(w_\star)\leq \alpha\quad
		\textrm{and}\quad h_{n_i'+1}^{(s_i), \dagger}(w_\star)-h_{n_i'}^{(s_i), \dagger}(w_\star)> \alpha.$$

	From the minimal choice assumption on $n$,  we have 
	$$	h_{n}^{(s_{i_{0}}), \dagger}(w_\star)-h_{n-1}^{(s_{i_{0}}), \dagger}(w_\star)\geq 	h_{n}^{(s_{i_{\max}}), \dagger}(w_\star)-h_{n-1}^{(s_{i_{\max}}), \dagger}(w_\star)=\alpha,$$
	and hence 
	\begin{equation}\label{neq::17}
	n_{i_0'}\leq  n-1.
\end{equation}
	We put 
$N:=\sum\limits_{i=1}^u n_i'$.  
	Combining $|N_{u,i_0}-N_{u,i_{\max}}|\leq 1$ with the inequalities \eqref{eq2} and \eqref{neq::17}, we conclude that the two partitions $(N_{u,1},\dots, N_{u,u})$ 
	and $(n_1',\dots,n_u')$ of $N$ are different. 
	Since $n_i'$ are vertices of $\NP(G^{(s_i),\dagger}(w_\star,-))$ for each $1\leq i\leq u$,
	we obtain that
	\begin{itemize}
		\item 	 the point $\left(N, \sum\limits_{i=1}^uh_{n_i'}^{(s_i), \dagger}(w_\star)\right)$ is a vertex of $\jing\NP(
		G^{(s_i),\dagger}(w_\star,-) ).$ 
%		and
%hence 
%		\begin{equation}\label{eq3}
%				\sum\limits_{i=1}^u h_{n_i'}^{(s_i),\dagger}(w_\star)=	\sum\limits_{i=1}^u \v\left(g_{n_i'}^{(s_i),\dagger}(w_\star)\right).
%		\end{equation}

\item
The sequence
	 $(n_1',\dots,n_u')$ reaches the minimum of  
	 $	\sum\limits_{i=1}^u h_{n_i'}^{(s_i),\dagger}(w_\star)$ uniquely among all partitions of $N$. 
	 	Hence, 
	 \begin{equation*}
	 	\sum\limits_{i=1}^u	h_{N_{u,i}}^{(s_i),\dagger}(w_\star)>  	\sum\limits_{i=1}^u h_{n_i'}^{(s_i),\dagger}(w_\star).
	 \end{equation*}
	 	\end{itemize}

%%	
%%	
%%	From the choice of $n_0$, for any $i\geq i_{\max}$
%%		we have $$h_{n-2}^{(s_{i}), \dagger}(w_\star)-h_{n-1}^{(s_{i}), \dagger}(w_\star)\leq h_{n-2}^{(s_{i_{\max}}), \dagger}(w_\star)-h_{n-1}^{(s_{i_{\max}}), \dagger}(w_\star)\leq a;$$
%%	for any $i\leq i_{\max}$
%%	we have $$h_{n-3}^{(s_{i}), \dagger}(w_\star)-h_{n-2}^{(s_{i}), \dagger}(w_\star)\leq h_{n-3}^{(s_{i_{\max}}), \dagger}(w_\star)-h_{n-2}^{(s_{i_{\max}}), \dagger}(w_\star)\leq a.$$
%%	This implies that  $n_{i}\geq  n-3$ for every $1\leq i\leq u$, and hence $N\geq (n-3)u$
%
%	
%	from $n_1'\geq n+1\geq n_1+2$, we have
%	$N_{\us, 2}<n_1'$ and $N_{\us, 1}>n_1$, which together imply that 

	Combined with Proposition~\ref{mlemma:Ghost}, this strict inequality shows that
	\begin{equation}\label{neq::12}
			\v(g^{(\us), \dagger}_{N}(w_\star))=\sum\limits_{i=1}^u\v(g^{(s_i),\dagger}_{N_{u,i}}(w_\star))
			\geq \sum\limits_{i=1}^u h^{(s_i),\dagger}_{N_{u,i}}(w_\star)> 
			\sum\limits_{i=1}^u h_{n_i'}^{(s_i),\dagger}(w_\star),
	\end{equation}
and hence
	$\jing\NP\left(
	G^{(s_i),\dagger}(w_\star,-) \right)$ lies strictly above the point $\left(N, 	\sum\limits_{i=1}^u h_{n_i'}^{(s_i),\dagger}(w_\star)\right)$. By the first item above, this implies that 
	$\NP(G^{(\us),\dagger}(w_\star,-))$ and	$\jing\NP\left(
	G^{(s_i),\dagger}(w_\star,-) \right)$ 
 do not match at $x=N$, and hence
 $$\NP(G^{(\us),\dagger}(w_\star,-))\neq\jing\NP\left(
 G^{(s_i),\dagger}(w_\star,-) \right),$$
	a contradiction.
\end{proof}
We now prove Theorem~\ref{keythm} with assuming the following proposition, whose proof is given at the end of \S\ref{section4.2}.
\begin{proposition}\label{main thm}
	For any sequence  $\us\in S^u$ of finite length, if there exist $i,j\in \{1,\dots,u\}$ such that 
	$s_i\neq s_j$, then there exists  $w_\star\in \bfm_{\CC_p}$ such that 
	$$\jing\NP\left( G_k^{(s_i),\dagger}(w_\star,-)\right)\neq \NP\left(G_k^{(\us),\dagger}(w_\star,-)\right).$$
\end{proposition}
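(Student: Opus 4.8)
The plan is to invoke the zigzag criterion to reduce to a two‑term problem, and then to produce the desired $w_\star$ just off a classical weight that sees the difference between $s$ and $s'$.

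First I would reduce to $u=2$. Given $\us$ with $s_i\neq s_j$, reorder so that $\us$ is non‑decreasing and pick consecutive indices with $s:=s_{i_0}<s':=s_{i_0+1}$. By Proposition~\ref{critirien}, for any fixed $w_\star$ the equality $\NP(G^{(\us),\dagger}(w_\star,-))=\jing\NP(G^{(s_i),\dagger}(w_\star,-))$ holds exactly when the pairwise slope conditions hold at $w_\star$ for every pair of indices; hence it is enough to find one $w_\star\in\bfm_{\CC_p}$ at which those conditions fail for the single pair $(s,s')$, and the whole statement reduces to the case $\us=(s,s')$ with $s<s'$ in $S$.

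Next I would single out a classical weight. By Lemma~\ref{lem:1} distinct elements of $S$ carry distinct data $(\delta_\bullet,t_1^{(\bullet)},t_2^{(\bullet)})$, so by Lemma~\ref{re:10} and Notation~\ref{eq:1} the two step functions $k\mapsto d_k^{\ur,\dagger}(s)$ and $k\mapsto d_k^{\ur,\dagger}(s')$ on $\calK$ are unequal; fix $k^\star\in\calK$ with $D:=d_{k^\star}^{\ur,\dagger}(s)\neq d_{k^\star}^{\ur,\dagger}(s')=:D'$, where $|D-D'|=1$ by the odd‑dominance of $d_k^{\ur,\dagger}$ (Lemma~\ref{lem:3}). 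Put $c:=k^\star_\bullet$. Now take $w_\star$ with $\v(w_\star-w_{k^\star})$ very large, so that $\v(w_\star-w_k)=\v(w_{k^\star}-w_k)=1+\v(k^\star_\bullet-k_\bullet)$ for all $k\neq k^\star$. Since $m_n^{(s),\dagger}(k^\star)>0$ precisely for $D<n<2c+2-D$, the valuations $\v(g_n^{(s),\dagger}(w_\star))$ are huge on that range but bounded at the two endpoints; so $\NP(G^{(s),\dagger}(w_\star,-))$ contains one long segment joining $x=D$ to $x=2c+2-D$, of width $2(c+1-D)$, and $\NP(G^{(s'),\dagger}(w_\star,-))$ one of width $2(c+1-D')$. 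As $d_{k^\star}^{\ur,\dagger}(s,s')=D+D'$ and $d_{k^\star}^{\Iw,\dagger}(s,s')=2(2c+2)$, the polygon $\NP(G^{(s,s'),\dagger}(w_\star,-))$ also has one long segment, of width $2(2c+2-D-D')=2(c+1-D)+2(c+1-D')$. The widths match, but on this stretch $\NP(G^{(s,s'),\dagger}(w_\star,-))$ has a single slope, whereas $\NP(G^{(s),\dagger}(w_\star,-))\#\NP(G^{(s'),\dagger}(w_\star,-))$ merges two segments of slopes $\sigma_s,\sigma_{s'}$ and widths $2(c+1-D)\neq 2(c+1-D')$; evaluating the endpoint valuations $\v(g_D^{(s),\dagger}(w_\star))$, $\v(g_{2c+2-D}^{(s),\dagger}(w_\star))$ from the recipe for $m_n^{(s),\dagger}$, the palindromy $m_n^{(s),\dagger}(k)=m_{2k_\bullet+2-n}^{(s),\dagger}(k)$, and $\v(w_{k^\star}-w_k)=1+\v(k^\star_\bullet-k_\bullet)$ shows $\sigma_s\neq\sigma_{s'}$, so the two polygons disagree at $w_\star$.

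I expect the main obstacle to be exactly this last piece of bookkeeping: showing that the ``long middle segment'' of each $\NP(G^{(s),\dagger}(w_\star,-))$ is genuinely unbroken (i.e., that $(D,\v(g_D^{(s),\dagger}(w_\star)))$ and $(2c+2-D,\v(g_{2c+2-D}^{(s),\dagger}(w_\star)))$ are vertices and are not bypassed by the low‑ or high‑weight contributions from other $k\in\calK$), and that the endpoint valuations are controlled precisely enough to force $\sigma_s\neq\sigma_{s'}$; this is where the table of Lemma~\ref{lem:1} must be used in full, and one may need to pick $k^\star$ minimal in the interval of $\calK$ where $d_k^{\ur,\dagger}(s)$ and $d_k^{\ur,\dagger}(s')$ disagree, or perturb $w_\star$ slightly so that every coefficient stays nonzero. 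If computing the numerator of $\sigma_s$ is awkward, a cleaner packaging is available: assume the $\#$‑equality holds for \emph{all} $w_\star$; letting $w_\star\to w_{k^\star}$ for every $k^\star\in\calK$ and matching the ``gaps'' of vanishing coefficients of the three polygons would force $d_{k^\star}^{\ur,\dagger}(s)=d_{k^\star}^{\ur,\dagger}(s')$ for all $k^\star$, hence $s=s'$ by the injectivity of $s\mapsto(\delta_s,\{t_1^{(s)},t_2^{(s)}\})$ on $S$ read off from Lemma~\ref{lem:1} — contradicting $s<s'$.
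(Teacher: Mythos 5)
There is a genuine gap, and it sits exactly at the step you yourself flag as ``bookkeeping'': the claim that the two long segments have different slopes $\sigma_s\neq\sigma_{s'}$ is false. By the palindromic symmetry \eqref{neq::19} (i.e.\ $\Delta^{(s)}_{k,\ell}=\Delta^{(s)}_{k,-\ell}$, from \cite[Proposition~4.18(4)]{xiao}), for $\ell=\tfrac12 d_{k^\star}^{\new}(s)$ one gets
$\sum_{k'\neq k^\star}\bigl(m^{(s),\dagger}_{2c+2-D}(k')-m^{(s),\dagger}_{D}(k')\bigr)v_p(w_{k^\star}-w_{k'})=(k^\star-2)(c+1-D)$,
so in your limit $v_p(w_\star-w_{k^\star})\to\infty$ the slope of the maximal near-Steinberg segment is exactly $\tfrac{k^\star-2}{2}$ for \emph{both} $s$ and $s'$. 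With the slopes forced equal, the only remaining asymmetry is the endpoint mismatch $|D-D'|=1$, and this does \emph{not} violate the zigzag criterion: by odd-dominance (Lemma~\ref{lem:3}) the endpoint that moves is odd, so $D$ and $2c+1-D$ have opposite parities, and the two strict inequalities you would extract (one at each end of the range, pointing in opposite directions) each land on the parity for which Proposition~\ref{critirien} permits them. This is not an accident — Lemma~\ref{lemma:parity} says the ghost multiplicities themselves always satisfy the zigzag inequalities, and your degeneration $w_\star\to w_{k^\star}$ only sees the multiplicities at $k^\star$ to first order — so no choice of classical weight $k^\star$ and no $w_\star$ sufficiently close to $w_{k^\star}$ can ever produce a counterexample. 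Your fallback in the last paragraph fails for the same reason: when the slopes coincide, the $\#$ of the two polygons merges the two gaps into a single segment of the same total width and slope as the gap of the product polygon, so ``matching gaps'' does not force $d^{\ur,\dagger}_{k^\star}(s)=d^{\ur,\dagger}_{k^\star}(s')$.

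The paper's proof is built precisely to escape this first-order obstruction. It constructs a weight $k_{w_0}$ (with $\ell=p^{p^{p+2}}$, $r_0=p^{2p}$, and $w_0$ tuned in Lemma~\ref{mlem:1} so that $F^{(s_0)}_{k_{w_0}}(r_0-1)\in[-1,0]<F^{(s_0)}_{k_{w_0}}(r_0)$) such that the \emph{convex hulls} $\OD^{(s)}_{k_{w_0}}$ have central segments of lengths $2(r_0-1)$ for $s\le s_1$ and $2(r_0+1)$ for $s\ge s_1+1$ (Proposition~\ref{prop:1}); then it places $w_\star$ at the \emph{finite} distance $v_p(w_\star-w_{k_{w_0}})=P_{k,\ell}$, so that the near-Steinberg ranges of $s_1$ and $s_2$ have lengths differing by $2$. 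Because the lengths differ by $2$, the left endpoints have the \emph{same} parity, and the two consecutive positions just left of $\LL_1$ carry strict inequalities in the same direction; one of them must then violate the alternating condition of Proposition~\ref{critirien}. Your reduction to a single pair $(s,s')$ via the zigzag criterion is fine, but the construction of the distinguishing $w_\star$ requires this second-order information (the shape of $\OD^{(s)}_{k}$ away from its center), not merely the dimensions $d^{\ur,\dagger}_{k}(s)$.
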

\begin{proof}[Proof of Theorem~\ref{keythm}]
	By Proposition~\ref{re:4}, we can assume that for every $i\in \{1,\dots, u\}$, $\tH_i=\tH(s_i)$ for some $s_i\in \{0,\dots, p-2\}$.  Combined with Lemma~\ref{re:14}, this simplification reduces Theorem~\ref{keythm} to proving that  for any sequence
	$\us\in \{0,\dots,p-2\}^u$ such that $\iota(s_i)$ (see Definition~\ref{re::1}(1)) is generic for every $1\leq i\leq u$, the following statements are equivalent. 
		\begin{enumerate}
		\item[$(1')$] For every $w_\star\in \bfm_{\CC_p}$, we have 
		\begin{equation*}
			\NP\left(G^{(\us)}(w_\star,-)\right)=\jing \NP\left(G^{(s_i)}(w_\star,-)\right).
		\end{equation*}
		\item[$(2')$] For every distinct $i, j\in \{1,\dots, u\}$, we have $s_i=s_j$ or
		$s_i+s_j\equiv k_0-1\pmod{p-1}$.
	\end{enumerate}	
By Lemmas~\ref{re:5} and \ref{lem:2},  the equality in $(1')$ can be replaced by  
	 \begin{equation}\label{neweq4}
	\NP\left(G^{(\us),\dagger}(w_\star,-)\right)= \jing\NP\left(G^{(s_i),\dagger}(w_\star,-)\right).
\end{equation}
Combined with Lemma~\ref{re:15}, this theorem can further reduced to proving that  the following statements are equivalent for every $\us\in S^u$.
\begin{enumerate}
	\item[$(1'')$] The relation~\eqref{neweq4} holds for every $w_\star\in \bfm_{\CC_p}$.
	\item[$(2'')$] The indices $s_i$'s are identical for all $i\in \{1,\dots, u\}$.
\end{enumerate}
It follows from Proposition~\ref{main thm} that $(1'')$ implies $(2'')$, and from Proposition~\ref{critirien}, its reverse statement. 
\end{proof}

\section{Proof of Proposition~\ref{main thm}}

\subsection{Some general results.}
As in  \cite[Notation~5.1]{xiao}.
\begin{notation}
	For any $k\in \calK$ and any $s\in \{0,\dots,p-2\}$, we write 
	\begin{gather}	d_k^\new(s):=d_k^\Iw(s)-2d_k^\ur(s),\notag
\\
\label{neq::20}
		{\Delta}_{k, \ell}^{(s)}:=v_{p}\left(g_{\frac{1}{2} d_{k}^{\Iw}+\ell, \hat{k}}^{(s)}\left(w_{k}\right)\right)-\frac{k-2}{2} \ell, 
	\end{gather}
	for $\ell=-\frac{1}{2} d_{k}^\new(s),-\frac{1}{2} d_{k}^\new(s)+1, \ldots, \frac{1}{2} d_{k}^\new(s)$,
where	$g_{\frac{1}{2} d_{k}^{\Iw}+\ell, \hat{k}}^{(s)}\left(w\right)$ is the polynomial of $w$  constructed by removing $(w-w_k)$-factors in $g_{\frac{1}{2} d_{k}^{\Iw}+\ell}^{(s)}\left(w\right)$. 
	By \cite[Proposition~4.18(4)]{xiao}, we have
	\begin{equation}\label{neq::19}
		{\Delta}_{k, \ell}^{(s)}={\Delta}_{k,-\ell}^{(s)} \quad \text { for every } \ell=-\frac{1}{2} d_{k}^{\text {new }}, -\frac{1}{2} d_{k}^{\text {new }}+1, \ldots, \frac{1}{2} d_{k}^{\text {new }}.
	\end{equation}
	
	We denote by $\Delta_{k}^{(s)}$ the 
	 lower convex hull  of the set of points $$\left\{(\ell, \overline \Delta_{k, \ell}^{(s)})\;\Big|\; -\frac{1}{2}d_k^\new(s)\leq \ell \leq\frac{1}{2} d_k^\new(s)
	 \right\},$$
	and by $\left(\ell, \OD_{k, \ell}^{(s)}\right)$ the corresponding points on this convex hull. 
	Note that, for simplicity of notation, we use  	${\Delta}_{k, \ell}^{(s)}$ and $\OD_{k, \ell}^{(s)}$ to denote 	${\Delta'}_{k, \ell}$ and 	${\Delta}_{k, \ell}$ in \cite{xiao}. 
\end{notation}

\begin{notation}\label{notation:1}
Recall that we define $\beta^{(s)}_{n}$ in Notation~\ref{notation:2}. 	For any $k\in \calK$, $s\in \{0,1,\dots,p-2\}$ and $\ell\in \ZZ$, we write
	\begin{itemize}
		\item $\theta^{(s)}_{k,\ell}:=\beta^{(s)}_{k_\bullet-\delta_{s}-\ell}-\beta^{(s)}_{k_\bullet+1-\delta_{s}-\ell}+\frac{p+1}{2}$, which is equal to either $a_{s} + 2$ or $p-1-a_{s}$; 
		\item $A^{(s)}_{k,\ell}:=\frac{1}{2}\left(k-2-(p+1)\ell+\theta_{k,\ell+1}^{(s)}\right)$;
		\item $B^{(s)}_{k,\ell}:=\frac{1}{2}\left(k-2+(p+1)\ell-\theta_{k,\ell+1}^{(s)}\right)$.

	\end{itemize}

\end{notation}

\begin{lemma}\label{lem:8}
	For any $k\in \calK$, $s\in S$ and $\ell\in \ZZ$, 
	\begin{enumerate}
		\item if $k_\bullet+1-\ell$ is even, then
		\begin{equation*}
			\theta^{(s)}_{k,\ell}=2s+2-k_0;
		\end{equation*}
		\item if $k_\bullet+1-\ell$ is odd, then
		\begin{equation*}
			\theta^{(s)}_{k,\ell}=p-1-2s+k_0;
		\end{equation*}
		\item $3\leq \theta^{(s)}_{k,\ell}\leq p-2;$ and
		\item as a function of $s\in S$, $B^{(s)}_{k,\ell}$ is increasing if $k_\bullet+1-\ell$ is even, while decreasing if $k_\bullet+1-\ell$ is odd. 
	\end{enumerate}
	
\end{lemma}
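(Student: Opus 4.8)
The plan is to reduce all four claims to the explicit table in Lemma~\ref{lem:1} together with the definition of $\beta_n^{(s)}$ in Notation~\ref{notation:2}. First I would unwind $\theta^{(s)}_{k,\ell}$. Put $m:=k_\bullet-\delta_s-\ell$, so that the two indices occurring in the definition of $\theta^{(s)}_{k,\ell}$ are $m$ and $m+1$; since these have opposite parity, exactly one of $\beta_m^{(s)},\beta_{m+1}^{(s)}$ equals $t_1^{(s)}$ and the other equals $t_2^{(s)}-\tfrac{p+1}{2}$. Substituting into the definition gives $\theta^{(s)}_{k,\ell}=t_1^{(s)}-t_2^{(s)}+(p+1)$ when $m$ is even and $\theta^{(s)}_{k,\ell}=t_2^{(s)}-t_1^{(s)}$ when $m$ is odd. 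Because $m\equiv(k_\bullet+1-\ell)-\delta_s-1\pmod 2$, the parity of $m$ is opposite to that of $k_\bullet+1-\ell$ when $\delta_s=0$ and equal to it when $\delta_s=1$.

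Next I would specialize to $s\in S$. By Notation~\ref{notation:3}, the set $S$ meets only the second and third rows of the table in Lemma~\ref{lem:1}, where $a_s=k_0+p-3-2s$: for $s\in S$ with $s\le k_0-2$ one has $\delta_s=0$, $t_1^{(s)}=k_0-s-1$, $t_2^{(s)}=s+1$, while for $s\in S$ with $s\ge k_0-1$ one has $\delta_s=1$, $t_1^{(s)}=s+1$, $t_2^{(s)}=p+k_0-s$. Feeding these values into the two formulas from the first step, and using the parity relation between $m$ and $k_\bullet+1-\ell$ recorded there, one checks directly in both cases that $\theta^{(s)}_{k,\ell}=2s+2-k_0$ when $k_\bullet+1-\ell$ is even and $\theta^{(s)}_{k,\ell}=p-1-2s+k_0$ when it is odd; this is (1) and (2). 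As a consistency check, the two values sum to $p+1$ and, since $a_s=k_0+p-3-2s$, they equal $p-1-a_s$ and $a_s+2$ respectively, matching the dichotomy stated in Notation~\ref{notation:1}.

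Part (3) is then immediate: $\iota(s)$ is generic for $s\in S$ by Lemma~\ref{re:15}, so $1\le a_s\le p-4$, and hence both possible values $a_s+2$ and $p-1-a_s$ of $\theta^{(s)}_{k,\ell}$ lie in $\{3,\dots,p-2\}$. For (4) I substitute (1)--(2), with $\ell$ replaced by $\ell+1$, into $B^{(s)}_{k,\ell}=\tfrac12\bigl(k-2+(p+1)\ell-\theta^{(s)}_{k,\ell+1}\bigr)$. Since $k_\bullet+1-(\ell+1)=k_\bullet-\ell$ has parity opposite to $k_\bullet+1-\ell$, when the latter is even we get $\theta^{(s)}_{k,\ell+1}=p-1-2s+k_0$, so the coefficient of $s$ in $B^{(s)}_{k,\ell}$ is $+1$ and $B^{(s)}_{k,\ell}$ is increasing in $s$; when it is odd we get $\theta^{(s)}_{k,\ell+1}=2s+2-k_0$, so that coefficient is $-1$ and $B^{(s)}_{k,\ell}$ is decreasing.

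The computation is elementary throughout; the only point requiring care is the parity bookkeeping --- keeping straight how $\delta_s$ shifts the relevant parity, and confirming that $S$ lies inside the second and third rows of the Lemma~\ref{lem:1} table with $a_s=k_0+p-3-2s$ --- so I do not anticipate a genuine obstacle, only attention to the handful of sub-cases.
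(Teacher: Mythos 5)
Your proof is correct and follows essentially the same route as the paper: unwind $\theta^{(s)}_{k,\ell}$ via the parity of the index $k_\bullet-\delta_s-\ell$, plug in the values of $\delta_s, t_1^{(s)}, t_2^{(s)}$ from the table of Lemma~\ref{lem:1} (noting $S$ sits in its second and third rows), and read off (3) and (4) from the resulting formulas. The only cosmetic differences are that you verify both parities directly rather than deducing (2) from (1) via $\theta^{(s)}_{k,\ell}+\theta^{(s)}_{k,\ell+1}=p+1$, and you obtain the bounds in (3) from genericity ($1\leq a_s\leq p-4$) instead of the explicit inequality on $2s+2-k_0$; both are equivalent.
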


\begin{proof}	
	(1)	By Lemma~\ref{lem:1}, for $\lceil\frac{k_0+1}{2}\rceil\leq s\leq k_0-2$, we have $\delta_{s}=0$, and hence $$\theta^{(s)}_{k,\ell}=\beta^{(s)}_{1}-\beta^{(s)}_{0}+\frac{p+1}{2}=t_{2}^{(s)}-t_{1}^{(s)}=2s+2-k_0.$$
	
	Similarly, for $k_0-1\leq s\leq \lfloor\frac{k_0+p-4}{2}\rfloor$, we have
	$\delta_{s}=1$, and hence $$\theta^{(s)}_{k,\ell}=\beta^{(s)}_{0}-\beta^{(s)}_{1}+\frac{p+1}{2}=p+1+t_{1}^{(s)}-t_{2}^{(s)}=2s+2-k_0.$$
	
%	For $k_0$ is odd and $s=\frac{k_0-1+p-1}{2}$, we have $\delta_{s}=1$ and $2s+2-k_0=p$, and hence $$\theta^{(s)}_{k,\ell}:=\beta^{(s)}_{0}-\beta^{(s)}_{1}+\frac{p+1}{2}=p+1+t_{1}^{(s)}-t_{2}^{(s)}=
%	k_0-s+p-1-(s+2)+p+1=2s+2-k_0.$$

	(2) From (1), we know that  $\theta^{(s)}_{k,\ell+1}=2s+2-k_0$. Combined with 
	\begin{equation}\label{eq:12}
		\theta^{(s)}_{k,\ell+1}+\theta^{(s)}_{k,\ell}=p+1,
	\end{equation} 
	this equality completes the proof. 
	
	(3) If $k_\bullet+1-\ell$ is even, by (1), we have $\theta^{(s)}_{k,\ell}=2s+2-k_0.$
	Combined with
	\begin{equation}\label{neq::23}
		3\leq 2\left\lceil\frac{k_0+1}{2}\right\rceil+2-k_0 \leq 2s+2-k_0\leq 2\left\lfloor\frac{k_0+p-4}{2}\right\rfloor+2-k_0\leq p-2,
	\end{equation}
this equality proves this case. 

If $k_\bullet+1-\ell$ is odd, note that we have proved $3\leq \theta^{(s)}_{k,\ell+1}\leq p-2.$
Combined with \eqref{eq:12},
 this chain of inequalities completes the proof.
 
(4) It is a direct consequence of (1) and (2). 
\end{proof}

\begin{notation}\label{aa:10}
	For a positive integer $m$, let $\operatorname{Dig}(m)$ denote the sum of all digits in the $p$-based expression of $m$. Then the sum of valuations of consecutive integers in $\left(m_{1}, m_{2}\right]$ with $m_{2}>m_{1}>0$ is
	\begin{equation}\label{bb:1}
\sum\limits_{m_{1}<i \leq m_{2}} v_{p}(i)=\frac{\left(m_{2}-\operatorname{Dig}\left(m_{2}\right)\right)-\left(m_{1}-\operatorname{Dig}\left(m_{1}\right)\right)}{p-1} .
	\end{equation}
\end{notation}

\begin{lemma}\label{lem:6}
For any $k\in \calK$, $s\in S$ and $1\leq \ell\leq\frac{1}{2}d^{\new}_k(s)$, we have
	$$
	{\Delta}_{k, \ell}^{(s)}- {\Delta}_{k, \ell-1}^{(s)}=
	\frac{(p-1)(\ell-1)+\theta_{k,\ell}^{(s)}}{2}+\frac{\theta_{k,\ell}^{(s)}+\Dig\left(A^{(s)}_{k,\ell}\right)+2 \Dig(\ell-1)-\Dig\left(B^{(s)}_{k,\ell}\right)}{p-1}.
	$$
	% 	$
	%	{\Delta}_{ \ell+1}^{(s)}- {\Delta}_{ \ell}^{(s)}=
	%	\frac{(p-1)\ell+\theta_{k,\ell+1}^{(s)}}{2}+\frac{\theta_{k,\ell+1}^{(s)}+\Dig\left(A_-^{(s)}_{k,\ell}\right)+2 \Dig(\ell)-\Dig\left(B_+^{(s)}_{k,\ell}\right)}{p-1}.$

\end{lemma}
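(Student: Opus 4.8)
The plan is to unfold the definition of $\Delta^{(s)}_{k,\ell}$ into a sum over weights and then repackage it using the digit-sum formula of Notation~\ref{aa:10}. First I would note, from Definition~\ref{re:3} and Notation~\ref{re:2}(2), that $g^{(s)}_n(w)=\prod_{k'\in\calK}(w-w_{k'})^{m^{(s)}_n(k')}$, so that deleting the $(w-w_k)$-factor and evaluating at $w=w_k$ gives $\v(g^{(s)}_{n,\hat k}(w_k))=\sum_{k'\in\calK,\,k'\ne k}m^{(s)}_n(k')\,\v(w_k-w_{k'})$. Using the standard fact that $\v(w_k-w_{k'})=1+\v(k_\bullet-k'_\bullet)$ for distinct $k,k'\in\calK$, and writing $n:=\tfrac{1}{2}d^{\Iw}_k(s)+\ell=k_\bullet+1-\delta_s+\ell$ (by Lemma~\ref{re:10}), one rewrites $\Delta^{(s)}_{k,\ell}-\Delta^{(s)}_{k,\ell-1}$ as $\sum_{k'\ne k}(m^{(s)}_n(k')-m^{(s)}_{n-1}(k'))(1+\v(k_\bullet-k'_\bullet))-\tfrac{k-2}{2}$.

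Next I would locate the support and sign of the first differences $m^{(s)}_n(k')-m^{(s)}_{n-1}(k')$. By the ``tent'' shape of $m^{(s)}_\bullet(k')$ recorded in Notation~\ref{re:2}(2), each such difference lies in $\{-1,0,1\}$; it equals $+1$ exactly when $d^{\ur}_{k'}(s)<n\le\tfrac{1}{2}d^{\Iw}_{k'}(s)$ and $-1$ exactly when $\tfrac{1}{2}d^{\Iw}_{k'}(s)<n\le d^{\Iw}_{k'}(s)-d^{\ur}_{k'}(s)$. Substituting $d^{\Iw}_{k'}(s)=2k'_\bullet+2-2\delta_s$ and the floor formula for $d^{\ur}_{k'}(s)$ from Lemma~\ref{re:10}, and using that $d^{\ur}_{k'}(s)$ is non-decreasing in $k'_\bullet$, one finds that the $+1$-weights form a block $k'_\bullet\in[k_\bullet+\ell,M_1]$ and the $-1$-weights form a block $k'_\bullet\in[M_2,k_\bullet+\ell-1]$. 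The hypothesis $1\le\ell\le\tfrac{1}{2}d^{\new}_k(s)$ is precisely what places $k'=k$ in the raw $-1$-range, so passing to $g^{(s)}_{n,\hat k}$ just deletes that one term; the endpoints $M_1,M_2$ are found by solving $\lfloor\frac{k'_\bullet-t^{(s)}_1}{p+1}\rfloor+\lfloor\frac{k'_\bullet-t^{(s)}_2}{p+1}\rfloor+2\le C$ for the relevant constants $C$, and after running through the four regimes of Lemma~\ref{lem:1} I expect them to be exactly the quantities built into $A^{(s)}_{k,\ell}$ and $B^{(s)}_{k,\ell}$ of Notation~\ref{notation:1}, with Lemma~\ref{lem:8} supplying the value $\theta^{(s)}_{k,\ell}\in\{a_s+2,p-1-a_s\}$ (selected by the parity of $k_\bullet+1-\ell$).

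Then I would split the summand $1+\v(k_\bullet-k'_\bullet)$ into its two parts. The ``$1$''-part contributes the number of $+1$-weights minus the number of $-1$-weights, a difference of block lengths; combined with $-\tfrac{k-2}{2}$ and the identities $A^{(s)}_{k,\ell}+B^{(s)}_{k,\ell}=k-2$ and $\theta^{(s)}_{k,\ell}+\theta^{(s)}_{k,\ell+1}=p+1$ (see \eqref{eq:12}), it should collapse to $\tfrac{(p-1)(\ell-1)+\theta^{(s)}_{k,\ell}}{2}$. The ``$\v$''-part equals $\sum_{k'\in P}\v(k_\bullet-k'_\bullet)-\sum_{k'\in M}\v(k_\bullet-k'_\bullet)$, where $P$ and $M$ are the two blocks; writing each as a sum of $\v(i)$ over a run of consecutive positive integers $i=|k_\bullet-k'_\bullet|$ and applying \eqref{bb:1}, the boundary digit-sums should assemble into $\tfrac{\theta^{(s)}_{k,\ell}+\Dig(A^{(s)}_{k,\ell})+2\Dig(\ell-1)-\Dig(B^{(s)}_{k,\ell})}{p-1}$, the two copies of $\Dig(\ell-1)$ appearing because both $P$ and $M$ abut the point $k'_\bullet=k_\bullet$ and hence each contributes a term $\v((\ell-1)!)$. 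Adding the two parts yields the asserted identity.

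The main obstacle will be Steps two and three: correctly pinning down all four block endpoints in each of the four regimes of Lemma~\ref{lem:1}, keeping straight the parities of $\delta_s$ and of $k_\bullet+1-\ell$ that toggle $\theta^{(s)}_{k,\ell}$, checking that the deleted $k'=k$ term is absorbed exactly by the truncation, and matching each of the three $\Dig$-terms to $A^{(s)}_{k,\ell}$, $B^{(s)}_{k,\ell}$ and $\ell-1$ with the correct signs and no off-by-one errors. This is the same style of bookkeeping already carried out for a single hybrid representation in \cite[\S4--5]{xiao} --- indeed $\Delta^{(s)}_{k,\ell}$ is a renaming of the invariant ${\Delta'}_{k,\ell}$ there, reached via the twist used in the proof of Lemma~\ref{re:10} --- so one could alternatively deduce the lemma by quoting that computation and translating it into the normalization fixed in \eqref{neq::20}.
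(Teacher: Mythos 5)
Your proposed route is sound but genuinely different from the paper's. You re-derive the first-difference formula from first principles: expand $\v\bigl(g^{(s)}_{n,\hat k}(w_k)\bigr)$ as $\sum_{k'\neq k} m^{(s)}_n(k')\bigl(1+\v(k_\bullet-k'_\bullet)\bigr)$, locate the two contiguous blocks of weights where $m^{(s)}_n(k')-m^{(s)}_{n-1}(k')=\pm1$, and convert the resulting runs of $\v(i)$ into digit sums via \eqref{bb:1}. The paper instead takes the shortcut you mention only in your final sentence: it quotes the already-packaged formula \cite[(5.3.6)]{xiao} (stated with $n:=\tfrac12 d_k^{\Iw}(s)-\ell$, which is legitimate by the symmetry \eqref{neq::19}, and with an auxiliary quantity $\eta$ in place of your block endpoints), so that the entire proof reduces to the algebraic verification that $\eta-\tfrac{p+1}{2}(\ell-1)=A^{(s)}_{k,\ell}$ and $\eta+\theta^{(s)}_{k,\ell}+\tfrac{p+1}{2}(\ell-1)=B^{(s)}_{k,\ell}$, carried out via Lemma~\ref{lem:1}, the identity $\beta^{(s)}_{n-1}=\beta^{(s)}_{n+1}$ and \eqref{eq:12}. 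What your approach buys is self-containedness; what it costs is precisely the bookkeeping you yourself flag as ``the main obstacle'' --- determining the outer endpoints of the two blocks in all four regimes of Lemma~\ref{lem:1} and matching them to $A^{(s)}_{k,\ell}$ and $B^{(s)}_{k,\ell}$ --- which is the content of the derivation of (5.3.6) in \cite{xiao} and is not actually executed in your sketch. Two small imprecisions, neither fatal: the blocks abut $k'_\bullet=k_\bullet+\ell$, not $k_\bullet$; and the two copies of $\Dig(\ell-1)$ arise because the $+1$-block begins at distance $\ell$ from $k_\bullet$ while the portion of the $-1$-block to the right of $k_\bullet$ ends at distance $\ell-1$, so each contributes one boundary term $\Dig(\ell-1)$ when \eqref{bb:1} is applied.
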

\begin{proof}
	Let $$n:=\frac{1}{2} d_{k}^{\Iw}(s)-\ell=k_\bullet+1-\delta_{s}-\ell\quad\textrm{and}\quad\eta:=\frac{p-1}{2} k_\bullet-\frac{p+1}{2} \delta_{s}+\beta^{(s)}_{n}-1.$$
	Then we have 
	$$\theta^{(s)}_{k,\ell}=\beta^{(s)}_{n-1}-\beta^{(s)}_{n}+\frac{p+1}{2}.$$
	
	Combined with \cite[(5.3.6)]{xiao}, this equality implies that
	\begin{multline}\label{eq:10}
		{\Delta}_{k, \ell}^{(s)}- {\Delta}_{k, \ell-1}^{(s)}\\=
		\frac{(p-1)(\ell-1)+\theta_{k,\ell}^{(s)}}{2}+\frac{\theta_{k,\ell}^{(s)}+\Dig\left(\eta-\frac{p+1}{2}(\ell-1)\right)+2 \Dig(\ell-1)-\Dig\left(\eta+\theta_{k,\ell}^{(s)}+\frac{p+1}{2}(\ell-1)\right)}{p-1}.
	\end{multline}

	On the other hand, by Lemma~\ref{lem:1},  we have 
	\begin{equation*}
		-(p+1) \delta_{s}+\beta^{(s)}_{n}+\beta^{(s)}_{n+1}=k_0-\frac{p+1}{2}.
	\end{equation*}
	Combined with $\beta_{n-1}^{(s)}=\beta_{n+1}^{(s)}$, this equality shows that
	\begin{align*}
		\eta-\frac{p+1}{2}(\ell-1)	=&\frac{p-1}{2} k_\bullet-\frac{p+1}{2} \delta_{s}+\beta^{(s)}_{n}-1-\frac{p+1}{2}(\ell-1)\\=&
		\frac{1}{2}\left(k-k_0-(p+1)\delta_{s}+\frac{p+1}{2}+2\beta^{(s)}_{n}-2-(p+1)\ell+\frac{p+1}{2}\right)
		\\=&
		\frac{1}{2}\left(k-\beta^{(s)}_{n}-\beta^{(s)}_{n+1}+2\beta^{(s)}_{n}-2-(p+1)\ell+\frac{p+1}{2}\right)
		\\=& \frac{1}{2}\left(k+\theta^{(s)}_{k,\ell+1}-2-(p+1)\ell\right)\\
		=&A^{(s)}_{k,\ell}.
		\end{align*}
	Together with \eqref{eq:12}, this chain of equalities implies that \[\eta+\theta_{k,\ell}^{(s)}+\frac{p+1}{2}(\ell-1)=A^{(s)}_{k,\ell}+\theta_{k,\ell}^{(s)}+\frac{p+1}{2}(2\ell-1)=B^{(s)}_{k,\ell}.\qedhere\] 
\end{proof}

%\Ren{Idt that I need this corollary.}
%\begin{corollary}\label{cor:1}
%	For any $s_1< s_2\in S$, \begin{enumerate}
%		\item if $k_\bullet+1-\ell$ is even, then $	\theta^{(s_1)}_{k,\ell}< 	\theta^{(s_2)}_{k,\ell}.$
%		\item If $k_\bullet+1-\ell$ is odd, then $	\theta^{(s_1)}_{k,\ell}> 	\theta^{(s_2)}_{k,\ell}.$
%	\end{enumerate}
%\end{corollary}
%
%\begin{proof}
%
%	(1) By Lemma~\ref{lem:8}, our hypothesis implies that
%\begin{itemize}
%	\item for $s\in S_1$, the function $\theta^{(s)}_{k,\ell}=	k_0-2s\leq k_0$
%	is decreasing as a function of $s$;
%	\item for $s\in S_2$, the function $\theta^{(s)}_{k,\ell}=2s-k_0+2\geq k_0+2$
%	is increasing as a function of $s$.
%\end{itemize}
%Clearly, these two facts prove (1).
%
%(2) Combining (1) with $\theta^{(s)}_{k,\ell}+\theta^{(s)}_{k,\ell+1}=q+1$, we complete the proof of (2). 
%\end{proof}

\begin{lemma}\label{shift}
	For any $k\in \calK$ and $s\in \{0,\dots,p-2\}$, we have
	\begin{equation*}
		B^{(s)}_{k,\ell+r+1}-A^{(s)}_{k,\ell-r}=B^{(s)}_{k,\ell-r}-	A^{(s)}_{k,\ell+r+1}=(p+1)\ell.
	\end{equation*}
\end{lemma}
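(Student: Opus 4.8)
The plan is a direct computation from the definitions in Notation~\ref{notation:1}, exploiting only the parity behaviour of $\theta^{(s)}_{k,j}$ as a function of the index $j$. First I would record the relevant elementary fact: from its definition via $\beta^{(s)}_{\bullet}$ in Notation~\ref{notation:2}, the quantity $\theta^{(s)}_{k,j}$ takes only the two values $t_1^{(s)}-t_2^{(s)}+(p+1)$ (when $k_\bullet-\delta_s-j$ is even) and $t_2^{(s)}-t_1^{(s)}$ (when $k_\bullet-\delta_s-j$ is odd), and these two values sum to $p+1$ — this is exactly the dichotomy ``$a_s+2$ or $p-1-a_s$'' noted after the definition, and the content of \eqref{eq:12}. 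Consequently $\theta^{(s)}_{k,j}$ depends only on the parity of $j$: one has $\theta^{(s)}_{k,j_1}=\theta^{(s)}_{k,j_2}$ when $j_1\equiv j_2\pmod 2$, and $\theta^{(s)}_{k,j_1}+\theta^{(s)}_{k,j_2}=p+1$ when $j_1\not\equiv j_2\pmod 2$.

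Next I would substitute the formulas for $A^{(s)}_{k,\cdot}$ and $B^{(s)}_{k,\cdot}$ directly. For the first identity, the two occurrences of $\tfrac12(k-2)$ cancel and one is left with
$$B^{(s)}_{k,\ell+r+1}-A^{(s)}_{k,\ell-r}=\tfrac12\Big((p+1)(\ell+r+1)-\theta^{(s)}_{k,\ell+r+2}+(p+1)(\ell-r)-\theta^{(s)}_{k,\ell-r+1}\Big).$$
The two $(p+1)$-linear terms add up to $(p+1)(2\ell+1)$, while the indices $\ell+r+2$ and $\ell-r+1$ differ by $2r+1$, hence have opposite parity, so $\theta^{(s)}_{k,\ell+r+2}+\theta^{(s)}_{k,\ell-r+1}=p+1$ by the fact above. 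Therefore the right-hand side equals $\tfrac12\big((p+1)(2\ell+1)-(p+1)\big)=(p+1)\ell$.

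Finally, the computation of $B^{(s)}_{k,\ell-r}-A^{(s)}_{k,\ell+r+1}$ is carried out identically: the $\tfrac12(k-2)$ terms cancel, the $(p+1)$-linear part again contributes $(p+1)(2\ell+1)$, and the two $\theta$-terms are now indexed by $\ell-r+1$ and $\ell+r+2$, which still differ by the odd number $2r+1$, so they again sum to $p+1$, yielding $(p+1)\ell$. There is no genuine obstacle here; the only point requiring care is the sign bookkeeping for the $\theta$-terms and the observation that the two indices entering each $\theta$-sum have opposite parity, which is immediate since their difference is $2r+1$.
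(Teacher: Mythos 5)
Your proof is correct and follows essentially the same route as the paper's: direct substitution of the definitions of $A^{(s)}_{k,\cdot}$ and $B^{(s)}_{k,\cdot}$, cancellation of the $\tfrac12(k-2)$ terms, and the identity $\theta^{(s)}_{k,j_1}+\theta^{(s)}_{k,j_2}=p+1$ for indices of opposite parity (i.e.\ \eqref{eq:12}). If anything, you make explicit the parity cancellation of the $\theta$-terms that the paper's proof leaves implicit, which is a welcome clarification.
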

\begin{proof}
	It follows that	
	\begin{align*}
		B^{(s)}_{k,\ell+r+1}-A^{(s)}_{k,\ell-r}
		=&\frac{1}{2}\left(k-2+(p+1)(\ell+r+1)-\theta_{k,\ell+r+2}^{(s)}\right)-\frac{1}{2}\left(k-2-(p+1)(\ell-r)+\theta_{k,\ell-r+1}^{(s)}\right)\\
		=&(p+1)\ell
	\end{align*}
	and 
	\begin{align*}		
		B^{(s)}_{k,\ell-r}-	A^{(s)}_{k,\ell+r+1}
		=&\frac{1}{2}\left(k-2+(p+1)(\ell-r)-\theta_{k,\ell+r+1}^{(s)}\right)-\frac{1}{2}\left(k-2-(p+1)(\ell+r+1)+\theta_{k,\ell-r+2}^{(s)}\right)\\=&(p+1)\ell.\qedhere
	\end{align*}
	
\end{proof}

\begin{notation}\label{notation:s}
	We write \[s_{k,\ell}:=\begin{cases}
		\lceil  \frac{k_0+1}{2}\rceil&\textrm{if~}  k_\bullet+1-\ell\textrm{~is odd},
		\\
		\lfloor  \frac{k_0+p-4}{2}\rfloor&\textrm{if~} k_\bullet+1-\ell \textrm{~is even},
	\end{cases}\]
	and \begin{multline*}
		2P_{k,\ell}:=	\frac{(p-1)(2\ell-1)+p+1}{2}
		+\frac{p+1+2(\Dig(\ell)+\Dig(\ell-1))}{p-1}\\
		-
		\frac{\Dig\left(B^{(s_{k,\ell})}_{k,\ell+1}\right)-\Dig\left(A^{(s_{k,\ell})}_{k,\ell}\right)+\Dig\left(B^{(s_{k,\ell})}_{k,\ell}\right)-\Dig\left(A^{(s_{k,\ell})}_{k,\ell+1}\right)}{p-1}.
	\end{multline*}
\end{notation}

\begin{lemma}\label{lem:7}
For any $k\in \calK$, $s\in S$, $1\leq \ell\leq\frac{1}{3}d^{\new}_k(s)$ and $1\leq r\leq p^{\v(\ell)-1}$, if there does not exist $p^{\v(\ell)}$-divisible integer in 
	$(B^{(s)}_{k,\ell-r+1}, B^{(s)}_{k,\ell+r}] $,
	then
	$$	\Delta^{(s)}_{k, \ell-r+1}-\Delta^{(s)}_{k, \ell-r}+\Delta^{(s)}_{k, \ell+r}-\Delta^{(s)}_{k, \ell+r-1}\\
	=2P_{k,\ell}. $$
\end{lemma}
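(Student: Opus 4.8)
The plan is to reduce the claim, via two applications of Lemma~\ref{lem:6}, to a single identity about sums of $p$-adic valuations over short intervals, and then to prove that identity using the symmetry of the intervals involved together with the non-divisibility hypothesis. Set $c:=\tfrac{k-2}{2}$ and $v:=\v(\ell)$; we may assume $v\ge 1$, for otherwise no integer $r$ satisfies $1\le r\le p^{v-1}$ and the statement is vacuous. Recall from \eqref{eq:12} and Lemma~\ref{lem:8} that $\theta^{(s)}_{k,m}\equiv k_0\pmod 2$ and $\theta^{(s)}_{k,m}+\theta^{(s)}_{k,m+1}=p+1$ for every $m$; in particular each $A^{(s)}_{k,m},B^{(s)}_{k,m}$ is an integer with $A^{(s)}_{k,m}+B^{(s)}_{k,m}=k-2$ and $B^{(s)}_{k,m}-A^{(s)}_{k,m}=(p+1)m-\theta^{(s)}_{k,m+1}$, and $\theta^{(s)}_{k,m}+\theta^{(s)}_{k,m'}=p+1$ whenever $m,m'$ have opposite parity.

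First I would apply Lemma~\ref{lem:6} once with $\ell$ replaced by $\ell-r+1$ and once with $\ell$ replaced by $\ell+r$ (legitimate because $r\le p^{v-1}\le \ell/p$ gives $\ell-r+1\ge 2$ and $\ell+r\le\ell(1+\tfrac1p)\le\tfrac12 d^{\new}_k(s)$ for $p\ge 7$). Adding the two formulas, the ``linear'' halves combine to $\tfrac12\big((p-1)(2\ell-1)+p+1\big)$ using $\theta^{(s)}_{k,\ell-r+1}+\theta^{(s)}_{k,\ell+r}=p+1$, and $2\Dig(\ell-r)+2\Dig(\ell+r-1)$ equals $2\Dig(\ell-1)+2\Dig(\ell)$: writing $\ell=p^v m$ with $p\nmid m$ one has $\Dig(\ell+r-1)=\Dig(m)+\Dig(r-1)$, $\Dig(\ell-r)=\Dig(m-1)+\Dig(p^v-r)$, and $\Dig(p^v-r)+\Dig(r-1)=v(p-1)$ by digit-wise complementation (valid since $0\le r-1\le p^{v-1}-1$). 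Since the explicit formula for $2P_{k,\ell}$ in Notation~\ref{notation:s} has exactly this shape, ``LHS $=2P_{k,\ell}$'' reduces to
\[\Dig(A^{(s)}_{k,\ell-r+1})+\Dig(A^{(s)}_{k,\ell+r})-\Dig(B^{(s)}_{k,\ell-r+1})-\Dig(B^{(s)}_{k,\ell+r})=\Dig(A^{(s_{k,\ell})}_{k,\ell})+\Dig(A^{(s_{k,\ell})}_{k,\ell+1})-\Dig(B^{(s_{k,\ell})}_{k,\ell})-\Dig(B^{(s_{k,\ell})}_{k,\ell+1}).\]
Applying \eqref{bb:1} in the form $\Dig(B^{(s)}_{k,m})-\Dig(A^{(s)}_{k,m})=\big((p+1)m-\theta^{(s)}_{k,m+1}\big)-(p-1)F^{(s)}_k(m)$, where $F^{(s)}_k(m):=\sum_{A^{(s)}_{k,m}<i\le B^{(s)}_{k,m}}\v(i)$ (one checks $0<A^{(s)}_{k,m}<B^{(s)}_{k,m}$ for all indices at hand from the bounds on $\ell,r$), and noting $(\ell-r+1)+(\ell+r)=\ell+(\ell+1)=2\ell+1$ so the $\theta$-terms cancel, this becomes the single identity
\begin{equation*}
F^{(s)}_k(\ell-r+1)+F^{(s)}_k(\ell+r)=F^{(s_{k,\ell})}_k(\ell)+F^{(s_{k,\ell})}_k(\ell+1).\tag{$\star$}
\end{equation*}

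Next I would exploit that $A^{(s)}_{k,m}+B^{(s)}_{k,m}=k-2$, so the interval $(A^{(s)}_{k,m},B^{(s)}_{k,m}]$ is symmetric about the fixed center $c$. Writing $\rho^{(s)}_k(m):=B^{(s)}_{k,m}-c$ and $G(\rho):=\sum_{c-\rho<i\le c+\rho}\v(i)$, we get $F^{(s)}_k(m)=G(\rho^{(s)}_k(m))$, and whenever $m+m'=2\ell+1$ the $\theta$-relation gives $\rho^{(s)}_k(m)+\rho^{(s)}_k(m')=(p+1)\ell$. Hence both sides of $(\star)$ are values of the single function $\Phi(\rho):=G(\rho)+G((p+1)\ell-\rho)$, and $(\star)$ reads $\Phi(\rho_0)=\Phi(\rho_1)$ for $\rho_0:=\rho^{(s)}_k(\ell-r+1)$ and $\rho_1:=\rho^{(s_{k,\ell})}_k(\ell)$. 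One checks $0\le\rho_0\le\rho_1\le\tfrac12(p+1)\ell$ and that $\omega:=\rho_1-\rho_0=\tfrac12\big((p+1)(r-1)+\theta^{(s)}_{k,\ell-r+2}-\theta^{(s_{k,\ell})}_{k,\ell+1}\big)$ is a nonnegative integer with $\omega<p^v$: here $s_{k,\ell}$ is precisely the index of $S$ making $\theta^{(s_{k,\ell})}_{k,\ell+1}$ the minimum (equal to $3$ or $4$, hence $\le\theta^{(s)}_{k,\ell-r+2}$) of $\theta^{(\cdot)}_{k,\ell+1}$ over $S$, and $r\le p^{v-1}$.

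The hard part, which I expect to be the main obstacle, is then to show $\Phi$ is constant on $[\rho_0,\rho_1]$. Expanding $\Phi(\rho_1)-\Phi(\rho_0)$ as a difference of $G$-shells yields four integer intervals of common width $\omega$, namely $J_1=(A^{(s_{k,\ell})}_{k,\ell},A^{(s)}_{k,\ell-r+1}]$, $J_2=(B^{(s)}_{k,\ell-r+1},B^{(s_{k,\ell})}_{k,\ell}]$, $J_3=(A^{(s)}_{k,\ell+r},A^{(s_{k,\ell})}_{k,\ell+1}]$, $J_4=(B^{(s_{k,\ell})}_{k,\ell+1},B^{(s)}_{k,\ell+r}]$, with $\Phi(\rho_1)-\Phi(\rho_0)=\sum_{J_1}\v+\sum_{J_2}\v-\sum_{J_3}\v-\sum_{J_4}\v$; Lemma~\ref{shift} both identifies $J_3,J_4$ (e.g.\ $B^{(s)}_{k,\ell-r+1}-(p+1)\ell=A^{(s)}_{k,\ell+r}$) and, crucially, shows $J_4=J_1+(p+1)\ell$ and $J_2=J_3+(p+1)\ell$. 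Although the hypothesis mentions only the one interval $(B^{(s)}_{k,\ell-r+1},B^{(s)}_{k,\ell+r}]$, it controls all four: from $\rho_0\le\rho_1\le\tfrac12(p+1)\ell$ and Lemma~\ref{shift} one gets $J_2,J_4\subseteq(B^{(s)}_{k,\ell-r+1},B^{(s)}_{k,\ell+r}]$, so neither contains a $p^v$-divisible integer, and then $J_1=J_4-(p+1)\ell$ and $J_3=J_2-(p+1)\ell$ contain none either since $p^v\mid(p+1)\ell$. Finally, on an interval of width $<p^v$ with no multiple of $p^v$ the function $i\mapsto\v(i)$ depends only on residues modulo $p^v$, so translation by the multiple $(p+1)\ell$ of $p^v$ preserves its valuation sum; hence $\sum_{J_4}\v=\sum_{J_1}\v$ and $\sum_{J_2}\v=\sum_{J_3}\v$, giving $\Phi(\rho_1)-\Phi(\rho_0)=0$. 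This proves $(\star)$, and with it the lemma.
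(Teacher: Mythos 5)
Your proposal is correct and follows essentially the same route as the paper: two applications of Lemma~\ref{lem:6}, the digit/valuation symmetry of $[\ell-r,\ell+r-1]$ about $\ell$, and Lemma~\ref{shift} together with the no-$p^{\v(\ell)}$-divisible-integer hypothesis to match the $A$-intervals with the $B$-intervals under translation by $(p+1)\ell$. Your reformulation via $\Phi$ and the four intervals $J_1,\dots,J_4$ is just a more explicit bookkeeping of the same cancellation the paper performs directly on the $\Dig$ terms.
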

\begin{proof}
We first note that $$\ell+r\leq \frac{p+1}{p}\ell\leq \frac{p+1}{3p}d^{\new}_k(s)\leq\frac{1}{2}d^{\new}_k(s).$$

	By Lemma~\ref{lem:6}, we have
	\begin{multline*}		
			{\Delta}_{k, \ell-r+1}^{(s)}- {\Delta}_{k, \ell-r}^{(s)}+\Delta^{(s)}_{k, \ell+r}-\Delta^{(s)}_{k, \ell+r-1}
		\\
\begin{aligned}
	=&\frac{(p-1)(\ell-r)+\theta_{k,\ell-r+1}^{(s)}}{2}+\frac{\theta_{k,\ell-r+1}^{(s)}+\Dig\left(A^{(s)}_{k,\ell-r+1}\right)+2 \Dig(\ell-r)-\Dig\left(B^{(s)}_{k,\ell-r+1}\right)}{p-1}
		\\	
		+&\frac{(p-1)(\ell+r-1)+\theta_{k,\ell+r}^{(s)}}{2}+\frac{\theta_{k,\ell+r}^{(s)}+\Dig\left(A^{(s)}_{k,\ell+r}\right)+2 \Dig(\ell+r-1)-\Dig\left(B^{(s)}_{k,\ell+r}\right)}{p-1}.
\end{aligned}
	\end{multline*}	
Combining the hypothesis in this lemma with Lemma~\ref{lem:8}(4), we obtain that $(B^{(s_{k,\ell})}_{k,\ell}, B^{(s)}_{k,\ell+r}]\subset(B^{(s)}_{k,\ell-r+1}, B^{(s)}_{k,\ell+r}]$
does not contain $p^{\vl}$-divisible integers.
	Combined with Lemma~\ref{shift} that the interval 
	$(A^{(s_{k,\ell})}_{k,\ell+1}, A^{(s)}_{k,\ell-r+1}]$ is a shift of 
$	(B^{(s_{k,\ell})}_{k,\ell}, B^{(s)}_{k,\ell+r}]$ to the left by $(p+1)\ell$, this implies that
	every $p$-divisible integer in $(A^{(s_{k,\ell})}_{k,\ell+1}, A^{(s)}_{k,\ell-r+1}]$ corresponds to one in $(B^{(s_{k,\ell})}_{k,\ell}, B^{(s)}_{k,\ell+r}]$ with the same $p$-adic valuation. 
	Therefore, by \eqref{bb:1}, we have 
	$$\Dig(B^{(s)}_{k,\ell+r})-\Dig(B^{(s_{k,\ell})}_{k,\ell})=\Dig(A^{(s)}_{k,\ell-r+1})-\Dig(A^{(s_{k,\ell}}_{k,\ell+1}).$$
	Similarly, we have 
		$$\Dig(B^{(s_{k,\ell})}_{k,\ell+1})-\Dig(B^{(s)}_{k,\ell-r+1})=\Dig(A^{(s_{k,\ell})}_{k,\ell})-\Dig(A^{(s)}_{k,\ell+r}).$$

	On the other hand, by our hypothesis $r\leq p^{\vl-1}$ and the symmetry of $p$-divisible integers in $[\ell-r+1,\ell+r-1]$ about $x=\ell$, we have $$\Dig(\ell+r-1)-\Dig(\ell)=\Dig(\ell-1)-\Dig(\ell-r).$$

	Combining all these equalities above, we complete the proof.
\end{proof}

\begin{corollary}\label{corollary:slope}
	For any $k\in \calK$, $s\in S$, $1\leq \ell\leq\frac{1}{3}d^{\new}_k(s)$ and $1\leq r\leq p^{\v(\ell)-1}$ if there does not exist $p^{\v(\ell)}$-divisible integer in 
	$(B^{(s)}_{k,\ell-r+1}, B^{(s)}_{k,\ell+r}] $, then
	 the segment connecting $(\ell-r, \Delta^{(s)}_{k,\ell-r})$ and  $(\ell+r, \Delta^{(s)}_{k,\ell+r})$ is of slope $P_{k,\ell}$.
\end{corollary}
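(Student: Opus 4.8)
The plan is to obtain the corollary from Lemma~\ref{lem:7} by a telescoping argument, the only preliminary being to check that the hypothesis of that lemma is inherited by all smaller radii.

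First I would record that $\ell\mapsto B^{(s)}_{k,\ell}$ is strictly increasing. Indeed, applying \eqref{eq:12} with $\ell$ replaced by $\ell+1$ gives $\theta^{(s)}_{k,\ell+2}=p+1-\theta^{(s)}_{k,\ell+1}$, so from the definitions in Notation~\ref{notation:1} one computes $B^{(s)}_{k,\ell+1}-B^{(s)}_{k,\ell}=\theta^{(s)}_{k,\ell+1}$, which is at least $3$ by Lemma~\ref{lem:8}(3). Consequently, for every $1\le r'\le r$ one has the inclusion of half-open intervals $(B^{(s)}_{k,\ell-r'+1},B^{(s)}_{k,\ell+r'}]\subseteq(B^{(s)}_{k,\ell-r+1},B^{(s)}_{k,\ell+r}]$, so the assumption that the latter contains no $p^{\v(\ell)}$-divisible integer forces the same for the former. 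Since the bounds $1\le r'\le r\le p^{\v(\ell)-1}$ and $1\le\ell\le\tfrac13 d^{\new}_k(s)$ persist, Lemma~\ref{lem:7} applies with $r'$ in place of $r$ and yields, for every such $r'$, the identity
$$\Delta^{(s)}_{k,\ell-r'+1}-\Delta^{(s)}_{k,\ell-r'}+\Delta^{(s)}_{k,\ell+r'}-\Delta^{(s)}_{k,\ell+r'-1}=2P_{k,\ell},$$
with the right-hand side the same constant $2P_{k,\ell}$ for every $r'$ (this is exactly why it was expressed through $s_{k,\ell}$ in Notation~\ref{notation:s}, independently of $r$ and $s$).

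Then I would sum these $r$ identities over $r'=1,\dots,r$. On the left, the descending blocks $\Delta^{(s)}_{k,\ell-r'+1}-\Delta^{(s)}_{k,\ell-r'}$ telescope to $\Delta^{(s)}_{k,\ell}-\Delta^{(s)}_{k,\ell-r}$ and the ascending blocks $\Delta^{(s)}_{k,\ell+r'}-\Delta^{(s)}_{k,\ell+r'-1}$ telescope to $\Delta^{(s)}_{k,\ell+r}-\Delta^{(s)}_{k,\ell}$, so the left-hand side collapses to $\Delta^{(s)}_{k,\ell+r}-\Delta^{(s)}_{k,\ell-r}$; the right-hand side equals $2rP_{k,\ell}$. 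Dividing by the horizontal run $(\ell+r)-(\ell-r)=2r$ shows that the segment joining $(\ell-r,\Delta^{(s)}_{k,\ell-r})$ and $(\ell+r,\Delta^{(s)}_{k,\ell+r})$ has slope $P_{k,\ell}$, as claimed. The argument is essentially bookkeeping; the only place that genuinely uses structure is the monotonicity of $B^{(s)}_{k,\cdot}$ needed to push the divisibility hypothesis down to every radius $r'\le r$, together with the $r$-independence of the constant in Lemma~\ref{lem:7} that makes the telescoped sum linear in $r$. No further input (in particular not the symmetry \eqref{neq::19}) is needed.
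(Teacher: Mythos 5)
Your proof is correct and follows essentially the same route as the paper: the paper likewise writes $\Delta^{(s)}_{k,\ell+r}-\Delta^{(s)}_{k,\ell-r}$ as the telescoping sum $\sum_{r'=1}^{r}\bigl(\Delta^{(s)}_{k,\ell-r'+1}-\Delta^{(s)}_{k,\ell-r'}+\Delta^{(s)}_{k,\ell+r'}-\Delta^{(s)}_{k,\ell+r'-1}\bigr)$ and applies Lemma~\ref{lem:7} to each term. Your explicit verification that $B^{(s)}_{k,\ell+1}-B^{(s)}_{k,\ell}=\theta^{(s)}_{k,\ell+1}\geq 3$, so that the divisibility hypothesis passes to every smaller radius $r'$, is a point the paper leaves implicit, and it is correct.
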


\begin{proof}
	Note that $$\frac{\Delta^{(s)}_{k,\ell+r}-\Delta^{(s)}_{k,\ell-r}}{2r}=\sum\limits_{r'=1}^r \frac{	\Delta^{(s)}_{k, \ell-r'+1}-\Delta^{(s)}_{k, \ell-r'}+\Delta^{(s)}_{k, \ell+r'}-\Delta^{(s)}_{k, \ell+r'-1}}{2r}.$$
	Combined with Lemma~\ref{lem:7}, this equality shows that  $\frac{\Delta^{(s)}_{k,\ell+r}-\Delta^{(s)}_{k,\ell-r}}{2r}=P_{k,\ell}$, which completes the proof.
\end{proof}
%\begin{remark}
%	We will use the results of Lemma~\ref{lem:7} and Corollary~\ref{corollary:slope} directly when the condition checking is straightforward. 
%\end{remark}

\begin{lemma}\label{lemma:r_1r_2}
	For any $k\in \calK$, $s\in S$ and $1\leq \ell_1<\ell_2\leq d_{k}^\new(s) $, we have 
	\begin{multline*}
		\Delta^{(s)}_{k, \ell_2}-\Delta^{(s)}_{k, \ell_2-1}-\left(\Delta^{(s)}_{k, \ell_1}-\Delta^{(s)}_{k, \ell_1-1}\right)\\=	\frac{(p-3)(\ell_2-\ell_1)+\theta_{k,\ell_2}^{(s)}-\theta_{k,\ell_1}^{(s)}}{2}+\left(\sum\limits_{i=B^{(s)}_{k,\ell_1}+1}^{B^{(s)}_{k,\ell_2}}+\sum\limits_{i=A^{(s)}_{k,\ell_2}+1}^{A^{(s)}_{k,\ell_1}}-2\sum\limits_{i=\ell_1}^{\ell_2-1}\right) \v(i).
	\end{multline*}
\end{lemma}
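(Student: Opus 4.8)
The proof is purely computational: it rests on the single-step formula of Lemma~\ref{lem:6} together with the digit-sum identity \eqref{bb:1}, and no new idea is needed beyond careful bookkeeping. The first step is to apply Lemma~\ref{lem:6} once with $\ell=\ell_1$ and once with $\ell=\ell_2$ and subtract. Writing $L:=\Delta^{(s)}_{k,\ell_2}-\Delta^{(s)}_{k,\ell_2-1}-\big(\Delta^{(s)}_{k,\ell_1}-\Delta^{(s)}_{k,\ell_1-1}\big)$ for the left-hand side of the claimed identity, this subtraction gives
\[
L=\frac{(p-1)(\ell_2-\ell_1)+\theta^{(s)}_{k,\ell_2}-\theta^{(s)}_{k,\ell_1}}{2}+\frac{\theta^{(s)}_{k,\ell_2}-\theta^{(s)}_{k,\ell_1}+\Dig(A^{(s)}_{k,\ell_2})-\Dig(A^{(s)}_{k,\ell_1})+2\Dig(\ell_2-1)-2\Dig(\ell_1-1)-\Dig(B^{(s)}_{k,\ell_2})+\Dig(B^{(s)}_{k,\ell_1})}{p-1}.
\]

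The second step is to massage the target right-hand side into the same shape. Lemma~\ref{lem:8}(3) gives $3\le\theta^{(s)}_{k,\ell}\le p-2$, which, combined with the relations $A^{(s)}_{k,\ell}+B^{(s)}_{k,\ell}=k-2$ and $B^{(s)}_{k,\ell}-A^{(s)}_{k,\ell}=(p+1)\ell-\theta^{(s)}_{k,\ell+1}$ that are immediate from Notation~\ref{notation:1}, shows that the three summation ranges $(B^{(s)}_{k,\ell_1},B^{(s)}_{k,\ell_2}]$, $(A^{(s)}_{k,\ell_2},A^{(s)}_{k,\ell_1}]$ and $(\ell_1-1,\ell_2-1]$ are nonempty intervals of positive integers, so \eqref{bb:1} applies to each. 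Substituting \eqref{bb:1} turns the bracketed sum in the statement into $\tfrac1{p-1}$ times the integer $B^{(s)}_{k,\ell_2}-B^{(s)}_{k,\ell_1}+A^{(s)}_{k,\ell_1}-A^{(s)}_{k,\ell_2}-2(\ell_2-\ell_1)$ minus the matching alternating sum of $\Dig$'s; and using the two relations above together with $\theta^{(s)}_{k,\ell+1}=p+1-\theta^{(s)}_{k,\ell}$ from \eqref{eq:12}, that integer simplifies to $(p-1)(\ell_2-\ell_1)+\theta^{(s)}_{k,\ell_2}-\theta^{(s)}_{k,\ell_1}$.

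The last step is to reassemble: the right-hand side of the claimed identity becomes
\[
\frac{(p-3)(\ell_2-\ell_1)+\theta^{(s)}_{k,\ell_2}-\theta^{(s)}_{k,\ell_1}}{2}+\frac{(p-1)(\ell_2-\ell_1)+\theta^{(s)}_{k,\ell_2}-\theta^{(s)}_{k,\ell_1}}{p-1}+\frac{-\Dig(B^{(s)}_{k,\ell_2})+\Dig(B^{(s)}_{k,\ell_1})-\Dig(A^{(s)}_{k,\ell_1})+\Dig(A^{(s)}_{k,\ell_2})+2\Dig(\ell_2-1)-2\Dig(\ell_1-1)}{p-1},
\]
and the elementary identity $\tfrac{(p-3)m}{2}+m=\tfrac{(p-1)m}{2}$ with $m=\ell_2-\ell_1$ collapses the first two summands to $\tfrac{(p-1)(\ell_2-\ell_1)+\theta^{(s)}_{k,\ell_2}-\theta^{(s)}_{k,\ell_1}}{2}$, after which regrouping the residual $\tfrac{\theta^{(s)}_{k,\ell_2}-\theta^{(s)}_{k,\ell_1}}{p-1}$ into the digit-sum fraction yields exactly the expression $L$ above. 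The only delicate point is the sign accounting of the six $\Dig$-terms and the two $\theta$-terms when translating between the ``$\Dig$'' form coming from Lemma~\ref{lem:6} and the ``$\sum v_p$'' form of the statement; I expect no conceptual obstacle there, only the usual risk of a transcription slip.
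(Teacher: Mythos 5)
Your proposal is correct and follows essentially the same route as the paper: both rest on subtracting two instances of Lemma~\ref{lem:6} and translating between digit sums and sums of $p$-adic valuations via \eqref{bb:1}, using $B^{(s)}_{k,\ell}-A^{(s)}_{k,\ell}=(p+1)\ell-\theta^{(s)}_{k,\ell+1}$ and $\theta^{(s)}_{k,\ell}+\theta^{(s)}_{k,\ell+1}=p+1$; the only difference is that the paper rewrites the $\Dig$ differences as valuation sums and substitutes, whereas you rewrite the valuation sums as $\Dig$ differences and compare, which is the same computation run in the opposite direction. The sign bookkeeping you flag does work out, so there is no gap.
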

\begin{proof}
	We first note that 
	\begin{multline*}
		\begin{aligned}
				\Dig\left(B^{(s)}_{k,\ell_2}\right)-	\Dig\left(B^{(s)}_{k,\ell_1}\right)	= &	B^{(s)}_{k,\ell_2}-	B^{(s)}_{k,\ell_1}-(p-1)\sum\limits_{i=B^{(s)}_{k,\ell_1}+1}^{B^{(s)}_{k,\ell_2}}\v(i)\\
			=&\frac{(\ell_2-\ell_1)(p+1)-\theta^{(s)}_{k,\ell_2+1}+\theta^{(s)}_{k,\ell_1+1}}{2}-(p-1)\sum\limits_{i=B^{(s)}_{k,\ell_1}+1}^{B^{(s)}_{k,\ell_2}}\v(i),
		\end{aligned}
	\end{multline*}
	and similarly that 
	\begin{equation*}
			\Dig\left(A^{(s)}_{k,\ell_1}\right)-	\Dig\left(A^{(s)}_{k,\ell_2}\right)
		=
		\frac{(\ell_2-\ell_1)(p+1)-\theta^{(s)}_{k,\ell_2+1}+\theta^{(s)}_{k,\ell_1+1}}{2}-(p-1)\sum\limits_{i=A^{(s)}_{k,\ell_2}+1}^{A^{(s)}_{k,\ell_1}}\v(i).
	\end{equation*}
	
	Plugging them into the equality in Lemma~\ref{lem:6}, we have
\begin{multline*}
		\Delta^{(s)}_{k, \ell_2}-\Delta^{(s)}_{k, \ell_2-1}-\left(\Delta^{(s)}_{k, \ell_1}-\Delta^{(s)}_{k, \ell_1-1}\right)\\
		\begin{aligned}	
		=&
		\frac{(p-1)(\ell_2-1)+\theta_{k,\ell_2}^{(s)}}{2}+\frac{\theta_{k,\ell_2}^{(s)}+\Dig\left(A^{(s)}_{k,\ell_2}\right)+2 \Dig(\ell_2-1)-\Dig\left(B^{(s)}_{k,\ell_2}\right)}{p-1}\\
		-&	\frac{(p-1)(\ell_1-1)+\theta_{k,\ell_1}^{(s)}}{2}-\frac{\theta_{k,\ell_1}^{(s)}+\Dig\left(A^{(s)}_{k,\ell_1}\right)+2 \Dig(\ell_1-1)-\Dig\left(B^{(s)}_{k,\ell_1}\right)}{p-1}\\
		=
		&
		\frac{(p-1)(\ell_2-\ell_1)+\theta_{k,\ell_2}^{(s)}-\theta_{k,\ell_1}^{(s)}}{2}+\frac{\theta_{k,\ell_2}^{(s)}-\theta_{k,\ell_1}^{(s)}+2 \Dig(\ell_2-1)-2 \Dig(\ell_1-1)}{p-1}\\
		-&	\frac{	(\ell_2-\ell_1)(p+1)-\theta^{(s)}_{k,\ell_2+1}+\theta^{(s)}_{k,\ell_1+1}}{p-1}+\sum\limits_{i=B^{(s)}_{k,\ell_1}+1}^{B^{(s)}_{k,\ell_2}}\v(i)+\sum\limits_{i=A^{(s)}_{k,\ell_2}+1}^{A^{(s)}_{k,\ell_1}}\v(i)\\
		=&
		\frac{(p-3)(\ell_2-\ell_1)+\theta_{k,\ell_2}^{(s)}-\theta_{k,\ell_1}^{(s)}}{2}+\left(\sum\limits_{i=B^{(s)}_{k,\ell_1}+1}^{B^{(s)}_{k,\ell_2}}+\sum\limits_{i=A^{(s)}_{k,\ell_2}+1}^{A^{(s)}_{k,\ell_1}}-2\sum\limits_{i=\ell_1}^{\ell_2-1}\right) \v(i).
	\end{aligned}
\end{multline*}
This completes the proof.
\end{proof}

\begin{corollary}\label{corollary:r_1r_2}
	For any $k\in \calK$, $s\in S$ and $1\leq \ell_1<\ell_2\leq d_{k}^\new(s) $, we have 
	\begin{equation*}
		\Delta^{(s)}_{k, \ell_2}-\Delta^{(s)}_{k, \ell_2-1}-\left(\Delta^{(s)}_{k, \ell_1}-\Delta^{(s)}_{k, \ell_1-1}\right)
		\geq 	\frac{(p-3)(\ell_2-\ell_1-1)}{2}-2\sum\limits_{i=\ell_1}^{\ell_2-1} \v(i)+1.
	\end{equation*}
\end{corollary}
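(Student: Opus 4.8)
The plan is to read off the exact identity provided by Lemma~\ref{lemma:r_1r_2} and then discard the two nonnegative ``digit-sum'' valuation contributions, keeping only the elementary leading term, which is controlled by the uniform bounds $3\le\theta^{(s)}_{k,\ell}\le p-2$ from Lemma~\ref{lem:8}(3).

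Concretely, I would start from the identity of Lemma~\ref{lemma:r_1r_2},
\[
\Delta^{(s)}_{k,\ell_2}-\Delta^{(s)}_{k,\ell_2-1}-\bigl(\Delta^{(s)}_{k,\ell_1}-\Delta^{(s)}_{k,\ell_1-1}\bigr)
=\frac{(p-3)(\ell_2-\ell_1)+\theta^{(s)}_{k,\ell_2}-\theta^{(s)}_{k,\ell_1}}{2}+\Bigl(\sum_{i=B^{(s)}_{k,\ell_1}+1}^{B^{(s)}_{k,\ell_2}}+\sum_{i=A^{(s)}_{k,\ell_2}+1}^{A^{(s)}_{k,\ell_1}}-2\sum_{i=\ell_1}^{\ell_2-1}\Bigr)\v(i).
\]
The term $-2\sum_{i=\ell_1}^{\ell_2-1}\v(i)$ is exactly the one appearing on the right-hand side of the corollary, so it may be set aside. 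To see that the two remaining valuation sums are nonnegative, I would first record that $B^{(s)}_{k,\ell}$ is strictly increasing and $A^{(s)}_{k,\ell}$ strictly decreasing in $\ell$: using the reflection $\theta^{(s)}_{k,\ell+1}+\theta^{(s)}_{k,\ell}=p+1$ from \eqref{eq:12}, a one-line computation from Notation~\ref{notation:1} gives $B^{(s)}_{k,\ell+1}-B^{(s)}_{k,\ell}=\theta^{(s)}_{k,\ell+1}$ and $A^{(s)}_{k,\ell+1}-A^{(s)}_{k,\ell}=-\theta^{(s)}_{k,\ell+1}$, and $\theta^{(s)}_{k,\ell+1}\ge 3>0$ by Lemma~\ref{lem:8}(3). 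Hence for $\ell_1<\ell_2$ both index ranges $[B^{(s)}_{k,\ell_1}+1,B^{(s)}_{k,\ell_2}]$ and $[A^{(s)}_{k,\ell_2}+1,A^{(s)}_{k,\ell_1}]$ consist of positive integers, so every summand $\v(i)$ is nonnegative and the two sums are $\ge 0$.

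It then remains only to bound the leading term. By Lemma~\ref{lem:8}(3) we have $\theta^{(s)}_{k,\ell_2}\ge 3$ and $\theta^{(s)}_{k,\ell_1}\le p-2$, hence $\theta^{(s)}_{k,\ell_2}-\theta^{(s)}_{k,\ell_1}\ge 5-p$, and therefore
\[
\frac{(p-3)(\ell_2-\ell_1)+\theta^{(s)}_{k,\ell_2}-\theta^{(s)}_{k,\ell_1}}{2}\;\ge\;\frac{(p-3)(\ell_2-\ell_1)+5-p}{2}\;=\;\frac{(p-3)(\ell_2-\ell_1-1)}{2}+1,
\]
since $\tfrac{(p-3)+(5-p)}{2}=1$. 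Combining these three observations yields exactly the claimed inequality. I do not expect a genuine obstacle: the whole argument is ``invoke Lemma~\ref{lemma:r_1r_2}, drop the two nonnegative sums, and apply $3\le\theta^{(s)}_{k,\ell}\le p-2$''. The only step deserving a moment's care — the closest thing to a difficulty — is confirming that the $B$- and $A$-index ranges are honest ranges of positive integers so that the nonnegativity of $\v$ may be invoked; this is precisely what the monotonicity computation above secures.
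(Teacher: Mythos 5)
Your proposal is correct and follows essentially the same route as the paper: the paper's proof likewise takes the identity of Lemma~\ref{lemma:r_1r_2}, bounds $\theta^{(s)}_{k,\ell_2}-\theta^{(s)}_{k,\ell_1}\geq 5-p$ via Lemma~\ref{lem:8}(3), and discards the two nonnegative valuation sums. Your extra verification that $B^{(s)}_{k,\ell}$ is increasing and $A^{(s)}_{k,\ell}$ is decreasing (so the two summation ranges are genuine nonempty ranges) is a detail the paper leaves implicit, but it is a welcome addition rather than a deviation.
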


\begin{proof}
	
By Lemma~\ref{lem:8}(3), we have 
$$\theta_{k,\ell_2}^{(s)}-\theta_{k,\ell_1}^{(s)}\geq  5-p.$$
Plugging it into Lemma~\ref{lemma:r_1r_2}, we complete the proof.
\end{proof}

\subsection{Concrete construction.}\label{section4.2}
For the rest of this section, we fix $s_1<s_2\in S$. By symmetry, we may and will assume that $s_1\neq k_0-1$.	We fix $\ell:= p^{p^{p+2}}$ and $r_0:=p^{2p}$. Let $s_0:=\left\lceil\frac{k_0+1}{2}\right\rceil$ be the smallest term in $S$. 
Recall that we assume $p\geq 7$. 

\begin{notation}
\begin{enumerate}
	\item For every $k\in \calK$, $s\in S$ and $r\in \ZZ_{\geq 0}$ such that $\ell+r\leq \frac{1}{2}d_k^\new(s)$, we write \begin{equation}\label{aa:2}
		F^{(s)}_k(r):=\Delta^{(s)}_{k, \ell+r}-\Delta^{(s)}_{k, \ell+r-1}-	(\Delta^{(s)}_{k, \ell-r+1}-\Delta^{(s)}_{k, \ell-r}).
	\end{equation}
	\item 	For every integer $w\geq 1$, we put \begin{gather*}
		k_w:=2\ell(p-1)+2(p^w-1)(k_0-1-s_1)+k_0-p+1\in \calK \\ 	Q_w:=\frac{3p-1}{2}\ell +p^w(k_0-1-s_1).
	\end{gather*}
\end{enumerate}	
\end{notation}

\begin{lemma}\label{lem:9}
	For every integer $w\geq 1$, $s\in S$ and $r\geq 1$, we have
	\begin{enumerate}
		\item 
		$
			\theta_{k_w,\ell+r}^{(s)}=\begin{cases}
				p-1-2s+k_0& \textrm{if~} 2\;|\;r,\\
				2s+2-k_0& \textrm{if~} 2\nmid r.\\
			\end{cases}
		$	
		\item $B^{(s)}_{k_w,\ell+r}=			Q_w+	\frac{r(p+1)}{2}+	\begin{cases}
			s_1-s-	\frac{p+1}{2}	& \textrm{if~} 2\;|\;r,\\
			s_1+s-k_0-p& \textrm{if~} 2\nmid r.
		\end{cases}	$
	\end{enumerate}

\end{lemma}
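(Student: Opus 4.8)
The plan is to derive both formulas by direct substitution into the definitions of $\theta^{(s)}_{k,\ell}$ and $B^{(s)}_{k,\ell}$ in Notation~\ref{notation:1}, the only conceptual ingredient being Lemma~\ref{lem:8}(1),(2) together with a parity computation for the integer $k_{w,\bullet}=\tfrac{k_w-k_0}{p-1}$.

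For part (1) I would first write $k_{w,\bullet}$ out explicitly: from $k_w-k_0=2\ell(p-1)+2(p^w-1)(k_0-1-s_1)-(p-1)$ and the fact that $\tfrac{p^w-1}{p-1}=1+p+\cdots+p^{w-1}$ is an integer, one gets $k_{w,\bullet}=2\ell-1+2\bigl(1+p+\cdots+p^{w-1}\bigr)(k_0-1-s_1)$. Since $\ell=p^{p^{p+2}}$ is a power of the odd prime $p$ it is odd, so $k_{w,\bullet}+1-\ell=\ell+2\bigl(1+p+\cdots+p^{w-1}\bigr)(k_0-1-s_1)$ is odd; hence $k_{w,\bullet}+1-(\ell+r)$ is odd when $r$ is even and even when $r$ is odd. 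Applying Lemma~\ref{lem:8}(1),(2) to the index $\ell+r$ then gives part (1) immediately.

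For part (2) I would start from the definition $B^{(s)}_{k,\ell}=\tfrac12\bigl(k-2+(p+1)\ell-\theta^{(s)}_{k,\ell+1}\bigr)$, so $B^{(s)}_{k_w,\ell+r}=\tfrac12\bigl(k_w-2+(p+1)(\ell+r)-\theta^{(s)}_{k_w,\ell+r+1}\bigr)$, and then substitute the value of $\theta^{(s)}_{k_w,\ell+r+1}$ obtained from part (1) with $r+1$ in place of $r$ (so it is $2s+2-k_0$ for $r$ even and $p-1-2s+k_0$ for $r$ odd). Plugging in the formula for $k_w$, the coefficient of $\ell$ assembles as $(p-1)+\tfrac{p+1}{2}=\tfrac{3p-1}{2}$ and the summand $p^w(k_0-1-s_1)$ emerges from $(p^w-1)(k_0-1-s_1)$, so that $Q_w=\tfrac{3p-1}{2}\ell+p^w(k_0-1-s_1)$ is recognized; reading off the leftover constant separately in the even and odd cases yields the claimed expression.

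This lemma is essentially a bookkeeping step for the concrete construction of \S\ref{section4.2}, so there is no real obstacle, only care in the computation. The one subtlety is the parity interlock: part (1) records $\theta^{(s)}_{k_w,\ell+r}$ whereas $B^{(s)}_{k_w,\ell+r}$ is defined through $\theta$ at the shifted index $\ell+r+1$, so the even/odd dichotomy is offset by one between the two parts and the parity of $r$ must be propagated consistently through the substitution; separating out the $\ell$-terms and the $p^w$-terms at the outset (to recover $Q_w$) keeps the rest of the arithmetic short.
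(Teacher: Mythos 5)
Your proposal follows exactly the paper's route: the parity of $k_{w,\bullet}+1-(\ell+r)$ (using that $\ell=p^{p^{p+2}}$ is odd) feeds into Lemma~\ref{lem:8} for part (1), and part (2) is direct substitution into the definition of $B^{(s)}_{k,\ell}$ with $\theta$ evaluated at the shifted index $\ell+r+1$, just as in the paper. One caveat: carrying the arithmetic through actually yields the constant $s_1+s-k_0-p+1$ in the odd case (as one can cross-check from $B^{(s)}_{k_w,\ell+r+1}-B^{(s)}_{k_w,\ell+r}=\theta^{(s)}_{k_w,\ell+r+1}$ together with the verified even case), so the stated $s_1+s-k_0-p$ appears to be off by one --- a defect of the statement rather than of your method, but your claim that the computation ``yields the claimed expression'' should be tempered accordingly.
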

\begin{proof}
(1)	Note that for any $w\geq 1$, the parity of 
\begin{equation*}
	{k_w}_\bullet+1-(r+\ell)=2\ell+2(k_0-1-s_1)\frac{p^w-1}{p-1}-(r+\ell)
\end{equation*} is opposite to $r$. Hence, by Lemma~\ref{lem:8}, we have
	\begin{equation*}
		\theta_{k_w,\ell+r}^{(s)}=\begin{cases}
			p-1-2s+k_0& \textrm{if~} 2\;|\;r,\\
			2s+2-k_0& \textrm{if~} 2\nmid r.\\
		\end{cases}
	\end{equation*}
%	We denote $$\theta_{k,\ell+r}^{(s)}:=\theta_{k_{s_1,w},\ell+r}^{(s_\bullet)}.$$
	
(2)	It follows that \begin{align*}
		B^{(s)}_{k_w,\ell+r}=
		&\frac{k_w-2+(\ell+r)(p+1)-\theta^{(s)}_{k,\ell+r+1}}{2}\\
		=&Q_w+\frac{2s_1-(k_0-1)+r(p+1)-\theta^{(s)}_{k,\ell+r+1}}{2}
		\\
		=&
		Q_w+	\frac{r(p+1)}{2}+	\begin{cases}
	s_1-s-	\frac{p+1}{2}	& \textrm{if~} 2\;|\;r,\\
s_1+s-k_0-p& \textrm{if~} 2\nmid r.
		\end{cases}\qedhere
		\end{align*}
\end{proof}

\begin{lemma}\label{mlem:1}\hspace{2em}
\begin{enumerate}
	\item There exists unique $2p+2<w_{0}<p^{p+2}$ such that 
	$$
	-1\leq F_{k_{w_0}}^{(s_0)}(r_0-1)\leq 0< F_{k_{w_0}}^{(s_0)}(r_0).$$

	\item 	For every $s\in S\backslash \{s_0\}$, we have 
	$$
	F_{k_{w_0}}^{(s)}(r_0-1)< 0< F_{k_{w_0}}^{(s)}(r_0).$$
	
\end{enumerate}
%	
%	We will fix such $w$ throughout the whole paper. 

\end{lemma}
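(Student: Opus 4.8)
The plan is to analyze the quantity $F_{k_w}^{(s)}(r)$ as a function of the integer parameter $w$ (with $s$ and $r$ fixed), show it grows essentially linearly in $w$, and then extract the crossing index $w_0$ by an intermediate-value/discreteness argument. The key structural input is Lemma~\ref{lemma:r_1r_2} applied with $\ell_1 = \ell - r + 1$ and $\ell_2 = \ell + r$ (legitimate since $\ell + r \leq \frac12 d_k^{\new}(s)$ is ensured by the hypotheses of each part), which gives a closed form
$$
F_{k_w}^{(s)}(r) = \frac{(p-3)(2r-1) + \theta^{(s)}_{k_w,\ell+r} - \theta^{(s)}_{k_w,\ell-r+1}}{2} + \left(\sum_{i = B^{(s)}_{k_w,\ell-r+1}+1}^{B^{(s)}_{k_w,\ell+r}} + \sum_{i=A^{(s)}_{k_w,\ell+r}+1}^{A^{(s)}_{k_w,\ell-r+1}} - 2\sum_{i=\ell-r+1}^{\ell+r-1}\right) v_p(i).
$$
By Lemma~\ref{lem:9}, the $\theta$-terms are independent of $w$, and the endpoints $B^{(s)}_{k_w,\ell+r}$ are of the form $Q_w + (\text{$w$-independent constant})$ where $Q_w = \frac{3p-1}{2}\ell + p^w(k_0-1-s_1)$ grows like $p^w$. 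Since $r_0 = p^{2p}$ is a fixed power of $p$, one shows that for $w$ in the relevant range the two $B$-intervals each have length $r_0(p+1)$ and, because $\ell = p^{p^{p+2}}$ is a huge power of $p$ and $Q_w$ has controlled $p$-adic behavior, the valuation sums over these intervals are governed by Notation~\ref{aa:10} formula \eqref{bb:1}: $\sum_{m_1 < i \le m_2} v_p(i) = \frac{(m_2 - \Dig(m_2)) - (m_1 - \Dig(m_1))}{p-1}$.

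**Step one:** Compute $F_{k_w}^{(s_0)}(r_0) - F_{k_w}^{(s_0)}(r_0-1)$ and show it is strictly positive and bounded below by a constant depending only on $p$ (not on $w$); this uses Corollary~\ref{corollary:r_1r_2}-type estimates — the dominant term $\frac{(p-3)\cdot 2}{2} = p-3 > 0$ beats the error $-2\sum v_p(i)$ over the short intervals $[\ell-r_0+1, \ell-r_0+2]$ and $[\ell+r_0-1,\ell+r_0]$ which are negligibly small (in fact the valuations there are typically $0$ since $\ell \pm r_0$ is a difference of two powers of $p$ of vastly different size, hence a $p$-adic unit times a small power). So $r \mapsto F_{k_w}^{(s_0)}(r)$ is increasing with controlled step size near $r = r_0$.

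**Step two:** Show $F_{k_w}^{(s_0)}(r_0)$ is, as a function of $w$, strictly monotone (increasing) and takes values crossing from negative to positive as $w$ ranges over $(2p+2, p^{p+2})$. Here the main computation is isolating the $w$-dependence: using \eqref{bb:1} the valuation sums become $\frac{1}{p-1}\big(\text{(endpoint differences)} - (\Dig(B^{(s_0)}_{k_w,\ell+r_0}) - \Dig(B^{(s_0)}_{k_w,\ell-r_0+1})) - \dots\big)$; the endpoint differences are $w$-independent, so the entire $w$-dependence sits in the $\Dig$ terms, and by Lemma~\ref{shift} the $A$-intervals are left-translates of the $B$-intervals, so their digit-sum contributions partially cancel. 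What remains is a $\Dig$-expression in $Q_w \approx p^w(k_0-1-s_1)$, and as $w$ increases by $1$ this essentially shifts digits, changing $F$ by a bounded amount with a definite sign for $w$ small versus $w$ large. Combined with Step one's bound on the step size of $F$ in $r$, one concludes there is a unique $w_0$ with $F_{k_{w_0}}^{(s_0)}(r_0-1) \le 0 < F_{k_{w_0}}^{(s_0)}(r_0)$, and the lower bound $F_{k_{w_0}}^{(s_0)}(r_0-1) \ge -1$ follows because at the last $w$ before crossing, $F$ was not yet positive but by the bounded step size it cannot be much below zero.

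**Step three** (part (2)): For $s \in S \setminus \{s_0\}$, compare $F_{k_{w_0}}^{(s)}(r)$ with $F_{k_{w_0}}^{(s_0)}(r)$. By Lemma~\ref{lem:8}(4), $B^{(s)}_{k,\ell}$ is strictly monotone in $s$ (increasing or decreasing according to parity of $k_\bullet + 1 - \ell$), so $s_0$ being the extreme endpoint of $S$ means the $B$-endpoints for $s$ are shifted by $s - s_0 \ne 0$ relative to those for $s_0$; this shift, being of size at most $p-2 < r_0(p+1)$, moves the valuation sums by a controlled amount, and the $\theta$-difference $\theta^{(s)} - \theta^{(s_0)}$ contributes an additional definite term. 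One checks that this perturbation strictly separates the inequalities: $F_{k_{w_0}}^{(s)}(r_0-1) < 0$ strictly (the value for $s_0$ was $\le 0$ and the perturbation pushes it down, or the tie is broken strictly) and $F_{k_{w_0}}^{(s)}(r_0) > 0$ strictly. \textbf{The main obstacle} I anticipate is the bookkeeping in Steps two and three: tracking the $\Dig$-terms of $B^{(s)}_{k_w,\ell\pm r}$ precisely enough — one must choose $\ell = p^{p^{p+2}}$, $r_0 = p^{2p}$ and the window $(2p+2, p^{p+2})$ for $w$ so that all the relevant integers $\ell \pm r$, $B^{(s)}_{k_w,\ell\pm r}$, $A^{(s)}_{k_w,\ell\pm r}$ have $p$-adic expansions whose digits in the low-order positions (controlled by $r_0$ and $s$) do not interact with the high-order positions (controlled by $Q_w$ and $\ell$), so that carries are avoided and \eqref{bb:1} can be applied cleanly on each subinterval. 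Getting these size inequalities exactly right — verifying $2p+2 < w_0 < p^{p+2}$ and that no $p^{v_p(\ell)}$-divisible integers sneak into the intervals where Lemma~\ref{lem:7}/Corollary~\ref{corollary:slope} are invoked — is where the delicate (but ultimately elementary) estimation lives.
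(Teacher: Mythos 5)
Your overall strategy coincides with the paper's: both derive the closed form for $F^{(s)}_{k_w}(r)$ from Lemma~\ref{lemma:r_1r_2} combined with Lemma~\ref{shift} (so that the $A$-interval valuation sum can be replaced by the $B$-interval sum), both locate $w_0$ by isolating the $w$-dependence of the valuation sums over the $B$-intervals, and your Step three for part~(2) is exactly the paper's comparison of the $B^{(s)}$-endpoints with the $B^{(s_0)}$-endpoints via Lemma~\ref{lem:9}(2). The genuine gap is in Step two. The conclusion $-1\leq F^{(s_0)}_{k_{w_0}}(r_0-1)\leq 0$, \emph{and} the uniqueness of $w_0$, both require knowing the \emph{exact} increment of $F^{(s_0)}_{k_w}(r_0-1)$ as $w$ increases by $1$, not merely that it changes ``by a bounded amount with a definite sign.'' The paper's key observation is that $Q_w=\tfrac{3p-1}{2}\ell+p^w(k_0-1-s_1)$ is the \emph{unique} $p^w$-divisible integer in $[B^{(s_0)}_{k_w,\ell-r_0+2}+1,\,B^{(s_0)}_{k_w,\ell+r_0-1}]$ (the interval has length $<r_0(p+1)<p^w$ once $w>2p+2$) and has $p$-adic valuation \emph{exactly} $w$, while every other integer in that interval sits at a $w$-independent offset from $Q_w$ and so contributes a $w$-independent valuation; hence $h:=F^{(s_0)}_{k_w}(r_0-1)-2w$ is constant, i.e.\ the step is exactly $2$. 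Existence then follows by taking $w_0=\lfloor -h/2\rfloor$, and uniqueness because any other $w$ moves the value by at least $2$, hence out of $[-1,0]$. With only a ``bounded positive'' step you could either jump over the length-one window $[-1,0]$ (step $>2$) or land in it for several $w$ (step $<2$), so neither half of part~(1) is secured by your argument as stated.

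Two further points. First, you defer the verification that the crossing occurs for some $w$ strictly between $2p+2$ and $p^{p+2}$; in the paper this is the estimate $-2p^{p+2}+2<h<-4p-6$, obtained from explicit bounds on $\sum v_p(i)$ over $[\ell-r_0+2,\ell+r_0-2]$ and over the $B$-interval. This is not optional bookkeeping: without it the range assertion $2p+2<w_0<p^{p+2}$ (used later to control valuations, e.g.\ in Lemma~\ref{mlemma1}) is unproved, and indeed without \emph{some} localization of $h$ one cannot even assert that $\lfloor -h/2\rfloor$ lies in the admissible range. Second, a small inaccuracy in Step one: the terms that must vanish in the lower bound for $F^{(s_0)}_{k_w}(r_0)-F^{(s_0)}_{k_w}(r_0-1)$ are $v_p(\ell-r_0+1)$ and $v_p(\ell+r_0-1)$, which equal $0$ because $\ell\pm r_0$ is divisible by $p^{2p}$ so that $\ell\pm r_0\mp1\equiv\mp1\pmod p$; your stated reason would instead compute $v_p(\ell\pm r_0)=2p$, which is not the quantity that appears.
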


\begin{proof}
By Lemmas~\ref{shift} and \ref{lemma:r_1r_2}, for any $1\leq r\leq r_0$ and $s\in S $, we have
\begin{align}\label{eq::1}
		F^{(s)}_{k_{w}}(r)
		=&\frac{(p-3)(2r-1)+\theta_{\ell+r}^{(s)}-\theta_{\ell+r-1}^{(s)}}{2}+\left(\sum\limits_{i=B^{(s)}_{k_{w},\ell-r+1}+1}^{B^{(s)}_{k_{w},\ell+r}}+\sum\limits_{i=A^{(s)}_{k_w,\ell+r}+1}^{A^{(s)}_{k_{w},\ell-r+1}}-2\sum\limits_{i=\ell-r+1}^{\ell+r-1}\right)\v(i)\\
	\notag	=&\frac{(p-3)(2r-1)+\theta_{\ell+r}^{(s)}-\theta_{\ell+r-1}^{(s)}}{2}+2\left( \sum\limits_{i=B^{(s)}_{k_{w},\ell-r+1}+1}^{B^{(s)}_{k_{w},\ell+r}}-\sum\limits_{i=\ell-r+1}^{\ell+r-1}\right)\v(i).
\end{align}

By Lemma~\ref{lem:9}(1) and $r_0=p^{2p}$, we have
	\begin{equation}\label{neq::8}
	\theta_{k_{w},\ell+r_0}^{(s_0)}=	2s+2-k_0.
\end{equation}
%(1) By Lemma~\ref{lem:9}(2), we have
%\begin{itemize}
%\item 	$B^{(s_1)}_{k_{w},\ell+r_0}-Q_w=\frac{r_0(p+1)}{2}-p+s_1$ and $B^{(s_1)}_{k_{w},\ell+r_0-1}-Q_w= \frac{r_0(p+1)}{2}-(p+1);$
%%$$B^{(s_2)}_{k,\ell+r_0-1}-\frac{k-2+(\ell+r_0)(p+1)+\frac{p+1}{2}-s_1}{2}= -(p+1)-s_1+s_2,$$
%\item $B^{(s_1)}_{k_{w},\ell-r_0+1}-Q_w= -\frac{r_0(p+1)}{2}$ and $B^{(s_1)}_{k_{w},\ell-r_0+2}-Q_w= -\frac{r_0(p+1)}{2}+1+s_1.$
%\end{itemize}
%Note that $Q_w\equiv 0\pmod {p^w}.$ 
%The interval $[B^{(s_1)}_{k_{w},\ell+r_0-1}+1,B^{(s_1)}_{k_{w},\ell+r_0}]$ contains exactly one $p$-divisible integer $Q_w+\frac{r_0(p+1)}{2}-p$ of $p$-adic valuation $1$; while
% the interval $[B^{(s_1)}_{k_{w},\ell-r_0+1}+1,B^{(s_1)}_{k_{w},\ell-r_0+2}]$ contains no $p$-divisible integer.
%Hence, we have
Combined with Lemma~\ref{lem:8}(3), this inequality implies that
\begin{multline}\label{eq::26}
F^{(s_0)}_{k_{w}}(r_0)-	F^{(s_0)}_{k_{w}}(r_0-1)\\
	\begin{aligned}
	=& \frac{2(p-3)-2(p+1)+4\theta_{k_{w},\ell+r_0}^{(s_0)}}{2}+2\left(\sum\limits_{i=B^{(s_0)}_{k_{w},\ell+r_0-1}+1}^{B^{(s_0)}_{k_{w},\ell+r_0}}+\sum\limits_{i=B^{(s_0)}_{k_{w},\ell-r_0+1}+1}^{B^{(s_0)}_{k_{w},\ell-r_0+2}}\right)\v(i)
	\\&\qquad\qquad\qquad\qquad\qquad\qquad\qquad\qquad-2\v(\ell-r_0+1)-2\v(\ell+r_0-1)
	\\\geq& \frac{2(p-3)-2(p+1)+4\theta_{k_{w},\ell+r_0}^{(s_0)}}{2}\\
	\geq &\frac{-8+4\times 3}{2}\geq 2.
\end{aligned}
\end{multline}

%Combining it with \eqref{eq::22} and \eqref{eq::23}, we have
%	$$F^{(s_2)}_{k_{w}}(r_0-1)<F^{(s_1)}_{k_{w}}(r_0-1)\leq F^{(s_1)}_{k_{w}}(r_0)\leq F^{(s_2)}_{k_{w}}(r_0).$$
%where the middle equality holds if and only if $s_1=0$.
%
Now we only discuss the property of $k_w$ for $w$ such that $2p+2<w<p^{p+2}$.
Note that by Lemma~\ref{lem:9}, it follows that $Q_w\in [B^{(s_0)}_{k_{w},\ell-(r_0-1)+1}+1, B^{(s_0)}_{k_{w},\ell+r_0-1}]$. Since 
the length of the interval $[B^{(s_0)}_{k_{w}+1,\ell-(r_0-1)+1}, B^{(s_0)}_{k_{w},\ell+r_0-1}]$ is equal to 
\begin{equation}\label{re:16}
	B^{(s_0)}_{k_{w},\ell+r_0-1}-B^{(s_0)}_{k_{w},\ell-r_0+2}-1=(r_0-2)(p+1)-2s_0+k_0+p-1<r_0(p+1)<p^w,
\end{equation}
$Q_w$ is the unique $p^w$-divisible integer in $[B^{(s_0)}_{k_{w},\ell-r_0+2}+1, B^{(s_0)}_{k_{w},\ell+r_0-1}]$, whose $p$-adic valuation is exactly $w$. 
This implies that 
$\left(	\sum\limits_{i=B^{(s_0)}_{k_{w},\ell-r_0+2}+1}^{B^{(s_0)}_{k_{w},\ell+r_0-1}}\v(i)-w\right)$ is constant for every $2p+2<w<p^{p+2}$.
Combining it with the expression of $F_{k_{w}}^{(s_0)}(r_0-1)$ in \eqref{eq::1}, we know that $h:=F_{k_{w}}^{(s_0)}(r_0-1)-2w$ is also independent to $w$.  
If 
\begin{equation}\label{neq::7}
	-2p^{p+2}+2<h<-4p-6,
\end{equation}
 then taking $w_0:=\left\lfloor\frac{-h}{2}\right\rfloor$, we have 
$$ F_{k_{w_0}}^{(s_0)}(r_0-1)=h+2\left\lfloor\frac{-h}{2}\right\rfloor\in [-1,0]\quad\textrm{and}\quad w_0\in [2p+3, p^{p+2}-1].$$
Combined with \eqref{eq::26}, this $w_0$ satisfies the both conditions in (1). Clearly, every $w(\neq w_0)\in (2p+2,p^{p+2})$ satisfies $$|F_{k_{w}}^{(s_0)}(r_0-1)-F_{k_{w}}^{(s_0)}(r_0-1)|\geq 2.$$
Hence, such $w_0$ if exists, then is unique.
Now we are left to prove \eqref{neq::7}.
%  \begin{equation}\label{eq::24}	-p^{p+2}<\frac{(p-3)(2r_0-1)+\theta_{\ell+r_0}^{(s_1)}-\theta_{\ell+r_0-1}^{(s_1)}}{2}+\left(2\times \sum\limits_{i=B^{(s_1)}_{k_{w},\ell-r_0+1}+1}^{B^{(s_1)}_{k_{w},\ell+r_0}}-\sum\limits_{i=\ell-r_0+1}^{\ell+r_0-1}\right)\v(i)-2w<-p-2.
%  \end{equation}
% 
Applying on \eqref{eq::1} that 
\begin{gather*}
	 \sum\limits_{i=\ell-r_0+2}^{\ell+r_0-2}\v(i)=p^{p+2}+2\sum\limits_{i=1}^{r_0-2}\v(i)\leq  p^{p+2}+2\sum\limits_{i=1}^{2p}\frac{r_0-2}{p^i}\leq p^{p+2}+\frac{2r_0}{p-1}, \\
	 \frac{(p-3)(2r_0-3)+\theta_{k_{w},\ell+r_0-1}^{(s_0)}-\theta_{k_{w},\ell+r_0-2}^{(s_0)}}{2}\geq \frac{2r_0-3}{2},\\
\textrm{and}\quad	 \sum\limits_{i=B^{(s_0)}_{k_{w},\ell-r_0+2}+1}^{B^{(s_0)}_{k_{w},\ell+r_0-1}}\v(i)-w\geq 0, 
\end{gather*}
we complete the proof of the left hand side of \eqref{neq::7}.

On the other hand, by \eqref{re:16} and the fact that $Q_w$ is the unique $r_0$-divisible integer in $[B^{(s_0)}_{k_{w},\ell-r_0+1}+1,B^{(s_0)}_{k_{w},\ell+r_0}]$, we have the following estimate:
\begin{equation*}
	\sum\limits_{i=B^{(s_0)}_{k_{w},\ell-r_0+2}+1}^{B^{(s_0)}_{k_{w},\ell+r_0-1}}\v(i)\leq \sum\limits_{i=Q_w-r_0(p+1)}^{Q_w+r_0(p+1)}\v(i)=w+2\sum\limits_{j=1}^{p+1} \left\lfloor \frac{r_0(p+1)}{p^j}\right\rfloor\leq w+\frac{2(p+1)r_0}{p-1}.
\end{equation*}
This implies that 
\begin{multline*}
	F_{k_{w}}^{(s_0)}(r_0-1)-2w\\
\begin{aligned}
=&	\frac{(p-3)(2r_0-3)+\theta_{k_{w},\ell+r_0-1}^{(s_0)}-\theta_{k_{w},\ell+r_0-2}^{(s_0)}}{2}+2\left(\sum\limits_{i=B^{(s_0)}_{k_{w},\ell-r_0+2}+1}^{B^{(s_0)}_{k_{w},\ell+r_0-1}}-\sum\limits_{i=\ell-r_0+2}^{\ell+r_0-2}\right)\v(i)-2w
	\\\leq& \frac{(p-3)(2r_0-3)+p-5}{2}+\frac{4(p+1)r_0}{p-1}-2p^{2p+2}
	\\
	<&-4p-6.
\end{aligned}
\end{multline*}
This proves the right hand side of \eqref{neq::7}.

(2)  By Lemma~\ref{lem:9}(2), we have $$B^{(s_0)}_{k_{w_0},\ell+r_0}<B^{(s)}_{k_{w_0},\ell+r_0}\quad
\textrm{and}\quad B^{(s_0)}_{k_{w_0},\ell-r_0+1}>B^{(s)}_{k_{w_0},\ell-r_0+1}.$$
Combined with \eqref{neq::8}, these equalities show that
\begin{equation*}
	F^{(s)}_{k_{w_0}}(r_0)-F^{(s_0)}_{k_{w_0}}(r_0)=2(\theta_{k_{w_0},r_0+\ell}^{(s)}-\theta_{k_{w_0},r_0+\ell}^{(s_0)})+2\left(\sum\limits_{i=B^{(s)}_{k_{w_0},\ell-r_0+1}+1}^{B^{(s_0)}_{k_{w_0},\ell-r_0+1}}+\sum\limits_{i=B^{(s_0)}_{k_{w_0},\ell+r_0}+1}^{B^{(s)}_{k_{w_0},\ell+r_0}}\right)\v(i)>0.
\end{equation*}
Combined with (1), this proves that $F^{(s)}_{k_{w_0}}(r_0)>0$.
Similarly, we have
\begin{equation*}
	F^{(s)}_{k_{w_0}}(r_0-1)<F^{(s_0)}_{k_{w_0}}(r_0-1)\leq 0.\qedhere
\end{equation*}
\end{proof}
\begin{convention}
We use $w_0$ to denote the unique integer determined in Lemma~\ref{mlem:1}.
\end{convention}

\begin{lemma}\label{aa:1}
For every $s\in S$ and $1\leq r\leq r_0+2$, we have
$$\Delta^{(s)}_{k_{w_0},\ell+r}-\Delta^{(s)}_{k_{w_0},\ell+r-1}=\frac{1}{2}F_{k_{w_0}}^{(s)}(r)+P_{k,\ell}.$$
\end{lemma}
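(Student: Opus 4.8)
The plan is to obtain Lemma~\ref{aa:1} as a one-line consequence of Lemma~\ref{lem:7} applied with $k=k_{w_0}$, once its hypotheses have been checked. Recall that by \eqref{aa:2},
\[
F^{(s)}_{k_{w_0}}(r)=\left(\Delta^{(s)}_{k_{w_0},\ell+r}-\Delta^{(s)}_{k_{w_0},\ell+r-1}\right)-\left(\Delta^{(s)}_{k_{w_0},\ell-r+1}-\Delta^{(s)}_{k_{w_0},\ell-r}\right),
\]
while Lemma~\ref{lem:7} asserts $\left(\Delta^{(s)}_{k_{w_0},\ell-r+1}-\Delta^{(s)}_{k_{w_0},\ell-r}\right)+\left(\Delta^{(s)}_{k_{w_0},\ell+r}-\Delta^{(s)}_{k_{w_0},\ell+r-1}\right)=2P_{k,\ell}$. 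Adding these two identities and dividing by $2$ gives precisely the claimed equality. Thus the substance lies in verifying, for every $s\in S$ and every $1\le r\le r_0+2$, the three hypotheses of Lemma~\ref{lem:7}: that $\Delta^{(s)}_{k_{w_0},\ell\pm r}$ is well-defined (equivalently $\ell\le\frac13 d^{\new}_{k_{w_0}}(s)$ in the form used there), that $1\le r\le p^{v_p(\ell)-1}$, and that there is no $p^{v_p(\ell)}$-divisible integer in $(B^{(s)}_{k_{w_0},\ell-r+1},\,B^{(s)}_{k_{w_0},\ell+r}]$.

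First I would dispatch the two size conditions. Since $v_p(\ell)=p^{p+2}$ we have $p^{v_p(\ell)-1}=p^{p^{p+2}-1}\ge p^{2p}+2=r_0+2\ge r$. For the bound on $\ell$, from the definition of $k_w$ one has $k_{w_0,\bullet}=2\ell+\frac{2(p^{w_0}-1)(k_0-1-s_1)-p+1}{p-1}$, and because $2p+2<w_0<p^{p+2}=v_p(\ell)$ we get $p^{w_0}\le \ell/p$, so the correction term is bounded in absolute value by a fixed fraction of $\ell$ and $k_{w_0,\bullet}$ stays of order $2\ell$. Plugging this into Lemma~\ref{re:10} gives $d^{\new}_{k_{w_0}}(s)=d^{\Iw}_{k_{w_0}}(s)-2d^{\ur}_{k_{w_0}}(s)=2k_{w_0,\bullet}\cdot\frac{p-1}{p+1}+O(p)$, which exceeds $2(\ell+r_0+2)$; this simultaneously makes $\Delta^{(s)}_{k_{w_0},\ell\pm r}$ defined for $r\le r_0+2$ and supplies the inequality $\ell+r\le\frac12 d^{\new}_{k_{w_0}}(s)$ that the proof of Lemma~\ref{lem:7} actually uses.

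The heart of the matter is the divisibility condition. Using Lemma~\ref{lem:9}(2) for the right endpoint and the analogous computation from Notation~\ref{notation:1} for the left one, the endpoints are $B^{(s)}_{k_{w_0},\ell+r}=Q_{w_0}+\frac{r(p+1)}{2}+O(p)$ and $B^{(s)}_{k_{w_0},\ell-r+1}=Q_{w_0}-\frac{(r-1)(p+1)}{2}+O(p)$, so the interval has length $\frac{(2r-1)(p+1)}{2}+O(p)<p^{2p+2}$ for $r\le r_0+2$. Since $Q_{w_0}=\frac{3p-1}{2}\ell+p^{w_0}(k_0-1-s_1)$ and $\frac{3p-1}{2}\ell$ is a multiple of $\ell=p^{v_p(\ell)}$, the interval sits at residue $\approx p^{w_0}(k_0-1-s_1)\pmod{\ell}$; here $k_0-1-s_1\ne 0$ because we arranged $s_1\ne k_0-1$, and $p\nmid(k_0-1-s_1)$ since $|k_0-1-s_1|<p$, so $v_p\big(p^{w_0}(k_0-1-s_1)\big)=w_0$ and $p^{w_0}\le\big|p^{w_0}(k_0-1-s_1)\big|<p^{w_0+1}\le\ell$. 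As $w_0>2p+2$, the quantity $p^{w_0}$ dominates the interval's length $<p^{2p+2}$, so reducing modulo $\ell$ the interval stays strictly inside $(0,\ell)$ and hence contains no multiple of $p^{v_p(\ell)}$. This verifies the hypothesis of Lemma~\ref{lem:7}, and the proof concludes by the identity above.

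I expect the divisibility step to be the only genuinely delicate point: it is exactly where the inequalities $2p+2<w_0<p^{p+2}$ and the reduction $s_1\ne k_0-1$ are consumed, and one must keep track of the $O(p)$ slop in the endpoint formulas so as to be certain the interval does not slide across $0$ or $\ell$ modulo $\ell$. Everything else is bookkeeping with the (very large) chosen constants $\ell=p^{p^{p+2}}$ and $r_0=p^{2p}$.
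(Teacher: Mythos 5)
Your proposal is correct and follows essentially the same route as the paper: both reduce the identity to Lemma~\ref{lem:7} via the definition \eqref{aa:2} of $F^{(s)}_{k_{w_0}}(r)$ and then check its three hypotheses, the key point being that the only highly $p$-divisible integer near $(B^{(s)}_{k_{w_0},\ell-r+1},B^{(s)}_{k_{w_0},\ell+r}]$ is $Q_{w_0}$, whose valuation $w_0$ is strictly less than $v_p(\ell)$. Your verification of the divisibility hypothesis is in fact more explicit than the paper's one-line assertion, and your slightly weaker bound on $d^{\new}_{k_{w_0}}(s)$ is harmless since the paper's estimate \eqref{aa:5} yields the stated $3\ell$ in any case.
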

\begin{proof}
	We first prove 	 
	\begin{equation}\label{re::8}
			\Delta^{(s)}_{k, \ell-r+1}-\Delta^{(s)}_{k, \ell-r}+\Delta^{(s)}_{k, \ell+r}-\Delta^{(s)}_{k, \ell+r-1}\\
	=2P_{k,\ell}.
	\end{equation}
	It is enough to check the three hypotheses in  Lemma~\ref{lem:7}. 
	By Lemma~\ref{re:10}, we have
		\begin{equation}\label{aa:5}
			d_{k_{w_0}}^{\new}\left(s\right) =	d_{k_{w_0}}^{\Iw}\left(s\right)-	2d_{k_{w_0}}^{\ur}\left(s\right)\geq 2k_{w_0,\bullet}-4-\frac{4k_{w_0,\bullet}}{p+1}-4\geq 3\ell.
		\end{equation}
	This proves the first hypothesis. 	
The second hypothesis is proved by $1\leq r_0+2\leq p^{\v(\ell)-1}$.
	From our assumption $1\leq r\leq r_0+2$, the maximal $p$-adic valuation of the integers in $(B^{(s)}_{k,\ell-r+1}, B^{(s)}_{k,\ell+r}] $ is $w<\v(\ell)$. This proves the last hypothesis. 
	
	Combining   \eqref{aa:2} with \eqref{re::8}, we complete the proof.
\end{proof}

\begin{lemma}\label{aa:4}
For each $s\in S$, 
$\ell_0,\ell_1,\ell_2\in \{0,\dots, \frac12d_k^{\new}(s)\}$ such that 
$\v(\ell_0)\geq 1$, $\ell_1\in \{\ell_0,\ell_0+1\}$, $\ell_1\leq \ell_2$, if the largest $p$-adic valuation of the integers in $[\ell_0+1,\ell_2]$ is strictly less than $\frac{1}{2}p^{\v(\ell_0)}$, then 
$${\Delta}_{k_{w_0}, \ell_2+1}-{\Delta}_{k_{w_0}, \ell_2}-(	{\Delta}_{k_{w_0}, \ell_1+1}-{\Delta}_{k_{w_0}, \ell_1})>0.$$
\end{lemma}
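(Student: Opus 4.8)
The statement asks me to show a second-difference monotonicity for $\Delta^{(s)}_{k_{w_0},\bullet}$ on a range of indices $[\ell_1,\ell_2]$ that straddles $\ell_0$, under the hypothesis that no large powers of $p$ interfere. The natural tool is Lemma~\ref{lemma:r_1r_2}, which expresses exactly such a difference of consecutive differences in closed form:
\[
	\Delta^{(s)}_{k, \ell_2+1}-\Delta^{(s)}_{k, \ell_2}-\left(\Delta^{(s)}_{k, \ell_1+1}-\Delta^{(s)}_{k, \ell_1}\right)
	=\frac{(p-3)(\ell_2-\ell_1)+\theta_{k,\ell_2+1}^{(s)}-\theta_{k,\ell_1+1}^{(s)}}{2}+\Bigl(\sum_{i=B^{(s)}_{k,\ell_1+1}+1}^{B^{(s)}_{k,\ell_2+1}}+\sum_{i=A^{(s)}_{k,\ell_2+1}+1}^{A^{(s)}_{k,\ell_1+1}}-2\sum_{i=\ell_1+1}^{\ell_2}\Bigr) \v(i),
\]
(applying the lemma with $\ell_1+1$ and $\ell_2+1$ in place of $\ell_1,\ell_2$). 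First I would observe that the ``main term'' $\tfrac{(p-3)(\ell_2-\ell_1)}{2}$ dominates: since $\theta^{(s)}_{k,\bullet}\in[3,p-2]$ by Lemma~\ref{lem:8}(3), the $\theta$-contribution is at least $\tfrac{5-p}{2}$, and the three valuation-sums are bounded in absolute value in terms of $\v(i)$ over intervals of controlled length. The whole point of the hypothesis ``the largest $p$-adic valuation of integers in $[\ell_0+1,\ell_2]$ is $<\tfrac12 p^{\v(\ell_0)}$'' together with $\ell_1\in\{\ell_0,\ell_0+1\}$ is to bound these sums: it forces $\ell_2-\ell_1$ (hence also $\ell_2-\ell_0$) to be comparatively small relative to $p^{\v(\ell_0)}$, since an interval of that length must contain a multiple of $p^{\lceil \log_p(\text{length})\rceil}$.

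Concretely, the key steps in order: (1) rewrite the target quantity via Lemma~\ref{lemma:r_1r_2} as above; (2) bound the valuation-sum $\sum_{\ell_1+1}^{\ell_2}\v(i)$ from above using the standard estimate $\sum_{i=m_1+1}^{m_2}\v(i)\le \sum_{j\ge1}\bigl(\lfloor m_2/p^j\rfloor-\lfloor m_1/p^j\rfloor\bigr)\le \frac{m_2-m_1}{p-1}$, which by the hypothesis is much smaller than $\ell_2-\ell_1$; (3) bound the two sums over the $B$- and $A$-intervals, using that these intervals have length $\approx (p+1)(\ell_2-\ell_1)$ by Lemma~\ref{shift} (and the explicit formulas for $B^{(s)}_{k,\ell}$ in Lemma~\ref{lem:9}(2) with $k=k_{w_0}$), and that the maximal $p$-power appearing is controlled; here the hypothesis ``$<\tfrac12 p^{\v(\ell_0)}$'' is what ensures the relevant $p$-adic valuations in these $B$/$A$-intervals stay bounded, since $\v(\ell_0)\le \v(\ell)$ and $\ell=p^{p^{p+2}}$ is enormous. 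Then (4) combine: the main term $\tfrac{(p-3)(\ell_2-\ell_1)}{2}$ exceeds the sum of all the error terms, because each error term is $O\bigl(\tfrac{(\ell_2-\ell_1)(p+1)}{p-1}\bigr)$ or smaller constants, and $p\ge 7$ gives the needed numerical slack; hence the whole expression is $>0$.

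The main obstacle I anticipate is the careful bookkeeping in step (3): the $A$- and $B$-intervals are long (length on the order of $(p+1)r$), so a priori they could contain integers of large $p$-adic valuation. The hypothesis only directly controls valuations of integers in $[\ell_0+1,\ell_2]$, which is a much shorter interval, so I will need to use the explicit location of $B^{(s)}_{k_{w_0},\ell+r}$ near $Q_{w_0}=\tfrac{3p-1}{2}\ell+p^{w_0}(k_0-1-s_1)$ from Lemma~\ref{lem:9}(2), together with the fact (established in the proof of Lemma~\ref{mlem:1} via inequality \eqref{re:16}) that $Q_{w_0}$ is essentially the only integer of large valuation anywhere near that window, and that its valuation $w_0<p^{p+2}=\v(\ell)$ is still far below $\tfrac12 p^{\v(\ell_0)}$ when $\v(\ell_0)\ge 1$ only if $\ell_0$ is itself large — which is where I must be careful, since $\ell_0$ is only assumed $p$-divisible, not a huge power of $p$. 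I expect the cleanest route is to split the sums $\sum_{B}\v(i)$ and $\sum_{A}\v(i)$ into a ``regular part'' that cancels pairwise against $2\sum \v(i)$ by the shift symmetry of Lemma~\ref{shift} (as in the proof of Lemma~\ref{lem:7}), leaving only a bounded residual coming from the at most one or two integers of anomalously large valuation, and then check that residual is dominated by $\tfrac{(p-3)(\ell_2-\ell_1)}{2}$ minus the $\theta$-defect. Once that cancellation structure is set up, the inequality is a short numerical check.
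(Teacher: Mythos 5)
There is a genuine gap, and it sits exactly where the real difficulty of the lemma lies. Your step (2) rests on the estimate $\sum_{i=m_1+1}^{m_2}\v(i)\le\sum_{j\ge1}\bigl(\lfloor m_2/p^j\rfloor-\lfloor m_1/p^j\rfloor\bigr)\le\frac{m_2-m_1}{p-1}$, and the second inequality is false: a single term $\lfloor m_2/p^j\rfloor-\lfloor m_1/p^j\rfloor$ can equal $1$ even when $p^j\gg m_2-m_1$, so a short interval containing one integer of large $p$-adic valuation makes $\sum\v(i)$ far exceed $\frac{m_2-m_1}{p-1}$. This is precisely the failure mode the hypothesis of the lemma is guarding against, and you also misread that hypothesis: it does not force $\ell_2-\ell_1$ to be small relative to $p^{\v(\ell_0)}$ (it cannot be used that way --- $\ell_2$ may be very far from $\ell_0$ as long as no intermediate integer has large valuation); it caps the valuations occurring in $[\ell_0+1,\ell_2]$. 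The paper's proof supplies the missing structure: first one \emph{cuts} the interval at every index $\ell'\in[\ell_1+1,\ell_2-1]$ with $\v(\ell')\ge\v(\ell_0)$ and chains the inequality through these points, reducing to the case $\ell_2\le\ell_0+p^{\v(\ell_0)}$; then one runs an induction on $\v(\ell_0)$ within a single such period, where the endpoint case $\ell_2=\ell_0+p^{\v(\ell_0)}$ is settled by $2p^{\v(\ell_0)}-2\cdot\frac{p^{\v(\ell_0)}-1}{p-1}-3-\v(\ell_2)>0$, using the valuation cap on $\v(\ell_2)$. Without this reduction your ``main term dominates'' conclusion does not follow.

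Your anticipated ``main obstacle'' in step (3) is, by contrast, a non-issue: in Lemma~\ref{lemma:r_1r_2} the sums over the $A$- and $B$-intervals enter with a \emph{positive} sign, so for the desired lower bound they can simply be discarded --- that is exactly the content of Corollary~\ref{corollary:r_1r_2}, which the paper invokes and which you do not need to re-derive. The pairwise cancellation via Lemma~\ref{shift} that you propose is the mechanism of Lemma~\ref{lem:7}, which applies to windows symmetric about $\ell$ and is not what is needed here. In short: drop the $A$/$B$ bookkeeping, and replace the false uniform bound on $\sum\v(i)$ with the cutting-plus-induction argument on $\v(\ell_0)$.
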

\begin{proof}
	Adding and subtracting terms of the form ${\Delta}_{k_{w_0}, \ell'+1}-{\Delta}_{k_{w_0}, \ell'}$
	into the left hand side of the last inequality, where $\ell'$ runs over all integers in $[\ell_1+1, \ell_2-1]$ with $\v(\ell')\geq \v(\ell_0)$, by induction, we can assume $\ell_2\leq \ell_0+p^{\v(\ell_0)}$.
	By Corollary~\ref{corollary:r_1r_2}, we have
	$${\Delta}_{k_{w_0}, \ell_2+1}-{\Delta}_{k_{w_0}, \ell_2}-	{\Delta}_{k_{w_0}, \ell_1+1}-{\Delta}_{k_{w_0}, \ell_1}\geq 2(\ell_2-\ell_1)-2	\sum\limits_{j=\ell_1+1}^{\ell_2}\v(j)-1.$$
Hence, it is enough to prove that for $\ell_1\in \{\ell_0,\ell_0+1\}$ and $\ell_1\leq \ell_2\leq \ell_0+p^{\v(\ell_0)}$, we have
$$2(\ell_2-\ell_1)-2	\sum\limits_{j=\ell_1+1}^{\ell_2}\v(j)-1> 0.$$
We prove it by taking induction on $\v(\ell_0)$.

	When $\v(\ell_0)=1$, if $\ell_2<\ell_0+p$, this statement is trivial. If $\ell_2=\ell_0+p$, then we have
$$2(\ell_2-\ell_1)-2	\sum\limits_{j=\ell_1+1}^{\ell_2}\v(j)-1\geq 2p-3-2p^{\frac{1}{2}}>0.
$$
This proves the base case.

Now assume that $\v(\ell_0)\geq 2$. Note that every 
$\ell_0+1\leq j\leq \ell_0+p^{\v(\ell_0)}-1$ satisfies $\v(j)\leq \v(\ell_0)<p^{\frac12\v(\ell_0)}$. Hence, if $\ell_2\neq \ell_0+p^{\v(\ell_0)}$, this statement can be deduce from cutting interval 
$[\ell_1+1, \ell_2]$ 
by integers of $p$-adic valuation $\v(\ell)-1$. By our induction and a similar argument as in the first simplification, we completes the proof of this case. Now, we deal with the case 
$\ell_2=\ell_0+p^{\v(\ell_0)}$. In this case, we have
$$2(\ell_2-\ell_1)-2	\sum\limits_{j=\ell_1+1}^{\ell_2}\v(j)-1\geq 2p^{\v(\ell_0)}-2\times\frac{p^{\v(\ell_0)}-1}{p-1}-3-\v(\ell_2)>0,$$
where the last equality uses our assumption $\v(\ell_2)\leq \frac{1}{2}\v(\ell_0)$.
This completes the proof. 
\end{proof}

\begin{lemma}\label{aa:6}
	For each $s\in S$, 
	$\ell_0,\ell_1,\ell_2\in \{0,\dots, \frac12d_k^{\new}(s)\}$ such that 
	$\v(\ell_0)\geq 1$, $\ell_1\in \{\ell_0-1,\ell_0\}$, $\ell_1\geq \ell_2$, if the largest $p$-adic valuation of the integers in $[\ell_2, \ell_0-1]$ is strictly less than $\frac{1}{2}p^{\v(\ell_0)}$, then 
	$${\Delta}_{k_{w_0}, \ell_2}-{\Delta}_{k_{w_0}, \ell_2-1}-(	{\Delta}_{k_{w_0}, \ell_1}-{\Delta}_{k_{w_0}, \ell_1-1})<0.$$
\end{lemma}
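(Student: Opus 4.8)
The statement is the ``leftward'' counterpart of Lemma~\ref{aa:4}: where that lemma says the slopes of the convex hull $\Delta^{(s)}_{k_{w_0}}$ strictly increase as one moves to the \emph{right} of a point $\ell_0$ with $\v(\ell_0)\ge 1$ (unless shielded by an integer of very large valuation), the present lemma asserts the same as one moves to the \emph{left}. The plan is to transport the proof of Lemma~\ref{aa:4} through the reflection symmetry $\Delta^{(s)}_{k,\ell}=\Delta^{(s)}_{k,-\ell}$ of~\eqref{neq::19}; concretely, to rerun that argument with the interval $[\ell_0+1,\ell_2]$ of Lemma~\ref{aa:4} replaced throughout by $[\ell_2,\ell_0-1]$. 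I describe the direct route below.

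First I would reduce to a combinatorial inequality. Since $\v(\ell_0)\ge 1$ we have $\ell_0\ge p\ge 7$, and by hypothesis $\ell_1\in\{\ell_0-1,\ell_0\}$, $\ell_1\ge\ell_2$; we may assume $\ell_2<\ell_1$ (the degenerate case $\ell_2=\ell_1$ not occurring), and note $\ell_2\ge 1$, since $0\in[\ell_2,\ell_0-1]$ with $\v(0)=\infty$ would contradict the hypothesis. Applying Corollary~\ref{corollary:r_1r_2} with its two running indices taken in the order $\ell_2<\ell_1$ (legitimate, as $1\le\ell_2<\ell_1\le\ell_0\le\tfrac12 d^{\new}_{k_{w_0}}(s)$) gives
\[
\bigl(\Delta^{(s)}_{k_{w_0},\ell_1}-\Delta^{(s)}_{k_{w_0},\ell_1-1}\bigr)-\bigl(\Delta^{(s)}_{k_{w_0},\ell_2}-\Delta^{(s)}_{k_{w_0},\ell_2-1}\bigr)\ \ge\ \frac{(p-3)(\ell_1-\ell_2-1)}{2}-2\sum_{i=\ell_2}^{\ell_1-1}\v(i)+1,
\]
and since $p\ge 7$ the right side is $\ge 2(\ell_1-\ell_2)-1-2\sum_{i=\ell_2}^{\ell_1-1}\v(i)$. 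The structural point is that $\ell_1-1\le\ell_0-1<\ell_0$, so $\ell_0\notin[\ell_2,\ell_1-1]\subseteq[\ell_2,\ell_0-1]$; hence the hypothesis guarantees $\v(i)<\tfrac12 p^{\v(\ell_0)}$ for every $i$ in the sum. It therefore suffices to show $2(\ell_1-\ell_2)-1-2\sum_{i=\ell_2}^{\ell_1-1}\v(i)>0$.

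This combinatorial inequality I would prove by induction on $\v(\ell_0)$, mirroring the proof of Lemma~\ref{aa:4}. In the base case $\v(\ell_0)=1$, the integers $\ell_0-1,\dots,\ell_0-(p-1)$ are prime to $p$, so if $\ell_2>\ell_0-p$ the sum vanishes and the inequality is immediate, while if $\ell_2=\ell_0-p$ the sum equals $\v(\ell_0-p)<\tfrac{p}{2}$ and $\ell_1-\ell_2\ge p-1$, which suffices; longer intervals reduce to these by cutting $[\ell_2,\ell_0-1]$ at its multiples of $p$ and applying~\eqref{bb:1} together with the above bounds on each block. For $\v(\ell_0)\ge 2$ one uses that every $j\in(\ell_0-p^{\v(\ell_0)},\ell_0)$ satisfies $\v(j)\le\v(\ell_0)-1<p^{\frac12\v(\ell_0)}$, cuts $[\ell_2,\ell_1-1]$ at the integers of valuation $\ge\v(\ell_0)-1$, applies the inductive hypothesis to each resulting block, and treats the terminal block $\ell_2=\ell_0-p^{\v(\ell_0)}$ directly via~\eqref{bb:1}, using $\v(\ell_2)<\tfrac12 p^{\v(\ell_0)}$. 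Plugging the resulting bound into the displayed estimate yields the claimed strict inequality.

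\textbf{Main obstacle.} There is no real mathematical difficulty, since the lemma is the mirror image of one already established; the effort is entirely bookkeeping. The points that need care are: orienting Corollary~\ref{corollary:r_1r_2} correctly (running indices $\ell_2<\ell_1$) so that the sign of the conclusion is negative as required; using $\ell_1\le\ell_0$ to see that the summation interval $[\ell_2,\ell_1-1]$ lies strictly below $\ell_0$, which is precisely what lets the valuation hypothesis be applied; and verifying that the base case and the terminal case of the induction each produce a \emph{strict} inequality with constants matching those in Lemma~\ref{aa:4}. I would expect the terminal case $\ell_2=\ell_0-p^{\v(\ell_0)}$ to be the most delicate, since there the interval contains the integer $\ell_2$ itself, whose valuation may be as large as $\tfrac12 p^{\v(\ell_0)}-1$.
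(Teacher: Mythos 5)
Your proposal is correct and takes essentially the same route as the paper, whose entire proof of this lemma is the single sentence that it is identical to that of Lemma~\ref{aa:4}; you have simply written out the mirrored argument (Corollary~\ref{corollary:r_1r_2} applied with the indices in the order $\ell_2<\ell_1$, followed by the same induction on $\v(\ell_0)$) that the paper leaves implicit.
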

\begin{proof}
	Its proof is exactly same to Lemma~\ref{aa:4}.
\end{proof}

\begin{lemma}\label{mlemma1}
For any $s\in S$, we have
\begin{enumerate}
	\item 	$\Delta^{(s)}_{k_{w_0}, \ell+r}-\Delta^{(s)}_{k_{w_0}, \ell+r-1} 
		\leq	P_{k_{w_0},\ell}$ for $1\leq r\leq r_0-2$;
	\item 
	$\Delta^{(s)}_{k_{w_0}, \ell+r}-\Delta^{(s)}_{k_{w_0}, \ell+r-1} 
	>	P_{k_{w_0},\ell}$ for $r_0+2\leq r\leq \frac{1}{2}d_k^\new(s)-\ell.$
\end{enumerate}

\end{lemma}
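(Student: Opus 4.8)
For this proof write $\sigma^{(s)}(L):=\Delta^{(s)}_{k_{w_0},L}-\Delta^{(s)}_{k_{w_0},L-1}$, so that $F^{(s)}_{k_{w_0}}(r)=\sigma^{(s)}(\ell+r)-\sigma^{(s)}(\ell-r+1)$ by \eqref{aa:2}; part (1) asks that $\sigma^{(s)}(\ell+r)\le P_{k_{w_0},\ell}$ for $1\le r\le r_0-2$ and part (2) that $\sigma^{(s)}(\ell+r)>P_{k_{w_0},\ell}$ for $r_0+2\le r\le\tfrac12 d^{\new}_{k_{w_0}}(s)-\ell$. The plan is to anchor the slope at the three indices $\ell+r_0-1,\ell+r_0,\ell+r_0+1$ near the transition point using Lemma~\ref{aa:1} together with the sign information of Lemma~\ref{mlem:1}, and then to propagate the anchored bounds away from the transition point using the monotonicity Lemmas~\ref{aa:4} and \ref{aa:6} and Corollary~\ref{corollary:r_1r_2}. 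Throughout one uses that $r_0=p^{2p}$ and $\ell=p^{p^{p+2}}$, so $p^{2p}\mid \ell$, $\v(\ell\pm j)=\v(j)$ for $0<j<\ell$, and $\operatorname{Dig}(r_0)=1$; one also uses $\tfrac12 d^{\new}_{k_{w_0}}(s)\ge \tfrac32\ell$ from \eqref{aa:5}, and that the definition of $k_{w_0}$ forces $\tfrac12 d^{\new}_{k_{w_0}}(s)<2\ell$ (so that $r<\ell$ in part (2)).

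\textbf{Part (1).} For $1\le r\le r_0-2$ we have $r\le r_0+2$, so Lemma~\ref{aa:1} applies and gives $\sigma^{(s)}(\ell+r)=\tfrac12 F^{(s)}_{k_{w_0}}(r)+P_{k_{w_0},\ell}$; in particular $\sigma^{(s)}(\ell+r_0-1)=\tfrac12 F^{(s)}_{k_{w_0}}(r_0-1)+P_{k_{w_0},\ell}\le P_{k_{w_0},\ell}$ by the first inequality of Lemma~\ref{mlem:1} (valid for $s=s_0$ by (1) and for $s\ne s_0$ by (2)). Now propagate this leftward: apply Lemma~\ref{aa:6} with $\ell_0=\ell+r_0$, $\ell_1=\ell+r_0-1$, $\ell_2=\ell+r$. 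Here $\v(\ell_0)=2p\ge 1$ since $p^{2p}\mid\ell+r_0$, and every integer in $[\ell+r,\ell+r_0-1]$ with $r\ge 1$ has $p$-adic valuation at most $2p-1<\tfrac12 p^{2p}$ (no multiple of $p^{2p}$ lies in $[1,r_0-1]$). Lemma~\ref{aa:6} then yields $\sigma^{(s)}(\ell+r)<\sigma^{(s)}(\ell+r_0-1)\le P_{k_{w_0},\ell}$ for all $1\le r\le r_0-2$, as required.

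\textbf{Part (2).} First handle $r=r_0+1,r_0+2$ (both $\le r_0+2$), where Lemma~\ref{aa:1} reduces the claim to $F^{(s)}_{k_{w_0}}(r_0+1)>0$ and $F^{(s)}_{k_{w_0}}(r_0+2)>0$ for every $s\in S$. These I would obtain by the same bookkeeping as in the proof of Lemma~\ref{mlem:1}: starting from $F^{(s)}_{k_{w_0}}(r_0)>0$, one evaluates the increments $F^{(s)}_{k_{w_0}}(r_0+1)-F^{(s)}_{k_{w_0}}(r_0)$ and $F^{(s)}_{k_{w_0}}(r_0+2)-F^{(s)}_{k_{w_0}}(r_0+1)$ via the explicit formula~\eqref{eq::1}, using Lemma~\ref{lem:9} to locate the endpoints $B^{(s)}_{k_{w_0},\ell\pm r}$ relative to $Q_{w_0}$, Lemma~\ref{lem:8} for the values $3\le\theta^{(s)}_{k_{w_0},\ell+r}\le p-2$, and the fact that $r_0+1$ is prime to $p$ (so $\v(\ell\pm(r_0+1))=0$) together with the fact that the valuation-$w_0$ integer $Q_{w_0}$ stays in the relevant $B$-interval and is the only integer there of valuation $\ge w_0$; this keeps $F^{(s)}_{k_{w_0}}$ positive across $r_0+1,r_0+2$. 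For $r\ge r_0+3$ I propagate rightward with Corollary~\ref{corollary:r_1r_2} applied to $\ell_1=\ell+r_0+2$, $\ell_2=\ell+r$: it gives
\[
\sigma^{(s)}(\ell+r)-\sigma^{(s)}(\ell+r_0+2)\ \ge\ \frac{(p-3)(r-r_0-3)}{2}-2\sum_{i=\ell+r_0+2}^{\ell+r-1}\v(i)+1 .
\]
Since $r<\ell$ we have $\sum_{i=\ell+r_0+2}^{\ell+r-1}\v(i)=\sum_{j=r_0+2}^{r-1}\v(j)=\sum_{j=1}^{r-1}\v(j)-\sum_{j=1}^{r_0+1}\v(j)\le\frac{r-1}{p-1}-\frac{r_0-1}{p-1}=\frac{r-r_0}{p-1}$ (using $\operatorname{Dig}(r_0+1)=2$), and for $p\ge 7$ one checks $\frac{(p-3)(r-r_0-3)}{2}-\frac{2(r-r_0)}{p-1}+1\ge 0$ whenever $r\ge r_0+3$. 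Hence $\sigma^{(s)}(\ell+r)\ge\sigma^{(s)}(\ell+r_0+2)>P_{k_{w_0},\ell}$, completing part (2).

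\textbf{Main obstacle.} The only genuinely delicate step is the ``gray zone'' $r=r_0+1,r_0+2$ of part (2): neither monotonicity lemma bridges across the index $\ell+r_0$, where $\ell\pm r_0$ carry $p$-adic valuation $2p$ and the slope of $\Delta^{(s)}_{k_{w_0}}$ genuinely drops, so one is forced to track the cancellation in $F^{(s)}_{k_{w_0}}(r)$ term by term and to verify that the positive linear term, together with the contribution of $Q_{w_0}$ to the digit sum over the $B$-interval, already outweighs these drops by the time $r$ reaches $r_0+2$ --- uniformly in $s\in S$. Everything else is routine once the estimates of \S\ref{section4.2} are available.
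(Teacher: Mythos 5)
Your part (1) and your rightward propagation for $r\ge r_0+3$ in part (2) are both sound (the propagation goes through Corollary~\ref{corollary:r_1r_2} rather than the monotonicity lemma the paper uses, but your estimate closes). The genuine gap is in your anchoring of part (2) at the transition point: you assert that $F^{(s)}_{k_{w_0}}(r_0+1)>0$ for \emph{every} $s\in S$ and propose to reach $F^{(s)}_{k_{w_0}}(r_0+2)>0$ by stepping through $r_0+1$ with increments that ``keep $F$ positive''. That intermediate claim is false for every $s\ge s_1+1$. For such $s$, Lemma~\ref{lem:9}(2) shows that the intervals $\bigl(B^{(s)}_{k_{w_0},\ell+r_0},B^{(s)}_{k_{w_0},\ell+r_0+1}\bigr]$ and $\bigl(B^{(s)}_{k_{w_0},\ell-r_0},B^{(s)}_{k_{w_0},\ell-r_0+1}\bigr]$ contain no multiple of $p$, so the terms $-2\v(\ell+r_0)-2\v(\ell-r_0)=-8p$ in \eqref{eq::1} are not compensated; one gets $F^{(s)}_{k_{w_0}}(r_0+1)-F^{(s)}_{k_{w_0}}(r_0-1)=-2p-4$, and since $F^{(s)}_{k_{w_0}}(r_0-1)<0$ for $s\neq s_0$ this forces $F^{(s)}_{k_{w_0}}(r_0+1)<0$. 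This sign change at $r_0+1$ is not a nuisance to be estimated away: it is precisely the dichotomy of Proposition~\ref{prop:1} (hull segment of length $2(r_0-1)$ for $s\le s_1$ versus $2(r_0+1)$ for $s\ge s_1+1$) on which the proof of Proposition~\ref{main thm} rests; if your claim were true the main construction would collapse. Note also that $r=r_0+1$ lies outside the stated range of part (2), so nothing should be proved there.

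The correct route, and the one the paper takes, is to bypass $r_0+1$ entirely and compute $F^{(s)}_{k_{w_0}}(r_0+2)-F^{(s)}_{k_{w_0}}(r_0-1)$ in a single step. The relevant intervals $\bigl(B^{(s)}_{k_{w_0},\ell+r_0-1},B^{(s)}_{k_{w_0},\ell+r_0+2}\bigr]$ and $\bigl(B^{(s)}_{k_{w_0},\ell-r_0-1},B^{(s)}_{k_{w_0},\ell-r_0+2}\bigr]$ have length $\tfrac{3(p+1)}{2}$ and, for \emph{every} $s\in S$ (using $s+s_1\ge k_0+1$ and $s_1-s-p\le 0$), each contains exactly one integer of $p$-adic valuation $2p$; this contributes $+8p$, cancels the drop at $\ell\pm r_0$, and yields $F^{(s)}_{k_{w_0}}(r_0+2)-F^{(s)}_{k_{w_0}}(r_0-1)\ge 2p-4\ge 2$, whence $F^{(s)}_{k_{w_0}}(r_0+2)>0$ together with the lower bound on $F^{(s)}_{k_{w_0}}(r_0-1)$ from Lemma~\ref{mlem:1}. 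With that anchor in place, your propagation argument finishes part (2).
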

\begin{proof}
	For simplicity we put $k:=k_{w_0}$ temporarily in this Lemma.
	
(1) 
By Lemmas~\ref{mlem:1} and \ref{aa:1}, 
we have 
$\Delta^{(s)}_{k, \ell+r_0-1}-\Delta^{(s)}_{k, \ell+r_0-2}\leq  P_{k,\ell}.$
Note that by \eqref{aa:5},  we have $\frac12d_k^{\new}\leq \frac{p-1}{p+1}k_\bullet-4\leq 3\ell$, 
and hence every integer in $\{1,\dots, \frac{1}{2}d_k^{\new}\}$ has $p$-adic valuation less than or equal to $\v(\ell)=p^{p+2}$.
Combined with $p^{\v(\ell+r_0)}=p^{2p}> 2\v(\ell), $ this result checks the conditions in Lemma~\ref{aa:6}, by which, we have
	$$\Delta^{(s)}_{k_{w_0}, \ell+r}-\Delta^{(s)}_{k_{w_0}, \ell+r-1} <
	\Delta^{(s)}_{k_{w_0}, \ell+r_0-1}-\Delta^{(s)}_{k_{w_0}, \ell+r_0-2}\leq P_{k,\ell},$$ 
for every $1\leq r\leq r_0-2$.

%
%Hence, to prove (1), it is enough showing that for every $1\leq r\leq r_0-2$,
%\begin{equation}\label{eq::36}
%	\Delta^{(s)}_{k, \ell+r_0-1}-\Delta^{(s)}_{k, \ell+r_0-2}-
%	\left(\Delta^{(s)}_{k, \ell+r}-\Delta^{(s)}_{k, \ell+r-1}\right)
%	\geq 0.
%\end{equation}
%Note that by Corollary~\ref{corollary:r_1r_2}, we have
%\begin{equation}\label{eq::28}
%	\Delta^{(s)}_{k, \ell+r_0-1}-\Delta^{(s)}_{k, \ell+r_0-2}-
%	\left(\Delta^{(s)}_{k, \ell+r}-\Delta^{(s)}_{k, \ell+r-1}\right)
%	\geq
%	\frac{(p-3)(r_0-r-2)}{2}-2\sum\limits_{i=\ell+r}^{\ell+r_0-2} \v(i)+1.
%\end{equation}
% When $r=r_0-2$, we have 
%$\sum\limits_{i=\ell+r_0-2}^{\ell+r_0-2}\v(i)=0$. Plugging it into \eqref{eq::28}, we have 
%$$	\Delta^{(s)}_{k, \ell+r_0-1}-\Delta^{(s)}_{k, \ell+r_0-2}-
%\left(\Delta^{(s)}_{k, \ell+r}-\Delta^{(s)}_{k, \ell+r-1}\right)\geq 1.$$
%Now we assume $1\leq r\leq r_0-3$. From 
%$\v(\ell)>\v(r_0)$, we have
%$$\sum\limits_{i=\ell+r}^{\ell+r_0-2} \v(i)\leq \sum\limits_{i=1}^{r_0-r}\v(i)\leq \sum\limits_{j=1}^\infty \left\lfloor\frac{r_0-r}{p^j}\right\rfloor\leq \frac{r_0-r}{p-1}.
%$$
%Plugging it into \eqref{eq::28}, we obtain 
%\begin{equation}\label{eq::35}
%	\begin{split}
%	&\Delta^{(s)}_{k, \ell+r_0-1}-\Delta^{(s)}_{k, \ell+r_0-2}-\left(\Delta^{(s)}_{k, \ell+r}-\Delta^{(s)}_{k, \ell+r-1}\right)\\
%	\geq 	&\frac{(p-3)(r_0-r-2)}{2}-\frac{r_0-r}{p-1}+1\\
%	\geq&	(r_0-r-2)\frac{(p-3)(p-1)-2}{2(p-1)}-\frac{2}{p-1}+1\\
%		\geq &\frac{(p-1)(p-1)-6}{2(p-1)}> 0,
%	\end{split}
%\end{equation}
%where the last two inequalities are from $p\geq 7$. This completes the proof.
%
%

	(2) We first prove that 
	\begin{equation}\label{eq::30}
		F^{(s)}_k(r_0+2)
			\geq 	F^{(s)}_k(r_0-1)+2.
	\end{equation}
Note that $\ell+r_0$ and $\ell-r_0$ are the unique $p$-divisible integers in $(\ell+r_0-1-1, \ell+r_0+2-1]$ and $[\ell-r_0-2+1, \ell-(r_0-1)+1)$, respectively. 
	By Lemma~\ref{lem:8}(3) and \eqref{eq::1}, we have
		\begin{multline}\label{mlemma:1}
			F^{(s)}_k(r_0+2)-F^{(s)}_k(r_0-1)\\
			\begin{aligned}
			=&
			\frac{6(p-3)+2(\theta_{\ell+r_0+2}^{(s)}-\theta_{\ell+r_0+1}^{(s)})}{2}-2\v(\ell+r_0)-2\v(\ell-r_0) +2\times\left(\sum\limits_{i=B^{(s)}_{k,\ell+r_0-1}+1}^{B^{(s)}_{k,\ell+r_0+2}}+\sum\limits_{i=B^{(s)}_{k,\ell-r_0-1}+1}^{B^{(s)}_{k,\ell-r_0+2}}\right)\v(i)\\
			\geq &
			3(p-3)-(p-5)-8p+
			2\times\left(\sum\limits_{i=B^{(s)}_{k,\ell+r_0-1}+1}^{B^{(s)}_{k,\ell+r_0+2}}+\sum\limits_{i=B^{(s)}_{k,\ell-r_0-1}+1}^{B^{(s)}_{k,\ell-r_0+2}}\right)\v(i).
	\end{aligned}
		\end{multline}

	By Lemma~\ref{lem:9}(2), we have \begin{gather}\label{aa1}
		{B^{(s)}_{k,\ell\pm r_0+2}}=Q_w\pm \frac{r_0(p+1)}{2}+	s_1+s-k_0+1,\\
		\label{aa2}
		B^{(s)}_{k,\ell\pm  r_0-1}=Q_w\pm \frac{r_0(p+1)}{2}+	s_1-s-p-1.
	\end{gather}
Note that  from $s_1,s\in S$, we have
$s_1+s\geq k_0+1$, and hence 	$$s_1+s-k_0+1> 0.$$
Combined with $s_1-s-p\leq 0$, \eqref{aa1} and \eqref{aa2}, this inequality implies that there is a unique $p$-divisible integer in $[B^{(s)}_{k,\ell+r_0-1}+1,B^{(s)}_{k,\ell+r_0+2}]$ (resp. $[B^{(s)}_{k,\ell-r_0-1}+1,B^{(s)}_{k,\ell-r_0+2}]$) of $p$-adic valuation $2p$, and further that  
	\begin{equation*}
		\sum\limits_{i=B^{(s)}_{k,\ell+r_0-1}+1}^{B^{(s)}_{k,\ell+r_0+2}}\v(i)= 	\sum\limits_{i=B^{(s)}_{k,\ell-r_0-1}+1}^{B^{(s)}_{k,\ell-r_0+2}}\v(i)=2p.
	\end{equation*}
	Plugging it into \eqref{mlemma:1}, we obtain $F^{(s)}_k(r_0+2)-F^{(s)}_k(r_0-1)\geq 2p-4\geq 2$, 
and hence complete the proof of \eqref{eq::30}. 	
	Combining \eqref{eq::30} with Lemma~\ref{mlem:1} that $F^{(s)}_k(r_0-1)\geq -1$, we have
	$	F^{(s)}(r_0+2)> 0.$
Together with Lemma~\ref{aa:1}, 
this implies that
$\Delta^{(s)}_{k, \ell+r_0+2}-\Delta^{(s)}_{k, \ell+r_0-1}>  P_{k,\ell}.$
Similar to the argument in (1), by Lemma~\ref{aa:6}, we have
$$\Delta^{(s)}_{k_{w_0}, \ell+r}-\Delta^{(s)}_{k_{w_0}, \ell+r-1} >
\Delta^{(s)}_{k_{w_0}, \ell+r_0+2}-\Delta^{(s)}_{k_{w_0}, \ell+r_0+1}> P_{k,\ell},$$ 
for every $r_0-2\leq r\leq \frac{1}{2}d_k^\new(s)-\ell$. This completes the proof.
\end{proof}

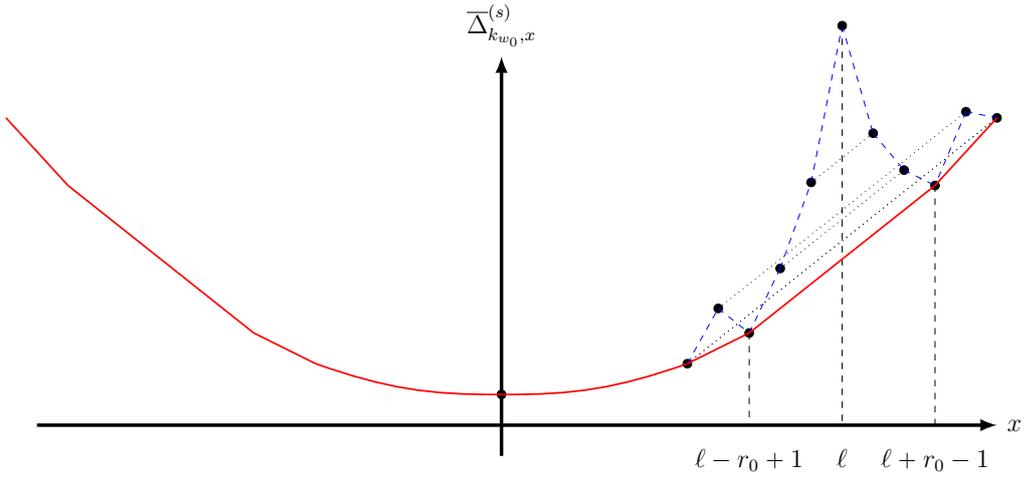
\begin{figure}
	\centering
	  \resizebox{0.8\textwidth}{!}{%
	\begin{tikzpicture}
		\draw[axis] (-7,0) -- (7,0) node[right=2* \nudge cm] {\(x\)};
		\draw[axis] (0,0) -- (0,5) node[above=2*\nudge cm] {\(\OD^{(s)}_{k_{w_0},x}\)};
		
		\coordinate (A) at (0,1/2);
		\coordinate (B) at (3,2/2);
		\coordinate (D) at (3.5,3.8/2);
		\coordinate (E) at (4,3/2);
			\coordinate (E1) at (4.5,5.1/2);
				\coordinate (E2) at (5,7.9/2);
					\coordinate (G1) at  (6,9.5/2);
			\coordinate (G2) at (6.5,8.3/2);
		\coordinate (F) at (7.5, 10.2/2);
		\coordinate (G) at (7, 7.8/2);
		\coordinate (H) at (5.5, 6.5);
				\coordinate (C) at (8,10/2);
		\coordinate (H0) at (5.5, 0);
		\coordinate (E0) at (4,0);
		\coordinate (G0) at (7, 0);
		
		\filldraw[black] (A) circle (2pt) node[anchor=west]{};
		\filldraw[black] (B) circle (2pt) node[anchor=west]{};
		\filldraw[black] (C) circle (2pt) node[anchor=west]{};
		\filldraw[black] (D) circle (2pt) node[anchor=west]{};
		\filldraw[black] (E) circle (2pt) node[anchor=west]{};
		\filldraw[black] (F) circle (2pt) node[anchor=west]{};
		\filldraw[black] (G) circle (2pt) node[anchor=west]{};
		\filldraw[black] (H) circle (2pt) node[anchor=west]{};
		\filldraw[black] (E1) circle (2pt) node[anchor=west]{};
	\filldraw[black] (E2) circle (2pt) node[anchor=west]{};
		\filldraw[black] (G2) circle (2pt) node[anchor=west]{};
		\filldraw[black] (G1) circle (2pt) node[anchor=west]{};

		\draw[line] (A) to [bend left=-10] (B);
		\draw[blue, dashed] (B) to (D);
		\draw[blue, dashed] (D) to (E);
		\draw[blue, dashed] (E) to (E1);
		\draw[blue, dashed] (E1) to (E2);
		\draw[blue, dashed] (E2) to (H);
		\draw[blue, dashed] (H) to (G1);
		\draw[blue, dashed] (G1) to (G2);
		\draw[blue, dashed] (G2) to (G);
		\draw[blue, dashed] (F) to (G);								
		\draw[blue, dashed] (F) to (C);

		\draw[dotted] (B) to (C);
		\draw[dotted] (D) to (F);
		\draw[dotted] (E1) to (G2);
		\draw[dotted] (E2) to (G1);
		\draw[dotted] (B) to (C);

		\draw[dashed] (H) to (H0)  node[below=0.5*\nudge cm] {\(\ell\)};
		\draw[dashed] (E) to (E0)  node[below=0.5*\nudge cm] {\(\ell-r_0+1\)};
		\draw[dashed] (G) to (G0)  node[below=0.5*\nudge cm] {\(\ell+r_0-1\)};

		\coordinate (E1) at (-4,3/2);
		\coordinate (G1) at (-7, 7.8/2);
		\coordinate (B1) at (-3,2/2);
		
		\coordinate (C1) at (-8,10/2);

		\draw[line] (A) to [bend left=10] (B1);

		\draw[line] (B) to (E);
		\draw[line] (E) to (G);
		\draw[line] (C) to (G);	
		\draw[line] (B1) to (E1);
		\draw[line] (E1) to (G1);
		\draw[line] (C1) to (G1);	
		
	\end{tikzpicture}
}
	\caption{The graph of $\OD^{(s)}_{k_{w_0}}$ for $s\leq s_1$.}
	\label{fig1}
\end{figure}

\begin{figure}
	\centering

  \resizebox{0.8\textwidth}{!}{%	
\begin{tikzpicture}
	
		\draw[axis] (-7,0) -- (7,0) node[right=2* \nudge cm] {\(x\)};
\draw[axis] (0,0) -- (0,5) node[above=2*\nudge cm] {\(\OD^{(s)}_{k_{w_0},x}\)};

\coordinate (A) at (0,1/2);
\coordinate (B) at (3,2/2);
\coordinate (D) at (3.5,3.8/2);
\coordinate (E) at (4,2);
\coordinate (E1) at (4.5,5.1/2);
\coordinate (E2) at (5,7.9/2);
\coordinate (G1) at  (6,9.5/2);
\coordinate (G2) at (6.5,8.3/2);
\coordinate (F) at (7.5, 10.2/2);
\coordinate (G) at (7, 8.8/2);
\coordinate (H) at (5.5, 6.5);
\coordinate (C) at (8,10/2);
\coordinate (H0) at (5.5, 0);
\coordinate (E0) at (4,0);
\coordinate (G0) at (7, 0);

\filldraw[black] (A) circle (2pt) node[anchor=west]{};
\filldraw[black] (B) circle (2pt) node[anchor=west]{};
\filldraw[black] (C) circle (2pt) node[anchor=west]{};
\filldraw[black] (D) circle (2pt) node[anchor=west]{};
\filldraw[black] (E) circle (2pt) node[anchor=west]{};
\filldraw[black] (F) circle (2pt) node[anchor=west]{};
\filldraw[black] (G) circle (2pt) node[anchor=west]{};
\filldraw[black] (H) circle (2pt) node[anchor=west]{};
\filldraw[black] (E1) circle (2pt) node[anchor=west]{};
\filldraw[black] (E2) circle (2pt) node[anchor=west]{};
\filldraw[black] (G2) circle (2pt) node[anchor=west]{};
\filldraw[black] (G1) circle (2pt) node[anchor=west]{};

\draw[line] (A) to [bend left=-10] (B);
\draw[blue, dashed] (B) to (D);
\draw[blue, dashed] (D) to (E);
\draw[blue, dashed] (E) to (E1);
\draw[blue, dashed] (E1) to (E2);
\draw[blue, dashed] (E2) to (H);
\draw[blue, dashed] (H) to (G1);
\draw[blue, dashed] (G1) to (G2);
\draw[blue, dashed] (G2) to (G);
\draw[blue, dashed] (F) to (G);								
\draw[blue, dashed] (F) to (C);

\draw[dotted] (E) to (G);
\draw[dotted] (D) to (F);
\draw[dotted] (E1) to (G2);
\draw[dotted] (E2) to (G1);
\draw[dotted] (B) to (C);

\draw[dashed] (H) to (H0)  node[below=0.5*\nudge cm] {\(\ell\)};
\draw[dashed] (E) to (E0)  node[below=0.5*\nudge cm] {\(\ell-r_0+1\)};
\draw[dashed] (G) to (G0)  node[below=0.5*\nudge cm] {\(\ell+r_0-1\)};

\coordinate (E1) at (-4,3/2);
\coordinate (G1) at (-7, 7.8/2);
\coordinate (B1) at (-3,2/2);

\coordinate (C1) at (-8,10/2);

\draw[line] (A) to [bend left=10] (B1);

\draw[line] (B) to (C);

\draw[line] (B1) to (C1);
\end{tikzpicture}
}

\caption{The graph of $\OD^{(s)}_{k_{w_0}}$ for $s\geq s_1+1$.}
\label{fig2}
\end{figure}
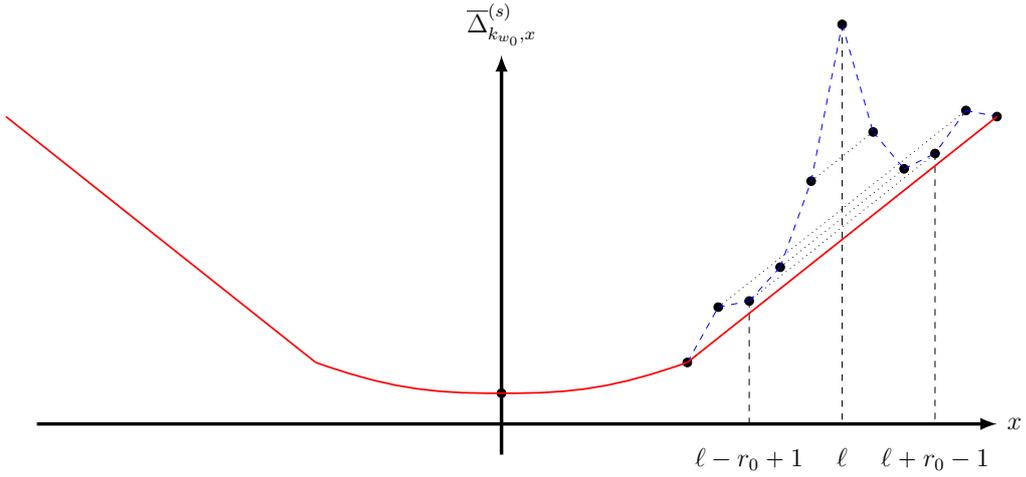

Here, the dotted lines and the long red lines are all of slope $P_{k_{w_0},\ell}$.

\begin{proposition}\label{prop:1}
	For each $s\in S$,
	\begin{enumerate}
		\item if  $s\leq s_1$, then the segment with center $\ell$ in $\OD^{(s)}_{k_{w_0}}$ has length $2(r_0-1)$ and slope $P_{k_{w_0},\ell}$;
		\item if $s\geq s_1+1$, then the segment with center $\ell$ in $\OD^{(s)}_{k_{w_0}}$ has length $2(r_0+1)$ and slope $P_{k_{w_0},\ell}$.
	\end{enumerate}
We demonstrate the two cases in Figures~\ref{fig1} and \ref{fig2}. 
\end{proposition}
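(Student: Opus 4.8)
The plan is to reconstruct the convex hull $\OD^{(s)}_{k_{w_0}}$ in a neighbourhood of $x=\ell$ from the increment data assembled in Lemmas~\ref{mlem:1}, \ref{aa:1} and \ref{mlemma1}. First I would note that, by Lemma~\ref{aa:1} together with the definition~\eqref{aa:2} of $F^{(s)}_{k_{w_0}}$, for $1\le r\le r_0+2$ the two increments straddling the centre satisfy
\[
\Delta^{(s)}_{k_{w_0},\ell+r}-\Delta^{(s)}_{k_{w_0},\ell+r-1}=P_{k_{w_0},\ell}+\tfrac12 F^{(s)}_{k_{w_0}}(r),\quad
\Delta^{(s)}_{k_{w_0},\ell-r+1}-\Delta^{(s)}_{k_{w_0},\ell-r}=P_{k_{w_0},\ell}-\tfrac12 F^{(s)}_{k_{w_0}}(r),
\]
so the deviation $r\mapsto \Delta^{(s)}_{k_{w_0},\ell+r}-\Delta^{(s)}_{k_{w_0},\ell}-rP_{k_{w_0},\ell}$ is an \emph{even} function of $r$ on $\{-r_0-1,\dots,r_0+2\}$. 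Hence the part of $\OD^{(s)}_{k_{w_0}}$ near $x=\ell$ is symmetric about $x=\ell$, and the segment through $x=\ell$ has even length $2r^{\star}$, where $r^{\star}$ is the largest index $\le r_0+1$ such that $\sum_{j=r+1}^{r^{\star}}F^{(s)}_{k_{w_0}}(j)\le 0$ for all $0\le r<r^{\star}$ while $\sum_{j=r^{\star}+1}^{r^{\star}+t}F^{(s)}_{k_{w_0}}(j)>0$ for all $t\ge1$. Its slope is $P_{k_{w_0},\ell}$: the pairwise cancellation above gives $\Delta^{(s)}_{k_{w_0},\ell+r^{\star}}-\Delta^{(s)}_{k_{w_0},\ell-r^{\star}}=2r^{\star}P_{k_{w_0},\ell}$, the same suffix‑sum inequalities put every intermediate point on or above the chord joining $(\ell\pm r^{\star},\,\Delta^{(s)}_{k_{w_0},\ell\pm r^{\star}})$, and the increments $>P_{k_{w_0},\ell}$ furnished by Lemma~\ref{mlemma1} just outside show $\ell\pm r^{\star}$ are genuine vertices. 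This is precisely the picture drawn in Figures~\ref{fig1} and~\ref{fig2}.

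Next I would feed in the sign information already available. Lemma~\ref{mlemma1} gives, through Lemma~\ref{aa:1}, that $F^{(s)}_{k_{w_0}}(r)\le0$ for $1\le r\le r_0-2$ and $F^{(s)}_{k_{w_0}}(r)>0$ for $r_0+2\le r\le\tfrac12 d^{\new}_{k_{w_0}}(s)-\ell$, while Lemma~\ref{mlem:1} gives $F^{(s)}_{k_{w_0}}(r_0-1)\le0<F^{(s)}_{k_{w_0}}(r_0)$. Combined with the characterisation of $r^{\star}$ above, the only undetermined quantity is $F^{(s)}_{k_{w_0}}(r_0+1)$, and the statement collapses to the dichotomy: $r^{\star}=r_0-1$ (segment of length $2(r_0-1)$) exactly when $F^{(s)}_{k_{w_0}}(r_0)+F^{(s)}_{k_{w_0}}(r_0+1)>0$, and $r^{\star}=r_0+1$ (length $2(r_0+1)$) otherwise (the subsidiary inequality $F^{(s)}_{k_{w_0}}(r_0+1)\le0$ in the second case is automatic since $F^{(s)}_{k_{w_0}}(r_0)>0$). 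So it suffices to show $F^{(s)}_{k_{w_0}}(r_0)+F^{(s)}_{k_{w_0}}(r_0+1)>0$ for $s\le s_1$ and $\le 0$ for $s\ge s_1+1$.

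The remaining — and I expect most technical — step is that sign computation. As in the proof of Lemma~\ref{mlem:1}, I would expand $F^{(s)}_{k_{w_0}}(r_0+1)$ via \eqref{eq::1} (its derivation from Lemmas~\ref{shift} and~\ref{lemma:r_1r_2} carries over to $r=r_0+1$ because $v_p\bigl((p+1)\ell\bigr)=v_p(\ell)$ dwarfs every $p$‑adic valuation occurring in the relevant intervals) and compare it with $F^{(s)}_{k_{w_0}}(r_0)$. The ``constant'' and $\theta$‑contributions to $F^{(s)}_{k_{w_0}}(r_0)+F^{(s)}_{k_{w_0}}(r_0+1)$ are independent of $s$, so its $s$‑dependence sits entirely in the valuation sums over a few intervals, the decisive ones being $(B^{(s)}_{k_{w_0},\ell+r_0},B^{(s)}_{k_{w_0},\ell+r_0+1}]$ and $(B^{(s)}_{k_{w_0},\ell-r_0},B^{(s)}_{k_{w_0},\ell-r_0+1}]$, each of length $<p$ by Lemma~\ref{lem:9}. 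Using Lemma~\ref{lem:9} to locate their endpoints against the $p^{2p}$‑divisible ``backbone'' $Q_{w_0}\pm\tfrac{r_0(p+1)}{2}$ built into the definition of $k_w$, one sees that such a high‑valuation integer falls inside these short intervals precisely for $s$ in a range cut off at $s_1$, so crossing $s=s_1$ removes a valuation contribution of order $p$; one then checks that the $F^{(s_0)}_{k_{w_0}}$‑baseline pinned down in Lemma~\ref{mlem:1}, together with the remaining $s$‑dependent terms, is small enough that this jump is decisive. The main obstacles I anticipate are (a) keeping this $p$‑adic bookkeeping honest — tracking the borrows between the $p^{2p}$‑divisible backbone and the $O(p)$‑sized offsets controlled by $s$ and $s_1$, and ruling out stray high‑valuation integers in the short intervals — and (b) the care needed for the vertex and above‑the‑chord verifications in the first step, which use the full pattern of signs of $F^{(s)}_{k_{w_0}}$, not just one comparison.
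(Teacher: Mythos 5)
Your first two steps are sound and match the paper's strategy. The symmetry $\Delta^{(s)}_{k_{w_0},\ell+r}-\Delta^{(s)}_{k_{w_0},\ell+r-1}=P_{k_{w_0},\ell}+\tfrac12F^{(s)}_{k_{w_0}}(r)$ together with $\Delta^{(s)}_{k_{w_0},\ell-r+1}-\Delta^{(s)}_{k_{w_0},\ell-r}=P_{k_{w_0},\ell}-\tfrac12F^{(s)}_{k_{w_0}}(r)$ is exactly the content of Lemma~\ref{aa:1} combined with \eqref{re::8}, and your reduction of the whole proposition to the dichotomy ``$F^{(s)}_{k_{w_0}}(r_0)+F^{(s)}_{k_{w_0}}(r_0+1)>0$ for $s\le s_1$, $\le 0$ for $s\ge s_1+1$'' is a correct and in fact cleaner packaging of what the paper does via Corollary~\ref{corollary:slope} and Lemma~\ref{mlemma1} (in case (1) the paper shows both increments at $r_0$ and $r_0+1$ exceed $P_{k_{w_0},\ell}$; in case (2) it shows $\Delta^{(s)}_{k_{w_0},\ell+r_0+1}-\Delta^{(s)}_{k_{w_0},\ell+r_0-1}<2P_{k_{w_0},\ell}$, which is precisely your sign condition).

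The gap is that the decisive sign computation is only described, not carried out, and this is where the entire $s\le s_1$ versus $s\ge s_1+1$ dichotomy actually originates. Concretely, the paper evaluates $F^{(s)}_{k_{w_0}}(r_0+1)-F^{(s)}_{k_{w_0}}(r_0)$ via \eqref{eq::1} and Lemma~\ref{lem:9}(2): for $s\le s_1$ each of the two short intervals $(B^{(s)}_{k,\ell\pm r_0},B^{(s)}_{k,\ell\pm r_0+1}]$ contains the valuation-$2p$ integer $Q_{w_0}\pm\tfrac{r_0(p+1)}{2}$, giving $F^{(s)}_{k_{w_0}}(r_0+1)-F^{(s)}_{k_{w_0}}(r_0)=2(p-2+k_0-1-2s)\ge 0$ and hence $F^{(s)}_{k_{w_0}}(r_0+1)\ge F^{(s)}_{k_{w_0}}(r_0)>0$; for $s\ge s_1+1$ one instead needs \emph{upper} bounds, which the paper obtains from $F^{(s)}_{k_{w_0}}(r_0+1)-F^{(s)}_{k_{w_0}}(r_0-1)=-2p-4$ and $F^{(s)}_{k_{w_0}}(r_0)-F^{(s)}_{k_{w_0}}(r_0-1)=2(2s-k_0)$ together with $F^{(s)}_{k_{w_0}}(r_0-1)\le 0$ from Lemma~\ref{mlem:1}, yielding $F^{(s)}_{k_{w_0}}(r_0)+F^{(s)}_{k_{w_0}}(r_0+1)\le -2p-4+2(p-4)<0$. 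Your sketch identifies the right mechanism for the first of these but does not explain how to bound $F^{(s)}_{k_{w_0}}(r_0)$ from above in case (2): Lemma~\ref{mlem:1} alone gives only $F^{(s)}_{k_{w_0}}(r_0)>0$ with no upper control, so the additional comparison with $F^{(s)}_{k_{w_0}}(r_0-1)$ (and the corresponding interval/valuation analysis) is an essential missing ingredient, not mere bookkeeping. A secondary point shared with the paper: the vertex verification only controls points within $O(r_0)$ of $\ell$ and to the right of $\ell$; a word is needed (via Lemma~\ref{aa:6} or the symmetry \eqref{neq::19}) about why points far to the left of $\ell-r_0$ cannot dip below the supporting line.
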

\begin{proof}
	For simplicity, we put $k:=k_{w_0}$.
	By \eqref{eq::1} and Lemma~\ref{lem:9}(1), we have
\begin{multline}\label{eq::5}
		F^{(s)}_k(r_0+1)-F^{(s)}_k(r_0)\\
		\begin{aligned}	
=&
		p-3+\theta_{k,\ell+r_0+1}^{(s)}-\theta_{k,\ell+r_0}^{(s)}-2\v(\ell+r_0)-2\v(\ell-r_0)
		+2\left(\sum\limits_{i=B^{(s)}_{k,\ell+r_0}+1}^{B^{(s)}_{k,\ell+r_0+1}}+\sum\limits_{i=B^{(s)}_{k,\ell-r_0}+1}^{B^{(s)}_{k,\ell-r_0+1}}\right)\v(i)\\
		=&	p-3-p-1+2(p-1-2s+k_0)-8p+2\left(\sum\limits_{i=B^{(s)}_{k,\ell+r_0}+1}^{B^{(s)}_{k,\ell+r_0+1}}+\sum\limits_{i=B^{(s)}_{k,\ell-r_0}+1}^{B^{(s)}_{k,\ell-r_0+1}}\right)\v(i)
		\\
		=&
		-4+2(k_0-1-2s)-6p+2\left(\sum\limits_{i=B^{(s)}_{k,\ell+r_0}+1}^{B^{(s)}_{k,\ell+r_0+1}}+\sum\limits_{i=B^{(s)}_{k,\ell-r_0}+1}^{B^{(s)}_{k,\ell-r_0+1}}\right)\v(i).
	\end{aligned}
\end{multline}
	
(1)		By Lemma~\ref{lem:9}(2), we have $${B^{(s)}_{k,\ell+r_0+1}}=Q_w+ \frac{r_0(p+1)}{2}+s_1-s\quad\textrm{and}\quad B^{(s)}_{k,\ell+r_0}=Q_w+\frac{r_0(p+1)}{2}+s_1+s-k_0-p.$$
Note that from $s,s_1\in S$, we have 
\begin{equation}\label{aa:7}
	s+s_1\leq2\times \left\lfloor\frac{k_0-4+p}{2}\right\rfloor \leq k_0-3+p.
\end{equation}
	Combined with our assumption $s\leq s_1$, this chain of inequalities implies that there is a unique $p$-divisible integer in $[B^{(s)}_{k,\ell+r_0}+1,B^{(s)}_{k,\ell+r_0+1}]$ of $p$-adic valuation $2p$, and further that
\begin{equation}\label{neq::10}		\sum\limits_{i=B^{(s)}_{k,\ell+r_0}+1}^{B^{(s)}_{k,\ell+r_0+1}}\v(i)= 2p.
\end{equation}
	
Similarly, we have
$${B^{(s)}_{k,\ell-r_0+1}}=Q_w- \frac{r_0(p+1)}{2}+s_1-s\quad\textrm{and}\quad B^{(s)}_{k,\ell-r_0}=Q_w-\frac{r_0(p+1)}{2}+s_1+s-k_0-p,$$
and hence \begin{equation}\label{neq::11}
	\sum\limits_{i=B^{(s)}_{k,\ell-r_0}+1}^{B^{(s)}_{k,\ell-r_0+1}}\v(i)= 2p.
\end{equation}
				
	Plugging \eqref{neq::10}  and \eqref{neq::11} into \eqref{eq::5}, we have \begin{equation}\label{m2}
		F^{(s)}_k(r_0+1)-F^{(s)}_k(r_0)=2(p-2+k_0-1-2s)\geq 0,
	\end{equation}
where the last inequality is same to \eqref{aa:7}.
	Combining \eqref{m2} with Lemma~\ref{aa:1} and our assumption $F^{(s)}_k(r_0)> 0$,
we have
	\begin{equation*}
		\Delta^{(s)}_{k, \ell+r_0+1}-\Delta^{(s)}_{k, \ell+r_0}
			> P_{k,\ell}  \quad\textrm{and}\quad\Delta^{(s)}_{k, \ell+r_0}-\Delta^{(s)}_{k, \ell+r_0-1}			> P_{k,\ell}.
	\end{equation*}
	 Further combining the above two inequalities with  Corollary~\ref{corollary:slope} and Lemma~\ref{mlemma1}, we complete the  proof of the first statement in this proposition. 
	
(2) Note that by Lemma~\ref{mlem:1}, we have
$F^{(s)}_k(r_0)\geq 0$. Combined with Lemma~\ref{aa:1}, this implies that
\begin{equation}\label{eq::6}
	\Delta^{(s)}_{k, \ell+r_0}-\Delta^{(s)}_{k, \ell+r_0-1}\geq P_{k,\ell}.
\end{equation}
Consider
	\begin{multline}\label{m1}
		F^{(s)}_k(r_0+1)-F^{(s)}_k(r_0-1)\\
		\begin{aligned}
			=
			2(p-3)-2\v(\ell+r_0)-2\v(\ell-r_0)-2\v(\ell+r_0-1)-2\v(\ell-r_0+1)&\\
			+2\left(\sum\limits_{i=B^{(s)}_{k,\ell+r_0-1}+1}^{B^{(s)}_{k,\ell+r_0+1}}+\sum\limits_{i=B^{(s)}_{k,\ell-r_0}+1}^{B^{(s)}_{k,\ell-r_0+2}}\right)\v(i)&.
		\end{aligned}
	\end{multline}
	By Lemma~\ref{lem:9}(2), we have 
	$$B^{(s)}_{k,\ell+r_0+1}=Q_w+ \frac{r_0(p+1)}{2}+s_1-s\quad\textrm{and}\quad B^{(s)}_{k,\ell+r_0-1}=B^{(s)}_{k,\ell+r_0+1}-(p+1).$$
	From our assumption $s\geq s_1+1$, we have
\begin{equation}\label{re::9}
		\sum\limits_{i=B^{(s)}_{k,\ell+r_0-1}}^{B^{(s)}_{k,\ell+r_0+1}}\v(i)=1.
\end{equation}
	By Lemma~\ref{lem:9}(2), we have 
$$B^{(s)}_{k,\ell-r_0+2}=Q_w- \frac{r_0(p+1)}{2}+s_1+s-k_0+1\quad\textrm{and}\quad B^{(s)}_{k,\ell-r_0}=B^{(s)}_{k,\ell-r_0+2}-(p+1).$$
Note that $k_0\leq s+s_1\leq k_0+p-3$. Hence, we have
$$	\sum\limits_{i=B^{(s)}_{k,\ell-r_0}}^{B^{(s)}_{k,\ell-r_0+2}}\v(i)=2p.
	$$
	Plugging this equality and \eqref{re::9} into \eqref{m1}, we have
\begin{equation}\label{eq::14}
	F^{(s)}_k(r_0+1)- F^{(s)}_k(r_0-1)=2(p-3)-4p+2=-2p-4.
\end{equation}

	Similar to the above argument, we have $$\sum\limits_{i=B^{(s)}_{k,\ell+r_0}+1}^{B^{(s)}_{k,\ell+r_0+1}}\v(i)=\sum\limits_{i=B^{(s)}_{k,\ell-r_0}+1}^{B^{(s)}_{k,\ell-r_0+1}}\v(i)=0.$$
	
Combined with Lemma~\ref{lem:9}(1), this chain of equalities implies that
	\begin{align*}
			&F^{(s)}_k(r_0)-F^{(s)}_k(r_0-1)	\\
			\notag=&
			p-3+\theta_{k,\ell+r_0}^{(s)}-\theta_{k,\ell+r_0-1}^{(s)}-2\v(\ell+r_0-1)-2\v(\ell-r_0+1)
		+\left(\sum\limits_{i=B^{(s)}_{k,\ell+r_0-1}+1}^{B^{(s)}_{k,\ell+r_0}}+\sum\limits_{i=B^{(s)}_{k,\ell-r_0+1}+1}^{B^{(s)}_{k,\ell-r_0}}\right)\v(i)
			\\
		\notag	=&	2(2s+2-k_0)-4\\
		\notag	=&	2(2s-k_0).
	\end{align*}
%Note that even though we have known that this value is greater or equal to 
%$P_{k,\ell}$. We need it exact value for the following calculation. 
Together with \eqref{eq::14}, this chain of equalities implies that
$$F^{(s)}_k(r_0+1)+F^{(s)}_k(r_0)\leq 
2F^{(s)}_k(r_0-1)-2p-4+2(2s-k_0+2)-4<0.$$
Together with Lemma~\ref{aa:1}, this inequality implies that
	\begin{equation*}
	\Delta^{(s)}_{k, \ell+r_0+1}-\Delta^{(s)}_{k, \ell+r_0-1}<2P_{k,\ell}.
\end{equation*}
This inequality with 
\eqref{eq::6}, Corollary~\ref{corollary:slope} and Lemma~\ref{mlemma1}(1)  shows that 
for any $1\leq r\leq r_0+1$, the point $(n,\Delta_{k,\ell+r}^{(s)})$ is above the  segment connecting $(\ell-r_0-1, \Delta^{(s)}_{k,\ell-r_0-1})$ and  $(\ell+r_0+1, \Delta^{(s)}_{k,\ell+r_0+1})$. Combined with Lemma~\ref{mlemma1}(2), this completes the proof.
\end{proof} 

%Similar to Proposition~\ref{prop:1}, we have the following.
%\begin{proposition}\label{prop:2}
%	For any $1\leq s\leq p-2$ there exists $k=p^\ell+2p^w-s-1$ with $p+2<w<\ell$ such that 
%	for any $s\in S_{k}$,
%	\begin{enumerate}
%		\item if  $s\geq s$, then the segment with center $\ell$ in $\Delta^{(s)}_{k_{s}}$ has length $2(r_0-1)$ and slope $P_{k_{s},\ell}$.
%		\item 	If $s\leq s-1$, then the segment with center $\ell$ in $\Delta^{(s)}_{k_{s}}$ has length $2(r_0-1)$ and slope $P_{k_{s},\ell}$.
%	\end{enumerate}
%\end{proposition}
%\begin{proof}
%	The proof is symmetric to the one of Proposition~\ref{prop:1} with replacing $\lceil\frac{s}{2}\rceil$ by $\lfloor\frac{2p-4-s}{2}\rfloor$ in Lemma~\ref{mlem:1}.
%
%		
%\end{proof}

We introduce some notations from \cite[Definition~5.11]{xiao}.

\begin{notation}
	
	\begin{enumerate}
		\item 	For any $k\in \calK$, $s\in \{0,\dots,p-2\}$ and $w_\star\in \bfm_{\CC_p}$, we denote by $L^{(s)}_{w_{\star}, k}$ the largest number (if exists) in $\left\{1, \ldots, \frac{1}{2} d_{k}^{\text {new }}(s)\right\}$ such that
		$$
		v_{p}\left(w_{\star}-w_{k}\right) \geq \OD^{(s)}_{k, L^{(s)}_{w_{\star}, k}}-\OD^{(s)}_{k, L^{(s)}_{w_{\star}, k}-1}.
		$$
		\item	When such $L_{w_{\star}, k}$ exists, the open interval $n S_{w_{\star}, k}^{(s),\dagger}:=\Big(\frac{1}{2} d_{k}^{\Iw,\dagger}(s)-L^{(s)}_{w_{\star}, k}, \frac{1}{2} d_{k}^{\Iw,\dagger}(s)+L^{(s)}_{w_{\star}, k}\Big)$ is called the \emph{near-Steinberg range} for the pair $\left(w_{\star}, k\right)$. 
		
		\item
		Note that by Lemma~\ref{re:15}, we have proved that $\iota(s)$ is generic for every $s\in S$.
		Hence, by \cite[Proposition~5.17(1)]{xiao}, for fixed $w_\star\in \CC_p$ and $s\in S,$ 
		the set of near-Steinberg ranges $\nS^{(s),\dagger}_{w_{\star}, k}$ for all $k\in \calK$ is nested, i.e., for any two such open intervals, either they are disjoint or one is contained in another. We call those $\mathrm{n} S^{(s),\dagger}_{w_{\star}, k}$ that are not contained in the others the \emph{maximal near-Steinberg range}.
	\end{enumerate}
	
\end{notation}

\begin{proof}[Proof of Proposition~\ref{main thm}]
	Without loss of generality, we may assume that two of the different indices in $\us$ are $s_1$ and $s_2$ taken in the beginning of this section. 
	We write $k:=k_{w_0}$ and take $w_\star\in \bfm_{\CC_p}$ such that 
	$\v(w_\star-w_k)=P_{k,\ell}$. 
	By Proposition~\ref{prop:1}, we have
\begin{equation}\label{neq::18}
		L^{(s_1)}_{w_\star, k}=\ell+r_0-1\quad\textrm{and}\quad L^{(s_2)}_{w_\star, k}=\ell+r_0+1.
\end{equation}

Now we claim the following.
	\begin{claim}\label{claim}
	The near-Steinberg ranges	$\nS^{(s_i),\dagger}_{w_\star,k}$ for  $i=1,2$ are both maximal. 
	\end{claim}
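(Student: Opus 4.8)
The plan is to prove Claim~\ref{claim} by a nestedness argument: since the set of near-Steinberg ranges $\{\nS^{(s_i),\dagger}_{w_\star,k'}\mid k'\in\calK\}$ is nested (by \cite[Proposition~5.17(1)]{xiao}, applicable because $\iota(s_i)$ is generic for $s_i\in S$), to show $\nS^{(s_i),\dagger}_{w_\star,k}$ is maximal it suffices to rule out the existence of any $k'\in\calK$ with $k'\neq k$ whose near-Steinberg range strictly contains $\nS^{(s_i),\dagger}_{w_\star,k}$. First I would record what \eqref{neq::18} gives: the near-Steinberg range for $(w_\star,k)$ relative to $s_i$ is the interval centered at $\frac12 d_k^{\Iw,\dagger}(s_i)=k_\bullet+1$ of half-width $L^{(s_i)}_{w_\star,k}$, which is $\ell+r_0-1$ for $i=1$ and $\ell+r_0+1$ for $i=2$. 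A containing range $\nS^{(s_i),\dagger}_{w_\star,k'}$ would have to have a center and a half-width $L^{(s_i)}_{w_\star,k'}$ with its interval covering our interval; in particular its half-width would be at least roughly $\ell$, forcing $\v(w_\star-w_{k'})=\v(w_\star-w_k)=P_{k,\ell}$ to be small relative to the size of the $\OD$-slopes near index $\ell$, which by the slope computations (Corollary~\ref{corollary:slope}, Lemma~\ref{mlemma1}, Proposition~\ref{prop:1}) is sharply pinned down.

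The key steps, in order: (1) Use the ultrametric inequality. If $\nS^{(s_i),\dagger}_{w_\star,k'}\supsetneq\nS^{(s_i),\dagger}_{w_\star,k}$, then in particular $w_\star$ is ``close'' to both $w_k$ and $w_{k'}$ in the sense needed, and $\v(w_k-w_{k'})\geq\min\{\v(w_\star-w_k),\v(w_\star-w_{k'})\}$; combined with the standard fact that $\v(w_k-w_{k'})=\v(k-k')+(\text{bounded})$ and that $k=k_{w_0}$ was built with a very specific $p$-adic size ($k_w-k_0+p-1$ divisible by $2\ell(p-1)$ and by $2(p^w-1)(k_0-1-s_1)$), this constrains $k'$ to lie in a narrow congruence class. (2) Show $P_{k,\ell}=\v(w_\star-w_k)$ is strictly smaller than $\OD^{(s_i)}_{k',n+1}-\OD^{(s_i)}_{k',n}$ for the relevant index $n$ near $\frac12 d_{k'}^{\Iw}(s_i)-(\ell+r_0\pm1)$ whenever $k'\neq k$, which would contradict the defining inequality for $L^{(s_i)}_{w_\star,k'}$ being that large. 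For this I would reuse the $\Delta$-increment formula (Lemma~\ref{lem:6}) and the estimate that $P_{k,\ell}$ grows like $\tfrac{(p-1)(2\ell-1)}{2}$, i.e. it is far too small to be a ``long'' slope of $\OD^{(s_i)}_{k'}$ near the relevant location unless $k'$ produces essentially the same flat segment — which, by the uniqueness of $w_0$ in Lemma~\ref{mlem:1} and the explicit choice of $\ell, r_0$, only $k=k_{w_0}$ does. (3) Conclude that no strictly larger near-Steinberg range exists, so $\nS^{(s_i),\dagger}_{w_\star,k}$ is maximal for $i=1,2$.

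Concretely I would argue as follows. Suppose for contradiction that $\nS^{(s_i),\dagger}_{w_\star,k}$ is not maximal; by nestedness there is $k'\in\calK$ with $\nS^{(s_i),\dagger}_{w_\star,k}\subsetneq\nS^{(s_i),\dagger}_{w_\star,k'}$, so $L^{(s_i)}_{w_\star,k'}\geq \ell+r_0-1$ and in fact the center of the $k'$-range must coincide with that of the $k$-range after accounting for the shift, giving $\tfrac12 d_{k'}^{\Iw}(s_i)$ within $O(1)$ of $\tfrac12 d_k^{\Iw}(s_i)$; combined with Lemma~\ref{re:10} this forces $k'_\bullet=k_\bullet$, i.e. $k'=k$ unless the $\delta_{s_i}$-bookkeeping allows a shift, which it does not after passing to the dagger normalization. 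That is the contradiction. The main obstacle I expect is step (2): making precise and rigorous the claim that $P_{k,\ell}$ is ``too small'' to serve as the slope $\OD^{(s_i)}_{k',L}-\OD^{(s_i)}_{k',L-1}$ for a competing $k'$ with a large $L$ — this requires a careful lower bound on $\OD$-increments near index $\ell$ that is uniform over the admissible $k'$, and then checking that the only way equality of near-Steinberg data can hold is $k'=k$. All the raw material (Lemmas~\ref{lem:6}, \ref{lem:8}, \ref{lem:9}, Corollaries~\ref{corollary:slope}, \ref{corollary:r_1r_2}, Propositions~\ref{prop:1}) is already in place, so the argument should be a matter of assembling these estimates and invoking \cite[Proposition~5.17(1)]{xiao} for nestedness.
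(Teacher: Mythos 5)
Your overall shape (argue by contradiction, use nestedness to produce a $k'$ whose range strictly contains $\nS^{(s_i),\dagger}_{w_\star,k}$, then rule out such $k'$) matches the paper, but the two mechanisms you propose for the final contradiction are respectively false and reversed. First, your ``concrete'' argument asserts that containment of the $k'$-range in the $k$-range forces their centers to coincide, hence $k'_\bullet=k_\bullet$ and $k'=k$. That is not true: a strictly larger interval can contain a smaller one with a completely different center, so nothing forces $\tfrac12 d^{\Iw,\dagger}_{k'}$ to be near $\tfrac12 d^{\Iw,\dagger}_{k}$. Second, your step (2) aims to show $P_{k,\ell}$ is ``too small'' to be a long slope of $\OD^{(s_i)}_{k'}$; the paper's argument runs in the opposite direction. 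It uses \cite[Proposition~5.15(1)]{xiao} (which you never invoke) to get
$\v(w_k-w_{k'})<\OD^{(s_i)}_{k',L^{(s_i)}_{w_\star,k'}}-\OD^{(s_i)}_{k',L^{(s_i)}_{w_\star,k'}-1}\leq \v(w_{k'}-w_\star)$, whence by the ultrametric inequality $\v(w_k-w_{k'})=\v(w_k-w_\star)=P_{k,\ell}$. It then \emph{lower}-bounds $P_{k,\ell}$: from the explicit formula, $2P_{k,\ell}\geq (p-1)\ell-\tfrac{2(p+1)\ell}{p-1}$, so $P_{k,\ell}\geq \ell$ for $p\geq 7$. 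Thus $v_p(k-k')$ is at least about $\ell$, forcing $k'\geq p^{\ell}$.

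The contradiction is then purely positional, not about slopes: for such a huge $k'$ one has $d^{\ur,\dagger}_{k'}(s_i)>\tfrac{k'_\bullet}{p+1}>2\ell$, while any near-Steinberg range for $k'$ is confined to $\left[d^{\ur,\dagger}_{k'}(s_i),\,d^{\Iw,\dagger}_{k'}(s_i)-d^{\ur,\dagger}_{k'}(s_i)\right]$; but the $k$-range it is supposed to contain reaches down to indices of size roughly $\ell+O(\ell/p)<2\ell$ (since $k_\bullet+1\approx 2\ell$ and $L^{(s_i)}_{w_\star,k}=\ell+r_0\pm1$). So no containing $k'$ exists. Your proposal identifies some of the right raw material (nestedness, the ultrametric constraint on $k'$, the size of $P_{k,\ell}$) but the decisive steps --- the citation of \cite[Proposition~5.15(1)]{xiao}, the \emph{lower} bound $P_{k,\ell}\geq\ell$, and the resulting positional incompatibility --- are missing, and what you substitute for them does not close the argument.
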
 
	\begin{claim}\label{claim:2}
			For every $k'(\neq k)\in \calK$ such that  $\v(w_{k}-w_{k'})\geq\v(w_{k}-w_{\star})$, 
$$		\textrm{if~}
		\nS^{(s_1),\dagger}_{w_\star,k}\cap [d_k^{\ur,\dagger}(s_1), d_k^{\Iw,\dagger}(s_1)-d_k^{\ur,\dagger}(s_1)]\neq\emptyset,
		\textrm{then~}  \nS^{(s_1),\dagger}_{w_\star,k}\subset  \calK_+^{(s_1)}\textrm{~or~}K^{(s_1)}_-,$$
		where 
		$\calK_+^{(s_i)}$ (resp. $\calK_-^{(s_i)}$) for $i=1,2$ is the set consisting of $k'(\neq k)\in \calK$ such that  $\v(w_{k}-w_{k'})\geq\v(w_{k}-w_{\star})$,
		and $$\nS^{(s_i),\dagger}_{w_\star,k}\subset  \left[d_{k'}^{\ur,\dagger}(s_i), \frac12d_{k'}^{\Iw,\dagger}(s_i)\right] ~(\textrm{resp.~} \left[\frac12d_{k'}^{\Iw,\dagger}(s_i), d_{k'}^{\Iw,\dagger}(s_i)-d_{k'}^{\ur,\dagger}(s_i)\right]). $$
		
	The result holds with $s_{1}$ replaced by $s_{2}$.
	\end{claim}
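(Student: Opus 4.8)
The plan is to prove the displayed implication by showing that its hypothesis is in fact never met for the weights $k'$ under consideration --- that is, for every $k'\neq k$ in $\calK$ with $v_p(w_k-w_{k'})\ge v_p(w_k-w_\star)$, the range $\nS^{(s_i),\dagger}_{w_\star,k}$ is disjoint from the interesting region $\bigl[d_{k'}^{\ur,\dagger}(s_i),d_{k'}^{\Iw,\dagger}(s_i)-d_{k'}^{\ur,\dagger}(s_i)\bigr]$ of $k'$ --- so that the implication holds vacuously and in particular $\calK_+^{(s_i)}=\calK_-^{(s_i)}=\emptyset$; the case of $s_2$ is word-for-word the same as that of $s_1$, so write $k:=k_{w_0}$. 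The first ingredient is the $w$-distance formula: for $\kappa,\kappa'\in\calK$ we have $w_\kappa-w_{\kappa'}=\exp\!\bigl((\kappa'-2)p\bigr)\bigl(\exp\!\bigl((\kappa-\kappa')p\bigr)-1\bigr)$, and since $v_p\bigl((\kappa-\kappa')p\bigr)=1+v_p(\kappa-\kappa')=1+v_p(\kappa_\bullet-\kappa'_\bullet)\ge 1$, the identity $v_p(\exp(x)-1)=v_p(x)$ (valid for $v_p(x)\ge 1$ as $p\ge 7$) gives $v_p(w_\kappa-w_{\kappa'})=1+v_p(\kappa_\bullet-\kappa'_\bullet)$. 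By the choice of $w_\star$, $v_p(w_k-w_\star)=P_{k,\ell}$; hence the hypothesis $v_p(w_k-w_{k'})\ge v_p(w_k-w_\star)$ amounts to $p^{P_{k,\ell}-1}$ dividing $k'_\bullet-k_\bullet$.

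The second ingredient is a crude size comparison. In the formula for $P_{k,\ell}$ (Notation~\ref{notation:s}) the leading term $\tfrac{(p-1)(2\ell-1)+p+1}{2}=(p-1)\ell+1$ dominates: the remaining digit-sum contributions are controlled by $\Dig(\ell)=1$, $\Dig(\ell-1)=(p-1)p^{p+2}$ and the $\Dig$'s of $A^{(s_{k,\ell})}_{k,\ell}$, $A^{(s_{k,\ell})}_{k,\ell+1}$, $B^{(s_{k,\ell})}_{k,\ell}$, $B^{(s_{k,\ell})}_{k,\ell+1}$, all of size $O(p^{p+2})$ because those four integers are $O(\ell)$; hence $P_{k,\ell}=\tfrac{p-1}{2}\ell+O(p^{p+2})$. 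On the other hand $k_\bullet=2\ell+2(k_0-1-s_1)\tfrac{p^{w_0}-1}{p-1}-1$ with $|k_0-1-s_1|<p$ and $2p+2<w_0<p^{p+2}$, so $0\le k_\bullet<3p\ell$, and $t_1^{(s_i)}+t_2^{(s_i)}\le 3p$ by Lemma~\ref{lem:1} since $s_i\in S$. As $\ell=p^{p^{p+2}}$ we get $P_{k,\ell}-1>\log_p\!\bigl((p+1)(k_\bullet+\ell+r_0+8)\bigr)$, hence $p^{P_{k,\ell}-1}>(p+1)(k_\bullet+\ell+r_0+8)$; and since $k'_\bullet\ge 0>k_\bullet-p^{P_{k,\ell}-1}$, the divisibility above forces $k'_\bullet-k_\bullet$ to be a positive multiple of $p^{P_{k,\ell}-1}$, so $k'_\bullet\ge k_\bullet+p^{P_{k,\ell}-1}>(p+1)(k_\bullet+\ell+r_0+8)$.

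Finally I would deduce the disjointness. By Lemma~\ref{re:10} and Notation~\ref{eq:1}, $d_{k'}^{\ur,\dagger}(s_i)\ge\tfrac{2k'_\bullet-t_1^{(s_i)}-t_2^{(s_i)}}{p+1}\ge\tfrac{2k'_\bullet}{p+1}-4$, so the previous paragraph gives $d_{k'}^{\ur,\dagger}(s_i)>k_\bullet+\ell+r_0+2$. On the other hand, Proposition~\ref{prop:1} and \eqref{neq::18} give $L^{(s_1)}_{w_\star,k}=\ell+r_0-1$ and $L^{(s_2)}_{w_\star,k}=\ell+r_0+1$, so the interval $\nS^{(s_i),\dagger}_{w_\star,k}=\bigl(\tfrac12 d_k^{\Iw,\dagger}(s_i)-L^{(s_i)}_{w_\star,k},\,\tfrac12 d_k^{\Iw,\dagger}(s_i)+L^{(s_i)}_{w_\star,k}\bigr)=\bigl(k_\bullet+1-L^{(s_i)}_{w_\star,k},\,k_\bullet+1+L^{(s_i)}_{w_\star,k}\bigr)$ has right endpoint at most $k_\bullet+\ell+r_0+2<d_{k'}^{\ur,\dagger}(s_i)$. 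Hence $\nS^{(s_i),\dagger}_{w_\star,k}$ lies strictly to the left of $\bigl[d_{k'}^{\ur,\dagger}(s_i),d_{k'}^{\Iw,\dagger}(s_i)-d_{k'}^{\ur,\dagger}(s_i)\bigr]$, so no such $k'$ has its interesting region meeting $\nS^{(s_i),\dagger}_{w_\star,k}$; consequently $\calK_+^{(s_i)}=\calK_-^{(s_i)}=\emptyset$ and the implication of the claim holds vacuously. This is the form in which the claim enters the proof of Proposition~\ref{main thm}: on the $x$-range $\nS^{(s_i),\dagger}_{w_\star,k}$ the ghost series $G^{(s_i),\dagger}(w_\star,-)$ receives no contribution from any $k'$ with $v_p(w_k-w_{k'})\ge v_p(w_k-w_\star)$ other than $k$ itself.

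The step I expect to be the main obstacle is the size comparison of the second paragraph: one must be sure that $p^{P_{k,\ell}-1}$ genuinely dwarfs both $k_\bullet$ and the near-Steinberg half-width $\ell+r_0+1$. This is precisely what the extravagant choices $\ell=p^{p^{p+2}}$ and $r_0=p^{2p}$ are engineered to guarantee --- they are only polynomially related to one another, but $\ell$ is a tower, so $P_{k,\ell}=\Theta(\ell)$ is exponentially larger than $\log_p\ell$, whence $p^{P_{k,\ell}-1}$ exceeds any fixed polynomial in $\ell$. The only care required is keeping the error terms in $P_{k,\ell}$ (the $\Dig$-corrections) and in $k_\bullet$ (the $p^{w_0}$-summand) of size $O(\ell)$, which follows from $w_0<p^{p+2}$ and from $A^{(s_{k,\ell})}_{k,\ell}$, $A^{(s_{k,\ell})}_{k,\ell+1}$, $B^{(s_{k,\ell})}_{k,\ell}$, $B^{(s_{k,\ell})}_{k,\ell+1}$ being $O(\ell)$.
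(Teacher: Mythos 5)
Your proposal is correct, but it reaches Claim~\ref{claim:2} by a genuinely different route than the paper: the paper simply invokes the general structural result \cite[Proposition~5.15(1)]{xiao} on near-Steinberg ranges, whereas you show that for the specific data of \S\ref{section4.2} the hypothesis of the implication is never met, so the claim holds vacuously and in fact $\calK_+^{(s_i)}=\calK_-^{(s_i)}=\emptyset$. Your chain of estimates is sound: $\v(w_k-w_{k'})=1+\v(k_\bullet-k'_\bullet)$, the choice $\v(w_\star-w_k)=P_{k,\ell}$ with $P_{k,\ell}\ge\ell$ (the paper itself derives $2P_{k,\ell}\ge(p-1)\ell-\tfrac{2(p+1)\ell}{p-1}$ inside the proof of Claim~\ref{claim}), hence any admissible $k'\neq k$ satisfies $|k'_\bullet-k_\bullet|\ge p^{\lceil P_{k,\ell}\rceil-1}$, and since $k'_\bullet\ge 0$ while $k_\bullet=O(\ell)$ this forces $k'_\bullet$, and with it $d_{k'}^{\ur,\dagger}(s_i)\ge \tfrac{2k'_\bullet}{p+1}-4$, to dwarf the right endpoint $k_\bullet+1+L^{(s_i)}_{w_\star,k}\le k_\bullet+\ell+r_0+2$ of $\nS^{(s_i),\dagger}_{w_\star,k}$; this is exactly the style of size argument the paper uses to prove Claim~\ref{claim}, so the method fits the paper naturally. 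Two remarks: first, you tacitly (and correctly) read the hypothesis interval as $[d_{k'}^{\ur,\dagger}(s_1),\,d_{k'}^{\Iw,\dagger}(s_1)-d_{k'}^{\ur,\dagger}(s_1)]$, i.e.\ indexed by $k'$; as printed the claim carries the subscript $k$, a typo under whose literal reading your computation would contradict the stated conclusion rather than prove it, so this should be flagged. Second, your stronger conclusion is fully compatible with the downstream use in the proof of Proposition~\ref{main thm}: with $\calK_\pm$ empty the correction sums there vanish and the common slope is $\alpha=\tfrac{k-2}{2}$, which if anything simplifies that computation; what your approach gives up is the generality of the cited LTXZ proposition (your argument is tailored to $\ell=p^{p^{p+2}}$, $r_0=p^{2p}$, $k=k_{w_0}$ and $\v(w_\star-w_k)=P_{k,\ell}$), which is all that is needed at this point of the paper, while the citation would cover any $w_\star$ and $k$ under genericity. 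A cosmetic point only: $P_{k,\ell}$ need not be an integer, so the divisibility statement should be phrased as $\v(k'_\bullet-k_\bullet)\ge\lceil P_{k,\ell}-1\rceil$, which is what your inequalities actually use.
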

The proof of Claim~\ref{claim}  is given at the end of this section, while Claim~\ref{claim:2} follows from  \cite[Proposition~5.15(1)]{xiao}.
	Combining the relations $
	d_k^{\Iw,\dagger}(s_1)=d_k^{\Iw,\dagger}(s_2)$, $|d_k^{\ur,\dagger}(s_1)-d_k^{\ur,\dagger}(s_2)|\leq 1,$
\eqref{neq::18} with Claim~\ref{claim:2}, we have 
$\calK_+^{(s_1)}=\calK_+^{(s_2)}$ and $\calK_-^{(s_1)}=\calK_-^{(s_2)}$, which are denoted by $\calK_\pm$ uniformly.

	We next prove that for each $i=1,2$, the Newton polygon $\NP(G^{(s_i),\dagger}(w_\star,-))$ has a segment $\LL_i$ over some interval of length  $\nS^{(s_i),\dagger}_{w_\star,k}$.
	Suppose not. 
	Without loss of generality, we assume that 
	the left endpoint $P$ of $\nS^{(s_i),\dagger}_{w_\star,k}$  is not a vertex.
	By \cite[Thoerem~5.18(2)]{xiao},
	there exists some $k'\in \calK$ such that  $P\in \nS^{(s_i),\dagger}_{w_\star,k'}.$
	Since the near-Steinberg sets are nested, we have 
	$$\nS^{(s_i),\dagger}_{w_\star,k}\subseteq\nS^{(s_i),\dagger}_{w_\star,k'},$$
	which contradicts to our Claim~\ref{claim}.
	This implies that  for $i=1,2,$ we have
	 \begin{align}
	 	&	h^{(s_i),\dagger}_{\frac{1}{2}d_k^{\Iw,\dagger}+L^{(s_i)}_{w_\star, k}}(w_\star)
	 	-h^{(s_i),\dagger}_{\frac{1}{2}d_k^{\Iw,\dagger}-L^{(s_i)}_{w_\star, k}}(w_\star)
	\notag	\\
		=&\v\left(g^{(s_i),\dagger}_{\frac{1}{2}d_k^{\Iw,\dagger}+L^{(s_i)}_{w_\star, k}}(w_\star)\right)
		-\v\left(g^{(s_i),\dagger}_{\frac{1}{2}d_k^{\Iw,\dagger}-L^{(s_i)}_{w_\star, k}}(w_\star)\right)\label{neq::21}\\
		=&\v\left(g^{(s_i)}_{\frac{1}{2}d_k^{\Iw}(s_i)+L^{(s_i)}_{w_\star, k}}(w_\star)\right)
		-\v\left(g^{(s_i)}_{\frac{1}{2}d_k^{\Iw}(s_i)-L^{(s_i)}_{w_\star, k}}(w_\star)\right).\notag
	\end{align}

On the other hand, by Claim~\ref{claim:2}, we have
\begin{align*}
&	\v\left(g^{(s_i)}_{\frac{1}{2}d_k^{\Iw}(s_i)+L^{(s_i)}_{w_\star, k}}(w_\star)\right)
-\v\left(g^{(s_i)}_{\frac{1}{2}d_k^{\Iw}(s_i)-L^{(s_i)}_{w_\star, k}}(w_\star)\right)\\
=&	\v\left(g^{(s_i)}_{\frac{1}{2}d_k^{\Iw}(s_i)+L^{(s_i)}_{w_\star, k},\hat{k}}(w_k)\right)
-\v\left(g^{(s_i)}_{\frac{1}{2}d_k^{\Iw}(s_i)-L^{(s_i)}_{w_\star, k},\hat{k}}(w_k)\right)\\
+&2L^{(s_i)}_{w_\star, k}\times \left(\sum\limits_{k'\in \calK_+}(\v(w_{k'}-w_{\star})-\v(w_{k'}-w_{k}))-\sum\limits_{k'\in \calK_-}(\v(w_{k'}-w_{\star})-\v(w_{k'}-w_{k}))\right)\\
=&2L^{(s_i)}_{w_\star, k}\times\left(\frac{k-2}{2}+\sum\limits_{k'\in \calK_+}(\v(w_{k'}-w_{\star})-\v(w_{k'}-w_{k}))-\sum\limits_{k'\in \calK_-}(\v(w_{k'}-w_{\star})-\v(w_{k'}-w_{k}))\right),
\end{align*}
where the last equality follows from \eqref{neq::20} and \eqref{neq::19}.
Combined  with \eqref{neq::21}, this chain of equality implies that
$$\alpha:=\frac{1}{2L^{(s_i)}_{w_\star, k}}\left(h^{(s_i),\dagger}_{\frac{1}{2}d_k^{\Iw,\dagger}+L^{(s_i)}_{w_\star, k}}(w_\star)
-h^{(s_i),\dagger}_{\frac{1}{2}d_k^{\Iw,\dagger}-L^{(s_i)}_{w_\star, k}}(w_\star)\right)$$ is same for $i=1,2$. Namely, the slopes of the segments $\LL_1$ and $\LL_2$ are both equal to $\alpha$. This implies that
\begin{gather*}
		h^{(s_2),\dagger}_{\frac{1}{2}d_k^{\Iw,\dagger}-L^{(s_1)}_{w_\star, k}+1}(w_\star)
	-h^{(s_2),\dagger}_{\frac{1}{2}d_k^{\Iw,\dagger}-L^{(s_1)}_{w_\star, k}}(w_\star)
	=\alpha<
	h^{(s_1),\dagger}_{\frac{1}{2}d_k^{\Iw,\dagger}-L^{(s_1)}_{w_\star, k}+1}(w_\star)
	-h^{(s_1),\dagger}_{\frac{1}{2}d_k^{\Iw,\dagger}-L^{(s_1)}_{w_\star, k}}(w_\star),\\
	h^{(s_2),\dagger}_{\frac{1}{2}d_k^{\Iw,\dagger}-L^{(s_1)}_{w_\star, k}+2}(w_\star)
	-h^{(s_2),\dagger}_{\frac{1}{2}d_k^{\Iw,\dagger}-L^{(s_1)}_{w_\star, k}+1}(w_\star)=\alpha<h^{(s_1),\dagger}_{\frac{1}{2}d_k^{\Iw,\dagger}-L^{(s_1)}_{w_\star, k}+2}(w_\star)
-h^{(s_1),\dagger}_{\frac{1}{2}d_k^{\Iw,\dagger}-L^{(s_1)}_{w_\star, k}+1}(w_\star).
\end{gather*}
	Combining these two inequalities with  Proposition~\ref{critirien}, we complete the proof.
\end{proof}

\begin{proof}[Proof of Claim~\ref{claim}]
	Suppose that $\nS^{(s_i),\dagger}_{w_\star,k}$ is not maximal for some $i\in \{1,2\}$. Then there is $k'\in \calK$ such that $\nS^{(s_i),\dagger}_{w_\star,k}\subsetneqq \nS^{(s_i),\dagger}_{w_\star,k'}$. By \cite[Proposition~5.15(1)]{xiao}, we have 
$$\v(w_{k}-w_{k'})<\OD^{(s_i)}_{k',L^{(s_i)}_{w_\star,k'}}-\OD^{(s_i)}_{k',L^{(s_i)}_{w_\star,k'}-1}\leq \v(w_{k'}-w_{\star}),$$
and hence $\v(w_{k}-w_{k'})=\v(w_{k}-w_{\star})=P_{k,\ell}$.
Note that \begin{align*}
	2P_{k,\ell}=& 	\frac{(p-1)(2\ell-1)+p+1}{2}
	+\frac{p+1+2(\Dig(\ell)+\Dig(\ell-1))}{p-1}\\
	-&
	\frac{\Dig\left(B^{(s_{k,\ell})}_{k,\ell+1}\right)-\Dig\left(A^{(s_{k,\ell})}_{k,\ell}\right)+\Dig\left(B^{(s_{k,\ell})}_{k,\ell}\right)-\Dig\left(A^{(s_{k,\ell})}_{k,\ell+1}\right)}{p-1}\\
	\geq & (p-1)\ell -
	\frac{B^{(s_{k,\ell})}_{k,\ell+1}-A^{(s_{k,\ell})}_{k,\ell}+B^{(s_{k,\ell})}_{k,\ell}-A^{(s_{k,\ell})}_{k,\ell+1}}{p-1}\\
	=& (p-1)\ell -
	\frac{2(p+1)\ell}{p-1}.
\end{align*}
Since we assume that $p\geq 7$, we have
$P_{k,\ell}\geq \ell$, 
and hence $k'\geq p^{\ell}$.
This implies that 
$$d^{\ur,\dagger}_{k'}(s)>\frac{k'_\bullet}{p+1}\geq \frac{p^{\ell}}{(p+1)(p-1)}>2\ell\geq d^{\ur,\dagger}_{k}(s),$$
and hence  $$d^{\ur,\dagger}_{k}(s)\notin [d^{\ur,\dagger}_{k'}(s),d^{\ur,\dagger}_{k'}(s)-d^{\ur,\dagger}_{k'}(s)],$$
a contradiction.  This completes the proof of our claim.
\end{proof}

\end{document}